\tikzset{
modal/.style={>=stealth',shorten >=1pt,shorten <=1pt,auto,node distance=1.5cm,
semithick},
world/.style={circle,draw,minimum size=0.5cm,fill=gray!15},
point/.style={circle,draw,inner sep=0.5mm,fill=black},
reflexive above/.style={->,loop,looseness=7,in=120,out=60},
reflexive below/.style={->,loop,looseness=7,in=240,out=300},
reflexive left/.style={->,loop,looseness=7,in=150,out=210},
reflexive right/.style={->,loop,looseness=7,in=30,out=330}
}
\def\imp{\supset}
\def\far{\rightarrow}
\def\langfo{\mathcal{Q}}
\def\ctrrel{(ctr_{\R})}
\def\wk{(wk)}
\def\ctrr{(ctr_{r})}
\def\ctrl{(ctr_{l})}
\def\cut{(cut)}
\def\id{(id)}
\def\idfo{(id_{q})}
\def\idfond{(id_{q}^{n})}
\def\idfocd{(id_{q}^{c})}
\def\botr{(\bot_{l})}
\def\botl{(\bot_{l})}
\newcommand{\disrr}{(\vee_{r})}
\newcommand{\conrr}{(\wedge_{r})}
\newcommand{\disrl}{(\vee_{l})}
\newcommand{\conrl}{(\wedge_{l})}
\newcommand{\imprr}{(\supset_{r})}
\newcommand{\impr}{(\supset_{r})}
\newcommand{\imprl}{(\supset_{l})}
\newcommand{\trans}{(tra)}
\newcommand{\refl}{(ref)}
\def\lift{(lift)}
\def\allr{(\forall_{r})}
\def\alll{(\forall_{l})}
\def\existsl{(\exists_{l})}
\def\existsr{(\exists_{r})}
\def\nd{(nd)}
\def\cd{(cd)}
\def\negl{(\neg_{l})}
\def\negr{(\neg_{r})}
\def\implnew{(\imp_{l}^{*})}
\def\idnew{(id^{*})}
\def\landl{(\land_{l})}
\def\lorr{(\lor_{r})}
\def\impl{(\supset_{l})}
\def\alll{(\forall_{l})}
\def\existsl{(\exists_{l})}
\def\existsr{(\exists_{r})}
\def\lift{(lift)}
\def\idcd{(id^{c}_{q})}
\def\idnd{(id^{n}_{q})}
\def\allcdl{(\forall^{c}_{l})}
\def\allndl{(\forall^{n}_{l})}
\def\allcdr{(\forall^{c}_{r})}
\def\allndr{(\forall^{n}_{r})}
\def\existscdr{(\exists^{c}_{r})}
\def\existsndr{(\exists^{n}_{r})}
\def\existsnedr{(\exists^{in}_{r})}
\def\allnedl{(\forall^{in}_{l})}
\def\existscedr{(\exists^{ic}_{r})}
\def\allcedl{(\forall^{ic}_{l})}
\def\ned{(ihd)}
\def\uc{\forall \vv{x}}
\def\sar{\Rightarrow}
\def\intfo{\mathsf{IntQ}}
\def\intfond{\mathsf{IntQ}}
\def\intfocd{\mathsf{IntQC}}
\def\int{\mathsf{Int}}
\def\lint{\mathsf{G3Int}}
\def\intl{\mathsf{IntL}}
\def\intfondl{\mathsf{IntQL}}
\def\intfocdl{\mathsf{IntQCL}}
\def\nint{\mathsf{NInt}}
\def\lintfo{\mathsf{G3IntQ}}
\def\lintfocd{\mathsf{G3IntQC}}
\def\lintfond{\mathsf{G3IntQ}}
\def\nintfo{\mathsf{NIntQ}}
\def\nintfond{\mathsf{NIntQ}}
\def\nintfocd{\mathsf{NIntQC}}
\newcommand{\lang}{\mathcal{L}}
\def\lang{\mathcal{L}}
\def\lcut{\{}
\def\rcut{\}}
\def\switch{\mathfrak{N}}
\def\prop{\mathrm{Prop}}
\def\pred{\mathrm{Pred}}
\def\unda{\underline{a}}
\def\undb{\underline{b}}
\def\undc{\underline{c}}
\def\switchtwo{\mathfrak{L}}
\def\strucset{\mathrm{LR}} %LabStr
\def\nstrucset{\mathrm{NR}} %NesStr
\providecommand{\acknowledgments}[1]{\textbf{Acknowledgments. } #1}
\newcommand{\R}{\mathcal{R}}
\newenvironment{customlem}[1]
  {\innercustomlem}
  {\endinnercustomlem}
\newenvironment{customthm}[1]
  {\innercustomthm}
  {\endinnercustomthm}
\newenvironment{customcor}[1]
  {\innercustomcor}
  {\endinnercustomcor}
\begin{document}
\mainmatter              % start of a contribution
\title{On the Correspondence between Nested Calculi and Semantic Systems for Intuitionistic Logics} %On Deriving Nested Calculi for Intuitionistic Logics from Semantic Systems}  %On the Correspondence between Nested Calculi and Semantic Systems for Intuitionistic Logics
\titlerunning{On the Correspondence between Nested Calculi and Semantic Systems}  % abbreviated title (for running head)
%                                     also used for the TOC unless
%                                     \toctitle is used
%
\author{Tim Lyon} %\orcidID{0000-0003-3214-0828}}
\authorrunning{Tim Lyon} % abbreviated author list (for running head)
%
%%%% list of authors for the TOC (use if author list has to be modified)
%\tocauthor{ }
%
\institute{Institut f\"ur Logic and Computation, Technische Universit\"at Wien, 1040 Wien, Austria  \\ \email{lyon@logic.at}}

\maketitle              % typeset the title of the contribution

\begin{abstract}

This paper %provides an analysis and exposition of the effects of structural rule elimination 
studies the relationship between labelled and nested calculi for propositional intuitionistic logic, first-order intuitionistic logic with non-constant domains, and first-order intuitionistic logic with constant domains. %Through this process of refining semantic systems
It is shown that Fitting's nested calculi naturally arise from their corresponding labelled calculi---for each of the aforementioned logics---via the elimination of structural rules in labelled derivations. %Moreover, we also show how to transform the nested calculi we consider into their corresponding labelled systems. 
 The translational correspondence between the two types of systems is leveraged to show that the nested calculi inherit proof-theoretic properties from their associated labelled calculi, such as completeness, invertibility of rules, and cut-admissibility. Since labelled calculi are easily obtained via a logic's semantics, the method presented in this paper can be seen as one whereby refined versions of labelled calculi (containing nested calculi as fragments) with favorable properties are derived directly from a logic's semantics.
%this paper presents a method of extracting refined systems from a logic's semantics.

\keywords{First-order · Intuitionistic logic · Kripke semantics · Labelled calculi · Nested calculi · Proof theory · Propositional · Refinement} %First-order ·  
%· Kripke semantics
%Structural rules
\end{abstract}

\section{Introduction}\label{Intro}

In his seminal work~\cite{Gen35}, Gentzen introduced the \emph{sequent calculus} framework for classical and intuitionistic logic, and proved the celebrated \emph{Hauptsatz}, i.e. \emph{cut-elimination theorem}, for the systems. As a corollary of his theorem, %consequence of this theorem, 
it can be observed that any formula provable in one of Gentzen's systems, is provable with an \emph{analytic derivation}, that is, a derivation where all formulae used to \emph{reach} the conclusion of the derivation, \emph{occur} in the conclusion of the derivation. This method of proof happens to be of practical consequence, and as such, many variations and extensions of Gentzen's sequent calculi have been assembled and proposed---examples include tableaux calculi~\cite{Fit72,Fit14}, display calculi~\cite{Bel82,LyoIttEckGra17,Wan94}, hypersequent calculi~\cite{Avr96,Pog08}, labelled calculi~\cite{Gab96,Sim94,Vig00}, and nested calculi~\cite{Bru09,Bul92,Kas94}. Such calculi have been applied to provide decision algorithms~\cite{Gen35,Pog08}, to automate the extraction of interpolants~\cite{LyoTiuGorClo20}, and to automated counter-model extraction~\cite{LyoBer19,TiuIanGor12}.

In this paper, we focus entirely on the labelled and nested proof-theoretic paradigms. The labelled paradigm may be qualified as \emph{semantic} as calculi are built %within the formalism 
by %the method employed to build calculi within the formalism is to 
transforming the semantic clauses and Kripke-frame properties of a logic into inference rules~\cite{Sim94,Vig00}. Despite some drawbacks and criticisms of this approach~\cite{Avr96}, the labelled paradigm offers many advantages. First, it is relatively straightforward to transform the semantics of a logic into a calculus; in fact, this process has been shown to be \emph{automatable}~\cite{CiaMafSpe13}. Second, the approach is exceptionally modular---allowing for the addition or deletion of rules to immediately obtain calculi for weaker or stronger logics---and is applicable to a wide variety of logics~\cite{DycNeg12,LyoBer19,Sim94,Vig00}. Last, labelled calculi consistently possess fundamental proof-theoretic properties such as invertibility of rules, admissibility of structural rules, and cut-admissibility---with fairly general results provided for large classes of modal, intuitionistic, and related logics~\cite{DycNeg12,LyoBer19,Sim94,Vig00}. 
Although these characteristics are certainly favorable, a drawback of labelled calculi is that they typically involve a complicated syntax (which incorporates a large amount of semantic information), the sequents utilized in proofs encode general graphs, and inference rules often violate the subformula property (i.e. labelled calculi are not usually analytic). Such properties cause an unnecessary increase in the size of sequents/proofs, and a decrease in the efficiency of associated automated reasoning algorithms.

In contrast to the data structures (called, \emph{labelled sequents}) employed in labelled calculi---which can be viewed as general graphs---the nested paradigm employs treelike data structures (called, \emph{nested sequents}) which manipulate logical information and are used to derive theorems. The inception of the paradigm is often attributed to Bull~\cite{Bul92} and Kashima~\cite{Kas94}, though it should be noted that nested calculi can be considered `upside down' versions of prefixed tableaux calculi, which were introduced much earlier in 1972 by Fitting~\cite{Fit72}. (NB. See~\cite{Fit14} for a discussion on the relationship between nested and tableaux systems). A strength of the nested paradigm is that the nested sequents employed reduce the bureaucracy and syntactic structures appearing in proofs, %effectively proving 
showing the nested formalism to be more parsimonious than the labelled formalism. Also, the nested formalism has proven itself useful in applications such as constructing analytic calculi~\cite{Bru09,Pog09Trends}, developing automated reasoning methods~\cite{TiuIanGor12}, and verifying interpolation~\cite{LyoTiuGorClo20}. %Moreover, the nested formalism has proven itself suitable for applications such as constructing analytic calculi~\cite{Bru09,Pog09Trends}, for developing automated reasoning methods~\cite{TiuIanGor12}, and for verifying interpolation~\cite{LyoTiuGorClo20}.
 Still, in spite of these advantages, the construction of nested calculi and the confirmation of their proof-theoretic properties is often done on a case-by-case basis. That is to say, the nested paradigm lacks general results---like those of the labelled paradigm---regarding the immediate construction of calculi in possession of fundamental properties. 

Due to the fact that the labelled formalism is well-suited for constructing calculi possessing essential proof-theoretic properties, and the nested formalism is more refined and better suited for a variety of applications, a method of extracting nested calculi from labelled calculi---with the properties of the latter preserved---is highly desirable. This \emph{refinement} process allows us to capture the best of both worlds: we invoke the general results of the labelled setting to obtain satisfactory labelled calculi for a class of logics, and via refinement, transform the systems into nested calculi better suited for applications. Similar ideas and relationships have been discussed in the literature~\cite{GorRam12AIML,LyoBer19,Lyo20,Mina13,Pim18}, where refined calculi (which can be considered nested calculi) were derived from labelled calculi for modal, intuitionistic, and related logics. (NB. The paper~\cite{Pim18} mentions results strongly related to Sect.~\ref{section-4}. Although the results presented here were discovered independently,
the work of Sect.~\ref{section-4} can be seen as a detailed explication and expansion of the work presented in~\cite{Pim18}. Moreover,~\cite{Pim18} does not consider the non-trivial and interesting first-order cases considered here.) 

In this paper we advance our understanding of the aforementioned method, and show how to extract \emph{slight} variants of Fitting's nested calculi for propositional intuitionistic logic, first-order intuitionistic logic with non-constant domains, and first-order intuitionistic logic with constant domains from the labelled calculi for these logics. Additionally, we demonstrate the converse translation---showing how to transform each considered nested calculus into its associated labelled calculus. These translations are worthwhile in that they show how each nested calculus inherits properties from its corresponding labelled calculus, and also shed light on how the semantics of each logic affects the shape of rules and syntactic structures inherent in nested derivations (via the labelled calculi).

The organization of this paper is as follows: Sect.~\ref{section-1} introduces the semantics and axiomatizations for the intuitionistic logics we will consider. Sect.~\ref{section-2} introduces the labelled and nested calculi for these logics. Sect.~\ref{section-3} introduces preliminary definitions and lemmata sufficient to translate intuitionistic labelled calculi into nested calculi, and vice-versa. Sect.~\ref{section-4}, Sect.~\ref{section-5}, and Sect.~\ref{section-6} show how to refine each labelled calculus through structural rule elimination, allowing for the extraction of the nested calculus from the labelled calculus for propositional intuitionistic logic, first-order intuitionistic logic with non-constant domains, and first-order intuitionisitic logic with constant domains, respectively. Sect.~\ref{section-7} shows how to translate each nested calculus into its associated labelled calculus, and discusses corollaries of the translational correspondence between the calculi---primarily focusing on the properties inherited by each nested calculus from its corresponding labelled calculus. The last section, Sect.~\ref{conclusion}, concludes. % and discusses future work.

This paper serves as an enhanced and revised version of the conference paper~\cite{Lyo20}. Most significantly, the nested-to-labelled translation of Sect.~\ref{section-3}, the content of Sect.~\ref{section-5}, and the majority of content in Sect.~\ref{section-7} are entirely new. Beyond this, the paper was written with an increased focus and more detailed exposition on \emph{how} the labelled calculi are refined in order to extract each nested calculus---this provides the paper with more explanatory force than~\cite{Lyo20}. Also, it should be noted that this paper corrects an error that occurs in the conference version. In~\cite{Lyo20}, the labelled calculus $\lintfocd$ is lacking a structural rule (called $\ned$ in this paper) corresponding to the condition that domains in first-order intuitionistic models are \emph{non-empty}, or \emph{inhabited}. Without the inclusion of this rule, $\lintfocd$ is incomplete as modus ponens cannot be simulated in the calculus (see Thm.~\ref{thm:lint-properties} and Appendix~\ref{app:A} for details). We have included this rule here and adjusted the content of~\cite{Lyo20} regarding $\lintfocd$ accordingly. Furthermore, to increase the flow and readability of the paper, some results outside the scope of, or auxiliary to, our main focus (i.e. the translational correspondence between labelled and nested calculi for intuitionistic logics) have been moved to a technical appendix (Appendix~\ref{app:A}) for the interested reader.

Last, Fig.~\ref{fig:relationships} shows the transformations (indicated by a solid arrow) and translations (indicated by a dotted arrow) between the various intuitionistic calculi considered. Transformations indicate that derivations in one system (or in a fragment of the system) are effectively (i.e. algorithmically) transformable to derivations in another system, and translations indicate a transformation along with a change in notation. The symbols $\switch$ and $\switchtwo$ represent a change from labelled to nested notation, and nested to labelled notation, respectively. The inclusion sign $\subset$ is taken to mean that one calculus consists of a proper subset of the rules in another calculus.

%; see Sect.~\ref{section-2}
%in a uniform manner
%To increae readability have deferred some proofs to appendix
\begin{figure}[t]
\noindent\hrule
\begin{center}
\begin{tabular}{c}
\xymatrix{
	 \lint\ar@/^.8pc/@{->}[rr]^{\text{Thm.~\ref{thm:admiss-all-rules-propositional}}}\ar@{}[d]|{\bigcap}	& &   \intl\ar@/^.8pc/@{.>}[rr]|\switch\ar@/^.8pc/@{->}[ll]^{\text{Lem.~\ref{lem:admiss-rules-lint}}}\ar@{}[d]|{\bigcap} & & \nint\ar@/^.8pc/@{.>}[ll]|\switchtwo\ar@{}[d]|{\bigcap}\ar@{}[ll]|{\text{Thm.~\ref{thm:refinement-prop}}} \\
	 \lintfond\ar@/^.8pc/@{->}[rr]^{\text{Thm.~\ref{thm:admiss-all-rules-fond}}}\ar@{}[d]|{\bigcap}	& &   \intfondl\ar@/^.8pc/@{.>}[rr]|\switch^{\text{Thm.~\ref{thm:Intfondl-to-Nintfond}}}\ar@/^.8pc/@{->}[ll]^{\text{Lem.~\ref{lem:admiss-rules-lintfond}}}\ar@{->}[d]^{\text{Cor.~\ref{cor:embed-Q-to-QC}}} & & \nintfond\ar@/^.8pc/@{.>}[ll]|\switchtwo^{\text{Thm.~\ref{thm:Nintfond-to-Intfondl}}}\ar@{->}[d]^{\text{Cor.~\ref{cor:embed-Q-to-QC}}} \\
		 \lintfocd\ar@/^.8pc/@{->}[rr]^{\text{Thm.~\ref{thm:admiss-all-rules-focd}}}	&  &  \intfocdl\ar@/^.8pc/@{.>}[rr]|\switch^{\text{Thm.~\ref{thm:Intfocdl-to-Nintfocd}}}\ar@/^.8pc/@{->}[ll]^{\text{Lem.~\ref{lem:admiss-rules-lintfocd}}} & & \nintfocd\ar@/^.8pc/@{.>}[ll]|\switchtwo^{\text{Thm.~\ref{thm:Nintfocd-to-Intfocdl}}} \\
}
\end{tabular}
\end{center}
\hrule
\caption{Transformations and translations between the intuitionistic calculi considered.}\label{fig:relationships}
\end{figure}
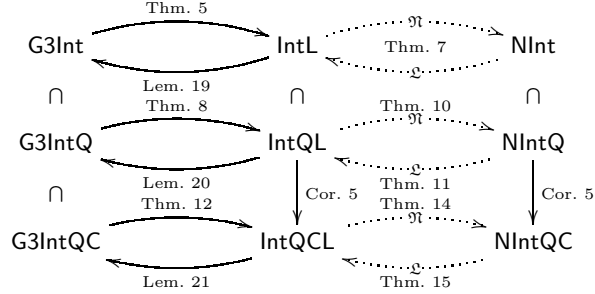

\section{Logical Preliminaries}\label{section-1}

In this section, we introduce the language, semantics, and axiomatizations for propositional intuitionistic logic $\int$, first-order intuitionistic logic with non-constant domains $\intfond$, and first-order intuitionistic logic with constant domains $\intfocd$. The first subsection will discuss the propositional setting, whereas the second subsection will discuss the first-order setting.

\subsection{Propositional Intuitionistic Logic}

The propositional language $\lang$ is defined via the BNF grammar shown below:
$$
A ::= p \ | \ \bot \ | \ (A \vee A) \ | \ (A \wedge A) \ | \ (A \imp A)
$$
where $p$ is among a denumerable set of \emph{propositional variables} $\prop = \{p, q, r, \ldots\}$. As usual, we define intuitionistic negation as $\neg A := A \imp \bot$. Moreover, the language admits a relational (or, Kripke-style) semantics as defined below (cf.~\cite{Kri65}).

\begin{definition}[$\int$-Frame, $\int$-Model] An \emph{$\int$-frame} is an ordered pair $F = (W, \leq)$ such that:
\begin{itemize}

\item[$\blacktriangleright$] $W$ is a non-empty set of worlds $\{w, u, v, \ldots\}$; % called the \emph{domain};

\item[$\blacktriangleright$] $\leq \ \subseteq W \times W$ is a reflexive and transitive binary relation on $W$.\footnote{The properties imposed on $\leq$ are defined as follows: (reflexivity) for all $w \in W$, $w \leq w$, and (transitivity) for all $w, u, v \in W$, if $w \leq v$ and $v \leq u$, then $w \leq u$.}
\end{itemize}

An \emph{$\int$-model} is an ordered pair $M = (F,V)$ where $F$ is an $\int$-frame and $V : \prop \mapsto 2^{W}$ is a \emph{valuation function} mapping propositional variables to subsets of $W$ satisfying the following \emph{monotonicity condition}:
\begin{flushleft}
$\mathbf{(M)}$ \ \ If $w \in V(p)$ and $w \leq v$, then $v \in V(p)$.
\end{flushleft}
\end{definition}

%We define satisfaction, global truth, and validity of formulae from $\lang$  

%\cite{GabSheSkv09}
\begin{definition}[Propositional Semantic Clauses]
\label{def:prop-semantic-clauses} Let $M = (W,\leq,V)$ be an $\int$-model with $w \in W$. The \emph{satisfaction relation} $M,w \Vdash A$ between $w \in W$ and a formula $A$ from $\lang$ is inductively defined as follows:

\begin{itemize}

\item[$\blacktriangleright$] $M, w \not\Vdash \bot$;

\item[$\blacktriangleright$] $M,w \Vdash p$ iff $w \in V(p)$;

\item[$\blacktriangleright$] $M,w \Vdash A \vee B$ iff $M,w \Vdash A$ or $M,w \Vdash B$;

\item[$\blacktriangleright$] $M,w \Vdash A \land B$ iff $M,w \Vdash A$ and $M,w \Vdash B$;

\item[$\blacktriangleright$] $M,w \Vdash A \imp B$ iff for all $u \in W$, if $w \leq u$ and $M,u \Vdash A$, then $M,u \Vdash B$.

\end{itemize}

%Global Truth Def. 3.2.19 (p. 210) of Gabbay. Validity is Def. 3.2.21 (p. 211) of Gabbay

We say that a formula $A$ is \emph{globally true on $M$}, written $M \Vdash A$, iff $M,u \Vdash A$ for all worlds $u \in W$. A formula $A$ is \emph{$\int$-valid}, written $ \Vdash_{\int} A$, iff it is globally true on all $\int$-models. Last, we say that a set $\Phi$ of formulae \emph{semantically implies} a formula $A$, written $\Phi \Vdash A$, iff for all intuitionistic models $M$ with $w \in W$, if $M,w \Vdash B$ for all $B \in \Phi$, then $M,w \Vdash A$.\footnote{We note that we could define a \emph{global} version of semantic implication as follows: A set of formulae $\Phi$ \emph{globally implies} a formula $A$ iff for all intuitionistic models $M$, if $M \Vdash B$ for all $B \in \Phi$, then $M \Vdash A$. We make use of the local version in Def.~\ref{def:prop-semantic-clauses} however, because the axiomatization we use for $\int$ (shown in Fig.~\ref{fig:intuitionistic-axioms}) is sound and complete relative to the local version of semantic implication (Thm.~\ref{thm:sound-complete-Int})~\cite{GabSheSkv09}.}
\end{definition} 

The monotonicity condition $\mathbf{(M)}$, together with the intuitionistic semantics defined above, necessitates a general form of monotonicity as detailed below:

%Lem 3.2.16 Gabbay (p. 209)
\begin{lemma}[General Monotonicity]\label{lm:prop-gen-monotonicity} %~{\cite[Lem.~3.2.16]{GabSheSkv09}}] 
Let $M$ be a model with $w,v \in W$ of $M$. If $M,w \Vdash A$ and $w \leq v$, then $M,v \Vdash A$.
\end{lemma}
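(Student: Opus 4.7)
The plan is to proceed by straightforward structural induction on the formula $A$. The base cases split into two. For $A = \bot$, the claim holds vacuously since $M, w \not\Vdash \bot$ by Def.~\ref{def:prop-semantic-clauses}, so the antecedent of the implication never holds. For $A = p$ with $p \in \prop$, the assumption $M, w \Vdash p$ unfolds to $w \in V(p)$, and the conclusion $M, v \Vdash p$ unfolds to $v \in V(p)$; this is exactly the monotonicity condition $\mathbf{(M)}$ imposed on $V$ in the definition of an $\int$-model.

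For the inductive step, I would assume the lemma for proper subformulae $B, C$ of $A$ and treat each connective in turn. The cases $A = B \vee C$ and $A = B \wedge C$ are routine: unfold the satisfaction clause at $w$, apply the induction hypothesis to whichever disjunct is witnessed (respectively, to both conjuncts) together with $w \leq v$ to transfer satisfaction to $v$, and then reapply the clause at $v$.

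The only case that needs any thought is $A = B \imp C$. Assume $M, w \Vdash B \imp C$ and $w \leq v$; to establish $M, v \Vdash B \imp C$, take an arbitrary $u \in W$ with $v \leq u$ and $M, u \Vdash B$. At this point I would invoke the \emph{transitivity} of $\leq$, which is part of the definition of an $\int$-frame, to conclude $w \leq u$ from $w \leq v$ and $v \leq u$. The original assumption applied at $u$ then yields $M, u \Vdash C$, as required. Note that reflexivity is not needed anywhere, and the induction hypothesis is not even invoked in this case, since the implication clause already quantifies over $\leq$-successors.

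The main obstacle, such as it is, lies in the $\imp$ case, but it is conceptual rather than technical: one has to notice that the semantic clause for implication internally quantifies over successors, so extending upward from $w$ to $v$ merely amounts to composing two $\leq$-steps — precisely what transitivity delivers. No other frame property is used, which is also why this proof would go through unchanged in the first-order cases once the quantifier clauses are added.
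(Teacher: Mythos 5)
Your proof is correct. The paper itself does not give an argument for this lemma at all --- it simply cites \cite[Lem.~3.2.16]{GabSheSkv09} --- so your structural induction is a self-contained replacement for that reference, and it is the standard one: the atomic case is exactly the monotonicity condition $\mathbf{(M)}$, the $\vee$ and $\wedge$ cases are immediate from the induction hypothesis, and the $\imp$ case is discharged by transitivity of $\leq$ without invoking the induction hypothesis, precisely as you observe. One small caveat on your closing remark: the claim that the argument goes through ``unchanged'' in the first-order setting (Lem.~\ref{lm:fo-gen-monotonicity}) is slightly too strong, since the $\exists$ case there does need an additional frame property --- the nested domain condition $\mathbf{(ND)}$ --- to guarantee that a witness $a \in D_{w}$ still lies in $D_{v}$ before the induction hypothesis can be applied. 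This does not affect the correctness of your proof of the propositional lemma as stated.
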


\begin{proof} See~{\cite[Lem.~3.2.16]{GabSheSkv09}} for details.
\qed
\end{proof}

Additionally, propositional intuitionistic logic $\int$ is finitely axiomatizable. The axioms and inference rule syntactically characterizing $\int$ are given in Fig.~\ref{fig:intuitionistic-axioms}. We define an \emph{$\int$-derivation} (relative to the axiomatization of $\int$) from a set of formulae $\Phi$ (written $\Phi \vdash A$) in the usual way (cf.~\cite{GabSheSkv09,TroDal88}). It is well-known that the notion of semantic consequence for $\int$ is equivalent to the syntactic notion of an $\int$-derivation from a set of formulae:

%Soundness discussed around pg. 214 of Gabbay
%Lem. 3.2.31 Gabbay (p. 218)
%Completeness: pg. 500 (first-order)
\begin{theorem}[Soundness and Completeness~\cite{GabSheSkv09}]\label{thm:sound-complete-Int} For any $A \in \mathcal{L}$, $\Phi \Vdash A$ iff $\Phi \vdash A$.
\end{theorem}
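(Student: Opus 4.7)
The plan is to prove each direction separately in the standard Kripke-style fashion, which is why the paper defers to~\cite{GabSheSkv09} rather than giving details. For soundness ($\Phi \vdash A$ implies $\Phi \Vdash A$), I would proceed by induction on the length of a derivation. In the base case, one must verify that each axiom schema in Fig.~\ref{fig:intuitionistic-axioms} is globally true on every $\int$-model; the cases for the implication-related axioms crucially invoke the general monotonicity lemma (Lem.~\ref{lm:prop-gen-monotonicity}) so that once a premise $A$ has been assumed at some accessible $u$, it remains true at any $v \geq u$. The inductive step reduces to showing that modus ponens preserves semantic implication, which is immediate from the semantic clause for $\imp$ together with reflexivity of $\leq$.

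For completeness ($\Phi \Vdash A$ implies $\Phi \vdash A$), I would argue the contrapositive via a canonical model construction. First, I would introduce \emph{prime theories}: sets of formulae that are closed under $\vdash$, consistent, and satisfy the disjunction property. Next, I would establish a Lindenbaum-style extension lemma stating that whenever $\Psi \not\vdash B$, the set $\Psi$ can be extended to a prime theory $\Gamma$ with $B \notin \Gamma$; this is proved by enumerating all formulae and adding, at each stage, whichever disjunct preserves the non-derivability of $B$. Applied to the hypothesis $\Phi \not\vdash A$, this produces a prime theory $\Gamma_0 \supseteq \Phi$ with $A \notin \Gamma_0$.

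Next, I would define the canonical model $M_c = (W_c, \leq_c, V_c)$ where $W_c$ is the set of all prime theories, $\leq_c$ is set inclusion, and $V_c(p) = \{\Gamma \in W_c : p \in \Gamma\}$. Reflexivity and transitivity of $\leq_c$ are trivial, and the monotonicity condition $\mathbf{(M)}$ holds by definition. The heart of the argument is the truth lemma $M_c, \Gamma \Vdash B$ iff $B \in \Gamma$, proved by induction on the structure of $B$. The cases for atoms, $\bot$, and the conjunctive/disjunctive connectives are routine (the $\vee$ case uses primeness). The implication case is the main obstacle: to show that $B \imp C \notin \Gamma$ forces the existence of some $\Gamma' \supseteq \Gamma$ in $W_c$ with $B \in \Gamma'$ and $C \notin \Gamma'$, one observes via the deduction theorem that $\Gamma \cup \{B\} \not\vdash C$, and then reapplies the extension lemma to obtain the desired prime theory $\Gamma'$. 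Once the truth lemma is in place, specializing at $\Gamma_0$ gives $M_c, \Gamma_0 \Vdash \Phi$ and $M_c, \Gamma_0 \not\Vdash A$, establishing $\Phi \not\Vdash A$ and completing the proof.
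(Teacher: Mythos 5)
Your proposal is correct and follows exactly the standard route that the paper's cited reference~\cite{GabSheSkv09} takes for this result (which the paper itself does not reprove): soundness by induction on derivations using general monotonicity (Lem.~\ref{lm:prop-gen-monotonicity}), and completeness via a Lindenbaum-style extension to prime theories and the canonical model over them ordered by inclusion, with the truth lemma's implication case handled by the deduction theorem. Note only that the local form of $\Phi \Vdash A$ used in Def.~\ref{def:prop-semantic-clauses} is precisely what makes your modus ponens step (via reflexivity of $\leq$) and your single-world refutation at $\Gamma_0$ go through as stated.
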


\begin{figure}[t]
\noindent\hrule
$$
A \supset (B \supset A)
\qquad
(A \imp (B \imp C)) \imp ((A \imp B) \imp (A \imp C))
\qquad
A \supset (B \supset (A \land B))
$$

$$
(A \land B) \supset A
\qquad
(A \land B) \supset B
\qquad
A \supset (A \lor B)
\qquad
\AxiomC{$A$}
\AxiomC{$A \imp B$}
\RightLabel{$(mp)$}
\BinaryInfC{$B$}
\DisplayProof
$$

$$
B \supset (A \lor B)
\qquad
\bot \imp A
\qquad
(A \supset C) \supset ((B \supset C) \supset ((A \lor B) \supset C))
$$

\hrule
\caption{An axiomatization of propositional intuitionistic logic $\int$~\cite{GabSheSkv09}.}\label{fig:intuitionistic-axioms}
\end{figure}
%Page 6, Gabbay

 %to be a sequence of formulae $B_{0}, \ldots, B_{n}$ such that (i) $B_{i}$ is an instance of an axiom, (ii) $B_{i}$ is an element of $\Sigma$, or (iii) $B_{i}$ is obtained from an application of $(mp)$ on formulae $B_{j}$ and $B_{k}$ with $j,k < i$, for $i \in \{0, \ldots, n\}$. We say that a formula $A$ is a \emph{theorem} of $\int$, and write $\vdash A$, if and only if there exists a derivation of $A$ in $\int$ from the empty set $\emptyset$ of premises.

%\resizebox{\columnwidth}{!}{

\subsection{First-Order Intuitionistic Logics}

The language $\langfo$ for our first-order logics is defined via the BNF grammar below:
$$
A ::= p(x_{1}, \ldots, x_{n}) \ | \ \bot \ | \ (A \vee A) \ | \ (A \land A) \ | \ (A \imp A) \ | \ (\forall x) A \ | \ (\exists x) A
$$
where $p$ is among a denumerable set of $n$-ary \emph{predicate symbols} $\pred = \{p, q, r, \ldots\}$ and $x_{1}, \ldots, x_{n}, x$ are \emph{variables} (with $n \in \mathbb{N}$). We refer to formulae of the form $p(x_{1}, \ldots, x_{n})$ as \emph{atomic formulae} when $n > 0$, and refer to formulae of the form $p$ as \emph{propositional variables} when $n=0$ (i.e. a $0$-ary predicate $p$ is a propositional variable). Furthermore, we define a variable $x$ to be a \emph{free variable} in $A$ iff it is not within the scope of a quantifier $\forall x$ or $\exists x$, and to be a \emph{bound variable} iff it is within the scope of a quantifier. Last, as in the propositional case, our language admits a relational semantics, defined below. %(cf.~\cite{GabSheSkv09}).

\begin{definition}[$\intfo$-Frames, $\intfo$-Models~\cite{GabSheSkv09}]\label{def:FO-frame-model} We define an \emph{$\intfo$-frame} to be a tuple $F = (W,\leq,D)$ such that:
\begin{itemize}

\item[$\blacktriangleright$] $W$ is a non-empty set of worlds $\{w, u, v, \ldots\}$; % called the \emph{domain};

\item[$\blacktriangleright$] $\leq \ \subseteq W \times W$ is a reflexive and transitive binary relation on $W$;

\item[$\blacktriangleright$] $D$ is a \emph{domain function} mapping a world $w \in W$ to a non-empty set $D_{w}$ of \emph{objects} $\{a, b, c, \dots\}$ satisfying the \emph{nested domain condition} shown below:
\end{itemize}
\begin{flushleft}
$\mathbf{(ND)}$ \ \ If $a \in D_{w}$ and $w \leq v$, then $a \in D_{v}$.
\end{flushleft}

A \emph{$\intfocd$-frame} is an \emph{$\intfo$-frame} that additionally satisfies the following \emph{constant domain condition} shown below:
\begin{flushleft}
$\mathbf{(CD)}$ \ \ If $a \in D_{v}$ and $w \leq v$, then $a \in D_{w}$.
\end{flushleft}

An \emph{$\intfo$-model} (\emph{$\intfocd$-model}) $M$ is an ordered pair $(F,V)$ where $F$ is an $\intfo$-frame ($\intfocd$-frame) and $V$ is a \emph{valuation function} such that $V(p,w) \subseteq (D_{w})^{n}$ (with $n \in \mathbb{N}$) satisfying the following \emph{monotonicity condition}:
\begin{flushleft}
$\mathbf{(M)}$ \ \ If $w \in V(p,w)$ and $w \leq v$, then $v \in V(p,v)$ (if $p$ is of arity $0$);\\
\hspace{3em} If $w \leq v$, then $V(p,w) \subseteq V(p,v)$ (if $p$ is of arity $n > 0$).
%Def 3.2.8, p. 207 Gabbay for (TP) above
\end{flushleft}
We uphold the convention in~\cite{GabSheSkv09} and assume that for each world $w \in W$, $(D_{w})^{0} = \{w\}$, so $V(p,w) = \{w\}$ or $V(p,w) = \emptyset$, for a propositional variable $p$.
%\begin{itemize}
%\item If $p \in \pred$ with $n > 0$, then $V(p,w) \subseteq (D_{w})^{n}$ for all $w \in W$;
%\item If $p$ is a propositional variable, $\top$, or $\bot$, then $V(p) \in 2^{W}$.
%\end{itemize}
%We uphold the convention in~\cite{GabSheSkv09} and assume that for each world $w \in W$, $(D_{w})^{0} = \{w\}$, so $V(p,w) = \{w\}$ or $V(p,w) = \emptyset$, for a propositional variable $p$.
\end{definition}

As in~\cite{GabSheSkv09}, we forgo the direct interpretation of formulae from $\langfo$ on relational models, and instead, introduce $D_{w}$-sentences (Def.~\ref{def:d-sentence}) to be interpreted on such models. Defining satisfaction relative to $D_{w}$-sentences gives rise to a notion of validity for formulae in $\langfo$ (Def.~\ref{def:semantic-clauses-fo}). However, this notion of validity also depends on the \emph{universal closure} of a formula: given that $A \in \langfo$ contains only $x_{1}, \ldots, x_{n}$ as free variables, the universal closure $\uc A$ is taken to be the formula $\forall x_{1} \ldots \forall x_{n} A$.

%Page. 102 of Gabbay %$D$ be a non-empty set of individuals, called \emph{parameters}, not containing any variables.
\begin{definition}[$D_{w}$-Sentence]\label{def:d-sentence} Let $M = (W, \leq,D,V)$ be an $\intfo$-model with $w \in W$. We define $\langfo({D_{w}})$ to be the language $\langfo$ expanded with parameters $\unda, \undb, \undc, \ldots$ corresponding to the objects in the set $D_{w} = \{a, b, c, \ldots \}$. We define a \emph{$D_{w}$-formula} to be a formula in $\langfo(D_{w})$, and we define a \emph{$D_{w}$-sentence} to be a $D_{w}$-formula that does not contain any free variables. %Last, we use $a, b, c, \ldots$ to denote parameters in a set $D_{w}$.
\end{definition}

%Semantic Clauses found on pp. 208-209 of Gabbay
%~\cite{Cor89-1,Iem05,Kri65,TroDal88}
\begin{definition}[First-Order Semantic Clauses]
\label{def:semantic-clauses-fo} Let $M = (W, \leq,D,V)$ be an $\intfo$- or $\intfocd$-model with $w \in W$. The \emph{satisfaction relation} $M,w \Vdash A$ between $w$ and a $D_{w}$-sentence $A$ is inductively defined as follows:

\begin{itemize}

\item[$\blacktriangleright$] If $p$ is a propositional variable, then $M,w \Vdash p$ iff $w \in V(p,w)$;

\item[$\blacktriangleright$] If $p$ is an $n$-ary predicate symbol $($with $n > 0)$, then $M,w \Vdash p(\unda_{1}, \cdots, \unda_{n})$ iff $(a_{1}, \cdots, a_{n}) \in V(p,w)$;

%\item $M,w \Vdash A \vee B$ iff $M,w \Vdash A$ or $M,w \Vdash B$;

%\item $M,w \Vdash A \land B$ iff $M,w \Vdash A$ and $M,w \Vdash B$;

%\item $M,w \Vdash A \imp_{i} B$ iff for all $u \in W$, if $w \leq_{i} u$ and $M,u \Vdash A$, then $M,u \Vdash B$;

\item[$\blacktriangleright$] $M,w \Vdash \forall x A$ iff for all $v \in W$ and all $a \in D_{v}$, if $w \leq v$, then $M,v \Vdash A(\unda / x)$;

\item[$\blacktriangleright$] $M,w \Vdash \exists x A$ iff there exists an $a \in D_{w}$ such that $M,w \Vdash A(\unda / x)$.

\end{itemize}
The clauses for the $\lor$, $\land$, and $\imp$ connectives are the same as in Def.~\ref{def:prop-semantic-clauses}. 
%Global Truth Def. 3.2.19 (p. 210) of Gabbay. Validity is Def. 3.2.21 (p. 211) of Gabbay
We say that a formula $A$ is \emph{globally true on $M$}, written $M \Vdash A$, iff $M,u \Vdash \uc A$ for all worlds $u \in W$. A formula $A$ is \emph{$\intfo$-valid} (\emph{$\intfocd$-valid}), written $ \Vdash_{\intfo} A$ ($ \Vdash_{\intfocd} A$, resp.), iff it is globally true on all $\intfo$-models ($\intfocd$-models).
\end{definition}

Similar to the propositional case, the monotonicity condition imposed on atomic formulae in models generalizes:

%Lem 3.2.16 Gabbay (p. 209)
\begin{lemma}[General Monotonicity] %~{\cite[Lem.~3.2.16]{GabSheSkv09}}]
\label{lm:fo-gen-monotonicity} Let $M$ be an $\intfo$- or $\intfocd$-model with $w,v \in W$ of $M$. For any $D_{w}$-sentence $A$, if $M,w \Vdash A$ and $w \leq v$, then $M,v \Vdash A$.
\end{lemma}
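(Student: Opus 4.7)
The plan is to prove this by straightforward induction on the complexity of the $D_{w}$-sentence $A$, mirroring the argument for Lem.~\ref{lm:prop-gen-monotonicity} but paying attention to the quantifier cases and to the fact that a $D_{w}$-sentence must make sense as a $D_{v}$-sentence at the larger world. The latter is precisely what the nested domain condition $\mathbf{(ND)}$ buys us: since every parameter $\unda$ appearing in $A$ corresponds to some $a \in D_{w}$, and $w \leq v$ implies $a \in D_{v}$, the sentence $A$ is a $D_{v}$-sentence as well, so the clauses of Def.~\ref{def:semantic-clauses-fo} can be evaluated at $v$.

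For the base cases, I would handle $\bot$ (vacuous), propositional variables (direct application of the arity-$0$ form of $\mathbf{(M)}$), and atomic formulae $p(\unda_{1},\ldots,\unda_{n})$ with $n>0$ (direct application of the arity-$n$ form of $\mathbf{(M)}$, which says $V(p,w) \subseteq V(p,v)$ whenever $w \leq v$). The propositional connective cases for $\lor$ and $\land$ fall out immediately from the induction hypothesis. For $A \imp B$, suppose $M,w \Vdash A \imp B$ and $w \leq v$; to show $M,v \Vdash A \imp B$, take any $u$ with $v \leq u$ and $M,u \Vdash A$, invoke transitivity of $\leq$ to get $w \leq u$, and then apply the semantic clause at $w$ to conclude $M,u \Vdash B$. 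Observe that this case uses only transitivity of $\leq$ and no induction hypothesis; this is the usual reason the $\imp$-clause is designed to be persistent by construction.

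The two quantifier cases are the only new work compared to the propositional lemma. For $\forall x A$, suppose $M,w \Vdash \forall x A$ and $w \leq v$; to show $M,v \Vdash \forall x A$, pick any $u$ with $v \leq u$ and any $a \in D_{u}$; transitivity of $\leq$ yields $w \leq u$, and the semantic clause at $w$ then gives $M,u \Vdash A(\unda/x)$ directly, so again no induction hypothesis is needed. For $\exists x A$, suppose $M,w \Vdash \exists x A$, so that some $a \in D_{w}$ witnesses $M,w \Vdash A(\unda/x)$. The condition $\mathbf{(ND)}$ together with $w \leq v$ guarantees $a \in D_{v}$, so $a$ remains a legitimate witness at $v$; then the induction hypothesis applied to the $D_{w}$-sentence $A(\unda/x)$ at the pair $w \leq v$ yields $M,v \Vdash A(\unda/x)$, and the existential clause at $v$ closes the case.

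The only real obstacle is the bookkeeping around parameters in the $\exists$ case, where one must explicitly appeal to $\mathbf{(ND)}$ both to reinterpret $A(\unda/x)$ as a $D_{v}$-sentence and to keep $a$ as an admissible witness; everything else is routine. Notably the constant domain condition $\mathbf{(CD)}$ is \emph{not} needed, so the same induction proves the statement uniformly for $\intfo$- and $\intfocd$-models.
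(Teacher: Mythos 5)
Your proof is correct. The paper does not give its own argument here---it simply cites \cite[Lem.~3.2.16]{GabSheSkv09}---and what you have written is precisely the standard induction on formula complexity that the cited reference carries out: the atomic cases follow from $\mathbf{(M)}$, the $\imp$ and $\forall$ cases are persistent by construction via transitivity of $\leq$ (no induction hypothesis needed), and the only genuinely new work is the $\exists$ case, where, as you correctly observe, $\mathbf{(ND)}$ is needed both to keep the witness $a \in D_{v}$ and to ensure $A(\unda/x)$ remains a legitimate $D_{v}$-sentence. Your remark that $\mathbf{(CD)}$ plays no role, so the argument covers $\intfo$- and $\intfocd$-models uniformly, is also accurate.
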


\begin{proof} See~{\cite[Lem.~3.2.16]{GabSheSkv09}} for details.
\qed
\end{proof}

Sound and complete axiomatizations for our first-order intuitionistic logics (viz. $\intfo$ and $\intfocd$) are provided in Fig.~\ref{fig:axioms-FO}. We define the \emph{substitution} $(y/x)$ of the variable $y$ for the free variable $x$ on a formula $A$ in the standard way as the replacement of all free occurrences of $x$ in $A$ with $y$. %The substitution $(\unda/x)$ of the parameter $\unda$ for the free variable $x$ is defined similarly. 
Last, the side condition \emph{$y$ is free for $x$} (see~Fig.~\ref{fig:axioms-FO}) is taken to mean that $y$ does not become bound by a quantifier if substituted for $x$.

%\infer[\forall I~\textit{with } x \not\in B]
%{B \imp \forall x A}
%{B \imp A}
%\qquad
%\infer[\exists E~\textit{with } x \not\in B]
%{\exists x A \imp B}
%{A \imp B}

%In Gabbay: QCL has CD axiom (appears on pg. 136
\begin{figure}[t]
\noindent\hrule
\[
\forall x (B \imp A) \imp (B \imp \forall x A)~\textit{ with $x \not\in B$}
\qquad
\forall x (A \imp B) \imp (\exists x A \imp B)~\textit{ with $x \not\in B$}
\]
\quad
\[
\forall x A \supset A(y/x)~\textit{y free for x}
\qquad
A(y/x) \supset \exists x A~\textit{y free for x}
\qquad
\infer[gen]
{\forall x A}
{A}
\]
\quad
\[
\forall x (A \vee B) \imp \forall x A \vee B~\textit{ with $x \not\in B$}
\]
\hrule
\caption{The axiomatization for the logic $\intfo$ is given by extending the axiomatization of $\int$ with the first two rows. The axiomatization for the logic $\intfocd$ is given by extending the axiomatization of $\int$ with all three rows. Both axiomatizations can be found in~\cite{GabSheSkv09}.}
\label{fig:axioms-FO}
\end{figure}
%For above, Pg. 68 and 72 Troelstra and van Dalen
%Pg. 6 Gabbay Book has Propositional Axiomatization of Int
%Pg. 120 Predicate axioms and Rules in Gabbay
%Pg. 7 contains propositional intermediate logic axioms
%Pg. 136 Constant Domain axiom
%Pg. 81 Formulas and language defined
% Derivability of formulae with constants discussed on Pg. 140, also derivability of just formulae without constants discussed on pg. 122
%Pg. 139 Formulae with constants and logics consisting fo such formulae are considered. Aslo, derivability is discussed there as well.
%Pg. 140 (Def. 2.7.7) defines simple intuitionistic theory which is a theory containing constants

The logics $\intfo$ and $\intfocd$ are defined to be the smallest set of formulae from $\langfo$ closed under substitutions of the axioms and applications of the inference rules in their axiomatizations. The sole difference between the axiomatizations for $\intfond$ and $\intfocd$ is that the former omits the \emph{constant domain axiom} $\forall x (A \vee B) \imp \forall x A \vee B$ (with $x \not\in B$) whereas the latter includes it. We write $\vdash_{\intfo} A$ ($\vdash_{\intfocd} A$) to denote that $A$ is an element, or \emph{theorem}, of $\intfo$ ($\intfocd$, resp.).

%Soundness discussed around pg. 214 of Gabbay
%Lem. 3.2.31 Gabbay (p. 218)
%Completeness of IntQ in Gabbay: Pg. 500
%Completeness of IntQC in Gabbay: Prop 7.2.9, Pg. 558
\begin{theorem}[Soundness and Completeness~\cite{GabSheSkv09}] For any $A \in \langfo$, $\Vdash_{\intfo} A$ ($\Vdash_{\intfocd} A$) iff $\vdash_{\intfo} A$ ($\vdash_{\intfocd} A$, resp.).
\end{theorem}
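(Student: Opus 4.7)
The plan is a standard two-part argument, treating $\intfo$ and $\intfocd$ in parallel and noting the one essential divergence at the completeness step for constant domains. Throughout I would follow the presentation in~\cite{GabSheSkv09}, since the theorem is cited from there.

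For \emph{soundness} ($\vdash A \Rightarrow \Vdash A$), I would proceed by induction on the length of an axiomatic derivation. For each axiom schema in Fig.~\ref{fig:intuitionistic-axioms} and in Fig.~\ref{fig:axioms-FO}, I would show $\Vdash A$ by unfolding Def.~\ref{def:semantic-clauses-fo} at an arbitrary world $w$ of an arbitrary model, leaning on Lem.~\ref{lm:fo-gen-monotonicity} whenever the semantics of $\supset$ or $\forall$ forces us to evaluate at an $\leq$-successor. The propositional axioms reduce to the propositional argument behind Thm.~\ref{thm:sound-complete-Int}. The quantifier shift axioms require a short argument about the scope condition $x \not\in B$, and the substitution axioms $\forall x A \supset A(y/x)$ and $A(y/x) \supset \exists x A$ use the side condition that $y$ is free for $x$ together with the fact that we interpret $D_{w}$-sentences, so each free variable can be replaced by a parameter $\unda$ with $a \in D_{w}$. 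For $(gen)$, monotonicity of $\leq$ combined with $\mathbf{(ND)}$ gives preservation under generalization. The only place where the two logics part ways is the constant domain axiom $\forall x (A \vee B) \imp \forall x A \vee B$ (with $x \not\in B$): here I would use $\mathbf{(CD)}$ to show that, given an $\intfocd$-model, any object in the domain of an $\leq$-successor is already in the domain of $w$, which lets the disjunction on $B$ be pulled outside of the universal quantifier. Finally, $(mp)$ preserves validity trivially.

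For \emph{completeness} ($\Vdash A \Rightarrow \vdash A$), I would argue by contrapositive and use a Henkin-style canonical model construction. First I would fix a $D_{w}$-sentence $A$ with $\not\vdash A$ and expand $\langfo$ with a countably infinite set of fresh parameters. Then I would build the \emph{canonical $\intfo$-frame} whose worlds are \emph{prime saturated theories}---sets $\Gamma$ of sentences that are (i) closed under $\vdash_{\intfo}$, (ii) consistent, (iii) prime (if $B \vee C \in \Gamma$ then $B \in \Gamma$ or $C \in \Gamma$), and (iv) $\exists$-saturated (if $\exists x B \in \Gamma$ then $B(\unda/x) \in \Gamma$ for some parameter $\unda$)---ordering them by inclusion and setting $D_{\Gamma}$ to the set of parameters appearing in $\Gamma$. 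The core step is a Lindenbaum-type lemma: if $\Gamma \not\vdash B$ then $\Gamma$ extends to a prime saturated theory still not containing $B$, using the quantifier shift axioms of Fig.~\ref{fig:axioms-FO} to handle freshness during saturation. A truth lemma by induction on $B$ then gives $M,\Gamma \Vdash B$ iff $B \in \Gamma$, where the $\supset$ and $\forall$ cases invoke the Lindenbaum construction to produce suitable $\leq$-successors. From the assumption $\not\vdash A$ one obtains a prime saturated theory avoiding $A$, hence a countermodel. For $\intfocd$, I would run the same construction but use the constant domain axiom in the Lindenbaum step to show that extending a theory to a prime saturated one does not enlarge its parameter set in an incompatible way, so that if $\Gamma \subseteq \Delta$ in the canonical order then $D_{\Gamma} = D_{\Delta}$; this is exactly where the constant domain axiom is needed, and will be the main obstacle.

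The delicate point, and the main technical obstacle, is therefore the interaction between $\exists$-saturation and the constant domain condition in the $\intfocd$ case: one must saturate witnesses for existentials while simultaneously ensuring that no successor world introduces a new parameter. I would address this exactly as in~\cite{GabSheSkv09}, where the constant domain axiom is used to push existential witnesses downward along $\leq$, guaranteeing that $D_{\Gamma} = D_{\Delta}$ whenever $\Gamma \leq \Delta$ in the canonical frame. Everything else---primeness, consistency, the truth lemma, and the contrapositive conclusion---is routine once the canonical frame is set up correctly, and for $\intfo$ no such extra care is needed beyond $\mathbf{(ND)}$, which is automatic from the inclusion order on parameter sets.
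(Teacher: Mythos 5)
The paper offers no proof of this theorem: it is imported directly from~\cite{GabSheSkv09}, with the citation built into the theorem header, so there is no in-paper argument to compare yours against. Your outline (soundness by induction on axiomatic derivations using Lem.~\ref{lm:fo-gen-monotonicity} and the frame conditions $\mathbf{(ND)}$/$\mathbf{(CD)}$, completeness via a Henkin-style canonical model of prime $\exists$-saturated theories, with the constant domain axiom doing the extra work in the $\intfocd$ Lindenbaum step) is the standard argument carried out in that reference and is correct in its essentials.
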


%\begin{proof} {\color{blue} The forward direction follows from~{\cite[Prop.~7.2.9]{GabSheSkv09}} and~{\cite[Prop.~7.3.6]{GabSheSkv09}}, and the backwards direction follows from~{\cite[Lem.~3.2.31]{GabSheSkv09}}.}
%\qed
%\end{proof}

\section{Proof Calculi for Intuitionistic Logics}\label{section-2}

%We assume the reader is familiar with intuitionistic logics; for a comprehensive overview, see~\cite{GabSheSkv09}.

In this section, we introduce the labelled and nested proof systems for the intuitionistic logics $\int$, $\intfond$, and $\intfocd$. The first subsection presents the labelled system $\lint$ for propositional intuitionistic logic from~\cite{DycNeg12} as well as the first-order extensions of this calculus, that makes use of quantifier and structural rules motivated by those (viz. $\forall L$, $\forall R$, $id$, and $dd$) given in~\cite[Ch.~6]{Vig00}. The second subsection introduces (slight variants of) Fitting's nested calculi for intuitionistic logics from~\cite{Fit14}.

\subsection{The Labelled Calculi $\lint$, $\lintfo$, and $\lintfocd$}

We define \emph{propositional $($first-order$)$ labelled sequents} to be syntactic objects of the form $L_{1} \Rightarrow L_{2}$ ($L_{1}' \Rightarrow L_{2}'$, resp.), where $L_{1}$ and $L_{2}$ ($L_{1}'$ and $L_{2}'$, resp.) are formulae defined via the BNF grammar below top (below bottom, resp.). %Also, we use $\varepsilon$ to represent the empty string, thus allowing empty antecedents or succedents to occur in our labelled sequents.
\begin{center}
\begin{tabular}{c @{\hskip 2em} c}
$L_{1} ::= \varepsilon \ |\ w : A \ | \ w \leq v \ | \ L_{1},L_{1}$

&

$L_{2} ::= \varepsilon \ |\ w : A \ | \ L_{2},L_{2}$
\end{tabular}
\end{center}
\begin{center}
\begin{tabular}{c @{\hskip 2em} c}
$L_{1}' ::= \varepsilon \ |\ w : A \ | \ \unda \in D_{w} \ | \ w \leq v \ | \ L_{1}',L_{1}'$

&

$L_{2}' ::= \varepsilon \ |\ w : A \ | \ L_{2}',L_{2}'$
\end{tabular}
\end{center}
In the propositional case, $A$ is in the language $\lang$ and $w$ is among a denumerable set of labels $\{w, v, u, \ldots \}$. In the first-order case, $A$ is in the language $\langfo$, $\unda$ is among a denumerable set of \emph{parameters} $\{\unda, \undb, \undc, \ldots\}$, and $w$ is among a denumerable set of labels $\{w, v, u, \ldots \}$. We refer to formulae of the forms $w \leq v$ and $\unda \in D_{w}$ as \emph{relational atoms} (with formulae of the form $\unda \in D_{w}$ sometimes referred to as \emph{domain atoms}, more specifically) and refer to formulae of the form $w : A$ as \emph{labelled formulae}. Due to the two types of formulae occurring in a labelled sequent, we often use $\R$ to denote relational atoms, and $\Gamma$ and $\Delta$ to denote labelled formulae, thus distinguishing between the two. Labelled sequents are therefore written in a general form as $\R, \Gamma \Rightarrow \Delta$. 

\begin{figure}[t]
\noindent\hrule

\begin{center}
\begin{tabular}{c @{\hskip 1em} c} % @{\hskip 1em} c}

\AxiomC{}
\RightLabel{$\id$}
\UnaryInfC{$\R,w \leq v,w :p,\Gamma \Rightarrow \Delta, v :p$}
\DisplayProof

&

\AxiomC{$\R, w \leq v, v :A, \Gamma \Rightarrow \Delta, v :B$}
\RightLabel{$\imprr^{\dag_{1}}$}
\UnaryInfC{$\R, \Gamma \Rightarrow \Delta, w :A \imp B$}
\DisplayProof

\end{tabular}
\end{center}

\begin{center}
\begin{tabular}{c @{\hskip 1em} c @{\hskip 1em} c}

\AxiomC{$\R,w :A, w :B, \Gamma \Rightarrow \Delta$}
\RightLabel{$\conrl$}
\UnaryInfC{$\R, w :A \wedge B, \Gamma \Rightarrow \Delta$}
\DisplayProof

&

\AxiomC{$\R, \Gamma \Rightarrow \Delta, w :A$}
\AxiomC{$\R, \Gamma \Rightarrow \Delta, w :B$}
\RightLabel{$\conrr$}
\BinaryInfC{$\R, \Gamma \Rightarrow \Delta, w :A \wedge B$}
\DisplayProof

\end{tabular}
\end{center}

\begin{center}
\resizebox{\columnwidth}{!}{
\begin{tabular}{c @{\hskip 1em} c}

\AxiomC{$\R, w :A, \Gamma \Rightarrow \Delta$}
\AxiomC{$\R, w :B, \Gamma \Rightarrow \Delta$}
\RightLabel{$\disrl$}
\BinaryInfC{$\R, w :A \vee B, \Gamma \Rightarrow \Delta$}
\DisplayProof

&

\AxiomC{$\R,w \leq v, v \leq u, w \leq u, \Gamma \Rightarrow \Delta$}
\RightLabel{$\trans$}
\UnaryInfC{$\R,w \leq v, v \leq u, \Gamma \Rightarrow \Delta$}
\DisplayProof

\end{tabular}
}
\end{center}

%\begin{center}
%\begin{tabular}{c}

%\AxiomC{$\R, w \leq v, v :A, \Gamma \Rightarrow \Delta, v :B$}
%\RightLabel{$\imprr^{\dag_{1}}$}
%\UnaryInfC{$\R, \Gamma \Rightarrow \Delta, w :A \imp B$}
%\DisplayProof

%\end{tabular}
%\end{center}

\begin{center}
\begin{tabular}{c}
\AxiomC{$\R,w \leq v, w :A \imp B, \Gamma \Rightarrow \Delta, v :A$}
\AxiomC{$\R,w \leq v, w :A \imp B, v :B, \Gamma \Rightarrow \Delta$}
\RightLabel{$\imprl$}
\BinaryInfC{$\R,w \leq v, w :A \imp B, \Gamma \Rightarrow \Delta$}
\DisplayProof 
\end{tabular}
\end{center}

\begin{center}
\begin{tabular}{c c c}

\AxiomC{$\R,w \leq w, \Gamma \Rightarrow \Delta$}
\RightLabel{$\refl$}
\UnaryInfC{$\R,\Gamma \Rightarrow \Delta$}
\DisplayProof

&

\AxiomC{$\R, \Gamma \Rightarrow \Delta, w :A, w :B$}
\RightLabel{$\disrr$}
\UnaryInfC{$\R, \Gamma \Rightarrow \Delta, w :A \vee B$}
\DisplayProof

&

\AxiomC{}
\RightLabel{$\botr$}
\UnaryInfC{$\R,w :\bot, \Gamma \Rightarrow \Delta$}
\DisplayProof
\end{tabular}
\end{center}

\begin{center}
\begin{tabular}{c @{\hskip 1em} c}
\AxiomC{}
\RightLabel{$\idfo$}
\UnaryInfC{$\R,w \leq v, \vv{\unda} \in D_{w}, w :p(\vv{\unda}),\Gamma \Rightarrow \Delta, v :p(\vv{\unda})$}
\DisplayProof

&

\AxiomC{$\R, \unda \in D_{w}, \Gamma \Rightarrow \Delta$}
\RightLabel{$\ned^{\dag_{2}}$}
\UnaryInfC{$\R,\Gamma \Rightarrow \Delta$}
\DisplayProof
\end{tabular}
\end{center}

\begin{center}
\resizebox{\columnwidth}{!}{
\begin{tabular}{c c} % @{\hskip 1em} c}
\AxiomC{$\R, w \leq v, \unda \in D_{v}, \Gamma \Rightarrow \Delta, v : A(\unda/x)$}
\RightLabel{$\allr^{\dag_{3}}$}
\UnaryInfC{$\R, \Gamma \Rightarrow \Delta, w : \forall x A$}
\DisplayProof

&

\AxiomC{$\R, \unda \in D_{w}, \Gamma \Rightarrow \Delta, w: A(\unda/x), w: \exists x A$}
\RightLabel{$\existsr$}
\UnaryInfC{$\R, \unda \in D_{w}, \Gamma \Rightarrow \Delta, w: \exists x A$}
\DisplayProof

\end{tabular}
}
\end{center}

\begin{center}
\resizebox{\columnwidth}{!}{
\begin{tabular}{c c} % @{\hskip 1em} c}

\AxiomC{$\R, \unda \in D_{w}, w: A(\unda/x), \Gamma \Rightarrow \Delta$}
\RightLabel{$\existsl^{\dag_{2}}$}
\UnaryInfC{$\R, w : \exists x A, \Gamma \Rightarrow \Delta$}
\DisplayProof

&

\AxiomC{$\R, w \leq v, \unda \in D_{v}, v : A(\unda/x), w : \forall x A, \Gamma \Rightarrow \Delta$}
\RightLabel{$\alll$}
\UnaryInfC{$\R, w \leq v, \unda \in D_{v}, w : \forall x A, \Gamma \Rightarrow \Delta$}
\DisplayProof

\end{tabular}
}
\end{center}

\begin{center}
\begin{tabular}{c @{\hskip 1em} c}

\AxiomC{$\R, w \leq v, \unda \in D_{w}, \unda \in D_{v}, \Gamma \Rightarrow \Delta$}
\RightLabel{$\nd$}
\UnaryInfC{$\R, w \leq v, \unda \in D_{w}, \Gamma \Rightarrow \Delta$}
\DisplayProof

&

\AxiomC{$\R, w \leq v, \unda \in D_{v}, \unda \in D_{w}, \Gamma \Rightarrow \Delta$}
\RightLabel{$\cd$}
\UnaryInfC{$\R, w \leq v, \unda \in D_{v}, \Gamma \Rightarrow \Delta$}
\DisplayProof

\end{tabular}
\end{center}

\hrule
\caption{The labelled calculus $\lint$ for $\int$ consists of $\id$, $\botr$, $\conrl$, $\conrr$, $\disrl$, $\disrr$, $\imprl$, $\imprr$, $\refl$, and $\trans$ (see~\cite{DycNeg12}). The labelled calculus $\lintfo$ for $\intfo$ consists of all rules minus the $\cd$ rule, and all rules give the calculus $\lintfocd$ for $\intfocd$. The side condition $\dag_{1}$ states that the variable $v$ does not occur in the conclusion, $\dag_{2}$ states that $\unda$ does not occur in the conclusion, and $\dag_{3}$ states that neither $\unda$ nor $v$ occurs in the conclusion. Labels and parameters restricted from occurring in the conclusion of an inference are called \emph{eigenvariables}.%\protect\footnotemark
}
\label{fig:labelled-calculi}
\end{figure}

Moreover, we take the comma operator to be commutative and associative; for example, we identify the labelled sequent $w \leq u, w : A, \unda \in D_{w} \sar v : C, u :B$ with $\unda \in D_{w}, w \leq u, w : A \sar u :B, v : C$. This interpretation of comma lets us view $\R, \Gamma$ (the \emph{antecedent}) and $\Delta$ (the \emph{succedent}) of a labelled sequent $\R, \Gamma \sar \Delta$ as multisets. Also, we use $\varepsilon$ to denote the \emph{empty string} which acts as the identity element for the comma operator (e.g. we identify $w \leq v, \varepsilon, v : B$ with $w \leq v, v : B$), and we stipulate that if $\varepsilon$ is the antecedent or succedent of a sequent, then the antecedent or succedent is left empty. Therefore, $\varepsilon$ will be implicit in labelled sequents, but will never explicitly appear.  %, and therefore, our grammars allow for empty antecedents and succedents in our labelled sequents;
%will omit mention of $\varepsilon$ if it occurs in a labelled sequent however; e.g. we would identify $\varepsilon \sar w : A$ with $\sar \Delta$ and $w \leq v, \varepsilon, v : B$ with $w \leq v, v : B$. %if an antecedent or succedent is the empty string, then we leave the antecedent or succedent empty (e.g. $\varepsilon \sar \Delta$ is written as $\sar \Delta$), and we stipulate that $\varepsilon$ is 

In the first-order setting, we syntactically distinguish between \emph{bound variables} $\{x, y, z, \ldots\}$ and \emph{free variables}, which are replaced with \emph{parameters} $\{\unda, \undb, \undc, \ldots\}$, to avoid clashes between the two categories (cf.~\cite[Sect.~8]{Fit14}). Therefore, instead of using formulae directly from the first-order language, we use formulae from the first-order language where each freely occurring variable $x$ has been replaced by a distinct parameter $\unda$. For example, we would make use of the labelled formula $w : (\forall x) p(\unda,x) \lor q(\unda,\undb)$ instead of $w : (\forall x) p(y,x) \lor q(y,z)$ in a first-order sequent. %For a formula $A \in \langfo$, we write $A(\unda/x)$ to mean the formula that results from substituting the parameter $\unda$ for all occurrences of the free variable $x$ in $A$. 
Last, we use the notation $A(\unda_{1}, \ldots, \unda_{n})$, with $n \in \mathbb{N}$, to denote that the parameters $\unda_{1}, \ldots, \unda_{n}$ are all parameters occurring in the formula $A$. (NB. We will occasionally abuse notation and write $A(\unda)$ to indicate that the formula $A$ contains a parameter $\unda$ in which we are interested; however, when using this notation we leave open the possibility that $A$ may contain other parameters as well.) We write $A(\vv{\unda})$ as shorthand for $A(\unda_{1}, \ldots, \unda_{n})$ and $\vv{\unda} \in D_{w}$ as shorthand for $\unda_{1} \in D_{w}, \ldots, \unda_{n} \in D_{w}$. The labelled calculi are given in Fig.~\ref{fig:labelled-calculi}.

%\footnotetext{Note that $\id$ is an instance of $\idfo$; the same holds in the nested setting.}

Each labelled calculus is obtained from the models and semantic clauses of $\int$, $\intfond$, and $\intfocd$. We note that the labelled calculus $\lint$ for $\int$ is presented in~\cite{DycNeg12}, whereas the labelled calculi $\lintfond$ and $\lintfocd$ for $\intfond$ and $\intfocd$ (respectively) are new. The rules $\botl$, $\conrl$, $\conrr$, $\disrl$, $\disrr$, $\imprl$, $\imprr$, $\existsl$, $\existsr$, $\alll$, and $\allr$ are rule representations of the semantic clauses given in Def.~\ref{def:prop-semantic-clauses} and Def.~\ref{def:semantic-clauses-fo}. The $\refl$ and $\trans$ rules allow inferences arising from the fact that frames are reflexive and transitive, whereas $\ned$, $\nd$, and $\cd$ allow inferences based on the fact that the domains of frames are always inhabited (i.e. non-empty), are nested, and are constant, respectively. The rules $\id$ and $\idfo$ encode the monotonicity condition imposed on models; note that $\id$ is an instance of $\idfo$ (the same holds for the corresponding rules in the nested setting, which are introduced in the next subsection). Last, we write $\vdash_{\lint} \Lambda$, $\vdash_{\lintfond} \Lambda$, and $\vdash_{\lintfocd} \Lambda$ to indicate that the labelled sequent $\Lambda$ is derivable in $\lint$, $\lintfond$, and $\lintfocd$, respectively---this notation extends straightforwardly to the other calculi we will consider.

We define a \emph{label substitution} $(w/v)$ on a multiset of labelled formulae or relational atoms in the usual way as the replacement of all labels $v$ occurring in the mulitset with the label $w$. Similarly, we define a \emph{parameter substitution} $(\unda/\undb)$ on a multiset of labelled formulae or relational atoms as the replacement of all parameters $\undb$ occurring in the multiset with the parameter $\unda$. %As confirmed in the theorem below, both substitutions are hp-admissible in our labelled calculi.

Our labelled calculi possess desirable proof-theoretic properties such as the \emph{height-preserving (hp-) admissibility} of substitutions and structural rules (e.g. $(psub)$ and $\wk$), the \emph{height-preserving (hp-) invertibility} of all rules, and admissibility of $\cut$. These properties are detailed in the theorem below (Thm.~\ref{thm:lint-properties}) with the (hp-)admissible rules shown in Fig.~\ref{fig:lab-struc-rules}.

\begin{figure}[t]
\noindent\hrule
\begin{center}
\begin{tabular}{c @{\hskip 1em} c}
\AxiomC{$\R,\Gamma \Rightarrow \Delta$}
\RightLabel{$(lsub)$}
\UnaryInfC{$\R(w/v),\Gamma(w/v) \Rightarrow \Delta(w/v)$}
\DisplayProof

&

\AxiomC{$\R,\Gamma \Rightarrow \Delta$}
\RightLabel{$(psub)$}
\UnaryInfC{$\R(\unda/\undb),\Gamma(\unda/\undb) \Rightarrow \Delta(\unda/\undb)$}
\DisplayProof
\end{tabular}
\end{center}

\begin{center}
\begin{tabular}{c @{\hskip 1em} c @{\hskip 1em} c}
\AxiomC{$\R,\Gamma \Rightarrow \Delta$}
\RightLabel{$\wk$}
\UnaryInfC{$\R',\R,\Gamma',\Gamma \Rightarrow \Delta',\Delta$}
\DisplayProof

&

\AxiomC{$\R,\R',\R',\Gamma \Rightarrow \Delta$}
\RightLabel{$\ctrrel$}
\UnaryInfC{$\R,\R',\Gamma \Rightarrow \Delta$}
\DisplayProof

&

\AxiomC{$\R,\Gamma',\Gamma',\Gamma \Rightarrow \Delta$}
\RightLabel{$\ctrl$}
\UnaryInfC{$\R,\Gamma',\Gamma \Rightarrow \Delta$}
\DisplayProof
\end{tabular}
\end{center}

\begin{center}
\begin{tabular}{c @{\hskip 1em} c}
\AxiomC{$\R,\Gamma \Rightarrow \Delta, \Delta', \Delta'$}
\RightLabel{$\ctrr$}
\UnaryInfC{$\R,\Gamma \Rightarrow \Delta, \Delta'$}
\DisplayProof

&

\AxiomC{$\R,\Gamma \Rightarrow \Delta, w :A$}
\AxiomC{$\R,w :A,\Gamma \Rightarrow \Delta$}
\RightLabel{$\cut$}
\BinaryInfC{$\R,\Gamma \Rightarrow \Delta$}
\DisplayProof
\end{tabular}
\end{center}

\hrule
\caption{The set $\strucset$ of admissible labelled rules consists of all rules shown above.}
\label{fig:lab-struc-rules}
\end{figure}

\begin{theorem}
\label{thm:lint-properties} Let $\mathsf{G3X} \in \{\lintfo,\lintfocd \}$. The calculi $\lint$, $\lintfo$, and $\lintfocd$ have the following properties:
\begin{itemize}

\item[$(i)$] 

\begin{itemize}

\item[(a)] For all $A \in \lang$, $ \vdash_{\lint} \R,w \leq v, w : A, \Gamma \Rightarrow v : A, \Delta$;

\item[(b)] For all $A \in \lang$, $ \vdash_{\lint} \R,w:A,\Gamma \Rightarrow \Delta, w :A$; 

\item[(c)] For all $A \in \langfo$, $\vdash_{\mathsf{G3X}} \R,w \leq v, \vv{\unda} \in D_{w}, w : A(\vv{\unda}), \Gamma \Rightarrow v : A(\vv{\unda}), \Delta$; 

\item[(d)] For all $A \in \langfo$, $\vdash_{\mathsf{G3X}} \R, \vv{\unda} \in D_{w}, w:A(\vv{\unda}),\Gamma \Rightarrow \Delta, w :A(\vv{\unda})$;

\end{itemize}

\item[$(ii)$] All rules in $\strucset - \{\cut\}$ are hp-admissible;

%The $(lsub)$ and $(psub)$ rules are height-preserving $($i.e. `hp-'$)$ admissible;

%\item[$(iii)$] The parameter substitution rule (below) is hp-admissible;
%\begin{center}
%\AxiomC{$\R,\Gamma \Rightarrow \Delta$}
%\RightLabel{$(psub)$}
%\UnaryInfC{$\R[a/b],\Gamma[a/b] \Rightarrow \Delta[a/b]$}
%\DisplayProof
%\end{center}

\item[$(iii)$] All rules are hp-invertible;

%\item[$(iv)$] The $\wk$, $(ctr_{R})$, $\ctrl$, and $\ctrr$ rules are hp-admissible;

\item[$(iv)$] The $\cut$ rule is admissible;

\item[$(v)$] $\lint$, $\lintfo$, and $\lintfocd$ are sound and complete for $\int$, $\intfo$, and $\intfocd$, respectively.

\end{itemize}
\end{theorem}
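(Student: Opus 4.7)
The proof proceeds in the order $(i), (ii \text{ minus contraction}), (iii), (ii \text{ contraction}), (iv), (v)$, following the standard Negri-style methodology for labelled calculi. For $(i)$, I would prove the four statements simultaneously by induction on the complexity of $A$. The base cases for $(i)(a)$ and $(i)(b)$ follow from $\id$ (noting that $(i)(b)$ is obtained from $(i)(a)$ by applying $(lsub)$ with $v \mapsto w$ after using $\refl$, or more directly by combining $\refl$ with $\id$), while the base cases for $(i)(c)$ and $(i)(d)$ follow similarly from $\idfo$. Each inductive case applies the appropriate left rule followed by the right rule for the outermost connective; for instance, the case $A = B \supset C$ uses $\imprr$ with a fresh label, then $\imprl$, and invokes the induction hypothesis on $B$ and $C$. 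The quantifier cases for $(i)(c)$ and $(i)(d)$ require care with the eigenvariable conditions and the $\nd$ (or $\cd$, for $\lintfocd$) rule to propagate domain membership to the required world.

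For $(ii)$ without contraction, I would prove hp-admissibility of $(lsub)$ and $(psub)$ by induction on the height of derivations, handling the eigenvariable side conditions $\dag_1$, $\dag_2$, $\dag_3$ by renaming the eigenvariable when a substitution would violate freshness. With substitutions available, hp-admissibility of $\wk$ follows by straightforward induction on height. Then I would prove $(iii)$ by induction on height: for each rule, inspect the last rule applied in the derivation of the conclusion; if it is the rule being inverted, the premise(s) are already present; otherwise, apply the IH to the premises and then reapply the rule. The side-condition-sensitive rules ($\imprr$, $\allr$, $\existsl$, $\ned$) need the substitutions of $(ii)$ to rename eigenvariables if necessary. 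Once hp-invertibility is in hand, hp-admissibility of $\ctrl$, $\ctrr$, and $\ctrrel$ follows by induction on height: peel off the principal rule on one of the contracted formulas via invertibility, apply the IH to reduce contraction heights, and reapply the rule.

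For $(iv)$, I would use the standard double induction on $(|A|, h_1 + h_2)$ where $|A|$ is the complexity of the cut formula and $h_1, h_2$ are the heights of the two premises. The main cases are: cut formula not principal in either premise (permute the cut upward using invertibility/IH); cut formula principal in at least one premise but not the other (permute upward in the premise where it is non-principal); and cut formula principal on both sides (reduce to smaller cuts on the immediate subformulas, using hp-admissibility of $\wk$, $\ctrl$, $\ctrr$, and the substitutions). The genuinely new cases compared to $\lint$ are the quantifier rules and the domain rules $\nd$, $\cd$, $\ned$; for $\allr$/$\alll$ and $\existsl$/$\existsr$, the principal-principal reduction uses $(psub)$ to instantiate the eigenparameter with the witness, then appeals to the IH on the strictly smaller instance $A(\unda/x)$.

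For $(v)$, soundness is routine: interpret a labelled sequent $\R, \Gamma \Rightarrow \Delta$ in the natural way over the relevant class of models, and check rule-by-rule that validity of all premises entails validity of the conclusion. The frame-condition rules $\refl$, $\trans$, $\nd$, $\cd$, $\ned$ and the monotonicity-based rules $\id$, $\idfo$ correspond exactly to the respective conditions in Definitions of $\int$-/$\intfo$-/$\intfocd$-frames and models. Completeness proceeds by simulating Hilbert-style derivations: each axiom scheme from Fig.~\ref{fig:intuitionistic-axioms} and Fig.~\ref{fig:axioms-FO} is derived in the appropriate labelled calculus (using the generalized identities of $(i)$ heavily), and $(mp)$ and $(gen)$ are simulated using admissibility of $\cut$ from $(iv)$ together with $\imprr$/$\imprl$ and $\allr$. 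The constant-domain axiom requires $\cd$ in $\lintfocd$, and the derivation of $(mp)$ in $\lintfocd$ is precisely where $\ned$ is essential (as flagged in the discussion preceding the theorem).

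The main obstacle, as the discussion in Appendix~\ref{app:A} already hints, is the $\ned$ rule and the interaction of the domain rules $\nd$, $\cd$ with cut-elimination: one must verify that the reductions in $(iv)$ still terminate when the cut formula is $\forall x A$ or $\exists x A$ and eigenparameters are introduced by $\allr$ or $\existsl$, since the parameter substitution may alter the shape of domain atoms appearing elsewhere in the sequent. Careful bookkeeping of $(psub)$ under the constraints $\dag_2$, $\dag_3$ is the place where the argument is most likely to go wrong.
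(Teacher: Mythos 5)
Your proposal follows essentially the same route as the paper's proof in Appendix~\ref{app:A}: mutual induction on formula complexity for (i), height inductions with eigenvariable renaming via a double appeal to the induction hypothesis for the substitutions and weakening, invertibility before contraction, a lexicographic induction on $(|A|,h_{1},h_{2})$ for $\cut$ with $(psub)$ handling the quantifier principal-principal reductions, and completeness by deriving the Hilbert axioms and simulating $(mp)$ (where $\ned$ is indeed essential) and $(gen)$. The only cosmetic differences are that the paper outsources the propositional parts (i)(a), (i)(b) and the propositional cut cases to Dyckhoff--Negri, and obtains hp-invertibility of most rules directly from hp-admissibility of $\wk$ (since those rules keep their principal formulas in the premises), reserving the height induction for $\allr$ and $\existsl$ only.
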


\begin{proof} We refer the reader to~\cite{DycNeg12} for proofs of properties (i)--(v) for $\lint$; note that hp-admissibility of $(psub)$ is trivial in the propositional setting since formulae do not contain parameters. The proofs of properties (i)--(v) for $\lintfo$ and $\lintfocd$ are similar to those for $\lint$ and can be found in Appendix~\ref{app:A}.
\qed
\end{proof}

\subsection{The Nested Calculi $\nint$, $\nintfo$, and $\nintfocd$}

We define a propositional (or, first-order) nested sequent $\Sigma$ to be a syntactic object defined via the following BNF grammars:
\begin{center}
\begin{tabular}{c @{\hskip 2em} c}
$X ::= \varepsilon \ | \ A \ | \ X, X$ %No emptyset: \ | \ \emptyset

&

$\Sigma ::= X \far X \ | \ X \far X, [\Sigma], \ldots, [\Sigma]$
\end{tabular}
\end{center}
where $A$ is in the propositional language $\lang$ (first-order language $\langfo$, resp.). As in the previous section, we take the comma operator to be commutative and associative, allowing us to view syntactic entities $X$ as multisets, and we let $\varepsilon$ represent the empty string which---as in the labelled setting---occurs implicitly, but not explicitly, in sequents.

In the first-order setting, we syntactically distinguish between bound variables and free variables in first-order formulae, using $\{x, y, z, \ldots\}$ for bound variables and replacing the occurrence of free variables in formulae with parameters $\{\unda, \undb, \undc, \ldots\}$. For example, we would use $p(\unda) \far p(\undb), [\bot \far \forall x q(x,\undb)]$ instead of the sequent $p(x) \far p(y), [\bot \far \forall x q(x,y)]$ in a nested derivation (where the free variable $x$ has been replaced by the parameter $\unda$ and $y$ has been replaced by $\undb$).
%\begin{center}
%\begin{tabular}{c @{\hskip 2em} c}
%$p(a) \far p(b), [\bot \far \forall x q(x,b)]$

%&

%$p(x) \far p(y), [\bot \far \forall x q(x,y)]$
%\end{tabular}
%\end{center}

%We define a \emph{subseqeunt} of a nested sequent $\Sigma = X \vdash Y, [\Sigma_{0}], \ldots, [\Sigma_{n}]$ inductively: (i) $\Sigma$ is a subsequent of itself, and (ii) Each subsequent of $\Sigma_{i}$ (for $i \in \{0, \ldots, n\}$) is a subsequent of $\Sigma$. Nested sequents are often written as $\Sigma\{X \far Y, [\Sigma_{0}], \ldots, [\Sigma_{n}] \}$, which indicates that $X \far Y, [\Sigma_{0}], \ldots, [\Sigma_{n}]$ is a subsequent of the sequent $\Sigma$.

Nested sequents are often written as $\Sigma\{X \far Y, [\Sigma_{1}], \ldots, [\Sigma_{n}] \}$, which indicates that $X \far Y, [\Sigma_{1}], \ldots, [\Sigma_{n}]$ occurs at some depth in the nestings of the sequent $\Sigma$. For example, if $\Sigma$ is taken to be $p(\unda) \far [\bot \far \forall x q(x,\undb), [ \ \far \top ]]$, then $\Sigma\{\bot \far \forall x q(x,\undb)\}$, $\Sigma\{\bot \far \forall x q(x,\undb), [ \ \far \top ]\}$, and $\Sigma\{ \far \top\}$ are correct representations of $\Sigma$ in our notation. 

%***Add notation of sequences of boxes, which is underlined X, Y, etc. for making rules in corollaries section more succinct***

The nested calculi are given in Fig.~\ref{fig:nested-calculi} and are \emph{slight} variants of the calculi presented by Fitting in~\cite{Fit14}. The only difference is that the rules $\negl$, $\lift$, $\impl$, $\existsr$, and $\alll$ preserve copies of the principal formula in the premise of the rule. Since we will be extracting Fitting's nested calculi from our labelled calculi, it will be seen that these copies of principal formulae %arise naturally and 
are residua of this process, which is based on the fact that our labelled calculi preserve copies of principal formulae in the premise(s) of certain rules. Howbeit, it is easy to confirm that the calculi $\nint$, $\nintfond$, and $\nintfocd$ are equivalent to Fitting's calculi from~\cite{Fit14}, as it is straightforward to transform a derivation from one calculus into a derivation in the other, and vice-versa. %Last, note that we write $\vdash_{\nint} \Sigma$, $\vdash_{\nintfond} \Sigma$, and $\vdash_{\nintfocd} \Sigma$ to indicate that the nested sequent $\Sigma$ is derivable in $\nint$, $\nintfond$, and $\nintfocd$, respectively.

\begin{figure}[t]
\noindent\hrule

\begin{center}
\begin{tabular}{c c c}
\AxiomC{} \RightLabel{$\id$}
\UnaryInfC{$\Sigma \lcut X, p \far p, Y \rcut$}
\DisplayProof

&

\AxiomC{$\Sigma \lcut X, A,B \far Y \rcut $}
\RightLabel{$\conrl$}
\UnaryInfC{$\Sigma \lcut X, A \land B \far Y \rcut$}
\DisplayProof

&

\AxiomC{$\Sigma\{X, A \far Y, [X', A \far Y']\}$}
\RightLabel{$\lift$}
\UnaryInfC{$\Sigma\{X, A \far Y, [X' \far Y']\}$}
\DisplayProof
\end{tabular}
\end{center}

\begin{center}
\begin{tabular}{c c}
\AxiomC{$\Sigma \lcut X, A \far Y \rcut$}
\AxiomC{$\Sigma \lcut X, B \far Y \rcut$}
\RightLabel{$\disrl$}
\BinaryInfC{$\Sigma \lcut X, A \lor B \far Y \rcut$}
\DisplayProof

&

\AxiomC{$\Sigma \lcut X \far A, Y \rcut$}
\AxiomC{$\Sigma \lcut X \far B, Y \rcut$}
\RightLabel{$\conrr$}
\BinaryInfC{$\Sigma \lcut X \far A\land B, Y \rcut$}
\DisplayProof

\end{tabular}
\end{center}

\begin{center}
\begin{tabular}{c c c}
\AxiomC{$\Sigma \lcut X \far Y, [A \far ] \rcut$}
\RightLabel{$\negr$}
\UnaryInfC{$\Sigma \lcut X \far Y, \neg A \rcut$}
\DisplayProof

&

\AxiomC{$\Sigma \lcut X, \neg A \far A, Y \rcut$}
\RightLabel{$\negl$}
\UnaryInfC{$\Sigma \lcut X, \neg A \far Y \rcut$}
\DisplayProof

&

\AxiomC{$\Sigma \lcut X \far A,B, Y \rcut $}
\RightLabel{$\disrr$}
\UnaryInfC{$\Sigma \lcut X \far A\lor B, Y \rcut$}
\DisplayProof
\end{tabular}
\end{center}

\begin{center}
\begin{tabular}{c c}
\AxiomC{$\Sigma \lcut X \far Y, [A \far B] \rcut$}
\RightLabel{$\imprr$}
\UnaryInfC{$\Sigma \lcut X \far A \imp B, Y \rcut$}
\DisplayProof

&

\AxiomC{$\Sigma \lcut X, A \imp B \far A, Y \rcut$}
\AxiomC{$\Sigma \lcut X, A \imp B, B \far Y \rcut$}
\RightLabel{$\imprl$}
\BinaryInfC{$\Sigma \lcut X, A \imp B \far Y \rcut$}
\DisplayProof
\end{tabular}
\end{center}

\begin{center}
\begin{tabular}{c c}
\AxiomC{}
\RightLabel{$\idfo$}
\UnaryInfC{$\Sigma \lcut X, p(\vv{\unda}) \far p(\vv{\unda}), Y \rcut$}
\DisplayProof

&

\AxiomC{$\Sigma \lcut X \far A(\unda/x), \exists x A, Y  \rcut$}
\RightLabel{$\existsr^{\dag_{1}}$}
\UnaryInfC{$\Sigma \lcut X \far \exists x A, Y \rcut$}
\DisplayProof
\end{tabular}
\end{center}

\begin{center}
\begin{tabular}{c c}
\AxiomC{$\Sigma \lcut X, A(\unda/x) \far Y  \rcut$}
\RightLabel{$\existsl^{\dag_{2}}$}
\UnaryInfC{$\Sigma \lcut X, \exists x A \far Y \rcut$}
\DisplayProof

&

\AxiomC{$\Sigma \lcut X \far Y, [ \ \far A(\unda/x)]  \rcut$}
\RightLabel{$\allndr^{\dag_{2}}$}
\UnaryInfC{$\Sigma \lcut X \far \forall x A, Y \rcut$}
\DisplayProof
\end{tabular}
\end{center}

\begin{center}
\begin{tabular}{c c}
\AxiomC{$\Sigma \lcut X \far A(\unda/x), Y  \rcut$}
\RightLabel{$\allcdr^{\dag_{2}}$}
\UnaryInfC{$\Sigma \lcut X \far \forall x A, Y \rcut$}
\DisplayProof

&

\AxiomC{$\Sigma \lcut X, \forall x A, A(\unda/x) \far Y  \rcut$}
\RightLabel{$\alll^{\dag_{1}}$}
\UnaryInfC{$\Sigma \lcut X, \forall x A \far Y \rcut$}
\DisplayProof
\end{tabular}
\end{center}

\hrule
\caption{The nested calculus $\nint$ for propositional intuitionistic logic consists of $\id$, $\conrl$, $\conrr$, $\disrr$, $\disrl$, $\negl$, $\negr$, $\imprr$, $\imprl$, and $\lift$. The nested calculus $\nintfo$ extends $\nint$ with $\idfo$, $\existsl$, $\existsr$, $\alll$, and $\allndr$. The nested calculus $\nintfocd$ extends $\nint$ with $\idfo$, $\existsl$, $\existsr$, $\alll$, and $\allcdr$ and omits the side condition $\dag_{1}$ on the $\existsr$ and $\alll$ rules~\cite{Fit14}. The side condition $\dag_{1}$ states that the parameter $\unda$ is either available or is an eigenvariable, and $\dag_{2}$ states that $\unda$ is an eigenvariable.}
\label{fig:nested-calculi}
\end{figure} %(meaning it does not occur in the conclusion)

The two distinguishing factors between the nested calculi $\nintfo$ and $\nintfocd$ are that (i) the former uses the $\allndr$ rule whereas the latter uses the $\allcdr$ rule, and (ii) the former imposes a side condition (see Fig.~\ref{fig:nested-calculi}) on the $\existsr$ and $\alll$ rules, whereas the latter does not. This side condition ensures the soundness of the $\existsr$ and $\alll$ rules with respect to $\intfo$-models, that is, with respect to models where domains are \emph{not necessarily constant} (cf.~\cite{Fit14}). The side condition relies on the notion of an \emph{available parameter}, which we define below:

\begin{definition}[Available Parameter~\cite{Fit14}]\label{def:available-parameter} Let $\Sigma\{X \far Y, [\Sigma_{1}], \ldots, [\Sigma_{n}] \}$ be a nested sequent. If there exists a formula $A(\unda) \in X,Y$, then the parameter $\unda$ is available in $X \far Y$ and in all boxed subsequents $\Sigma_{i}$ (with $i \in \{1, \ldots, n\}$).
\end{definition}

As explained in the following section, a nested sequent can be seen as a tree of sequents of the form $X \far Y$. Intuitively, if a parameter is available at some point $X \far Y$ in the tree, then it will be available at all points in the future of that point, i.e. in the subtree with $X \far Y$ as its root. Since a nested sequent can  be seen as an abstraction of an $\intfond$-model encoding the model's underlying treelike structure, the notion of an available parameter corresponds to the fact that if a parameter denotes an object in the domain of a world, then---due to the nested domain condition (Def.~\ref{def:FO-frame-model})---that parameter denotes that object at all future worlds and is \emph{available} for use in sentences at such worlds.

A feature which distinguishes the nested formalism from the labelled is that nested sequents may readily be converted into an \emph{equivalent formula}, that is, nested sequents allow for a straightforward formula translation (see Def.~\ref{def:formula-interpretation} below). As a consequence of our refinement procedure, we will see in Sect.~\ref{section-7} (viz. Cor.~\ref{cor:nested-internality}) that a nested sequent $\Sigma$ is provable if and only if its formula interpretation is provable. % the forward direction, where a nested sequent is provable only if its formula interpretation is provable, is simpler to show than the backwards direction, and is proven below.

\begin{definition}[Formula Interpretation] % $\formint$]
\label{def:formula-interpretation}
The \emph{formula interpretation} of a nested sequent is defined inductively as follows:
%\begin{center}
%\begin{tabular}{c}
$$\iota(X \far Y) := \displaystyle{\bigwedge X \imp \bigvee Y}
$$
$$
\iota(X \far Y, [\Sigma_{1}], \ldots, [\Sigma_{n}]) := \displaystyle{ \bigwedge X \imp \bigg ( \bigvee Y \vee \bigvee_{1 \leq i \leq n} \iota(\Sigma_{i}) \bigg )}
$$
%\end{tabular}
%\end{center}
In the propositional setting, $\iota(\Sigma)$ is a formula in $\lang$. In the first-order setting, we interpret a nested sequent $\Sigma$ as a formula in $\langfo$ by taking the universal closure $\uc \iota(\Sigma)$ of $\iota(\Sigma)$. Also, as usual $\bigwedge \varepsilon := \top$ and $\bigvee \varepsilon := \bot$, i.e. empty antecedents translate to $\top$ and empty succedents translate to $\bot$. % and we say that $X$ is valid if and only if $\Vdash \uc \iota(\Sigma)$. % is valid.
\end{definition}

Proof-theoretic properties of $\nint$, $\nintfond$, and $\nintfocd$ will be discussed in Sect.~\ref{section-7}, though we do state the essential properties of soundness and completeness below, which were proven by Fitting in~\cite{Fit14}. By \emph{strongly} sound and complete, we mean that a nested sequent $\Sigma$ is valid (relative to an $\int$-, $\intfond$-, or $\intfocd$-model) iff it is derivable (in $\nint$, $\nintfond$, or $\nintfocd$, respectively).

\begin{theorem}[Soundness and Completeness]\label{thm:strong-soundness-completeness-nested}
$\nint$, $\nintfond$, and $\nintfocd$ are strongly sound and complete for $\int$, $\intfond$, and $\intfocd$, respectively.
\end{theorem}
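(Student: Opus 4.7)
The plan is to reduce to Fitting's strong soundness and completeness results from~\cite{Fit14} by establishing that our variant calculi $\nint$, $\nintfond$, and $\nintfocd$ are derivationally equivalent to Fitting's original systems. As observed in the text preceding the theorem, the sole difference between our calculi and Fitting's is that the rules $\negl$, $\lift$, $\imprl$, $\existsr$, and $\alll$ preserve the principal formula in their premise(s), whereas the corresponding Fitting rules consume it. Equivalence in both directions therefore suffices, after which Fitting's theorem transfers.

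For the Fitting-to-variant direction, I would proceed by induction on the height of a Fitting derivation, replacing each rule application by the corresponding variant rule. Where a variant premise demands an extra copy of the principal formula not present in the Fitting premise, that copy is introduced via weakening; weakening is straightforwardly (height-preserving) admissible in each of $\nint$, $\nintfond$, and $\nintfocd$ by a routine induction, and any eigenvariables can be renamed using parameter substitution to avoid clashes with the freshly inserted formulae. For the variant-to-Fitting direction, one simulates each variant rule by the corresponding Fitting rule and then eliminates the surplus copy of the principal formula using admissibility of contraction, which Fitting establishes for his systems in~\cite{Fit14}. Composing either translation with Thm.~\ref{thm:strong-soundness-completeness-nested}'s analogue in~\cite{Fit14} then yields strong soundness and completeness for the variants.

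The main obstacle will be ensuring that the first-order side conditions are respected under these back-and-forth translations: specifically, the availability side condition $\dag_{1}$ on $\existsr$ and $\alll$ in $\nintfond$, and the eigenvariable condition $\dag_{2}$ on $\existsl$ and $\allndr/\allcdr$. Since weakening and contraction may shift where parameters appear, one has to verify that an occurrence of $\unda$ remains available (in the sense of Def.~\ref{def:available-parameter}) whenever a rule application depends on it, and that an eigenvariable introduced in the variant proof can still be chosen fresh in the reconstructed Fitting proof (and vice versa). This is handled by a global renaming via parameter substitution before translation, together with the tree-structural observation that availability propagates only downward into nestings, so inserting or deleting a duplicated principal formula at a given node does not disturb availability elsewhere. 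With these adjustments made, both translations are routine and the theorem follows.
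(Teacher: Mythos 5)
Your proposal is correct and follows essentially the same route as the paper: the paper offers no independent proof of this theorem, simply citing Fitting's soundness and completeness results from~\cite{Fit14} and remarking (in the text introducing the nested calculi) that the variants used here, which merely retain copies of principal formulae in the premises of $\negl$, $\lift$, $\imprl$, $\existsr$, and $\alll$, are straightforwardly interderivable with Fitting's originals. Your weakening/contraction argument, together with the care taken over the availability and eigenvariable side conditions, is precisely the fleshed-out version of that equivalence claim.
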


Interestingly, the correspondence we establish between $\lint$ and $\nint$, and $\lintfocd$ and $\nintfocd$, can be leveraged to show that $\nint$ and $\nintfocd$ inherit soundness and completeness from their associated labelled calculi. The correspondence between $\nintfond$ and $\lintfond$ cannot be leveraged to conclude the completeness of $\nintfond$ however, because of the use of a lemma (Lem.~\ref{lem:Nestedlike-sequents-have-nested-form-proofs} in Sect.~\ref{section-5}) that invokes the strong completeness of $\nintfond$ (i.e. it invokes Thm.~\ref{thm:strong-soundness-completeness-nested} above). Still, we note in that section how the lemma may be proved independent of the completeness of $\nintfond$, thus allowing for $\nintfond$ to inherit the completeness of $\lintfond$ via our refinement and translation processes. Last, in Sect.~\ref{section-7}, we will show that our nested calculi inherit additional properties from their corresponding labelled calculi such as the admissibilty of certain structural rules (Cor.~\ref{cor:nested-admissibility}) and the invertibility of all rules (Cor.~\ref{cor:nested-invertibility}).

%{\color{blue}
%\begin{theorem}[Soundness and Completeness~\cite{Fit14}]\label{thm:nint-properties} The calculi $\nint$, $\nintfo$, and $\nintfocd$ are sound and complete for $\int$, $\intfo$, and $\intfocd$, respectively.
%\end{theorem}
%}

\section{Fundamentals for Establishing Correspondence}\label{section-3}

This section consists of two parts: in the first subsection, we define translation functions that transform labelled sequents into nested sequents and vice-versa, as well as define classes of labelled sequents that are fruitful for carrying out our proof-theoretic transformation and translation work. In the second subsection, we establish preliminary results that are convenient for refining our labelled calculi, that is, such results will assist us in eliminating the structural rules (e.g. $\refl$ and $\nd$) from $\lint$, $\lintfond$, and $\lintfocd$ (which will ultimately yield systems that are close variants of Fitting's nested systems).

\subsection{Translating Notation: Labelled and Nested}\label{section-notation}

%In this section, we show how to switch from labelled to nested notation. 
It is instructive to observe that both nested and labelled sequents can be viewed as graphs (with the former restricted to trees and the latter more general). Graphs of sequents are significant for two reasons: the first (technical) reason is that graphs can be leveraged to switch from labelled to nested notation; thus, graphs will play a role in deriving our nested calculi from our labelled calculi, and vice-versa. Second, %The second reason is that 
graphs offer insight into \emph{why} structural rule elimination yields nested systems, which will be discussed in the next section (Sect.~\ref{section-4}). % (see below Ex.~\ref{ex:graphs-of-sequents} below).

It is straightforward to define the graph of each type of sequent. To do this, we first introduce a bit of notation and define the multiset $\Gamma \restriction w := \{A \ | \ w : A \in \Gamma\}$. For a labelled sequent $\Lambda := \R, \Gamma \Rightarrow \Delta$, the \emph{graph} $G(\Lambda)$ is the tuple $( V, E, \lambda )$, where (i) $V := \{w \ | \ \text{$w$ is a label in $\Lambda$.}\}$, (ii) $(w,v) \in E$ iff $w \leq v \in \R$, and
$$
(iii) \quad \lambda := \{(w,\Gamma' \Rightarrow \Delta') \ | \ \Gamma' = \Gamma \restriction w \text{, } \Delta' = \Delta \restriction w \text{, and } w \in V\}.$$

For a nested sequent, the graph is defined inductively on the structure of the nestings; we use strings $\sigma$ of natural numbers to denote vertices in the graph, similar to the prefixes used in prefixed tableaux~\cite{Fit72,Fit12,Fit14}.

\emph{Base case.} Let our nested sequent be of the form $X \far Y$ with $X$ and $Y$ multisets of formulae. Then, $G_{\sigma}(X \far Y) := (V_{\sigma},E_{\sigma},\lambda_{\sigma})$, where (i) $V_{\sigma} := \{\sigma\}$, (ii) $E_{\sigma} := \emptyset$, and (iii) $\lambda_{\sigma} := \{(\sigma, X \far Y)\}$.

\emph{Inductive step.} Suppose our nested sequent is of the form $X \far Y, [\Sigma_{1}], \ldots, [\Sigma_{n}]$. We assume that each $G_{\sigma  . i}(\Sigma_{i}) = ( V_{ \sigma . i}, E_{\sigma . i}, \lambda_{\sigma . i} )$ (with $i \in \{1, \ldots, n\}$) is already defined, and define $G_{\sigma}(X \far Y, [\Sigma_{1}], \ldots, [\Sigma_{n}]) := ( V_{\sigma}, E_{\sigma}, \lambda_{\sigma} )$ as follows:
$$
(i) \quad V_{\sigma} := \{\sigma\} \cup \displaystyle{\bigcup_{1 \leq i \leq n} V_{\sigma . i}} \qquad (ii) \quad E_{\sigma} := \{(\sigma,\sigma . i) \ | \ 1 \leq i \leq n \} \cup \displaystyle{\bigcup_{1 \leq i \leq n} E_{\sigma . i}}
$$
$$(iii) \quad \lambda_{\sigma} := \{(\sigma, X \far Y)\} \cup \displaystyle{\bigcup_{1 \leq i \leq n} \lambda_{\sigma . i}}
$$
We will occasionally refer to a sequent of the form $X \far Y$ in a nested sequent $\Sigma$ such that $(\sigma, X \far Y) \in \lambda_{0}$ (with $G_{0}(\Sigma) = (V_{0}, E_{0}, \lambda_{0})$) as a \emph{component} of the nested sequent $\Sigma$.
%\begin{lemma}
%All graphs of nested sequents isomorphic. ***maybe define isomorphism too?***
%\end{lemma}

%\begin{proof}
%***TODO***
%\end{proof}

\begin{definition} Let $G_{0} = (V_{0},E_{0},\lambda_{0})$ and $G_{1} = (V_{1},E_{1},\lambda_{1})$ be two graphs. We define an \emph{isomorphism} $f : V_{0} \mapsto V_{1}$ between $G_{0}$ and $G_{1}$ to be a function such that: (i) $f$ is bijective, (ii) $(x,y) \in E_{0}$ iff $(fx,fy) \in E_{1}$, (iii) $\lambda_{0}(x) = \lambda_{1}(fx)$. We say $G_{0}$ and $G_{1}$ are \emph{isomorphic} iff there exists an isomorphism between them.

\end{definition}

Although the formal definitions above may appear somewhat cumbersome, the example below shows that transforming a sequent into its graph---or conversely, obtaining the sequent from its graph---is relatively straightforward. 

\begin{example}\label{ex:graphs-of-sequents} The nested sequent $\Sigma$ is given below with its corresponding graph $G_{0}(\Sigma)$ shown on the left, and the labelled sequent $\Lambda$ is given below with its corresponding graph $G(\Lambda)$ on the right. Regarding the labelled sequent, we assume that $\Gamma_{i}$ and $\Delta_{i}$ consist solely of formulae labelled with $w_{i}$ (for $i \in \{0,1,2,3\}$).
$$
\Sigma = X_{0} \far Y_{0}, [X_{1} \far Y_{1}, [X_{2} \far Y_{2}]], [X_{3} \far Y_{3}]
$$
\vspace*{0em}
\begin{center}
\begin{tabular}{c @{\hskip 1em} c}
\xymatrix{
  \underset{0}{\boxed{X_{0} \far Y_{0}}}\ar@{->}[r]\ar@{>}[d] & \underset{0.1}{\boxed{X_{3} \far Y_{3}}}		\\
 	\underset{0.0}{\boxed{X_{1} \far Y_{1}}}\ar@{>}[r] & \underset{0.0.0}{\boxed{X_{2} \far Y_{2}}}   
}

&

\xymatrix@C=1em{
  \underset{w_{0}}{\boxed{\Gamma_{0} \restriction w_{0} \Rightarrow \Delta_{0} \restriction w_{0}}}\ar@{->}[r]\ar@{>}[d]\ar@(ul,u)\ar@{>}[dr] & \underset{w_{3}}{\boxed{\Gamma_{3} \restriction w_{3} \Rightarrow \Delta_{3} \restriction w_{3}}}		\\
 	\underset{w_{1}}{\boxed{\Gamma_{1} \restriction w_{1} \Rightarrow \Delta_{1} \restriction w_{1}}}\ar@{>}[r] & \underset{w_{2}}{\boxed{\Gamma_{2} \restriction w_{2} \Rightarrow \Delta_{2} \restriction w_{2}}}   
}
\end{tabular}
\end{center}
\begin{small}
$$
\Lambda = w_{0} \leq w_{0}, w_{0} \leq w_{1}, w_{1} \leq w_{2}, w_{0} \leq w_{2}, w_{0} \leq w_{3}, \Gamma_{0}, \Gamma_{1} , \Gamma_{2}, \Gamma_{3} \Rightarrow \Delta_{0}, \Delta_{1}, \Delta_{2}, \Delta_{3}   
$$
\end{small}
\end{example}

In the above example there is a loop from $w_{0}$ to itself in the graph of the labelled sequent; furthermore, there is an undirected cycle occurring between $w_{0}$, $w_{1}$, and $w_{2}$. As will be explained in the next section (specifically, Thm.~\ref{thm:treelike-derivations}), the $\refl$ and $\trans$ rules allow for such structures to appear in the graphs of labelled sequents used to derive theorems; however, the elimination of these rules in our labelled calculi has the effect that such structures \emph{can no longer} occur in the labelled derivation of a theorem. Consequently, it will be seen that eliminating such rules yields a labelled derivation where every sequent has a purely \emph{treelike} structure (cf.~\cite{GorRam12AIML}). This implies that each labelled sequent in the derivation has a graph isomorphic to the graph of a nested sequent. It is this idea which ultimately permits the extraction of our nested calculi from our labelled calculi.

\begin{definition}\label{def:treelike} Let $\Lambda$ be a labelled sequent and $G(\Lambda) = (V,E,\lambda)$. We say that $\Lambda$ is \emph{treelike} iff there exists a unique vertex $w \in V$, called the \emph{root}, such that there exists a unique path from $w$ to every other vertex $v \in V$.\footnote{Treelike sequents are equivalently characterized as sequents with graphs that are: (i) connected, (ii) acyclic, and (iii) contain no backwards branching.} We say that a labelled derivation is \emph{treelike} iff every labelled sequent in the derivation is treelike.
\end{definition}

%%%%%%%%%%%%%%%%%%%%%%%%%%%%%%%%%%%%%%%%%%%%%%%%%%%%%%%%%%%
%%%%%%%%%%%%%%%%%%%%%%%%%%%%%%%%%%%%%%%%%%%%%%%%%%%%%%%%%%%
%%%%%%%%%%%%%%%%%%%%%%%%%%%%%%%%%%%%%%%%%%%%%%%%%%%%%%%%%%%
\begin{comment}
\begin{definition}\label{def:treelike} Let $\Lambda$ be a labelled sequent, $G(\Lambda) = (V,E,\lambda)$, and define $E' := \{\{x,y\} \ | \ (x,y) \in E \text{ or } (y,x) \in E\}$. The labelled sequent $\Lambda$ is \emph{treelike} iff the following hold:

\begin{itemize}

\item[$(i)$] \emph{Connectivity}: If $x, y \in V$, then there exist $n \geq 0$ nodes $z_{i} \in V$ (with $i \in \{1, \ldots, n\}$) such that $\{x,z_{1}\}, \ldots, \{z_{n},y\} \in E'$;%\footnote{When $n = 0$ then either $(x,y) \in E$ or $(y,x) \in E$.}

\item[$(ii)$] \emph{Acyclicity}: If $x \in V$, then it is not the case that there exist $n \geq 0$ nodes $z_{i} \in V$ (with $i \in \{1, \ldots, n\}$) such that $\{x,z_{1}\}, \ldots, \{z_{n},x\} \in E'$.

\end{itemize}
When $n = 0$, this implies that $\{x,y\} \in E'$ in the first condition, and $\{x,x\} \in E'$ in the second condition.

\end{definition}
\end{comment}
%%%%%%%%%%%%%%%%%%%%%%%%%%%%%%%%%%%%%%%%%%%%%%%%%%%%%%%%%%%
%%%%%%%%%%%%%%%%%%%%%%%%%%%%%%%%%%%%%%%%%%%%%%%%%%%%%%%%%%%
%%%%%%%%%%%%%%%%%%%%%%%%%%%%%%%%%%%%%%%%%%%%%%%%%%%%%%%%%%%

If we take the graph of a treelike labelled sequent, then it can be viewed as the graph of a nested sequent, %and vice-versa, 
as the example below demonstrates:

\begin{example}\label{ex:switching-notation} The treelike labelled sequent $\Lambda'$ and its graph are given below. We assume that $\Gamma_{i}$ and $\Delta_{i}$ contain only formulae labelled with $w_{i}$ (for $i \in \{0,1,2,3\}$).
$$
\Lambda' = w_{0} \leq w_{1}, w_{1} \leq w_{2}, w_{0} \leq w_{3}, \Gamma_{0}, \Gamma_{1} , \Gamma_{2}, \Gamma_{3} \Rightarrow \Delta_{0}, \Delta_{1}, \Delta_{2}, \Delta_{3}   
$$
\begin{center}
\begin{tabular}{c} %@{\hskip 5em} c}

\xymatrix@C=1em{
\underset{w_{2}}{\boxed{\Gamma_{2}' \Rightarrow \Delta_{2}'}} & & \underset{w_{1}}{\boxed{\Gamma_{1}' \Rightarrow \Delta_{1}'}}\ar@{>}[ll] & & \underset{w_{0}}{\boxed{\Gamma_{0}' \Rightarrow \Delta_{0}'}}\ar@{->}[rr]\ar@{>}[ll] & & \underset{w_{3}}{\boxed{\Gamma_{3}' \Rightarrow \Delta_{3}'}}		%\\
% 	\underset{w_{1}}{\boxed{\Gamma_{1} \far \Delta_{1}}}\ar@{>}[r] & \underset{w_{2}}{\boxed{\Gamma_{2} \far \Delta_{2}}}   
}
\end{tabular}
\end{center}
Also, if we assume that $\Gamma_{i}' = \Gamma_{i} \restriction w_{i} = X_{i}$ and $\Delta_{i} ' = \Delta_{i} \restriction w_{i} = Y_{i}$ (for $i \in \{0,1,2,3\}$), then the above graph is isomorphic to the graph of the nested sequent in Example~\ref{ex:graphs-of-sequents}, meaning that $\Lambda$ can be translated as that nested sequent. % and vice-versa.
\end{example}

To make the above translation precise, we introduce the \emph{$w$-downward closure} of the graph of a labelled sequent. We may utilize this notion to explicitly define the \emph{translation} $\switch$ (see Def.~\ref{def:switch}) that translates labelled sequents into nested sequents. This translation will prove itself useful in extracting Fitting's nested calculi from our labelled calculi.

\begin{definition}[Downward Closure] Let $\Lambda$ be a treelike labelled sequent with graph $G(\Lambda) = (V,E,\lambda)$ and $w \in V$. We define the \emph{$w$-downward closure} $G_{w}(\Lambda) = (V',E',\lambda')$ to be the smallest subgraph of $G(\Lambda)$ such that $w \in V'$ and
\begin{itemize}

\item[$\blacktriangleright$] if $v \in V'$ and $(v,u) \in E$, then $u \in V'$;

\item[$\blacktriangleright$] $E' = \{(u,v) \ | \ u,v \in V'\}$;

\item[$\blacktriangleright$] $\lambda' = \{(v,\Gamma' \sar \Delta') \ | \ v \in V' \text{ and } (v,\Gamma' \sar \Delta') \in \lambda \}$.

\end{itemize}
\end{definition}

\begin{definition}[The Translation $\switch$]\label{def:switch} Let $\Lambda$ be a treelike labelled sequent with root $w$. We inductively define $\switch(\Lambda) := \switch(G_{w}(\Lambda))$ to be the nested sequent obtained from the graph $G_{w}(\Lambda)$ %We define $\switchtwo(\Sigma)$ to be the labelled sequent obtained from the graph $G_{0}(\Sigma)$.
as follows:
\begin{itemize}

\item[$\blacktriangleright$] If $G_{u}(\Lambda) = (V,E,\lambda)$, where $V = \{u\}$, $E = \emptyset$, and $\lambda = \{(u, \Gamma' \Rightarrow \Delta')\}$, then
$$
\switch(G_{u}(\Lambda)) := \Gamma' \far \Delta'.
$$

\item[$\blacktriangleright$] If $G_{u}(\Lambda) = (V,E,\lambda)$, where $u, v_{1}, \ldots, v_{n} \in V$, $(u,v_{1}), \ldots, (u,v_{n}) \in E$, and $(u, \Gamma' \Rightarrow \Delta') \in \lambda$, then 
$$
\switch(G_{u}(\Lambda)) := \Gamma' \far \Delta', [\switch(G_{v_{1}}(\Lambda))], \ldots, [\switch(G_{v_{n}}(\Lambda))].
$$

\end{itemize}
\end{definition}

We also define a converse translation $\switchtwo$ that translates nested sequents into labelled sequents. Whereas the former translation $\switch$ is useful in extracting our nested calculi from our labelled calculi, the translation $\switchtwo$ is useful in transferring proof-theoretic properties of the latter to the former as well as establishing the converse translation (see Sect.~\ref{section-7}). In order to define this translation, we first define two operations: (i) If $X$ is a multiset of formulae from $\lang$ or $\langfo$, then $w : X := \{w : A \ | \ A \in X\}$, that is, $w : X$ is the multiset of all formulae from $X$ labelled with $w$, and (ii) If $\Lambda_{1} = \R_{1}, \Gamma_{1} \sar \Delta_{1}$ and $\Lambda_{2} = \R_{2}, \Gamma_{2} \sar \Delta_{2}$, then we define the \emph{sequent composition} $\Lambda_{1} \oplus  \Lambda_{2} := \R_{1}, \R_{2}, \Gamma_{1}, \Gamma_{2} \sar \Delta_{1}, \Delta_{2}$.

\begin{definition}[The Translation $\switchtwo$]\label{def:switchtwo} Let $\Sigma$ be a nested sequent. We inductively define $\switchtwo(\Sigma) := \switchtwo(G_{0}(\Sigma))$ to be the labelled sequent obtained from the graph $G_{0}(\Sigma)$ as follows:
\begin{itemize}

\item[$\blacktriangleright$] If $G_{\sigma}(\Sigma) = (V_{\sigma},E_{\sigma},\lambda_{\sigma})$ with $V = \{\sigma\}$, $E = \emptyset$, $\lambda = \{(\sigma, X \far Y)\}$, and $\vv{a}$ are all parameters occurring in $X,Y$, then
$$
\switchtwo(G_{\sigma}(\Sigma)) := \vv{a} \in D_{\sigma}, \sigma : X \sar \sigma : Y .
$$

\item[$\blacktriangleright$] If $G_{\sigma}(\Sigma) = (V_{\sigma},E_{\sigma},\lambda_{\sigma})$ with $\sigma, \sigma . 1, \ldots, \sigma . n \in V$, $(\sigma,\sigma . 1), \ldots, (\sigma, \sigma . n) \in E$, $(\sigma, X \far Y) \in \lambda$, and $\vv{a}$ are all parameters occurring in $X,Y$, then 
\begin{flushleft}
$\switchtwo(G_{\sigma}(\Sigma)) := \Big(\sigma \leq \sigma . 1, \ldots, \sigma \leq \sigma . n, \vv{a} \in D_{\sigma}, \sigma : X \sar \sigma : Y\Big) $
\end{flushleft}
\begin{flushright}
$\oplus \Big(\switchtwo(G_{\sigma . 1}(\Sigma))\Big) \oplus \cdots \oplus \Big(\switchtwo(G_{\sigma . n}(\Sigma))\Big)$.
\end{flushright}
\end{itemize}
To simplify notation when making use of the $\switchtwo$ translation, we will use the same labels $w$, $u$, $v$, $\ldots$ employed in our labelled calculi in place of strings $\sigma$ of natural numbers.
\end{definition}

Before finishing this subsection, we introduce a fundamental notion useful for completing our translation from labelled to nested, which is based on the above definition---the notion of a \emph{nestedlike} labelled sequent:

\begin{definition}[Nestedlike]\label{def:nestedlike} We say that a labelled sequent $\Lambda$ is \emph{nestedlike} iff there exists a nested sequent $\Sigma$ such that $\switchtwo(\Sigma) = \Lambda$.
\end{definition}

It is not difficult to see that the following lemma holds:

\begin{lemma}\label{lem:nestedlike-implies-treelike}
Let $\Lambda := \R, \Gamma \sar \Delta$ be a nestedlike labelled sequent. Then, (i) $\Lambda$ is treelike, (ii) $\switchtwo(\switch(\Lambda)) = \Lambda$ (up to a change in labels), and (iii) there exists a domain atom $\unda \in D_{w} \in \R$ iff there exists a labelled formula $w : A(\unda) \in \Gamma, \Delta$.
\end{lemma}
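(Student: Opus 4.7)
My plan is to prove all three parts by a single induction on the structure of the nested sequent $\Sigma$ that witnesses the fact that $\Lambda$ is nestedlike (i.e., $\switchtwo(\Sigma) = \Lambda$). The inductive parameter is the depth of nesting, matching the recursive definition of $\switchtwo$ (Def.~\ref{def:switchtwo}).

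For the base case $\Sigma = X \far Y$, one has $\switchtwo(\Sigma) = \vv{a} \in D_{\sigma}, \sigma : X \sar \sigma : Y$ where $\vv{a}$ enumerates precisely the parameters occurring in $X, Y$. Its graph has a single node $\sigma$, which is vacuously treelike, establishing (i); applying $\switch$ yields the component $X \far Y$ and then $\switchtwo$ yields back $\Lambda$ up to a renaming of $\sigma$, establishing (ii); and (iii) is immediate from the definition of $\switchtwo$ at a leaf, since the domain atoms $\unda \in D_{\sigma}$ correspond bijectively to the parameters appearing in the labelled formulas $\sigma : X, \sigma : Y$.

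For the inductive step, let $\Sigma = X \far Y, [\Sigma_{1}], \ldots, [\Sigma_{n}]$, and assume the result holds for each $\Sigma_{i}$. The labelled sequent $\switchtwo(\Sigma)$ is the sequent composition of the root data $\sigma \leq \sigma.1, \ldots, \sigma \leq \sigma.n, \vv{a} \in D_{\sigma}, \sigma : X \sar \sigma : Y$ (with $\vv{a}$ the parameters in $X,Y$) together with each $\switchtwo(G_{\sigma . i}(\Sigma))$. Because the strings $\sigma.i$ are distinct from $\sigma$ and from one another, and (by IH) each subsequent labelled sequent is treelike with root $\sigma.i$ and uses labels disjoint from labels outside its own subtree, the whole graph $G(\Lambda)$ is obtained by adjoining the fresh root $\sigma$ to the disjoint union of treelike subgraphs through the fresh edges $(\sigma, \sigma.i)$. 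This yields a connected acyclic graph with unique root $\sigma$ and no backwards branching, so (i) holds. For (ii), the $\switch$ translation applied to $\Lambda$ reconstructs its root component $X \far Y$ and, by IH, reconstructs each $\Sigma_{i}$ from the $\sigma.i$-downward closure, so $\switch(\Lambda) = \Sigma$ up to the naming of labels; applying $\switchtwo$ therefore returns $\Lambda$ up to the same renaming. For (iii), the domain atoms of $\Lambda$ split into those appearing at the root (which, by construction of $\switchtwo$, correspond exactly to the parameters in $\sigma : X, \sigma : Y$) and those occurring within each $\switchtwo(G_{\sigma.i}(\Sigma))$ (which, by IH, correspond exactly to the parameters in labelled formulas at nodes $\sigma.i$ and below). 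Combining these two cases gives the biconditional.

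The only delicate point is bookkeeping in the inductive step: I must be careful that the fresh labels $\sigma.i$ produced by $\switchtwo$ are pairwise distinct and distinct from any previously generated labels, which is exactly what the string-indexed definition in Def.~\ref{def:switchtwo} guarantees; this is what prevents any accidental identification of distinct vertices and thus rules out cycles. Beyond that, the proof is essentially a direct unfolding of the definitions of $\switch$ and $\switchtwo$ on a treelike graph, so I expect no further obstacle.
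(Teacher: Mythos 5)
Your proof is correct: the paper itself gives no argument for this lemma (it is introduced with ``It is not difficult to see that the following lemma holds''), and your structural induction on the witnessing nested sequent $\Sigma$ with $\switchtwo(\Sigma) = \Lambda$ is exactly the routine unfolding of Definitions~\ref{def:switch} and~\ref{def:switchtwo} that the authors leave implicit. Your attention to the freshness and pairwise distinctness of the string labels $\sigma.i$ — which is what guarantees acyclicity in (i) and prevents cross-contamination of domain atoms between subtrees in (iii) — is the right place to put the care.
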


\subsection{Setting the Stage for Refinement: Preliminary Results}

In order to extract Fitting's nested calculi from our labelled calculi, we will expand our labelled calculi with rules sufficient for the elimination of certain structural rules. Therefore, we first inflate our calculi to a large collection of rules, and use the new additions to systematically eliminate certain rules from each calculus, thus obtaining a refined variant of each calculus. To avoid lengthy names for our inflated and refined calculi, we will use the following abbreviations throughout the remainder of the paper (see Fig.~\ref{fig:new-rules} for newly introduced rules):
%\begin{itemize} %[$\blacktriangleright$]
%\begin{definition}[Abbreviations for Calculi]
%\begin{flushleft}
%$\intl := \lint + \{\idnew, \negl, \negr, \implnew, \lift\}$
%$\intfondl := \lintfond + \{\idfond, \negl, \negr, \implnew, \lift, \existsndr, \allndl, \existsnedr, \allnedl\}$
%$\intfocdl := \lintfocd + \{\idfocd, \negl, \negr, \implnew, \lift, \existscdr, \allcdl, \existscedr, \allcedl, \allcdr\}$
%\end{flushleft}
%\end{itemize}
%\begin{flushleft}
%$\lint^{*} := \intl - \{\id, \botr, \imprl, \refl, \trans\}$
%$\lintfond^{*} := \intfondl - \{\idfo, \botl, \imprl, \refl, \trans, \existsr, \alll, \nd, \ned\}$
%$\lintfocd^{*} := \intfocdl - \{\idfo, \botl, \imprl, \refl, \trans, \existsr, \alll, \allr, \nd, \cd, \ned\}$
%\end{flushleft}
\begin{definition} The inflated ($\lint^{*}$, $\lintfond^{*}$, and $\lintfocd^{*}$) and refined ($\intl$, $\intfondl$, and $\intfocdl$) labelled calculi are defined as follows:

\begin{equation*}
\begin{split}
& \lint^{*} := \lint + \{\idnew, \negl, \negr, \implnew, \lift\} \\
& \lintfond^{*} := \lintfond + \{\idfond, \negl, \negr, \implnew, \lift, \existsndr, \allndl, \existsnedr, \allnedl\} \\
& \lintfocd^{*} := \lintfocd + \{\idfocd, \negl, \negr, \implnew, \lift, \existscdr, \allcdl, \existscedr, \allcedl, \allcdr\}
\end{split}
\end{equation*}
\begin{equation*}
\begin{split}
& \intl := \lint^{*} - \{\id, \botr, \imprl, \refl, \trans\} \\
& \intfondl := \lintfond^{*} - \{\idfo, \botl, \imprl, \refl, \trans, \existsr, \alll, \nd, \ned\} \\
& \intfocdl := \\
& \hspace*{2.5em} \lintfocd^{*} - \{\idfo, \botl, \imprl, \refl, \trans, \existsr, \alll, \allr, \nd, \cd, \ned\} 
\end{split}
\end{equation*}

\end{definition}

\begin{figure}[t]
\noindent\hrule

\begin{center}
\resizebox{\columnwidth}{!}{
\begin{tabular}{c c}
\AxiomC{}
\RightLabel{$\idnew$}
\UnaryInfC{$\R, \Gamma, w : p \Rightarrow w : p, \Delta$}
\DisplayProof

&

\AxiomC{}
\RightLabel{$\idfond^{\dag_{1}}$}
\UnaryInfC{$\R, \unda_{1} \in D_{v_{1}}, \ldots, \unda_{n} \in D_{v_{n}}, \Gamma, w : p(\vv{\unda}) \Rightarrow w : p(\vv{\unda}), \Delta$}
\DisplayProof
\end{tabular}
}
\end{center}

\begin{center}
\begin{tabular}{c}
\AxiomC{}
\RightLabel{$\idfocd^{\dag_{2}}$}
\UnaryInfC{$\R, \unda_{1} \in D_{v_{1}}, \ldots, \unda_{n} \in D_{v_{n}}, \Gamma, w : p(\vv{\unda}) \Rightarrow w : p(\vv{\unda}), \Delta$}
\DisplayProof
\end{tabular}
\end{center}

\begin{center}
\begin{tabular}{c @{\hskip 1em} c}
\AxiomC{$\R,w \leq v, v:A, \Gamma \Rightarrow \Delta$}
\RightLabel{$\negr^{\dag_{3}}$}
\UnaryInfC{$\R, \Gamma \Rightarrow \Delta, w : \neg A$}
\DisplayProof

&

\AxiomC{$\R, \unda \in D_{v}, \Gamma \Rightarrow \Delta, w: A(\unda/x), w: \exists x A$}
\RightLabel{$\existsndr^{\dag_{4}}$}
\UnaryInfC{$\R, \unda \in D_{v}, \Gamma \Rightarrow \Delta, w: \exists x A$}
\DisplayProof
\end{tabular}
\end{center}

\begin{center}
\begin{tabular}{c c}
\AxiomC{$\R, w: \neg A, \Gamma \Rightarrow w:A, \Delta$}
\RightLabel{$\negl$}
\UnaryInfC{$\R, w: \neg A, \Gamma \Rightarrow \Delta$}
\DisplayProof

&

\AxiomC{$\R, \unda \in D_{v}, w : A(\unda/x), w : \forall x A, \Gamma \Rightarrow \Delta$}
\RightLabel{$\allndl^{\dag_{4}}$}
\UnaryInfC{$\R, \unda \in D_{v}, w : \forall x A, \Gamma \Rightarrow \Delta$}
\DisplayProof
\end{tabular}
\end{center}

\begin{center}
\resizebox{\columnwidth}{!}{
\begin{tabular}{c c}
\AxiomC{$\R, \unda \in D_{v}, w : A(\unda/x), w : \forall x A, \Gamma \Rightarrow \Delta$}
\RightLabel{$\allcdl^{\dag_{5}}$}
\UnaryInfC{$\R, \unda \in D_{v}, w : \forall x A, \Gamma \Rightarrow \Delta$}
\DisplayProof
%\end{center}
&
%\begin{center}
\AxiomC{$\R, \unda \in D_{w}, \Gamma \Rightarrow w : A(\unda/x), \Delta$}
\RightLabel{$\allcdr^{\dag_{6}}$}
\UnaryInfC{$\R, \Gamma \Rightarrow w : \forall x A, \Delta$}
\DisplayProof
\end{tabular}
}
\end{center}

\begin{center}
\resizebox{\columnwidth}{!}{
\begin{tabular}{c c}
\AxiomC{$\R, w \leq u, w:A, u:A, \Gamma \Rightarrow \Delta$}
\RightLabel{$\lift$}
\UnaryInfC{$\R, w \leq u, w:A, \Gamma \Rightarrow \Delta$}
\DisplayProof

&

\AxiomC{$\R, \unda \in D_{v}, \Gamma \Rightarrow \Delta, w: A(\unda/x), w: \exists x A$}
\RightLabel{$\existscdr^{\dag_{5}}$}
\UnaryInfC{$\R, \unda \in D_{v}, \Gamma \Rightarrow \Delta, w: \exists x A$}
\DisplayProof
\end{tabular}
}
\end{center}

\begin{center}
\begin{center}
\resizebox{\columnwidth}{!}{
\begin{tabular}{c c}
\AxiomC{$\R, \unda \in D_{v}, \Gamma \Rightarrow w : A(\unda / x), w : \exists x A, \Delta$}
\RightLabel{$\existsnedr^{\dag_{7}}$}
\UnaryInfC{$\R, \Gamma \Rightarrow w : \exists x A, \Delta$}
\DisplayProof

&

\AxiomC{$\R, \unda \in D_{v}, \Gamma, w : \forall x A, w : A(\unda / x) \sar \Delta$}
\RightLabel{$\allnedl^{\dag_{7}}$}
\UnaryInfC{$\R, \Gamma, w : \forall x A, \sar \Delta$}
\DisplayProof
\end{tabular}
}
\end{center}
\end{center}

\begin{center}
\begin{center}
\resizebox{\columnwidth}{!}{
\begin{tabular}{c c}
\AxiomC{$\R, \unda \in D_{v}, \Gamma \Rightarrow w : A(\unda / x), w : \exists x A, \Delta$}
\RightLabel{$\existscedr^{\dag_{8}}$}
\UnaryInfC{$\R, \Gamma \Rightarrow w : \exists x A, \Delta$}
\DisplayProof

&

\AxiomC{$\R, \unda \in D_{v}, \Gamma, w : \forall x A, w : A(\unda / x) \sar \Delta$}
\RightLabel{$\allcedl^{\dag_{8}}$}
\UnaryInfC{$\R, \Gamma, w : \forall x A, \sar \Delta$}
\DisplayProof
\end{tabular}
}
\end{center}
\end{center}

\begin{center}
\AxiomC{$\R, w : A \imp B, \Gamma \Rightarrow \Delta, w : A$}
\AxiomC{$\R, w : A \imp B, w : B, \Gamma \Rightarrow \Delta$}
\RightLabel{$\implnew$}
\BinaryInfC{$\R, w : A \imp B, \Gamma \Rightarrow \Delta$}
\DisplayProof
\end{center}

\hrule
\caption{Rules used to derive $\nint$, $\nintfond$, and $\nintfocd$ from $\lint$, $\lintfond$, and $\lintfocd$, respectively. Let $\R$ be the relational atoms occurring in the relevant rule; the side condition $\dag_{1}$ states that $v_{i} \leadsto^{\R} w$ for each $i \in \{1, \ldots, n\}$, $\dag_{2}$ states that $v_{i} \sim^{\R} w$ for each $i \in \{1, \ldots, n\}$, $\dag_{3}$ states that $v$ is an eigenvariable, $\dag_{4}$ states that $v \leadsto^{\R} w$, $\dag_{5}$ states that $v \sim^{\R} w$, $\dag_{6}$ states that $\unda$ is an eigenvariable, $\dag_{7}$ states that $v \leadsto^{\R} w$ and $\unda$ is an eigenvariable, and $\dag_{8}$ states that $v \sim^{\R} w$ and $\unda$ is an eigenvariable.}
\label{fig:new-rules}
\end{figure}

The next three sections will focus on the non-trivial refinement and translation of our labelled calculi into our nested calculi. As such, we present two useful lemmata below which will aid us in the aforementioned endeavor and allow for us to focus our attention on the refinement and translation from labelled to nested, thus  %alleviating the %possibility of being sidetracked by 
freeing us from being sidetracked by too many auxiliary details. Moreover, we present the rules (mentioned in the abbreviations above and lemmata below) that we expand each labelled calculus with in Fig.~\ref{fig:new-rules} below. Some of these rules depend on the notion of a \emph{directed path} or \emph{undirected path}, whose definitions are as follows:

\begin{definition}[Directed Path]\label{def:directed-path} Let $\Lambda = \R, \Gamma \sar \Delta$ be a labelled sequent. We say that there exists a \emph{directed path} from $w$ to $u$ in $\R$ (written $w \leadsto^{\R} u$) iff $w = u$, or there exist worlds $v_{i}$ (with $i \in \{1, \ldots, n\}$) such that $w \leq v_{1}, \ldots, v_{n} \leq u \in \R$ holds (we stipulate that  $w \leq u \in \R$ when $n = 0$).
\end{definition}

\begin{definition}[Undirected Path]\label{def:undirected-path} Let $\Lambda = \R, \Gamma \sar \Delta$ be a labelled sequent and $w \sim v \in \{w \leq u, u \leq w\}$. We say that there exists an \emph{undirected path} from $w$ to $u$ in $\R$ (written $w \sim^{\R} u$) iff $w = u$, or there exist worlds $v_{i}$ (with $i \in \{1, \ldots, n\}$) such that $w \sim v_{1}, \ldots, v_{n} \sim u \in \R$ holds (we stipulate that $w \sim u \in \R$ when $n = 0$).
\end{definition}

The following lemma confirms that the expansion of our labelled calculi $\lint$, $\lintfond$, and $\lintfocd$ with rules from Fig.~\ref{fig:new-rules} preserves (to a degree) favorable proof-theoretic properties:

\begin{lemma}
\label{lem:extended-lint-properties} Let $\mathsf{G3X}^{*} \in \{\lintfond^{*}, \lintfocd^{*}\}$. The calculi $\lint^{*}$, $\lintfond^{*}$, and $\lintfocd^{*}$ have the following properties:
\begin{itemize}

\item[$(i)$] 

\begin{itemize}

\item[(a)] For all $A \in \lang$, $ \vdash_{\lint^{*}} \R,w \leq v, w : A, \Gamma \Rightarrow v : A, \Delta$;

\item[(b)] For all $A \in \lang$, $ \vdash_{\lint^{*}} \R,w:A,\Gamma \Rightarrow \Delta, w :A$; 

\item[(c)] For all $A \in \langfo$, $\vdash_{\mathsf{G3X}^{*}} \R,w \leq v, \vv{\unda} \in D_{w}, w : A(\vv{\unda}), \Gamma \Rightarrow v : A(\vv{\unda}), \Delta$; 

\item[(d)] For all $A \in \langfo$, $\vdash_{\mathsf{G3X}^{*}} \R, \vv{\unda} \in D_{w}, w:A(\vv{\unda}),\Gamma \Rightarrow \Delta, w :A(\vv{\unda})$;

\end{itemize}

\item[(ii)] The rules $\{(lsub),(psub),\wk,\ctrrel,\ctrr\}$ are hp-admissible;

\item[(iii)] With the exception of $\{\conrl,\existsl\}$, all rules are hp-invertible;

\item[(iv)] The rules $\{\conrl,\existsl\}$ are invertible;

\item[(v)] The rule $\ctrl$ is admissible.

\end{itemize}
\end{lemma}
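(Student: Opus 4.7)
The plan is to extend the proof of Theorem \ref{thm:lint-properties} by checking that each newly introduced rule ($\idnew$, $\idfond$, $\idfocd$, $\negl$, $\negr$, $\implnew$, $\lift$, and the new quantifier variants $\existsndr$, $\allndl$, $\existsnedr$, $\allnedl$, $\existscdr$, $\allcdl$, $\existscedr$, $\allcedl$, $\allcdr$) preserves each property in question. The old rules already satisfy the inductive hypotheses by Theorem \ref{thm:lint-properties} (and the proof sketched in Appendix~\ref{app:A}), so the work reduces to handling the new rules.

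For (i), I would proceed by structural induction on the formula $A$. The base cases follow directly from $\idnew$, $\idfond$, or $\idfocd$; for the first-order cases one must check that the path-based side conditions $v_{i} \leadsto^{\R} w$ or $v_{i} \sim^{\R} w$ are satisfied, which is immediate when $w \leq v \in \R$ (using $\wk$ and the induction). The inductive cases use the relevant connective and quantifier rules together with hp-admissibility of $\wk$; the $\imp$-case now uses $\implnew$, and the quantifier cases split into the ND and CD versions.

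Properties (ii), (iii), (iv) are established by simultaneous induction on the height of derivations, analyzing the last rule applied. The new rules are designed precisely so that substitutions and weakenings commute with premises: $\lift$, $\implnew$, and the duplicating quantifier variants preserve the principal formula in the premise, and the new identity and negation rules pose no difficulty. hp-invertibility of the new non-$\conrl$/$\existsl$ rules follows from hp-admissibility of $\wk$ (which one can verify commutes with every rule of each inflated calculus), while invertibility of $\conrl$ and $\existsl$ follows from admissibility of $\ctrl$ proved in (v).

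The main obstacle will be property (v), the admissibility of $\ctrl$, proven by induction on derivation height with case analysis on the last rule. Two types of cases require care: first, when the contracted formula is principal in a rule that copies the principal formula into its premise (such as $\lift$, $\implnew$, $\alll$, $\allndl$, $\allcdl$, $\existsr$, $\existsndr$, $\existscdr$, $\existsnedr$, $\existscedr$, or $\negl$), one invokes hp-invertibility from (iii) together with the induction hypothesis to push contraction upward and then reapply the rule; second, rules with eigenvariable side conditions ($\imprr$, $\negr$, $\existsl$, $\allr$, $\allcdr$, $\existsnedr$, $\existscedr$, $\allnedl$, $\allcedl$) may require renaming eigenvariables using hp-admissibility of $(lsub)$ and $(psub)$ from (ii) to avoid capture. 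The side conditions $v_{i} \leadsto^{\R} w$ and $v_{i} \sim^{\R} w$ on $\idfond$ and $\idfocd$ are preserved under contraction of labelled formulae (which do not affect $\R$), and under $\ctrrel$ the paths remain intact since contraction only removes duplicates. With these observations, the inductive argument runs through uniformly across the three calculi.
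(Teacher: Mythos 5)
Your plan for (i)--(iii) matches the paper's: (i) by structural induction using the new initial rules, (ii) by height induction as in Thm.~\ref{thm:lint-properties}, and (iii) for most rules via hp-admissibility of $\wk$. However, your treatment of (iv) and (v) contains a genuine gap. You assert that invertibility of $\conrl$ and $\existsl$ ``follows from admissibility of $\ctrl$ proved in (v)'', while your proof of (v) relies on invertibility to handle the case where the contracted formula is principal in the last inference. This is circular, and the direction you propose is also unsubstantiated: deriving invertibility of $\conrl$ from admissible contraction would ordinarily go through $\cut$, which is not among the properties available for the inflated calculi. Moreover, for the rules you list as ``copying the principal formula into the premise'' ($\lift$, $\implnew$, $\allndl$, etc.) no invertibility is needed at all---the premise already carries the duplicate, so IH plus reapplication suffices; the genuinely hard contraction cases are exactly $\conrl$ and $\existsl$, the two rules excluded from hp-invertibility, so appealing to ``hp-invertibility from (iii)'' there does not work.

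The paper resolves this by proving (iv) \emph{directly and first}, via strengthened induction statements: if a sequent containing $w_{1} : A \land B, \ldots, w_{n} : A \land B$ is derivable, then so is the sequent with \emph{all} of these occurrences simultaneously decomposed (and analogously for $\existsl$). This generalization is forced by $\lift$ and $\nd$: when $w : A \land B$ is principal in a $\lift$ inference, the premise contains the formula at a second label $u$, so a single-occurrence inversion hypothesis gets stuck; inverting all occurrences at once, at the cost of extra $\lift$/$\nd$/$(psub)$ applications, is what breaks height preservation and explains why $\conrl$ and $\existsl$ are merely invertible. Your proposal never identifies this obstruction. Finally, your induction for (v) is on derivation height alone; since the inversions of $\conrl$ and $\existsl$ are not height-preserving, the paper instead inducts on the lexicographic pairs $(|A|, h)$ so that, after a non-height-preserving inversion, the residual contractions are on strictly smaller formulae. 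Without the generalized inversion lemmas and the complexity-first induction measure, the argument for (iv) and (v) does not go through.
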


\begin{proof} See Appendix~\ref{app:A} for details.
\qed
\end{proof}

%\begin{proof} %Claims (i)-(iv) are proven in the typical way by induction the height of the given derivation. Claims (i) and (ii) are used to establish claim (iii), and claim (iii) is used to establish claim (iv). 
%Details can be found in App.~\ref{app:proofs}.
%\qed
%\end{proof}

The following lemma is useful in that it explicates what rules freely permute with each other, thus letting us focus only on the non-trivial cases in the sequel.%in our work later on.

\begin{lemma}\label{lm:structural-rules-permutation} 
The following hold in $\lint^{*}$, $\lintfond^{*}$, and $\lintfocd^{*}$:
\begin{itemize}

\item[(i)] The $\refl$ and $\trans$ rules can be permuted above $\idnew$, $\botl$, $\conrl$, $\conrr$, $\disrl$, $\disrr$, $\implnew$, $\imprr$, $\negl$, $\negr$, $\ned$, $\existsl$, $\existsr$, and $\allcdr$.

\item[(ii)] The $\nd$, $\cd$, and $\ned$ rules can be permuted above $\id$, $\idnew$, $\botl$, $\conrl$, $\conrr$, $\disrl$, $\disrr$, $\imprl$, $\implnew$, $\imprr$, $\negl$, $\negr$, $\lift$, $\ned$, $\existsl$, $\existsnedr$, $\existscedr$, $\allnedl$, $\allcedl$, $\allr$, and $\allcdr$.

%\item[(ii)] The $\ned$ rule can be permuted above $\id$, $\idnew$, $\botl$, $\conrl$, $\conrr$, $\disrl$, $\disrr$, $\imprl$, $\implnew$, $\imprr$, $\negl$, $\negr$, $\lift$, $\ned$, $\existsl$, $\existsnedr$, $\existscedr$, $\allnedl$, $\allcedl$, $\allr$, and $\allcdr$.

\end{itemize}

\end{lemma}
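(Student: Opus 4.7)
The plan is to establish both parts by a systematic case analysis on each pair (structural rule, logical rule) mentioned in (i) and (ii). For each such pair, we take a derivation in which the structural rule is applied immediately below the logical rule and rewrite it so that the structural rule is applied immediately above, while preserving the end sequent. The central observation making most cases mechanical is that $\refl$, $\trans$, $\nd$, $\cd$, and $\ned$ only manipulate relational atoms and domain atoms of the sequent; they never consume or produce labelled formulae, and---apart from $\ned$, which introduces a fresh parameter---they never introduce new labels or parameters. The logical rules in the two lists have been chosen so that neither their principal labelled formulae nor any relational or domain atom they act upon conflicts with the side atoms introduced or referenced by the structural rules. Consequently, the two inferences operate on essentially disjoint syntactic material, and the swap goes through by a direct reorganisation.

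The principal technical difficulty concerns eigenvariable side conditions on the logical rules $\imprr$, $\negr$, $\existsl$, $\allr$, $\allcdr$, $\existsnedr$, $\existscedr$, $\allnedl$, $\allcedl$, and the $\ned$ rule itself. A naive swap may violate the freshness of an eigenvariable $v$ or $\unda$ if that label or parameter happens to occur in the atoms introduced or referenced by the structural step being permuted---for instance, when $\ned$ is permuted across another rule that binds a parameter, or when $\trans$ produces a relational atom that mentions the eigenvariable of an overlying $\imprr$. In every such case, we first apply hp-admissibility of $(lsub)$ and $(psub)$ from Lem.~\ref{lem:extended-lint-properties}(ii) to $\alpha$-rename the eigenvariable of the logical rule to a fresh label or parameter disjoint from the data touched by the structural rule; the swap then proceeds without conflict.

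It remains to verify that the directed- and undirected-path side conditions $\dag_{1}$--$\dag_{8}$ attached to rules such as $\idfond$, $\idfocd$, $\allnedl$, $\existscedr$, and their variants are preserved after permutation. This is automatic: each of $\refl$, $\trans$, $\nd$, $\cd$, and $\ned$, read from conclusion to premise, only \emph{adds} relational or domain atoms, never removing them. Hence any directed or undirected path available in the lower sequent remains available in the upper sequent, and path-based side conditions continue to hold. The main obstacle throughout is therefore the bookkeeping of eigenvariables; once this is dealt with by $\alpha$-renaming, each case reduces to a routine rewriting of two adjacent inferences, and the lemma follows by exhaustive but mechanical enumeration of the listed pairs.
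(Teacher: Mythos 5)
Your core observation---that for the listed pairs the structural rule and the logical rule act on essentially disjoint syntactic material---is exactly the paper's argument: the paper simply notes that no rule in list (i) has an active relational atom of the form $w \leq u$ in its conclusion, and no rule in list (ii) has an active domain atom in its conclusion, so $\refl$ and $\trans$ (resp.\ $\nd$, $\cd$, and $\ned$) commute past them. Your extra care with eigenvariables is harmless (and renaming via $(lsub)$/$(psub)$ is the standard device), though no clash can actually occur here: any label or parameter touched by the structural rule appears in its premise, which is precisely the conclusion of the logical rule above it, and an eigenvariable of that logical rule cannot occur in its own conclusion.

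The step that needs correcting is your final paragraph on the path side conditions. Permuting a structural rule \emph{above} a logical rule moves the logical rule \emph{down}, so after the swap the logical rule's relational context has \emph{lost} an atom; what must be checked is that a required path survives \emph{deletion} of that atom, not that paths in the conclusion persist into the premise, which is the trivial direction you argue. Taken at face value, your reasoning would also ``prove'' that $\refl$ and $\trans$ permute above $\idnd$, $\existsndr$, and $\allndl$---but those are exactly the cases the paper deliberately excludes from this lemma and handles by hand in Lemmas~\ref{lem:trans-refl-fond-elim} and~\ref{lem:trans-refl-focd-elim}, where one must argue that deleting $u \leq u$, or replacing $u \leq z'$ by the pair $u \leq z, z \leq z'$, still yields a directed path. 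For the pairs actually listed in the present lemma the issue is vacuous for a different reason: no rule in list (i) carries a path condition at all, and in list (ii) the structural rules delete only domain atoms while $v \leadsto^{\R} w$ and $v \sim^{\R} w$ are defined purely in terms of $\leq$-atoms. So your proof of the stated lemma does go through, but the justification you offer for the side conditions is the wrong one and would fail if extended beyond the listed combinations.
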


\begin{proof} The claim of (i) follows from the fact that none of the rules mentioned have active relational atoms of the form $w \leq u$ in the conclusion, and so, $\refl$ and $\trans$ may be freely permuted above each rule. The claim of (ii) follows from the fact that none of the rules mentioned contain active domain atoms in the conclusion, allowing for $\nd$, $\cd$, and $\ned$ to be permuted above each rule.
\qed
\end{proof}

%***Need inhabited domain conditions to abosorb ned rule. (important) no need for new id rules since (ind) cannot be applied to initial sequents (observe that the parameter a that would be deleted occurs in the p's, which means the domain atom deleted must be in the context***

\section{Deriving the Calculus $\nint$ from $\lint$}\label{section-4}

%\subsection{Refining the Calculus $\lint$}

Deriving the calculus $\nint$ from $\lint$ depends on a crucial observation concerning labelled derivations: \emph{rules such as $\refl$ and $\trans$ allow for theorems to be derived in proofs containing non-treelike labelled sequents}. To demonstrate this fact, observe the following derivation in $\lint$:
\begin{center}
%\begin{tabular}{c c}
\AxiomC{$w \leq v, v \leq v, v : p \Rightarrow v : p$}
\RightLabel{$\refl$}
\UnaryInfC{$w \leq v, v : p \Rightarrow v : p$}
\RightLabel{$\imprr$}
\UnaryInfC{$\Rightarrow w : p \imp p$}
\DisplayProof
%\end{tabular}
\end{center}
The initial sequent is non-treelike due to the presence of the $v \leq v$ relational atom; however, the application of $\refl$ deletes this structure from the initial sequent and produces a treelike sequent as the conclusion.

In fact, it is true in general that every labelled derivation ending with a treelike sequent %of a theorem (i.e. a derivation whose end sequent is of the form $\Rightarrow w : A$) 
can be partitioned into a top derivation consisting of non-treelike sequents, and a bottom derivation consisting of treelike sequents. Note that if a derivation ends with a treelike sequent, %sequent of the form $\Rightarrow w : A$, 
then the derivation must necessarily contain a bottom treelike fragment. By contrast, the top non-treelike fragment of the derivation may be empty (e.g. the minimal derivation of $\Rightarrow w : \bot \imp p$).

To demonstrate why the aforementioned partition always exists, suppose you are given a labelled derivation of a treelike sequent and consider the derivation in a bottom-up manner. %The graph of the end sequent is evidently treelike by Def.~\ref{def:treelike}, and 
Observe the each bottom-up application of a rule in $\lint$---with the exception of $\refl$ and $\trans$---will produce a treelike sequent (see Thm.~\ref{thm:treelike-derivations} for auxiliary details). If, however, at some point in the derivation $\refl$ or $\trans$ is applied, then all sequents above the inference will inherit the (un)directed cycle produced by the rule, thus producing the non-treelike fragment of the proof.

One can therefore imagine that permuting instances of the $\refl$ and $\trans$ rules upward in a given derivation would potentially increase the bottom treelike fragment of the derivation and decrease the top non-treelike fragment. As it so happens, this intuition is correct so long as we choose \emph{adequate} rules---that bottom-up preserve the treelike structure of sequents---to replace certain instances of the $\refl$ and $\trans$ rules in a derivation, when necessary. We will first examine permuting instances of the $\refl$ rule, and motivate which adequate rules we ought to add to our calculus in order to achieve the complete elimination of $\refl$. After, we will turn our attention toward eliminating the $\trans$ rule, and conclude the section by leveraging our results to show that $\nint$ can be derived from $\lint$.

Let us first observe an application of $\refl$ to an initial sequent obtained via the $\id$ rule. There are two possible cases to consider: either the relational atom principal in the initial sequent is active in the $\refl$ inference (shown below left), or it is not (shown below right):
\vspace*{-1em}
\begin{center}
\begin{tabular}{c c}
\AxiomC{}
\RightLabel{$\id$}
\UnaryInfC{$\R, w \leq w, w : p, \Gamma \Rightarrow \Delta, w : p$}
\RightLabel{$\refl$}
\UnaryInfC{$\R, w : p, \Gamma \Rightarrow \Delta, w : p$}
\DisplayProof

&

\AxiomC{}
\RightLabel{$\id$}
\UnaryInfC{$\R, u \leq u, w \leq v, w : p, \Gamma \Rightarrow \Delta, v : p$}
\RightLabel{$\refl$}
\UnaryInfC{$\R, w \leq v, w : p, \Gamma \Rightarrow \Delta, v : p$}
\DisplayProof
\end{tabular}
\end{center}
In the case shown above right, the end sequent is an instance of the $\id$ rule, regardless of if $u = w$, $u = v$, or $u$ is distinct from $w$ and $v$. The case shown above left however, indicates that we ought to add the $\idnew$ rule (see Fig.~\ref{fig:new-rules}) to our calculus if we aim to eliminate $\refl$ from any given derivation. %Additionally, notice that an application of $\refl$ to $\idnew$ or $\botr$ produces another instance of the rule (Lem.~\ref{lm:structural-rules-permutation})%, regardless of if $v = w$ or $v \neq w$.
%\begin{center}
%\begin{tabular}{c @{\hskip 2em} c}
%\AxiomC{}
%\RightLabel{$\idnew$}
%\UnaryInfC{$\R, v \leq v, w : p, \Gamma \Rightarrow \Delta, w : p$}
%\RightLabel{$\refl$}
%\UnaryInfC{$\R, w : p, \Gamma \Rightarrow \Delta, w : p$}
%\DisplayProof
%&
%\AxiomC{}
%\RightLabel{$\botr$}
%\UnaryInfC{$\R, v \leq v, w : \bot, \Gamma \Rightarrow \Delta$}
%\RightLabel{$\refl$}
%\UnaryInfC{$\R, w : \bot, \Gamma \Rightarrow \Delta$}
%\DisplayProof
%\end{tabular}
%\end{center}
These facts, coupled with Lem.~\ref{lm:structural-rules-permutation}, imply that any application of $\refl$ to an initial sequent, produces an initial sequent.

Concerning the remaining rules of $\lint$, we need only investigate the permutation of $\refl$ above the $\imprl$ rule, if we rely on Lem.~\ref{lm:structural-rules-permutation}. %\footnote{For reasons explained later on (Lem.~\ref{lm:refl-trans-admiss}), we need not consider permuting $\refl$ above $\trans$, or vice-versa.} 
There are two cases: either the relational atom principal in the $\imprl$ inference is active in the $\refl$ inference, or it is not. The latter case is easily resolved, so we observe the former:
\vspace*{-2em}
\begin{center}
\scalebox{.95}{
\begin{tabular}{c}
\AxiomC{$\R, w \leq w, w : A \imp B, \Gamma \Rightarrow \Delta, w : A$}
\AxiomC{$\R, w \leq w, w : A \imp B, w : B, \Gamma \Rightarrow \Delta$}
\RightLabel{$\imprl$}
\BinaryInfC{$\R, w \leq w, w : A \imp B, \Gamma \Rightarrow \Delta$}
\RightLabel{$\refl$}
\UnaryInfC{$\R, w : A \imp B, \Gamma \Rightarrow \Delta$}
\DisplayProof
\end{tabular}
}
\end{center}
Applying $\refl$ to each premise of the $\imprl$ inference yields the following:
\begin{center}
\scalebox{.85}{
\begin{tabular}{c @{\hskip 1em} c}
\AxiomC{$\R, w \leq w, w : A \imp B, \Gamma \Rightarrow \Delta, w : A$}
\RightLabel{$\refl$}
\UnaryInfC{$\R, w : A \imp B, \Gamma \Rightarrow \Delta, w : A$}
\DisplayProof

&

\AxiomC{$\R, w \leq w, w : A \imp B, w : B, \Gamma \Rightarrow \Delta$}
\RightLabel{$\refl$}
\UnaryInfC{$\R, w : A \imp B, w : B, \Gamma \Rightarrow \Delta$}
\DisplayProof
\end{tabular}
}
\end{center}
The above observation suggests that we ought to add the $(\imp_{l}^{*})$ rule (see Fig.~\ref{fig:new-rules}) to our calculus if we wish to permute $\refl$ above the $\imprl$ rule; a single application of the $\implnew$ rule to the end sequents above gives the desired conclusion. 
%\begin{center}
%\begin{tabular}{c}
%\AxiomC{$\R, w : A \imp B, \Gamma \Rightarrow \Delta, w : A$}
%\AxiomC{$\R, w : A \imp B, w : B, \Gamma \Rightarrow \Delta$}
%\RightLabel{$(\imp_{l}^{*})$}
%\BinaryInfC{$\R, w : A \imp B, \Gamma \Rightarrow \Delta$}
%\DisplayProof
%\end{tabular}
%\end{center}
With the $\implnew$ rule added to our calculus, we may freely permute the $\refl$ rule above any $\imprl$ inference. %Still, we must confirm that the $\refl$ rule is permutable with the newly introduced $(\imp_{l}^{*})$ rule, but this is easily verifiable.

On the basis of our investigation, together with Lem.~\ref{lm:structural-rules-permutation}, we may conclude the following lemma:

\begin{lemma}\label{lm:refl-admiss} The $\refl$ rule is eliminable in $\lint + \{(id^{*}),(\imp^{*}_{l})\} - \trans$.

\end{lemma}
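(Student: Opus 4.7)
The plan is to argue by iterated permutation, pushing a topmost occurrence of $\refl$ upward in a given derivation until it is absorbed into an initial sequent. Given a derivation $\D$ in $\lint + \{\idnew, \implnew\} - \trans$ containing at least one instance of $\refl$, I would fix an uppermost such instance concluding $\R, \Gamma \sar \Delta$ from $\R, w \leq w, \Gamma \sar \Delta$; by the choice of being uppermost and by the absence of $\trans$ in the calculus, no application of $\refl$ or $\trans$ occurs above this instance. I would then show by induction on the height of the subderivation above the chosen $\refl$ that the instance can be eliminated. Iterating this over all occurrences of $\refl$ in $\D$ will yield the result.

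For the base case, the premise of the chosen $\refl$ is an initial sequent produced by $\botl$, $\idnew$, or $\id$. The $\botl$ and $\idnew$ sub-cases are immediate, since neither rule makes use of relational atoms. For the $\id$ sub-case, if the principal relational atom of $\id$ is distinct from $w \leq w$, the conclusion of $\refl$ is still an instance of $\id$; if instead the principal relational atom is $w \leq w$ itself, then the premise has the form $\R, w \leq w, w : p, \Gamma \sar \Delta, w : p$ and the conclusion $\R, w : p, \Gamma \sar \Delta, w : p$ is precisely an instance of the newly added $\idnew$ rule. This is exactly where the addition of $\idnew$ to the calculus is essential.

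For the inductive step, consider the rule $r$ applied immediately above $\refl$. Since we chose an uppermost $\refl$ and $\trans$ is absent, $r$ is neither $\refl$ nor $\trans$. If $r$ is distinct from $\id$, $\botl$, and $\imprl$, then Lem.~\ref{lm:structural-rules-permutation}(i) permits freely permuting $\refl$ above $r$, after which each resulting $\refl$ sits above a strictly shorter subderivation and the induction hypothesis applies. If $r = \id$ or $r = \botl$, the inductive step collapses to the base case. The only remaining case is $r = \imprl$: if the principal relational atom of $\imprl$ differs from $w \leq w$, the permutation is routine; if it coincides with $w \leq w$, I would apply $\refl$ to each of the two premises of $\imprl$ (shorter subderivations, so the induction hypothesis applies to each) and then conclude via a single application of $\implnew$, which does not require any relational atom. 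This is precisely why $\implnew$ was added to the calculus.

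The main obstacle is justifying termination: permuting $\refl$ above a branching rule such as $\imprl$ creates two new $\refl$ instances, and handled carelessly the procedure could loop. Termination is secured by targeting an uppermost $\refl$: above the chosen instance the subderivation contains neither $\refl$ nor $\trans$, so the induction measure (height above the current $\refl$) is well-defined and strictly decreases under each upward permutation. Iterating this top-down removal eliminates every $\refl$ inference in $\D$, yielding the desired $\refl$-free derivation in $\lint + \{\idnew, \implnew\} - \{\refl, \trans\}$.
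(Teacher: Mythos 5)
Your proof is correct and follows essentially the same route as the paper: permute a topmost $\refl$ upward using Lemma~\ref{lm:structural-rules-permutation} for the generic rules, absorb it into $\idnew$ at initial sequents, and handle the $\imprl$ case by applying $\refl$ to both premises and closing with $\implnew$. The explicit induction measure and termination argument you supply are only implicit in the paper's presentation, but they match its intent.
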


Let us turn our attention toward eliminating the $\trans$ rule from a labelled derivation. Since our aim is to eliminate \emph{both} $\refl$ and $\trans$ from any derivation, we assume that the rules $\{(id^{*}),(\imp^{*}_{l})\}$ have been added to our calculus.

%It is rather simple to verify that 
By Lem.~\ref{lm:structural-rules-permutation}, $\trans$ permutes with $\botr$ and $\idnew$, so we only consider the $\id$ case. As with the $\refl$ rule, there are two cases to consider when permuting $\trans$ above an $\id$ inference: either, the active formula of $\trans$ is principal in $\id$, or it is not. In the latter case, the result of the $\trans$ rule is an initial sequent, implying that the $\trans$ rule may be eliminated from the derivation. The former case proves trickier and is explicitly given below:
\begin{center}
\begin{tabular}{c}
\AxiomC{}
\RightLabel{$\id$}
\UnaryInfC{$\R, w \leq u, u \leq v, w \leq v, w : p, \Gamma \Rightarrow \Delta, v : p$}
\RightLabel{$\trans$}
\UnaryInfC{$\R, w \leq u, u \leq v, w : p, \Gamma \Rightarrow \Delta, v : p$}
\DisplayProof
\end{tabular}
\end{center}
Observe that the end sequent is not an initial sequent as it is not obtainable from an $\id$, $(id^{*})$, or $\botr$ rule. The issue is solved by considering the $\lift$ rule (see Fig.~\ref{fig:new-rules}), 
%\begin{center}
%\AxiomC{$\R, w \leq u, w:A, u:A, \Gamma \Rightarrow \Delta$}
%\RightLabel{$\lift$}
%\UnaryInfC{$\R, w \leq u, w:A, \Gamma \Rightarrow \Delta$}
%\DisplayProof
%\end{center}
%The addition of $\lift$ to our calculus
which allows us to obtain the desired end sequent without the use of $\trans$, as the following derivation demonstrates:
\begin{center}
\AxiomC{}
\RightLabel{$(id^{*})$}
\UnaryInfC{$\R, w \leq u, u \leq v, w:p, u:p, v:p, \Gamma \Rightarrow v : p, \Delta$}
\RightLabel{$\lift$}
\UnaryInfC{$\R, w \leq u, u \leq v, w:p, u:p, \Gamma \Rightarrow v : p, \Delta$}
\RightLabel{$\lift$}
\UnaryInfC{$\R, w \leq u, u \leq v, w:p, \Gamma \Rightarrow v : p, \Delta$}
\DisplayProof
\end{center}
Thus, the addition of $\lift$ to our calculus resolves the issue of permuting $\trans$ above any initial sequent. Nevertheless, by Lem.~\ref{lm:structural-rules-permutation}, we still need to consider the permutation of $\trans$ above the $\imprl$ and $\lift$ rules. %The $\trans$ rule and $(\imp_{l}^{*})$ are freely permutable due to the fact that $\trans$ solely affects relational atoms, and $(\imp_{l}^{*})$ solely affects labelled formulae. Also, 
However, due to the following lemma, we may omit consideration of the $\imprl$ case.%, since the following lemma entails that we may omit analyzing the permutation of $\trans$ above the $\imprl$ rule.

\begin{lemma}\label{lm:implies-left-deriv} The rule $\imprl$ is admissible in $\lint + \{(id^{*}), (\imp^{*}_{l}), \lift\}$.

\end{lemma}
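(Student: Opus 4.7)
The plan is to derive any instance of $\imprl$ using $(\imp^{*}_{l})$ and $\lift$, together with the hp-admissibility of weakening established in Lem.~\ref{lem:extended-lint-properties}(ii). The key observation is that the two rules differ only in \emph{where} the principal formula $A \imp B$ is decomposed: $\implnew$ acts entirely at a single label $w$, whereas $\imprl$ acts across an accessibility atom $w \leq v$ by producing $v:A$ in the succedent of one premise and adding $v:B$ to the antecedent of the other. The role of $\lift$ is precisely to bridge this gap, since it licenses propagating a labelled formula $w:C$ from $w$ to any successor $v$ (via the atom $w \leq v$).

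Given an arbitrary instance of $\imprl$ with premises
\[
\R, w \leq v, w : A \imp B, \Gamma \sar \Delta, v:A
\qquad\text{and}\qquad
\R, w \leq v, w : A \imp B, v:B, \Gamma \sar \Delta,
\]
I would first apply hp-admissible $\wk$ (Lem.~\ref{lem:extended-lint-properties}(ii)) to each premise to insert a copy of $v:A \imp B$ in the antecedent, obtaining
\[
\R, w \leq v, w : A \imp B, v:A \imp B, \Gamma \sar \Delta, v:A
\]
and
\[
\R, w \leq v, w : A \imp B, v:A \imp B, v:B, \Gamma \sar \Delta.
\]
A single application of $\implnew$ at label $v$ then yields
\[
\R, w \leq v, w : A \imp B, v:A \imp B, \Gamma \sar \Delta.
\]
Finally, reading $\lift$ top-down with $C := A \imp B$, the presence of $w \leq v$ and $w : A \imp B$ in the antecedent allows the redundant $v: A \imp B$ to be discarded, producing
\[
\R, w \leq v, w : A \imp B, \Gamma \sar \Delta,
\]
which is exactly the conclusion of the original $\imprl$ inference.

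The argument is short and purely local; I do not anticipate any real obstacle. The only point that requires a moment's care is verifying that $\wk$ is indeed hp-admissible in the subsystem $\lint + \{(id^{*}),(\imp^{*}_{l}),\lift\}$, but this follows immediately from Lem.~\ref{lem:extended-lint-properties}(ii) since any derivation in the subsystem is also a derivation in $\lint^{*}$. Note that the proof gives a \emph{uniform} simulation: it does not depend on the shape of the derivations of the two premises, so admissibility of $\imprl$ is established without any induction on derivation height.
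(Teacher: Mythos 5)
Your proposal is correct and coincides with the paper's own proof: both weaken a copy of $v : A \imp B$ into each premise, apply $\implnew$ at the successor label, and then use $\lift$ (read top-down) to discard the redundant copy. The only cosmetic difference is that you cite Lem.~\ref{lem:extended-lint-properties}(ii) explicitly for the hp-admissibility of $\wk$, which the paper leaves implicit.
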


\begin{proof} The derivation below proves the admissibility of the rule:

\begin{center}
\resizebox{\columnwidth}{!}{
\begin{tabular}{c}
\AxiomC{$\R,x \leq y, x:A \imp B, \Gamma \Rightarrow \Delta, y:A$}
\RightLabel{$\wk$}
\dashedLine
\UnaryInfC{$\R,x \leq y, x:A \imp B,y:A \imp B, \Gamma \Rightarrow \Delta, y:A$}
\AxiomC{$\R,x \leq y, x:A \imp B, y:B, \Gamma \Rightarrow \Delta$}
\RightLabel{$\wk$}
\dashedLine
\UnaryInfC{$\R,x \leq y, x:A \imp B,y:A \imp B, y:B, \Gamma \Rightarrow \Delta$}
\RightLabel{$(\imp^{*}_{l})$}
\BinaryInfC{$\R, x \leq y, x:A \imp B, y:A \imp B, \Gamma \Rightarrow \Delta$}
\RightLabel{$\lift$}
\UnaryInfC{$\R, x \leq y, x:A \imp B, \Gamma \Rightarrow \Delta$}
\DisplayProof 
\end{tabular}
}
\end{center}
\qed
\end{proof}

Last, the $\trans$ rule is permutable with the $\lift$ rule. In the case where the principal relational atom of $\lift$ is not active in the ensuing $\trans$ application, the two rules freely permute. The alternative case is resolved as shown below:
\begin{flushleft}
\AxiomC{$\R, w \leq u, u \leq v, w \leq v, w: A, v : A, \Gamma \Rightarrow \Delta$}
\RightLabel{$\lift$}
\UnaryInfC{$\R, w \leq u, u \leq v, w \leq v, w: A, \Gamma \Rightarrow \Delta$}
\RightLabel{$\trans$}
\UnaryInfC{$\R, w \leq u, u \leq v, w: A, \Gamma \Rightarrow \Delta$}
\DisplayProof
\end{flushleft}
\begin{flushright}
\AxiomC{$\R, w \leq u, u \leq v, w \leq v, w: A,v : A, \Gamma \Rightarrow \Delta$}
\RightLabel{$\trans$}
\UnaryInfC{$\R, w \leq u, u \leq v, w: A,v : A, \Gamma \Rightarrow \Delta$}
\RightLabel{$\wk$}
\dashedLine
\UnaryInfC{$\R, w \leq u, u \leq v, w: A, u:A, v : A, \Gamma \Rightarrow \Delta$}
\RightLabel{$\lift$}
\UnaryInfC{$\R, w \leq u, u \leq v, w: A, u:A, \Gamma \Rightarrow \Delta$}
\RightLabel{$\lift$}
\UnaryInfC{$\R, w \leq u, u \leq v, w: A, \Gamma \Rightarrow \Delta$}
\DisplayProof
\end{flushright}

%Furthermore, the following lemma allows us to omit consideration of permuting $\trans$ above $\imprl$ from consideration.
%At this point, we will quickly achieve our goal of proving $\trans$ elimination if we first observe the following lemma:

%By the above lemma, as well as Lem.~\ref{lm:structural-rules-permutation}, we need only consider the permutation of $\trans$ with the $(\imp_{l}^{*})$ rule. Nevertheless, these two rules are freely permutable due to the fact that $\trans$ solely affects relational atoms in a sequent, and $(\imp_{l}^{*})$ solely affects labelled formulae;

Hence, we obtain the following:

\begin{lemma}\label{lm:trans-admiss} The $\trans$ rule is eliminable in $\lint + \{(id^{*}),(\imp^{*}_{l}),\lift\} - \refl$.

\end{lemma}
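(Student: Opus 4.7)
The plan is to prove eliminability by induction on the height of the derivation of the premise of a topmost $\trans$ inference, systematically permuting that inference upward until it vanishes. Fix a topmost instance of $\trans$ with premise $\R, w \leq u, u \leq v, w \leq v, \Gamma \sar \Delta$ derived by some rule $r$, and analyse each possibility for $r$.

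The base cases are dispatched first. If $r$ is $\botl$ or $\idnew$, the conclusion is again an instance of the same axiom since neither imposes a condition on the deleted relational atom. If $r$ is $\id$ with its principal relational atom distinct from $w \leq v$, the conclusion is likewise an $\id$ axiom. The only non-trivial base case is $\id$ principal on exactly the deleted atom $w \leq v$, and this is precisely what the two-step $\lift$ derivation from $\idnew$ displayed in the paragraph preceding the statement handles.

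For the inductive step, Lemma~\ref{lm:structural-rules-permutation}(i) grants commutation of $\trans$ past the propositional connective rules (e.g. $\conrl, \conrr, \disrl, \disrr, \implnew, \imprr$), after which the induction hypothesis applies to the permuted subderivation(s). The only rules in the calculus not immediately covered are $\imprl$ and $\lift$. For $\imprl$, I would first invoke Lemma~\ref{lm:implies-left-deriv} to rewrite the subderivation above $\trans$ so that every $\imprl$ is simulated using $\idnew$, $\implnew$, hp-admissible $\wk$, and $\lift$; the resulting subderivation is $\imprl$-free, reducing the situation to the $\lift$ case. For $\lift$ with principal atom distinct from $w \leq v$, $\trans$ and $\lift$ commute outright. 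For $\lift$ with principal atom equal to $w \leq v$, the construction sketched just above the statement resolves the case: first push $\trans$ into the premise of $\lift$ via the induction hypothesis, then use hp-admissible $\wk$ (Lemma~\ref{lem:extended-lint-properties}(ii)) to insert a copy of the lifted formula at the intermediate world $u$, and finally apply two $\lift$ steps to recover the conclusion.

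The main obstacle is exactly this last sub-case: $\trans$ does not commute cleanly past $\lift$ when both act on the same relational atom, and the rebuilding argument only goes through because $\wk$ is hp-admissible in the inflated calculus and the two follow-up $\lift$ inferences are available. Once this case is in hand, all branches of the induction terminate and the topmost $\trans$ is eliminated, whence iterating the procedure removes every $\trans$ inference from the derivation.
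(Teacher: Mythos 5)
Your proposal is correct and follows essentially the same route as the paper: the non-trivial $\id$ case is discharged by $\idnew$ followed by two $\lift$ applications, the $\imprl$ case is removed wholesale via Lemma~\ref{lm:implies-left-deriv}, and the clash between $\trans$ and $\lift$ on the same relational atom is resolved by permuting $\trans$ upward and then rebuilding with hp-admissible $\wk$ and two $\lift$ inferences, exactly as in the paper's displayed derivations.
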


Enough groundwork has been laid to state one of our main lemmata, which is also a consequence of the work in~\cite{Pim18}.

\begin{lemma}\label{lm:refl-trans-admiss} The $\refl$ and $\trans$ rules are admissible in the calculus $\lint + \{(id^{*}),(\imp^{*}_{l}),\lift\}$.

\end{lemma}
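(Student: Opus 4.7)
The plan is to combine Lemmata~\ref{lm:refl-admiss} and~\ref{lm:trans-admiss} by eliminating $\refl$ first and $\trans$ second. Since Lem.~\ref{lm:refl-admiss} was proven in a calculus without $\trans$, the only new ingredient needed is that $\refl$ also permutes upward past $\trans$; Lem.~\ref{lm:structural-rules-permutation}(i) is silent on this interaction.

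To establish this permutation, I would perform a short case analysis. The $\refl$ rule acts on atoms of the form $x \leq x$ while $\trans$ manipulates atoms $w \leq v$, $v \leq u$, and $w \leq u$. In the generic case the affected atoms are distinct, so the two rules commute immediately. In the edge case where $w = u$ in the $\trans$ step, so that $\trans$ introduces a new $w \leq w$ atom bottom-up, permuting $\refl$ above $\trans$ introduces a second copy of $w \leq w$ in the upper premise, which is harmless because the antecedent is a multiset. Thus, in every case, swapping a $\refl$ above a $\trans$ yields a valid derivation of the same conclusion.

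With this extra permutation in place, the argument of Lem.~\ref{lm:refl-admiss} extends without modification to the full calculus $\lint + \{\idnew,\implnew,\lift,\trans\}$. Each instance of $\refl$ can be pushed upward past every rule, using Lem.~\ref{lm:structural-rules-permutation}(i), the explicit cases for $\id$ and $\imprl$ from Lem.~\ref{lm:refl-admiss}, and the new case for $\trans$, until it reaches an axiom, where it is absorbed into an instance of $\id$, $\idnew$, or $\botr$. The resulting derivation lives in $\lint + \{\idnew,\implnew,\lift,\trans\} - \{\refl\}$, so Lem.~\ref{lm:trans-admiss} applies directly and eliminates $\trans$ as well, yielding a derivation in $\lint + \{\idnew,\implnew,\lift\} - \{\refl,\trans\}$.

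The main obstacle is verifying the $\refl$-above-$\trans$ permutation carefully, particularly the subcase where both rules act on the same label $w$; once this is checked, the lemma is essentially a composition of the two preceding admissibility results.
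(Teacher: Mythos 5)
There is a genuine gap: you have misidentified the missing ingredient. Lemma~\ref{lm:refl-admiss} is proved for the calculus $\lint + \{\idnew,\implnew\} - \trans$, which does \emph{not} contain $\lift$, and Lemma~\ref{lm:structural-rules-permutation}(i) does not cover $\lift$ either. So when you assert that the argument of Lemma~\ref{lm:refl-admiss} "extends without modification" and list the cases to be checked as the rules of Lemma~\ref{lm:structural-rules-permutation}(i), $\id$, $\imprl$, and $\trans$, you have omitted exactly the case on which the paper's proof concentrates: permuting $\refl$ above $\lift$. This case is not a routine commutation. When the relational atom $w \leq w$ deleted by $\refl$ is the one principal in the $\lift$ inference,
\begin{center}
\AxiomC{$\R, w \leq w, w: A, w : A, \Gamma \Rightarrow \Delta$}
\RightLabel{$\lift$}
\UnaryInfC{$\R, w \leq w, w:A, \Gamma \Rightarrow \Delta$}
\RightLabel{$\refl$}
\UnaryInfC{$\R, w:A, \Gamma \Rightarrow \Delta$}
\DisplayProof
\end{center}
pushing $\refl$ into the premise leaves no relational atom along which to lift, so the $\lift$ inference cannot be reapplied; the paper instead deletes $w \leq w$ by the inductive hypothesis and then merges the two copies of $w : A$ using the admissibility of $\ctrl$ (Lemma~\ref{lem:extended-lint-properties}(v)). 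Without this case your induction does not go through.

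A secondary issue concerns your chosen decomposition. The paper never needs the $\refl$-above-$\trans$ permutation you spend most of your effort on: it eliminates the \emph{topmost} occurrence of either rule first, so the subderivation above that occurrence is already free of both $\refl$ and $\trans$, and Lemmas~\ref{lm:refl-admiss} and~\ref{lm:trans-admiss} (augmented with the $\lift$ case) apply directly. Your global "all $\refl$ first" strategy does force you to permute $\refl$ past $\trans$, and your case analysis there is itself incomplete: besides the subcase where $\trans$ produces $w \leq w$ bottom-up, you must also treat the subcase where the reflexive atom deleted by $\refl$ is one of the two side atoms $w \leq v$, $v \leq u$ of the $\trans$ inference (i.e. $w = v$ or $v = u$); there the $\trans$ inference collapses and the desired conclusion is recovered only via the hp-admissibility of $\ctrrel$. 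Both defects are repairable, but as written the proposal does not establish the lemma.
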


\begin{proof} Suppose we are given a derivation in $\lint + \{(id^{*}),(\imp^{*}_{l}),\lift\}$, and consider the topmost occurrence of either $\refl$ or $\trans$. If we can show that the $\refl$ rule permutes above the $\lift$ rule, then we may invoke Lem.~\ref{lm:refl-admiss} and Lem.~\ref{lm:trans-admiss} to conclude that each topmost occurrence of $\refl$ and $\trans$ can be eliminated from the given derivation in succession. This yields a $\refl$- and $\trans$-free proof of the end sequent and establishes the claim. Thus, we prove that the $\refl$ rule permutes above the $\lift$ rule.

In the case where the relational atom active in $\refl$ is not principal in the $\lift$ inference, the two rules may be permuted; the alternative case is resolved as shown below:
\begin{center}
\begin{tabular}{c @{\hskip 1em} c}

\AxiomC{$\R, w \leq w, w: A, w : A, \Gamma \Rightarrow \Delta$}
\RightLabel{$\lift$}
\UnaryInfC{$\R, w \leq w, w:A, \Gamma \Rightarrow \Delta$}
\RightLabel{$\refl$}
\UnaryInfC{$\R, w:A, \Gamma \Rightarrow \Delta$}
\DisplayProof

&

\AxiomC{}
\RightLabel{IH}
\dashedLine
\UnaryInfC{$\R, w: A, w : A, \Gamma \Rightarrow \Delta$}
\RightLabel{$\ctrl$}
\dashedLine
\UnaryInfC{$\R, w: A, \Gamma \Rightarrow \Delta$}
\DisplayProof

\end{tabular}
\end{center}
%Observe that the $\lift$ inference was deleted and the height of the $\refl$ instance decreased by 1. This proves the theorem.
\qed
\end{proof}

The addition of the rules $\{\idnew, (\imp_{l}^{*}), \lift\}$ to our calculus and the above admissibility results demonstrate that we are readily advancing toward our goal of deriving $\nint$. Howbeit, 
%Although $\lint + \{\idnew, (\imp_{l}^{*}), \lift\} - \{\id, \imprl, \refl, \trans\}$ can be easily transformed into a nested calculus (due to Thm.~\ref{thm:treelike-derivations} above), the calculus is still distinct from Fitting's nested calculus $\nint$. The sole difference between the two calculi is that each uses a different \emph{logical signature} for intuitionistic logic. The 
our labelled calculus is still distinct since it makes use of the logical signature $\{\bot, \land, \lor, \imp\}$, whereas $\nint$ uses the signature $\{\neg, \land,\lor, \imp\}$. Therefore, we need to show that $\botr$ (we define $\bot := p \land \neg p$) is admissible in the presence of (labelled versions of) the $\negr$ and $\negl$ rules (see Fig.~\ref{fig:new-rules}). %, where the side condition $\dag$ states that $v$ does not occur in the conclusion. 
This admissibility result is explained in the theorem below.
%To do this, we make use of the usual definition of negation intuitionistic logic: $\neg A := A \imp \bot$. 
%\begin{center}
%\begin{tabular}{c @{\hskip 2em} c}
%\AxiomC{$\R,w \leq v, v:A, \Gamma \Rightarrow \Delta$}
%\RightLabel{$(\neg_{r})^{\dag}$}
%\UnaryInfC{$\R, \Gamma \Rightarrow \Delta, w : \neg A$}
%\DisplayProof

%&

%\AxiomC{$\R, w: \neg A, \Gamma \Rightarrow \Delta, w:A$}
%\RightLabel{$(\neg_{l})$}
%\UnaryInfC{$\R, w: \neg A, \Gamma \Rightarrow \Delta$}
%\DisplayProof
%\end{tabular}
%\end{center}

\begin{theorem}
\label{thm:admiss-all-rules-propositional}
The rules $\{\id, \botr, \imprl, \refl, \trans\}$ are admissible in $\intl$.
\end{theorem}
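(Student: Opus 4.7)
My plan is to show, equivalently, that every derivation in $\lint^{*} = \intl \cup \{\id,\botr,\imprl,\refl,\trans\}$ can be effectively rewritten as a derivation in $\intl$, which establishes the admissibility of all five rules simultaneously. I would eliminate them in the order $\refl,\trans,\imprl,\id,\botr$.

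First, I would invoke Lem.~\ref{lm:refl-trans-admiss} to eliminate every occurrence of $\refl$ and $\trans$. The lemma is stated for $\lint + \{\idnew,\implnew,\lift\}$, but its proof is driven entirely by Lem.~\ref{lm:structural-rules-permutation}, whose clause (i) already lists $\negl$ and $\negr$ among the rules over which $\refl$ and $\trans$ permute, so the argument lifts verbatim to $\lint^{*}$. Second, I would apply Lem.~\ref{lm:implies-left-deriv} to replace each remaining $\imprl$ inference with the short derivation given there, built only from $\wk$ (hp-admissible by Lem.~\ref{lem:extended-lint-properties}), $\implnew$, and $\lift$---all present in $\intl$.

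Third, each $\id$ inference with conclusion $\R, w \leq v, w : p, \Gamma \Rightarrow \Delta, v : p$ is replaced by the two-step $\intl$-derivation
\begin{center}
\AxiomC{}
\RightLabel{$\idnew$}
\UnaryInfC{$\R, w \leq v, w : p, v : p, \Gamma \Rightarrow \Delta, v : p$}
\RightLabel{$\lift$}
\UnaryInfC{$\R, w \leq v, w : p, \Gamma \Rightarrow \Delta, v : p$}
\DisplayProof
\end{center}
Fourth, since $\bot$ is abbreviated $p \wedge \neg p$ in $\intl$'s signature, each $\botr$ inference with conclusion $\R, w : (p \wedge \neg p), \Gamma \Rightarrow \Delta$ is replaced by
\begin{center}
\AxiomC{}
\RightLabel{$\idnew$}
\UnaryInfC{$\R, w : p, w : \neg p, \Gamma \Rightarrow \Delta, w : p$}
\RightLabel{$\negl$}
\UnaryInfC{$\R, w : p, w : \neg p, \Gamma \Rightarrow \Delta$}
\RightLabel{$\conrl$}
\UnaryInfC{$\R, w : (p \wedge \neg p), \Gamma \Rightarrow \Delta$}
\DisplayProof
\end{center}
Crucially, none of these four replacements reintroduces any rule in $\{\id,\botr,\imprl,\refl,\trans\}$, so the final derivation lies entirely within $\intl$.

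The main subtlety is verifying that Lem.~\ref{lm:refl-trans-admiss} genuinely extends from $\lint + \{\idnew,\implnew,\lift\}$ to $\lint^{*}$. This reduces to checking Lem.~\ref{lm:structural-rules-permutation}(i) for $\negl$ and $\negr$, which is immediate because neither rule has an active relational atom in its conclusion.
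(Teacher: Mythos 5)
Your proposal is correct and follows essentially the same route as the paper's proof, which likewise combines Lem.~\ref{lm:structural-rules-permutation} and Lem.~\ref{lm:refl-trans-admiss} for $\refl$ and $\trans$, Lem.~\ref{lm:implies-left-deriv} for $\imprl$, the derivation of $\id$ from $\idnew$ and $\lift$, and the derivation of $\botr$ from $\negl$ and $\conrl$. The explicit two- and three-step replacement derivations you give for $\id$ and $\botr$, and your check that Lem.~\ref{lm:structural-rules-permutation}(i) covers $\negl$ and $\negr$ so that Lem.~\ref{lm:refl-trans-admiss} lifts to $\lint^{*}$, are exactly the details the paper leaves implicit.
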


\begin{proof} Follows from Lem.~\ref{lm:structural-rules-permutation}, Lem.~\ref{lm:refl-trans-admiss}, the fact that $\id$ is derivable using $\idnew$ and $\lift$, and the fact that $\botr$ is derivable from $(\neg_{l})$ and $\conrl$. Also, the admissibility of $\imprl$ is shown as in Lem.~\ref{lm:implies-left-deriv}.
\qed
\end{proof}

Although the above theorem is sufficient to conclude the completeness of $\intl$, we obtain a stronger result. As shown in the theorem below, we may conclude that $\intl$ is \emph{complete relative to treelike derivations} possessing the \emph{fixed root property}; i.e. for every $\int$-valid formula $A \in \lang$, (i) $\sar w :A$ is derivable in $\intl$ with a treelike derivation, and (ii) the label $w$ is the root of each treelike labelled sequent in the derivation. %We refer to property (ii) as the \emph{fixed root property}.

\begin{theorem}
\label{thm:treelike-derivations}
(i) The calculus $\intl$ is complete relative to treelike derivations with the fixed root property. (ii) Every derivation in $\intl$ of a treelike labelled sequent is a treelike derivation with the fixed root property.
\end{theorem}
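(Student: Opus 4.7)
The plan is to prove (ii) first by induction on the height of the derivation, and then deduce (i) as an immediate consequence of (ii) together with Theorem~\ref{thm:admiss-all-rules-propositional}.

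For (ii), suppose $\D$ is a derivation in $\intl$ ending with a treelike labelled sequent $\Lambda$ whose root is $w$. I would induct on the height of $\D$, showing that every sequent occurring in $\D$ is treelike with root $w$. In the base case, $\D$ is a single initial sequent (from $\idnew$ or $\botl$), which coincides with $\Lambda$ and is therefore treelike with root $w$ by assumption. For the inductive step, let $r$ be the last rule applied, with conclusion $\Lambda$ and premises $\Lambda_1, \ldots, \Lambda_n$. It suffices to show each $\Lambda_i$ is treelike with root $w$, after which the induction hypothesis applied to each sub-derivation yields that the entire derivation is treelike with the fixed root property.

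The case analysis on $r$ is where the bulk of the argument lies, but it is straightforward once one keeps in mind that $\intl$ omits $\refl$ and $\trans$. The rules $\conrl$, $\conrr$, $\disrl$, $\disrr$, $\implnew$, $\negl$, and $\lift$ leave $\R$ untouched; hence the graph of every premise coincides with the graph of $\Lambda$, and treelikeness with root $w$ is preserved verbatim. The only rules that modify the relational structure are $\imprr$ and $\negr$: each introduces an eigenvariable $v$ and a relational atom $w' \leq v$, where $w'$ is the label of the principal formula of the conclusion and thus already present in $\Lambda$. Because $v$ is fresh, its addition to the graph attaches a new leaf to the existing node $w'$, which is reachable from $w$ by hypothesis; the resulting graph is therefore still a tree rooted at $w$. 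Crucially, no rule of $\intl$ forces a loop or a new edge between existing labels, which is precisely why the removal of $\refl$, $\trans$, and $\id$ (in favour of $\idnew$, $\implnew$, and $\lift$) is essential.

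For (i), by Theorem~\ref{thm:admiss-all-rules-propositional} every $\int$-valid formula $A$ has a derivation $\D$ of $\sar w : A$ in $\intl$. The conclusion $\sar w : A$ has a graph with a single vertex $w$, so it is trivially treelike with root $w$. Applying (ii) to $\D$ yields that $\D$ itself is a treelike derivation with the fixed root property, establishing (i). The main obstacle, if any, is the careful verification in the inductive step that the eigenvariable side conditions on $\imprr$ and $\negr$ guarantee the fresh label is attached as a leaf rather than reconnecting existing branches of the tree; everything else amounts to inspecting that the remaining rules are graph-neutral.
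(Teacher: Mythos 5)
Your proposal is correct and follows essentially the same route as the paper: the paper proves (i) directly by observing that bottom-up applications of the rules of $\intl$ either preserve the relational structure or add forward relational structure via an eigenvariable (as in $\imprr$ and $\negr$), thereby growing a tree from the root, and it notes that (ii) is proven similarly---which is precisely the induction you spell out. (One negligible slip: $\botl$ is not a rule of $\intl$, so the only initial sequents are instances of $\idnew$; this does not affect the argument.)
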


\begin{proof} We prove (i), as (ii) is proven similarly. By Thm.~\ref{thm:admiss-all-rules-propositional} above, we know that $\intl$ is complete. To prove claim (i) then, we assume that we are given a derivation in $\intl$ of a labelled theorem, i.e. the end sequent is of the form $\sar w : A$. %, and argue that every sequent in the derivation is necessarily treelike. %In other words, we will show that the graph of every sequent is (a) connected, (b) free of directed cycles, and (c) free of backwards branching. (NB. properties (a)--(c) are equivalent to being treelike.) %%This Sentence -> was excluded from submitted version%We assume that we are given a derivation with the end sequent $\Rightarrow w:A$ and argue that every sequent in the derivation has properties (i)--(iii):
 It is not difficult to see that the derivation of $\Rightarrow w : A$ must be treelike with the fixed root property since applying inference rules from the calculus bottom-up to $\Rightarrow w : A$ either preserves relational structure or adds forward relational structure (e.g. $\imprr$ and $(\neg_{r})$), thus constructing a tree emanating from the root $w$.
%To see that $w$ is the root of each treelike sequent in the given derivation, observe that applying inference rules from the calculus bottom-up to $\Rightarrow w : A$ either preserve relational structure, or add forward relational structure (e.g. $\imprr$ and $(\neg_{r})$), thus constructing a tree emanating from $w$.
\qed
\end{proof}

We may leverage the above theorem and the translation $\switch$ (from Def.~\ref{def:switch}) to complete the extraction of the nested calculus $\nint$ from the labelled calculus $\lint$ (via the calculus $\intl$). The theorem below shows that (the treelike fragment of) $\intl$ and $\nint$ are notational variants of one another:

\begin{theorem}\label{thm:refinement-prop}
(i) Every derivation in $\intl$ of a nestedlike labelled sequent $\Lambda$ is effectively translatable to a derivation of the nested sequent $\switch(\Lambda)$ in $\nint$. (ii) Every derivation in $\nint$ of a nested sequent $\Sigma$ is effectively translatable to a derivation of the labelled sequent $\switchtwo(\Sigma)$ in $\intl$.%Every derivation in $\intl$ of a sequent $\sar w : A$ is effectively translatable to a derivation of the sequent $\far A$ in $\nint$, and vice-versa.
\end{theorem}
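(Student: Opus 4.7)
The plan is to prove both directions by induction on the height of the derivation, verifying that each rule in one calculus, when translated via $\switch$ or $\switchtwo$, is an instance of the same-named rule in the other calculus. The crucial preliminary observation for (i) is that by Lem.~\ref{lem:nestedlike-implies-treelike} a nestedlike sequent is treelike, so by Thm.~\ref{thm:treelike-derivations}(ii) the entire derivation of $\Lambda$ in $\intl$ is a treelike derivation with the fixed root property. This ensures $\switch$ is well-defined on every sequent appearing in the derivation, and that each bottom-up rule application either preserves the existing tree structure or adds a fresh forward edge---exactly mirroring the way nested sequents are built.

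For part (i), the base case is an instance of $\idnew$; since both occurrences of $p$ sit at the same label $w$ and appear in the same component $X, p \far p, Y$ of the nested translation, this is an instance of $\id$ in $\nint$. In the inductive step I would go through the rules of $\intl$ case by case. The single-label rules $\conrl$, $\conrr$, $\disrl$, $\disrr$, $\negl$, and $\implnew$ act locally within one component $X \far Y$ of $\switch(\Lambda)$, and the translations are immediate. For $\imprr$ and $\negr$, the eigenvariable $v$ and the atom $w \leq v$ in the premise translate under $\switch$ to a fresh nesting $[v\!:\!A \far v\!:\!B]$ (respectively $[v\!:\!A \far\,]$) sitting inside the $w$-component, which matches precisely the shape of the corresponding rules in $\nint$. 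The $\lift$ case is the most delicate, but once we observe that $w \leq u$ in the premise of the labelled $\lift$ means the $u$-component is a direct child of the $w$-component in the tree, the translation is exactly an application of the nested $\lift$.

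For part (ii) the argument is symmetric, proceeding by induction on the height of the $\nint$-derivation, applying $\switchtwo$ component by component. The initial case $\id$ translates to an $\idnew$ instance after invoking hp-admissibility of $\wk$ (Lem.~\ref{lem:extended-lint-properties}(ii)) to accommodate the additional relational atoms introduced by $\switchtwo$. For each nested rule, I would check that its $\switchtwo$-image is an instance of the same-named labelled rule in $\intl$, again using $\wk$-hp-admissibility to discharge the surrounding context produced by $\switchtwo$ on subsequents. The opening of a new nesting $[\cdots]$ under $\imprr$ or $\negr$ translates to a fresh label $v$ with $w \leq v$, satisfying the eigenvariable side condition $\dag_{1}$; and $\lift$ in $\nint$ unfolds into $\lift$ in $\intl$ because the outer-to-inner nesting is exactly the relational atom $w \leq u$ that the labelled rule requires.

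The main obstacle is bookkeeping: ensuring that every labelled rule application, when read under $\switch$, lands on the correct component of the nested sequent and respects the boxed-subsequent structure, and conversely that $\switchtwo$ produces the precise relational and domain context required by each labelled rule. In particular, one must verify that rules which introduce new relational atoms ($\imprr$, $\negr$) always extend the tree outward from the root (guaranteed by the fixed root property of Thm.~\ref{thm:treelike-derivations}), so no backward branching or cycle is ever created and the induction goes through cleanly. The arguments are otherwise routine once the translations $\switch$ and $\switchtwo$ are handled carefully rule by rule.
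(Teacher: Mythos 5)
Your proposal is correct and follows essentially the same route as the paper: both invoke Lem.~\ref{lem:nestedlike-implies-treelike} and Thm.~\ref{thm:treelike-derivations} to ensure $\switch$ is defined on every sequent of the derivation, and then verify rule-by-rule that each inference of $\intl$ maps to the same-named rule of $\nint$ under $\switch$ and conversely under $\switchtwo$. Your extra appeals to hp-admissibility of $\wk$ in part (ii) are harmless but not actually needed in the propositional setting, since $\switchtwo$ introduces no domain atoms and the two premises of a branching rule already share identical relational structure.
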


\begin{proof} By Lem.~\ref{lem:nestedlike-implies-treelike}, we know that $\Lambda$ is treelike, implying that our derivation of $\Lambda$ is treelike by Thm.~\ref{thm:treelike-derivations}, which further implies that the translation function $\switch$ is defined for each labelled sequent in the derivation. It is straightforward to verify that each rule in $\intl$ translates to an instance of the corresponding rule in $\nint$ under the $\switch$ translation, and every rule of $\nint$ translates to an instance of the corresponding rule in $\intl$ under the $\switchtwo$ translation. %By Thm.~\ref{thm:treelike-derivations}, we may assume that our derivation of $\sar w : A$ is treelike, implying that the translation function $\switch$ is defined. It is straightforward to verify that each rule in $\intl$ translates to an instance of the corresponding rule in $\nint$ under the $\switch$ translation, and every rule of $\nint$ translates to an instance of the corresponding rule in $\intl$ under the $\switchtwo$ translation.
\qed
\end{proof}

\section{Deriving the Calculus $\nintfo$ from $\lintfo$}\label{section-5}

%\subsection{Refining the Calculus $\lintfond$}

In the preceding section, we observed that the elimination of the $\refl$ and $\trans$ rules yielded derivations where every labelled sequent was of a treelike shape. In this section (and the succeeding section) the same effect obtains via the elimination of these two structural rules. Therefore, the majority of this section is dedicated to investigating the effects of eliminating the $\nd$ and $\ned$ structural rules instead. 

Similar to our analysis of eliminating the $\refl$ and $\trans$ rules, we motivate and explain the \emph{adequate} rules we ought to add to $\lintfond$ in order to obtain the complete elimination of $\nd$ and $\ned$. 
%After showing the admissibility of the $\refl$ and $\trans$ rules in $\intfondl$, we will focus our attention on which syntactic objects (or lack thereof) arise in labelled sequents when attempting to eliminate $\nd$ and $\ned$ from a derivation as well as which rules our analysis suggests we add to our calculus in order to achieve admissibility of these rules. 
Although the reader may refer to Fig.~\ref{fig:new-rules} to witness what rules are needed for the admissibility of $\nd$ and $\ned$, a central aim of this section is to demonstrate \emph{how} such rules emerge from our analysis when attempting to eliminate $\nd$ and $\ned$ from a derivation---this begets a deeper understanding of how the semantics of $\intfond$ affects the shape of rules in %Fitting's calculus 
$\nintfond$ (via the labelled calculus $\lintfond$). Ultimately, our admissibility results will permit the extraction of $\nintfond$ from $\lintfond$.

\begin{lemma}
\label{lem:trans-refl-fond-elim}
The $\refl$ and $\trans$ rules are admissible in the calculus $\lintfond^{*}$.
\end{lemma}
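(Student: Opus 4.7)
The plan is to mirror the proof of Lemma~\ref{lm:refl-trans-admiss}, extending the case analysis to the first-order rules of $\lintfond^{*}$. I consider a derivation and eliminate the topmost occurrence of $\refl$ or $\trans$; iterating removes both rules entirely. By Lemma~\ref{lm:structural-rules-permutation}(i), both $\refl$ and $\trans$ permute freely above every rule whose conclusion has no active relational atom $w \leq u$, so only the cases where the deleted atom is principal (or coincides with a principal atom) above require attention. Moreover, any side condition of the form $v \leadsto^{\R} w$ (appearing in $\idfond$, $\allndl$, $\existsndr$, $\allnedl$, $\existsnedr$) is preserved under both rules: deleting a loop $u \leq u$ never breaks a directed path since loops can be skipped, and deleting $w \leq u$ in a $\trans$-step leaves the length-two detour $w \leq v, v \leq u$ available.

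I would first extend Lemma~\ref{lm:refl-admiss} to $\lintfond^{*} - \trans$. The case of $\idfond$ above $\refl$ produces another $\idfond$ instance by the preservation observation just made, and the $\imprl$-case is handled by $\implnew$ as in the propositional proof. The one genuinely new case is $\alll$ whose principal atom $w \leq w$ is collapsed by $\refl$: I apply the induction hypothesis on height to eliminate $\refl$ from the $\alll$-premise, then close with $\allndl$, whose side condition $w \leadsto^{\R} w$ is met by the empty path. The $\nd$- and $\ned$-cases follow the same template---eliminate $\refl$ from the premise by the induction hypothesis and then reapply $\nd$ or $\ned$---possibly combined with the hp-admissibility of $\ctrl$ from Lemma~\ref{lem:extended-lint-properties}(v) to absorb a duplicated domain atom.

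Extending Lemma~\ref{lm:trans-admiss} to $\lintfond^{*} - \refl$ follows the same pattern. For $\alll$ whose principal atom $w \leq u$ is deleted by $\trans$, I permute $\trans$ into the $\alll$-premise via the induction hypothesis and close with $\allndl$ using the length-two path $w \leq v, v \leq u$. The $\imprl$-case is handled by $\implnew$ and $\lift$ exactly as in Lemma~\ref{lm:implies-left-deriv}, the $\lift$-case is identical to the derivation preceding Lemma~\ref{lm:trans-admiss}, and the $\nd$-case goes through by combining $\wk$ with two fresh applications of $\nd$ to reintroduce intermediate domain atoms along the detour. Combining these two separate admissibilities exactly as in Lemma~\ref{lm:refl-trans-admiss} then reduces the joint result to the permutation of $\refl$ above $\lift$, which is unchanged in the first-order setting since $\lift$ has no first-order content and relies only on the hp-admissibility of $\ctrl$. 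The main obstacle throughout is the interaction of $\refl$ and $\trans$ with the ``unstarred'' rule $\alll$, which retains an active $\leq$-atom in its conclusion; this is precisely what the starred rules $\allndl$, $\existsndr$, $\allnedl$, and $\existsnedr$ in $\lintfond^{*}$ are designed to absorb, since their side conditions are formulated via directed paths rather than single $\leq$-edges.
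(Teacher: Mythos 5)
Your proposal follows the paper's proof of this lemma almost step for step: the same induction on height applied to a topmost occurrence of $\refl$ or $\trans$, the same appeal to Lem.~\ref{lm:structural-rules-permutation} to dispatch the rules with no active $\leq$-atom in the conclusion, and, crucially, the same key observation that the $v \leadsto^{\R} w$ side conditions of $\idnd$, $\existsndr$, $\existsnedr$, $\allndl$, and $\allnedl$ survive both permutations (loops $u \leq u$ can be skipped in a directed path, and deleting $w \leq u$ by $\trans$ leaves the detour $w \leq v, v \leq u$ available). Two local points need repair. First, the duplicated domain atom that arises when $\refl$ is permuted above $\nd$ is a \emph{relational} atom, so the rule you need is the hp-admissible $\ctrrel$ of Lem.~\ref{lem:extended-lint-properties}(ii), not $\ctrl$ from part~(v); moreover height-preservation (rather than mere admissibility) is what keeps the induction on height intact, and $\ctrl$ is only claimed to be admissible. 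Second, your closure of the $\trans$-over-$\alll$ case ``with $\allndl$ using the length-two path'' does not match the literal rule format: after $\trans$ is pushed into the premise, the witness $u : A(\unda/x)$ sits at the \emph{future} label $u$, whereas $\allndl$ requires the witness to carry the same label $w$ as $\forall x A$ (only its domain atom may sit in the past). The paper avoids this by invoking Lem.~\ref{lem:old-instance-of-new-nd} to treat every $\alll$ inference as an $\allndl$ inference and then only permuting above $\allndl$; if you insist on handling $\alll$ directly, you need an extra detour, e.g.\ weakening in $u : \forall x A$, applying $\allndl$ at $u$ (empty path), and then removing $u : \forall x A$ with two $\lift$ steps along $w \leq v, v \leq u$. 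Your $\refl$-over-$\alll$ case, where the principal atom is $w \leq w$ and the witness therefore already sits at $w$, is fine as written. With these two fixes the argument coincides with the paper's.
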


\begin{proof} We prove the result by induction on the height of the given derivation and assume w.l.o.g. that the derivation ends with $\refl$ or $\trans$, and that no other instances of $\refl$ or $\trans$ occur in the derivation. The general result follows by considering a topmost occurrence of $\refl$ or $\trans$ and eliminating each topmost occurrence in succession until a proof free of $\refl$ and $\trans$ inferences is obtained. 

By Lem.~\ref{lm:structural-rules-permutation}, we need only show that $\refl$ and $\trans$ permute above the rules $\idfo, \idnd, \imprl$, $\lift$, $\existsndr$, $\existsnedr$, $\alll$, $\allndl$, $\allnedl$, and $\nd$. The cases of permuting $\refl$ and $\trans$ above $\lift$ are similar as for Thm.~\ref{thm:admiss-all-rules-propositional} and Lem.~\ref{lm:trans-admiss}, respectively. Also, $\imprl$ is admissible in the presence of $(\imp^{*}_{l})$ (similar to Lem.~\ref{lm:implies-left-deriv}), $\idfo$ is an instance of $\idnd$ (by Lem.~\ref{lem:old-instance-of-new-nd} below), and $\alll$ is an instance of $\allndl$ (also by Lem.~\ref{lem:old-instance-of-new-nd} below), so these cases may be omitted. %Permuting $\refl$ and $\trans$ above $\idnd$ is straightforward, so we exclude the case. 
Hence, we focus only on the nontrivial cases involving the $\idnd$, $\existsndr$, $\existsnedr$, $\allndl$, $\allnedl$, and $\nd$ rules, which are shown below to the left and resolved as shown below on the right. We prove the elimination of $\refl$ and refer the reader to Appendix~\ref{app:A} for the similar proof of $\trans$ elimination. % and omit the $\cd$ cases as they are similar to the $\nd$ cases. The remaining cases are relatively simple to verify.
%\begin{center}
%\resizebox{\columnwidth}{!}{
%\begin{tabular}{c c}
%\AxiomC{}
%\RightLabel{$\idfo$}
%\UnaryInfC{$\R', w \leq w, \Gamma, w : p(\vv{\unda}) \Rightarrow w : p(\vv{\unda}), \Delta$}
%\RightLabel{$\refl$}
%\UnaryInfC{$\R', \Gamma, w : p(\vv{\unda}) \Rightarrow w : p(\vv{\unda}), \Delta$}
%\DisplayProof
%\end{tabular}
%\end{center}
%&
%\begin{center}
%\begin{tabular}{c}
%\AxiomC{}
%\RightLabel{$\idnd$}
%\UnaryInfC{$\R', \Gamma, w : p(\vv{\unda}) \Rightarrow w : p(\vv{\unda}), \Delta$}
%\DisplayProof
%\end{tabular}
%}
%\end{center}

\textit{Base case.} We let $\R' := \R, \unda_{1} \in D_{v_{1}}, \ldots, \unda_{n} \in D_{v_{n}}$ in the $\idnd$ case below. Observe that the end sequent of the derivation below left is an instance of $\idnd$ since the side condition that $v_{i} \leadsto^{\R'} w$ for $i \in \{1, \ldots, n\}$ holds: if none of the directed paths from $v_{i}$ to $w$ contain $u \leq u$, then the paths are present in $\R'$, and if a directed path from $v_{i}$ to $w$ does contain $u \leq u$, then deleting each occurrence of $u \leq u$ from the directed path gives a new path from $v_{i}$ to $w$ in $\R'$.
\begin{flushleft}
\AxiomC{}
\RightLabel{$\idnd$}
\UnaryInfC{$\R', u \leq u, \Gamma, w : p(\vv{\unda}) \Rightarrow w : p(\vv{\unda}), \Delta$}
\RightLabel{$\refl$}
\UnaryInfC{$\R', \Gamma, w : p(\vv{\unda}) \Rightarrow w : p(\vv{\unda}), \Delta$}
\DisplayProof
\end{flushleft}
\begin{flushright}
\AxiomC{}
\RightLabel{$\idnd$}
\UnaryInfC{$\R', \Gamma, w : p(\vv{\unda}) \Rightarrow w : p(\vv{\unda}), \Delta$}
\DisplayProof
\end{flushright}

\textit{Inductive step.} For the inductive step, we consider the non-trivial $\existsndr$, $\existsnedr$, $\allndl$, $\allnedl$, and $\nd$ cases.
\begin{flushleft}
\AxiomC{$\R, u \leq u, \unda \in D_{v}, \Gamma \Rightarrow w : A(\unda/x), w : \exists x A, \Delta$}
\RightLabel{$\existsndr$}
\UnaryInfC{$\R, u \leq u, \unda \in D_{v}, \Gamma \Rightarrow w : \exists x A, \Delta$}
\RightLabel{$\refl$}
\UnaryInfC{$\R, \unda \in D_{v}, \Gamma \Rightarrow w : \exists x A, \Delta$}
\DisplayProof
\end{flushleft}
\begin{flushright}
\AxiomC{}
\RightLabel{IH}
\dashedLine
\UnaryInfC{$\R, \unda \in D_{v}, \Gamma \Rightarrow w : A(\unda/x), w : \exists x A, \Delta$}
\RightLabel{$\existsndr$}
\UnaryInfC{$\R, \unda \in D_{v}, \Gamma \Rightarrow w : \exists x A, \Delta$}
\DisplayProof
\end{flushright}

\begin{flushleft}
\AxiomC{$\R, u \leq u, \unda \in D_{v}, \Gamma \Rightarrow w : A(\unda/x), w : \exists x A, \Delta$}
\RightLabel{$\existsnedr$}
\UnaryInfC{$\R, u \leq u, \Gamma \Rightarrow w : \exists x A, \Delta$}
\RightLabel{$\refl$}
\UnaryInfC{$\R, \Gamma \Rightarrow w : \exists x A, \Delta$}
\DisplayProof
\end{flushleft}
\begin{flushright}
\AxiomC{}
\RightLabel{IH}
\dashedLine
\UnaryInfC{$\R, \unda \in D_{v}, \Gamma \Rightarrow w : A(\unda/x), w : \exists x A, \Delta$}
\RightLabel{$\existsnedr$}
\UnaryInfC{$\R, \Gamma \Rightarrow w : \exists x A, \Delta$}
\DisplayProof
\end{flushright}

\begin{flushleft}
\AxiomC{$\R, u \leq u, \unda \in D_{v}, w : A(\unda/x), w : \forall x A, \Gamma \Rightarrow \Delta$}
\RightLabel{$\allndl$}
\UnaryInfC{$\R, u \leq u, \unda \in D_{v}, w : \forall x A, \Gamma \Rightarrow \Delta$}
\RightLabel{$\refl$}
\UnaryInfC{$\R, \unda \in D_{v}, w : \forall x A, \Gamma \Rightarrow \Delta$}
\DisplayProof
\end{flushleft}
\begin{flushright}
\AxiomC{}
\RightLabel{IH}
\dashedLine
\UnaryInfC{$\R, \unda \in D_{v}, w : A(\unda/x), w : \forall x A, \Gamma \Rightarrow \Delta$}
\RightLabel{$\allndl$}
\UnaryInfC{$\R, \unda \in D_{v}, w : \forall x A, \Gamma \Rightarrow \Delta$}
\DisplayProof
\end{flushright}

\begin{flushleft}
\AxiomC{$\R, u \leq u, \unda \in D_{v}, \Gamma, w : A(\unda/x), w : \forall x A \Rightarrow \Delta$}
\RightLabel{$\allnedl$}
\UnaryInfC{$\R, u \leq u, \Gamma, w : \forall x A \Rightarrow \Delta$}
\RightLabel{$\refl$}
\UnaryInfC{$\R, w : \forall x A, \Gamma \Rightarrow \Delta$}
\DisplayProof
\end{flushleft}
\begin{flushright}
\AxiomC{}
\RightLabel{IH}
\dashedLine
\UnaryInfC{$\R, \unda \in D_{v}, \Gamma, w : A(\unda/x), w : \forall x A, \Gamma \Rightarrow \Delta$}
\RightLabel{$\allnedl$}
\UnaryInfC{$\R, w : \forall x A, \Gamma \Rightarrow \Delta$}
\DisplayProof
\end{flushright}

\begin{flushleft}
\AxiomC{$\R, w \leq w, \unda \in D_{w}, \unda \in D_{w}, \Gamma \Rightarrow \Delta$}
\RightLabel{$\nd$}
\UnaryInfC{$\R, w \leq w, \unda \in D_{w}, \Gamma \Rightarrow \Delta$}
\RightLabel{$\refl$}
\UnaryInfC{$\R, \unda \in D_{w}, \Gamma \Rightarrow \Delta$}
\DisplayProof
\end{flushleft}
\begin{flushright}
\AxiomC{$\R, w \leq w, \unda \in D_{w}, \unda \in D_{w}, \Gamma \Rightarrow \Delta$}
\RightLabel{$\ctrrel$}
\dashedLine
\UnaryInfC{$\R, w \leq w, \unda \in D_{w}, \Gamma \Rightarrow \Delta$}
\RightLabel{IH}
\dashedLine
\UnaryInfC{$\R, \unda \in D_{w}, \Gamma \Rightarrow \Delta$}
\DisplayProof
\end{flushright}
%In the $\idnd$ case the side condition holds, and in the $\existsndr$, $\existsnedr$, $\allndl$, and $\allnedl$ cases, observe that the side condition continues to hold after IH is applied. 
We now argue that the side condition ($v \leadsto^{\R} w$) of $\existsndr$, $\existsnedr$, $\allndl$, and $\allnedl$ continues to hold after applying IH above. If the directed path from $v$ to $w$ does not go through $u$, then the side condition trivially holds. Alternatively, if the directed path from $v$ to $w$ does go through $u$, then by deleting each occurrence of $u \leq u$ from the path, we obtain a new directed path from $v$ to $w$, which continues to be present after the invocation of IH.
\qed
\end{proof}

%The following lemma will be useful while performing our analysis regarding the admissibility of $\nd$ and $\cd$ in $\lintfocd + \{\idfonew, (\imp^{*}_{l}), \allnewl, \lift\} - \{\refl, \trans\}$. Until stated otherwise, we will be working with the aforementioned calculus.

We now analyze the admissibility of $\nd$ in $\lintfond^{*} - \{\refl, \trans\}$, and afterward, analyze the admissibility of the $\ned$ rule. %Moreover, we note that the exclusion of $\refl$ and $\trans$ implies the completeness of $\intfondl - \{\refl, \trans\}$ relative to treelike derivations, similar to Lem.~\ref{thm:treelike-derivations-fond}. We may therefore assume for the rest of the section that all labelled sequents we consider are treelike---this assumption will ease our admissibility analyses of $\nd$ and $\ned$.
It will be instructive to first consider the case of permuting $\nd$ above the $\existsr$ rule solely. It will be seen that in order to permute $\nd$ above $\existsr$, it is sufficient to strengthen the $\existsr$ rule through the addition of a side condition; this motivates a similar side condition that we will impose on $\idfo$ (yielding the rule $\idnd$) and on $\alll$ (yielding the rule $\allndl$).

%By Lem.~\ref{lm:structural-rules-permutation}, we need only consider permuting $\nd$ and $\cd$ above $\idfo$, $\idcd$, $\allcdl$, and $\existscdr$. It will seen that in order to permute $\nd$ and $\cd$ above the rules $\{\idfo, (id^{\circ}_{q}), \allnewl,\existsr\}$, a side condition must be imposed on each rule. As it so happens, the side condition imposed is the same in each case, so we restrict our analysis to observing permutations of $\nd$ and $\cd$ above the $\existsr$ rule.

To begin our analysis, observe (1) a non-trivial application of $\nd$ after an $\existsr$ inference, and (2) the result of applying $\nd$ to the top sequent in (1). 
\begin{equation}
\AxiomC{$\R, v \leq w, \unda \in D_{v}, \unda \in D_{w}, \Gamma \Rightarrow \Delta, w : A(\unda/x), w : \exists x A$}
\RightLabel{$\existsr$}
\UnaryInfC{$\R, v \leq w, \unda \in D_{v}, \unda \in D_{w}, \Gamma \Rightarrow \Delta, w : \exists x A$}
\RightLabel{$\nd$}
\UnaryInfC{$\R, v \leq w, \unda \in D_{v}, \Gamma \Rightarrow \Delta, w : \exists x A$}
\DisplayProof
\end{equation}

\begin{equation}
\AxiomC{$\R, v \leq w, \unda \in D_{v}, \unda \in D_{w}, \Gamma \Rightarrow \Delta, w : A(\unda/x), w : \exists x A$}
\RightLabel{$\nd$}
\UnaryInfC{$\R, v \leq w, \unda \in D_{v}, \Gamma \Rightarrow \Delta, w : A(\unda/x), w : \exists x A$}
\DisplayProof
\end{equation}

In (2), the application of $\nd$ has generated a labelled sequent where the domain atom (i.e. $\unda \in D_{v}$) is associated with the label $v$ \emph{one step in the past} of $w$ (due to the presence of the $v \leq w$ relational atom). One possible solution to overcome this obstacle, and to allow for the permutation of $\nd$ above $\existsr$, would be to add the following rule to our calculus:
\begin{center}
\begin{tabular}{c}
\AxiomC{$\R, v \leq w, \unda \in D_{v}, \Gamma \Rightarrow \Delta, w: A(\unda / x), w: \exists x A$}
\RightLabel{$(\exists_{r})'$}
\UnaryInfC{$\R, v \leq w, \unda \in D_{v}, \Gamma \Rightarrow \Delta, w: \exists x A$}
\DisplayProof
\end{tabular}
\end{center}
In the rule above, the domain atom $\unda \in D_{v}$ is associated with the label $v$, which is \emph{one step in the past} of $w$. %This is distinct from the $\existsr$ rule, where the relational atom is associated with the same label as the active formula $w : \exists x A$.
Although the addition of the above rule solves the problem of permuting $\nd$ above the $\existsr$ rule, its inclusion into our calculus would inevitably place us in a similar, undesirable circumstance:
%\begin{center}
%\begin{tabular}{c}
\begin{equation*}
\AxiomC{$\R, u \leq v, v \leq w, \unda \in D_{u}, \unda \in D_{v}, \Gamma \Rightarrow \Delta, w: A(\unda / x), w: \exists x A$}
\RightLabel{$(\exists_{r})'$}
\UnaryInfC{$\R, u \leq v, v \leq w, \unda \in D_{u}, \unda \in D_{v}, \Gamma \Rightarrow \Delta, w: \exists x A$}
\RightLabel{$\nd$}
\UnaryInfC{$\R, u \leq v, v \leq w, \unda \in D_{u}, \Gamma \Rightarrow \Delta, w: \exists x A$}
\DisplayProof
\end{equation*}
%\end{tabular}
%\end{center}
%If we apply $\nd$ to the top sequent of each derivation we obtain the following:
%\begin{center}
%\begin{tabular}{c}
\begin{equation*}
\AxiomC{$\R, u \leq v, v \leq w, \unda \in D_{u}, \unda \in D_{v}, \Gamma \Rightarrow \Delta, w: A(\unda / x), w: \exists x A$}
\RightLabel{$\nd$}
\UnaryInfC{$\R, u \leq v, v \leq w, \unda \in D_{u}, \Gamma \Rightarrow \Delta, w: A(\unda / x), w: \exists x A$}
\DisplayProof
\end{equation*}
%\end{tabular}
%\end{center}
It appears that now we would need to add a new version of the $\existsr$ rule allowing us to delete $w : A(\unda)$ when the required relational atom $\unda \in D_{u}$ is associated with a label (viz. $u$) that is \emph{two steps in the past} of $w$ (due to the occurrence of $u \leq v, v \leq w$). Including such a rule however, would inevitably force us to add another existential rule where the required relational atom is associated with a label \emph{three steps in the past}---such rule inclusions would then continue ad infinitum. To resolve the problem, and capture all such possibilities, we add the following rule to our calculus, which has the side condition $v \leadsto^{\R} w$ (see Def.~\ref{def:directed-path} and Fig.~\ref{fig:new-rules}):
\begin{center}
\AxiomC{$\R, \unda \in D_{v}, \Gamma \Rightarrow \Delta, w: A(\unda/x), w: \exists x A$}
\RightLabel{$\existsndr$}
\UnaryInfC{$\R, \unda \in D_{v}, \Gamma \Rightarrow \Delta, w: \exists x A$}
\DisplayProof
\end{center}
This rule generalizes the shifting behaviour of the domain atom witnessed above to include all possible \emph{shifts}.
%The side condition of the rule states that $w \sim^{\R} v$, i.e. there is a path (potentially of length $0$) from the label $w$ to the label $v$ (with $v$ being the label associated with the domain atom $\unda \in D_{v}$). Observe that this side condition \emph{generalizes} the shifting behaviour of the domain atom witnessed above to include all possible \emph{shifts}. Furthermore, since the labelled sequents we are considering %in $\intfocdl - \{\refl, \trans\}$
Furthermore, due to the fact that the labelled sequents we are considering encode an $\intfond$-model---which satisfies the \emph{nested domain} condition---it is appropriate to impose the above side condition, which essentially states that \emph{if an object is an element of some domain $D_{v}$ in the past of $w$, then it is an element of the domain $D_{w}$}. It is straightforward to verify that the $\existsndr$ rule is sound relative to $\intfond$-models. % {\color{blue} (Thm.~\ref{???})}.

If one analyzes the cases of permuting $\nd$ above the $\idfo$ and $\alll$ rules, then they will observe the same shifting behavior of domain atoms as in the $\existsr$ case. We therefore impose a similar side condition on $\idfo$ and $\alll$ to obtain the $\idnd$ and $\allndl$ rules (see Fig.~\ref{fig:new-rules}). It is not difficult to see that the following holds:

\begin{lemma}\label{lem:old-instance-of-new-nd}
The $\idfo$, $\existsr$, and $\alll$ rules are instances of the $\idnd$, $\existsndr$, and $\allndl$ rules, respectively.
\end{lemma}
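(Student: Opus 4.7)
The plan is to give a schematic, case-by-case verification that matches each old rule to the new rule by exhibiting a concrete instantiation of the new rule's parameters and checking the new side condition against relational and domain atoms that are already explicit in the conclusion of the old rule. Since no premises need to be derived and no structural rules are invoked, the proof should collapse to a one-line observation per case.

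For the pair $\existsr$ vs.\ $\existsndr$, I would note that in $\existsr$ the domain atom takes the form $\unda \in D_w$, where $w$ is the label of the principal formula $w : \exists x A$; matching this with the schema of $\existsndr$ amounts to setting the new rule's parameter $v$ equal to $w$. The side condition $\dag_{4}$, namely $v \leadsto^{\R} w$, then collapses to $w \leadsto^{\R} w$, which holds by the reflexive base clause of Def.~\ref{def:directed-path}. A parallel argument handles $\alll$ vs.\ $\allndl$: the conclusion of $\alll$ carries $w \leq v$ and $\unda \in D_v$ explicitly, so after aligning the schemas the directed-path requirement $v \leadsto^{\R} w$ is witnessed by reading off the relational atoms already present (once we identify the labels according to the schema).

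For $\idfo$ vs.\ $\idnd$, the conclusion of $\idfo$ exhibits $w \leq v$ and $\vv{\unda} \in D_w$ with $p(\vv{\unda})$ labelled appropriately on both sides. Aligning with the schema of $\idnd$, each domain label $v_i$ of $\idnd$ is instantiated to $w$, so the required $v_i \leadsto^{\R} w$ becomes $w \leadsto^{\R} w$ (again immediate from reflexivity of directed paths). In every case the verification is purely syntactic: the relational information needed to discharge the side condition is present in the old rule's conclusion by construction, so no extra derivation is required. The main (mild) obstacle is simply bookkeeping to make sure the labels, parameters, and side-condition witnesses line up correctly across the two schemas; there are no genuine technical difficulties.
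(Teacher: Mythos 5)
Your treatment of the $\existsr$/$\existsndr$ pair is correct and is exactly the intended observation: taking $v := w$ makes the two schemas coincide, and the side condition degenerates to $w \leadsto^{\R} w$, which holds by the base clause of Def.~\ref{def:directed-path}. (The paper gives no explicit proof of this lemma, so the only question is whether your verification is sound.)

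The other two cases, however, do not go through as written, and in fact the literal ``instance'' claim is accurate only for $\existsr$. For $\alll$ versus $\allndl$ the schemas do not align: the premise of $\alll$ carries the instance $v : A(\unda/x)$ at the \emph{future} world $v$ (with $w \leq v$), whereas the premise of $\allndl$ carries $w : A(\unda/x)$ at the \emph{same} world $w$ as the principal formula $w : \forall x A$; no instantiation of $\allndl$'s metavariables reproduces an arbitrary $\alll$ inference with $w \neq v$. Even setting that aside, your claimed witness for the side condition points the wrong way: the atom $w \leq v$ in the conclusion of $\alll$ witnesses $w \leadsto^{\R} v$, not the required $v \leadsto^{\R} w$ (the domain-atom label must lie in the \emph{past} of the principal label). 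The same label mismatch defeats the $\idfo$/$\idnd$ case: $\idfo$ has $w : p(\vv{\unda})$ in the antecedent but $v : p(\vv{\unda})$ in the succedent, while $\idnd$ requires one and the same label on both sides, so ``labelled appropriately on both sides'' conceals precisely the point at which the match fails. What is true, and what suffices for every use the paper makes of the lemma, is that $\idfo$ and $\alll$ are \emph{derivable} from $\idnd$ and $\allndl$ together with $\wk$ and $\lift$: weaken in a copy of the principal formula at $v$, apply $\idnd$ (resp.\ $\allndl$) with principal label $v$ --- where the path condition $w \leadsto^{\R} v$ (resp.\ $v \leadsto^{\R} v$) does hold --- and then discharge the extra copy with $\lift$, exactly as the paper derives $\id$ from $\idnew$ and $\lift$ in Thm.~\ref{thm:admiss-all-rules-propositional}. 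Your write-up needs either this repaired derivation or an explicit restriction of the ``instance'' claim to the $\existsr$ case.
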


The lemma below shows that the $\idnd$, $\allndl$, and $\existsndr$ rules are sufficient to prove the admissibility of $\nd$:

\begin{lemma}
\label{lm:nd-admiss}
The rule $\nd$ is admissible in the calculus $\lintfond^{*} - \{\refl,\trans\}$.
\end{lemma}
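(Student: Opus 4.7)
The plan is to follow the template of Lem.~\ref{lem:trans-refl-fond-elim} and argue by induction on the height of the derivation, assuming without loss of generality that the derivation ends in an application of $\nd$ and that no further instances of $\nd$ appear above it; the general statement then follows by eliminating topmost occurrences of $\nd$ in succession. By Lem.~\ref{lm:structural-rules-permutation}(ii), the $\nd$ rule freely permutes above every rule whose conclusion does not contain an active domain atom, so the only cases demanding real work are those where the rule immediately above is $\idnd$, $\existsndr$, $\allndl$, or $\nd$ itself. By Lem.~\ref{lem:old-instance-of-new-nd}, the corresponding cases for $\idfo$, $\existsr$, and $\alll$ are subsumed by these.

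For the base case, suppose the premise of $\nd$ is an instance of $\idnd$ and that $\nd$ deletes the atom $\unda \in D_{v}$ (present together with $w \leq v$ and $\unda \in D_{w}$). Since $\nd$ leaves every $\leq$-atom of $\R$ untouched, no directed path in $\R$ is destroyed. If $\unda \in D_{v}$ was not the atom witnessing the side condition of $\idnd$ for any parameter in the principal formula, then the conclusion is again an instance of $\idnd$. Otherwise, the still-present atom $\unda \in D_{w}$ can serve as the witness, because the original directed path $v \leadsto^{\R'} u$ (to the label $u$ carrying the principal formula) extends to $w \leadsto^{\R'} u$ via $w \leq v$, so the side condition continues to hold. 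For the inductive cases with $\existsndr$ and $\allndl$, I first apply $\nd$ to the premise of the quantifier rule using the inductive hypothesis, and then re-apply the quantifier rule to the result. The side condition $v' \leadsto^{\R'} w'$ of the quantifier rule is preserved: if its witnessing domain atom is different from the atom deleted by $\nd$, then nothing changes; and if the witnessing atom is exactly $\unda \in D_{v}$, then $\unda \in D_{w}$ remains available, with $w \leadsto^{\R'} w'$ obtained by prefixing $w \leq v$ to the path $v \leadsto^{\R'} w'$. The permutation of $\nd$ above another $\nd$ inference is straightforward by the inductive hypothesis applied to the inner premise.

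The main obstacle---and the conceptual point of the argument---is precisely the interaction between the domain atom removed by $\nd$ and the directed-path side conditions of the refined rules $\idnd$, $\existsndr$, and $\allndl$. The design of these rules (with the $\leadsto^{\R}$ side condition) is exactly what absorbs the path extensions generated by replacing a deleted atom $\unda \in D_{v}$ by its ancestor $\unda \in D_{w}$; without this refinement, the admissibility of $\nd$ would fail, which is the reason $\idfo$, $\existsr$, and $\alll$ were generalized to their $\leadsto$-versions before beginning the elimination.
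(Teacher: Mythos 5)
Your proposal is correct and follows essentially the same route as the paper: induction on derivation height, disposal of the trivial cases via Lemma~\ref{lm:structural-rules-permutation}(ii) and Lemma~\ref{lem:old-instance-of-new-nd}, and resolution of the remaining $\idnd$, $\existsndr$, and $\allndl$ cases by observing that when $\nd$ deletes $\unda \in D_{v}$ the surviving atom $\unda \in D_{w}$ witnesses the side condition, since prefixing $w \leq v$ to the directed path from $v$ yields the required path from $w$. This is precisely the argument given in the paper's proof.
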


\begin{proof} We prove the result by induction on the height of the given derivation. By Lem.~\ref{lm:structural-rules-permutation} and Lem.~\ref{lem:trans-refl-fond-elim}, we need only consider the non-trivial $\idnd$, $\existsndr$, and $\allndl$ cases.\\

\textit{Base case.} We let $\R' := \R, u \leq v, \unda_{1} \in D_{v_{1}}, \ldots, \unda_{n} \in D_{v_{n}}$ and assume that both $v \leadsto^{\R'} w$ and $v_{i} \leadsto^{\R'} w$ (for $i \in \{1, \ldots, n\}$) hold. Observe that the end sequent is an instance of $\idnd$ since (i) the paths from $v_{i}$ to $w$ are still present and (ii) there is a directed path from $u$ to $w$ composed of the relational atom $u \leq v$ and the direct path from $v$ to $w$ (i.e. $u \leadsto^{\R'} w$ holds). 
\begin{center}
%\resizebox{\columnwidth}{!}{
\begin{tabular}{c}
\AxiomC{}
\RightLabel{$\idnd$}
\UnaryInfC{$\R', \undb \in D_{u}, \undb \in D_{v}, \Gamma, w : p(\vv{\unda},\undb) \Rightarrow w : p(\vv{\unda},\undb), \Delta$}
\RightLabel{$\nd$}
\UnaryInfC{$\R', \undb \in D_{u}, \Gamma, w : p(\vv{\unda},\undb) \Rightarrow w : p(\vv{\unda},\undb), \Delta$}
\DisplayProof
\end{tabular}
%}
\end{center}

\textit{Inductive step.} We provide each proof below showing that $\nd$ can be permuted above $\existsndr$ and $\allndl$ (we consider only the non-trivial cases). Also, we assume that there is a directed path from $v$ to $w$, i.e. the side condition $v \leadsto^{\R} w$ holds. %In the $(\forall_{l}^{*})$ and $(\exists_{r}^{*})$ cases, we assume that the side condition holds, i.e. there exists a path from $v$ to $w$. This side condition is still satisfied after the upward permutation of $\nd$ due to the existence of $u \leq v,a \in D_{u}$ in our sequent and our assumption that there is a path from $v$ to $w$. Furthermore, it should be pointed out that $(\forall_{l}^{*})$ and $(\exists_{r}^{*})$ are used to resolve the non-trivial $(\forall_{l})$ and $(\exists_{r})$ cases; observe that the side condition for $(\forall_{l}^{*})$ and $(\exists_{r}^{*})$ holds after $\nd$ is permuted upwards.
\begin{flushleft}
\AxiomC{$\R, u \leq v, \unda \in D_{u}, \unda \in D_{v}, \Gamma \Rightarrow \Delta, w : A(\unda/x), w : \exists x A$}
\RightLabel{$\existsndr$}
\UnaryInfC{$\R, u \leq v, \unda \in D_{u}, \unda \in D_{v}, \Gamma \Rightarrow \Delta, w : \exists x A$}
\RightLabel{$\nd$}
\UnaryInfC{$\R, u \leq v, \unda \in D_{u}, \Gamma \Rightarrow \Delta, w : \exists x A$}
\DisplayProof
\end{flushleft}
\begin{flushright}
\AxiomC{ }
\RightLabel{IH}
\dashedLine
\UnaryInfC{$\R, u \leq v, \unda \in D_{u}, \Gamma \Rightarrow \Delta, w : A(\unda/x), w : \exists x A$}
\RightLabel{$\existsndr$}
\UnaryInfC{$\R, u \leq v, \unda \in D_{u}, \Gamma \Rightarrow \Delta,  w : \exists x A$}
\DisplayProof
\end{flushright}

\begin{flushleft}
\AxiomC{$\R, u \leq v, \unda \in D_{u}, \unda \in D_{v}, \Gamma, w : \forall x A, w : A(\unda/x) \Rightarrow \Delta$}
\RightLabel{$\allndl$}
\UnaryInfC{$\R, u \leq v, \unda \in D_{u}, \unda \in D_{v}, \Gamma, w : \forall x A \Rightarrow \Delta$}
\RightLabel{$\nd$}
\UnaryInfC{$\R, u \leq v, \unda \in D_{u}, \Gamma, w : \forall x A \Rightarrow \Delta$}
\DisplayProof
\end{flushleft}
\begin{flushright}
\AxiomC{ }
\RightLabel{IH}
\dashedLine
\UnaryInfC{$\R, u \leq v, \unda \in D_{u}, \Gamma, w : \forall x A, w : A(\unda/x) \Rightarrow \Delta$}
\RightLabel{$\allndl$}
\UnaryInfC{$\R, u \leq v, \unda \in D_{u}, \Gamma, w : \forall x A \Rightarrow \Delta$}
\DisplayProof
\end{flushright}
By assumption, there is a directed path from $v$ to $w$ in $\R$. Observe that the side condition is still satisfied after the invocation of IH since there is a directed path from $u$ to $w$ composed of the relational atom $u \leq v$ and the directed path from $v$ to $w$ in $\R$.
\qed
\end{proof}

We now turn our attention toward analyzing the admissibility of $\ned$ in the calculus $\lintfond^{*} - \{\refl,\trans,\nd\}$. We will see that sufficient modifications must be made to the $\existsndr$ and $\allndl$ rules to necessitate the elimination of $\ned$ from a given derivation. (NB. Due to Lem.~\ref{lem:old-instance-of-new-nd}, we may omit consideration of permuting $\ned$ above $\existsr$ and $\alll$.)
%Similar to our admissibility analysis of $\nd$, we first consider the permutation of $\ned$ above $\existsndr$ solely.\footnote{Due to Lem.~\ref{lem:old-instance-of-new-nd}, we may omit consideration of permuting $\ned$ above $\existsndr$ and $\allndl$.} Also, as before, we will see that sufficient modifications must be made to the $\existsndr$ rule to necessitate the elimination of $\ned$ from a given derivation; our analysis of the $\existsndr$ case suggests (correctly) that $\idnd$ and $\allndl$ require similar modifications.
Let us consider non-trivial applications of $\ned$ after an $\existsndr$ and $\allndl$ inference: 
\begin{center}
\resizebox{\columnwidth}{!}{
\begin{tabular}{c c}
\AxiomC{$\R, \unda \in D_{v}, \Gamma \Rightarrow w : A(\unda / x), w : \exists x A, \Delta$}
\RightLabel{$\existsndr$}
\UnaryInfC{$\R, \unda \in D_{v}, \Gamma \Rightarrow w : \exists x A, \Delta$}
\RightLabel{$\ned$}
\UnaryInfC{$\R, \Gamma \Rightarrow w : \exists x A, \Delta$}
\DisplayProof

&

\AxiomC{$\R, \unda \in D_{v}, \Gamma, w : \forall x A, w : A(\unda / x) \sar \Delta$}
\RightLabel{$\allndl$}
\UnaryInfC{$\R, \unda \in D_{v}, \Gamma, w : \forall x A, \sar \Delta$}
\RightLabel{$\ned$}
\UnaryInfC{$\R, \Gamma, w : \forall x A, \sar \Delta$}
\DisplayProof
\end{tabular}
}
\end{center}
Observe that the $\ned$ rule cannot be permuted above the $\existsndr$ or $\allndl$ rules due to the existence of the labelled formula $w : A(\unda / x)$ in the top sequent which violates the side condition on $\ned$ stating that $\unda$ must be an eigenvariable. Our solution to this issue is to \emph{absorb} the $\ned$ rule into the $\existsndr$ and $\allndl$ rules yielding the new versions $\existsnedr$ and $\allnedl$:
\begin{center}
\resizebox{\columnwidth}{!}{
\begin{tabular}{c c}
\AxiomC{$\R, \unda \in D_{v}, \Gamma \Rightarrow w : A(\unda / x), w : \exists x A, \Delta$}
\RightLabel{$\existsnedr$}
\UnaryInfC{$\R, \Gamma \Rightarrow w : \exists x A, \Delta$}
\DisplayProof

&

\AxiomC{$\R, \unda \in D_{v}, \Gamma, w : \forall x A, w : A(\unda / x) \sar \Delta$}
\RightLabel{$\allnedl$}
\UnaryInfC{$\R, \Gamma, w : \forall x A, \sar \Delta$}
\DisplayProof
\end{tabular}
}
\end{center}
where we impose the side condition that $v \leadsto^{\R} w$ and $\unda$ is an eigenvariable. This side condition essentially states that \emph{there exists an element of $D_{w}$, which is also in a domain $D_{v}$ in the past of $w$}. It can be shown that each rule is sound relative to $\intfond$-models, and relies both on the nested domain condition and the fact that such domains are \emph{non-empty}, i.e. \emph{inhabited}.

\begin{lemma}
\label{lm:ned-admiss}
The rule $\ned$ is admissible in $\lintfond^{*} - \{\refl,\trans,\nd\}$.
\end{lemma}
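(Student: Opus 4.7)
The plan is to mirror the structure of Lemma~\ref{lm:nd-admiss}, proceeding by induction on the height of a derivation ending with a topmost application of $\ned$. I will argue that each such topmost occurrence can be eliminated, and then iterate to obtain a fully $\ned$-free derivation. The first step is to invoke Lemma~\ref{lm:structural-rules-permutation}(ii), which tells us that $\ned$ freely permutes above every rule of $\lintfond^{*} - \{\refl,\trans,\nd\}$ except $\idnd$, $\existsndr$, and $\allndl$. The rules $\idfo$, $\existsr$, and $\alll$ reduce to these three via Lemma~\ref{lem:old-instance-of-new-nd}, while $\refl$, $\trans$, and $\nd$ have already been discharged by Lemmas~\ref{lem:trans-refl-fond-elim} and~\ref{lm:nd-admiss}. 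So only three genuinely nontrivial cases remain.

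For the base case, suppose $\ned$ is applied to the conclusion of $\idnd$, deleting some $\unda \in D_{u}$. Because the side condition on $\ned$ requires $\unda$ to be fresh in the resulting sequent, $\unda$ cannot occur in $p(\vv{\unda})$ of the $\idnd$ conclusion. Consequently the conclusion is itself an instance of $\idnd$ (with one fewer domain atom) and the side condition $v_{i} \leadsto^{\R} w$ for the remaining domain atoms is undisturbed.

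For the inductive step, I would handle the $\existsndr$ (and symmetrically $\allndl$) case as follows. If the domain atom deleted by $\ned$ differs from the active domain atom of $\existsndr$, the two rules commute: apply IH to permute $\ned$ above $\existsndr$, then re-apply $\existsndr$, noting that $v \leadsto^{\R} w$ is preserved since $\ned$ does not alter $\R$. The hard subcase---the one motivating the new rules in the first place---is when the deleted parameter coincides with the active parameter $\unda$ of $\existsndr$. Here ordinary permutation fails because $\unda$ still occurs in $A(\unda/x)$ in the premise of $\existsndr$. At this point I would apply $\existsnedr$ directly to the premise of $\existsndr$: its side condition ($v \leadsto^{\R} w$ together with $\unda$ eigenvariable) is precisely the conjunction of the conditions already guaranteed by $\existsndr$ and $\ned$, so the case closes in a single step. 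The $\allndl$ case is analogous, using $\allnedl$ in place of $\existsnedr$.

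The main obstacle is not conceptual but accounting: I need to verify that both the directed-path conditions $v \leadsto^{\R} w$ carried by the quantifier rules, and the eigenvariable restrictions carried by $\ned$, transfer correctly across the transformations---especially when multiple applications of $\existsndr$/$\allndl$ and $\ned$ are interleaved higher in the derivation. Since $\ned$ only removes a domain atom and leaves $\R$, $\Gamma$, and $\Delta$ otherwise intact, and since eigenvariables remain eigenvariables under removal of unrelated hypotheses, these checks should be routine but deserve to be spelled out once in the $\existsnedr$/$\allnedl$ subcases to make clear why the absorbed side conditions are exactly what is needed.
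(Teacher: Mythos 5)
Your proposal is correct and follows essentially the same route as the paper: induction on derivation height, using Lemma~\ref{lm:structural-rules-permutation}(ii) and Lemma~\ref{lem:old-instance-of-new-nd} to reduce to the $\idnd$, $\existsndr$, and $\allndl$ cases, and resolving the genuinely non-trivial quantifier cases by absorbing the $\ned$ inference into a single application of $\existsnedr$ or $\allnedl$, whose side conditions are exactly the conjunction of those of $\existsndr$/$\allndl$ and $\ned$. Your explicit treatment of the commuting subcase (deleted parameter distinct from the active one) and of why the directed-path condition survives deletion of a domain atom is left implicit in the paper but is accurate.
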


\begin{proof} We prove the result by induction on the height of the given derivation. The base case is simple as any application of $\ned$ to $\idnd$ or $\botl$ yields another instance of the rule. By Lem.~\ref{lm:structural-rules-permutation} and Lem.~\ref{lem:old-instance-of-new-nd}, the only cases we need to consider in the inductive step are the non-trivial $\existsndr$ and $\allndl$ cases; however, as we saw above, the non-trivial cases can be handled by applying $\existsnedr$ or $\allnedl$, respectively.
\qed
\end{proof}

%$\idfo$, $\botl$, $\imprl$, $\refl$, $\trans$, $\existsr, \alll$, $\nd$, and $\ned$
\begin{theorem}
\label{thm:admiss-all-rules-fond}
The rules $\{\idfo, \botl, \imprl, \refl, \trans, \existsr, \alll, \nd, \ned\}$ are admissible in the calculus $\lintfond^{*}$.
\end{theorem}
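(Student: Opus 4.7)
The plan is to chain together the preceding elimination lemmata in the order forced by their statements, and then dispatch the remaining rules by direct simulation. The structure closely parallels the proof of the propositional analogue, Theorem~\ref{thm:admiss-all-rules-propositional}, except that the first-order setting adds two extra structural rules ($\nd$ and $\ned$) and two extra quantifier rules ($\existsr$ and $\alll$) to handle.

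First, I would invoke Lemma~\ref{lem:trans-refl-fond-elim} on the given $\lintfond^{*}$-derivation to obtain a derivation free of $\refl$ and $\trans$. On that output, Lemma~\ref{lm:nd-admiss} removes every occurrence of $\nd$ (its statement requires the derivation to already be $\refl$- and $\trans$-free, so the order matters). On what remains, Lemma~\ref{lm:ned-admiss} removes every occurrence of $\ned$ (its statement in turn requires $\refl$-, $\trans$-, and $\nd$-freeness). Composing these three transformations yields a derivation that uses neither $\refl$, $\trans$, $\nd$, nor $\ned$.

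Next, I would observe via Lemma~\ref{lem:old-instance-of-new-nd} that every instance of $\idfo$, $\existsr$, and $\alll$ in this derivation is already a literal instance of $\idnd$, $\existsndr$, and $\allndl$ respectively, so these three rules can also be dropped without loss. For $\imprl$, the derivation scheme used in the propositional proof of Lemma~\ref{lm:implies-left-deriv} transfers verbatim to the first-order setting, because $\implnew$, $\lift$, and $\wk$ (all primitive or hp-admissible in $\lintfond^{*}$ by Lemma~\ref{lem:extended-lint-properties}) do not interact with domain atoms. For $\botl$, I would use the same encoding trick as in Theorem~\ref{thm:admiss-all-rules-propositional}: reading $\bot$ as shorthand for $p \land \neg p$, any sequent $\R, w : \bot, \Gamma \sar \Delta$ is derivable by an application of $\idnd$ (which absorbs an arbitrary relational context), followed by $\negl$ and $\conrl$.

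The main obstacle, such as it is, is purely bookkeeping: one must check that each sequential elimination step preserves the side conditions $v \leadsto^{\R} w$ on $\existsndr$, $\allndl$, $\existsnedr$, and $\allnedl$, i.e.\ that removing a $\refl$, $\trans$, or $\nd$ inference never destroys a directed path required higher in the derivation tree. This invariant has already been established inside the proofs of Lemma~\ref{lem:trans-refl-fond-elim} and Lemma~\ref{lm:nd-admiss} (see the explicit path-preservation arguments at the ends of those proofs: deleting loops $u \leq u$ or contracting a two-step path $u \leq v, v \leq w$ always leaves a directed path from any earlier endpoint to $w$ in place). Consequently, the composite statement follows without any new technology.
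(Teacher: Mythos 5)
Your proposal is correct and follows essentially the same route as the paper: chain Lemma~\ref{lem:trans-refl-fond-elim}, Lemma~\ref{lm:nd-admiss}, and Lemma~\ref{lm:ned-admiss} in that order, absorb $\idfo$, $\existsr$, and $\alll$ via Lemma~\ref{lem:old-instance-of-new-nd}, treat $\imprl$ as in Lemma~\ref{lm:implies-left-deriv}, and derive $\botl$ from $\negl$ and $\conrl$ using $\bot := p \land \neg p$. Your additional remarks on the ordering of the eliminations and on path-preservation for the side conditions are accurate but already internal to the cited lemmata.
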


\begin{proof} The admissibility of $\{\refl,\trans\}$, $\{\idfo, \existsr, \alll\}$, $\nd$, and $\ned$ follow from Lem.~\ref{lem:trans-refl-fond-elim}, Lem.~\ref{lem:old-instance-of-new-nd}, Lem.~\ref{lm:nd-admiss}, and Lem.~\ref{lm:ned-admiss}, respectively. Admissibility of $\impl$ is similar to Lem.~\ref{lm:implies-left-deriv} and admissibility of $\botl$ follows from the fact that the rule can be derived using $\negl$ and $\landl$.
\qed
\end{proof}

%We refer to $\intfondl - \{\idfo, \botl, \imprl, \refl, \trans, \existsr, \alll, \nd, \ned\}$ as $\lintfond^{*}$.

\begin{theorem}
\label{thm:treelike-derivations-fond}
(i) The calculus $\intfondl$ is complete relative to treelike derivations with the fixed root property. (ii) Every derivation in $\intfondl$ of a treelike labelled sequent is a treelike derivation with the fixed root property.
\end{theorem}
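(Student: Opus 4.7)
The plan is to mimic the proof of Thm.~\ref{thm:treelike-derivations}, adapting the argument to the richer rule set of $\intfondl$. For part (i), completeness of $\intfondl$ is already delivered by Thm.~\ref{thm:admiss-all-rules-fond}, so it suffices to argue that any derivation of $\sar w : A$ one obtains in $\intfondl$ must in fact be treelike with root $w$. I would do this by a bottom-up inspection of the rules of $\intfondl$, showing that each preserves the property ``the conclusion is treelike with root $w$ implies each premise is treelike with root $w$''. Since the end sequent $\sar w : A$ trivially satisfies this (its graph is the single vertex $w$), iterating the observation yields the claim.

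To carry this out, I would partition the rules of $\intfondl$ into three families according to how they affect the graph. First, the rules $\conrl$, $\conrr$, $\disrl$, $\disrr$, $\negl$, $\implnew$, $\lift$, $\existsl$, $\existsndr$, $\existsnedr$, $\allndl$, $\allnedl$ and the initial rules ($\idnew$, $\idfond$, $\botl$) make no change to $\R$ between conclusion and premise: either they act only on labelled formulae, or they add/consume domain atoms $\unda \in D_v$ without touching the $\leq$-edges. Hence the premise graph is identical to the conclusion graph and remains treelike with the same root. Second, the rules $\imprr$, $\negr$, and $\allndr$ (as well as $\allr$ if it appears in $\intfondl$) introduce a single fresh edge $w' \leq v$ in the premise, where $v$ is an eigenvariable not occurring in the conclusion; bottom-up this attaches a new leaf $v$ to the existing node $w'$, which keeps the graph a tree with the same root. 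The third family --- the structural rules that actually destroyed treelikeness (viz.\ $\refl$, $\trans$, $\nd$, $\ned$) --- has been removed from $\intfondl$, which is exactly why the argument goes through.

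For part (ii), the same inductive analysis applies verbatim: given a derivation whose end sequent is treelike, the rules above propagate treelikeness upwards (with the same root), so every sequent in the derivation is treelike and shares the root of the conclusion. The only subtlety, and the point I would be most careful about, is to verify that the eigenvariable side conditions on $\imprr$, $\negr$, $\allndr$ (and the parameter eigenvariable conditions on $\existsnedr$, $\allnedl$, $\existsl$) really guarantee freshness of the label $v$ in the larger derivation; without this, an upward application could re-use an existing label and create either a back-edge or backwards branching, destroying treelikeness. This is however standard in labelled systems and is exactly the freshness imposed in Fig.~\ref{fig:labelled-calculi} and Fig.~\ref{fig:new-rules}, so no extra work is needed beyond pointing to those side conditions. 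The main obstacle is therefore essentially bookkeeping: enumerating the expanded rule set of $\intfondl$ and confirming that none of the newly-introduced first-order rules ($\existsndr$, $\existsnedr$, $\allndl$, $\allnedl$) covertly insert $\leq$-edges. Once that is checked, the result follows precisely as in the propositional case.
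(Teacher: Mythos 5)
Your proposal is correct and follows essentially the same route as the paper, which likewise reduces completeness to Thm.~\ref{thm:admiss-all-rules-fond} and then argues by bottom-up inspection that every rule of $\intfondl$ either preserves the relational structure or attaches a fresh forward edge, so that a tree emanating from the root is constructed. The only cosmetic slip is the mention of $\allndr$, which is a rule of the nested calculus rather than of $\intfondl$ (the relevant labelled rule is $\allr$, which you do cover).
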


\begin{proof}
Similar to Thm.~\ref{thm:treelike-derivations}.
\qed
\end{proof}

Although the above theorem is fundamental for translating derivations in $\intfondl$ into derivations in $\nintfond$, additional work needs to be done to ensure that the $\existsndr$ and $\allndl$ rules properly translate to instances of the $\existsr$ and $\alll$ rules. The side condition imposed on $\existsndr$ and $\allndl$ relies on the existence of a domain atom containing a parameter $\unda$, whereas the side condition imposed on the nested $\existsr$ and $\alll$ rules stipulates that the parameter $\unda$ must either be an eigenvariable or available. The question arises: how do we reconcile these seemingly distinct side conditions?

Interestingly, our process of refinement has offered us an unadulterated perspective of the nested $\existsr$ and $\alll$ rules.  As will be argued below (Thm.~\ref{thm:Intfondl-to-Nintfond}), the rules $\existsnedr$ and $\allnedl$ in $\intfondl$ translate to instances of the nested $\existsr$ and $\alll$ rules where the relevant parameter of the inference is an \emph{eigenvariable}, and the rules $\existsndr$ and $\allndl$ translate to instances of the nested $\existsr$ and $\alll$ rules where the relevant parameter of the inference is \emph{available}. In other words, what are single rules with a disjunctive side condition in the nested setting have bifurcated into distinct sets of rules with atomic side conditions in the labelled setting.

In order to ensure that instances of $\existsndr$ and $\allndl$ properly translate to instances of the nested $\existsr$ and $\alll$ rules then, we must show that if the side condition of the former rules holds, then the side condition of the latter rules holds. This motivates the definition of a \emph{nested form} derivation:

\begin{definition}[Nested Form]\label{def:nested-form} Let $\Pi$ be a derivation of a labelled sequent $\Lambda$ in $\intfondl$. We define $\Pi$ to be in \emph{nested form} iff (i) $\Lambda$ is nestedlike, and (ii) for each instance of $\existsndr$ and $\allndl$ in $\Pi$:
\begin{center}
\resizebox{\columnwidth}{!}{
\begin{tabular}{c c}
\AxiomC{$\R, \unda \in D_{v}, \Gamma \Rightarrow \Delta, w: A(\unda/x), w: \exists x A$}
\RightLabel{$\existsndr$}
\UnaryInfC{$\R, \unda \in D_{v}, \Gamma \Rightarrow \Delta, w: \exists x A$}
\DisplayProof

&

\AxiomC{$\R, \unda \in D_{v}, w : A(\unda/x), w : \forall x A, \Gamma \Rightarrow \Delta$}
\RightLabel{$\allndl$}
\UnaryInfC{$\R, \unda \in D_{v}, w : \forall x A, \Gamma \Rightarrow \Delta$}
\DisplayProof
\end{tabular}
}
\end{center}
there exists a formula $u :B(\unda)$ such that $u \leadsto^{\R} w$ in the conclusion of the inference.
\end{definition}

Assuming we are given a nested form derivation in $\intfondl$, condition (ii) of the above definition ensures that %if a domain atom of the form $\unda \in D_{w}$ exists within a labelled sequent of the proof, then a labelled formula of the form $w : A(\unda)$ exists within the labelled sequent as well. Thus, when translating 
 instances of $\existsndr$ and $\allndl$ will properly translate into instances of $\existsr$ and $\alll$ in $\nintfond$. Additionally, nested form proofs are significant because one can view the space of nested form proofs in $\intfondl$ as being equivalent to the space of proofs generated by $\nintfond$, as confirmed by Thm.~\ref{thm:Intfondl-to-Nintfond} and~\ref{thm:Nintfond-to-Intfondl} below. Before we establish this correspondence however, the following lemma is needed for the proof of  Thm.~\ref{thm:Intfondl-to-Nintfond}, that is, for showing that if a nestedlike labelled sequent $\Lambda$ is provable in $\intfondl$, then $\switch(\Lambda)$ is provable in $\nintfond$.

\begin{lemma}\label{lem:Nestedlike-sequents-have-nested-form-proofs}
If a nestedlike labelled sequent $\Lambda$ is derivable in $\intfondl$, then $\Lambda$ has a nested form derivation in $\intfondl$.
\end{lemma}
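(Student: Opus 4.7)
The plan is to leverage the strong completeness of $\nintfond$ (Theorem~\ref{thm:strong-soundness-completeness-nested}) together with the nested-to-labelled translation $\switchtwo$ (Theorem~\ref{thm:Nintfond-to-Intfondl}) to obtain a nested form derivation. First, since $\Lambda$ is nestedlike, there is a nested sequent $\Sigma$ with $\switchtwo(\Sigma) = \Lambda$; indeed take $\Sigma := \switch(\Lambda)$, which is well-defined by Lemma~\ref{lem:nestedlike-implies-treelike}. Since $\Lambda$ is derivable in $\intfondl$ and $\intfondl$ is sound for $\intfond$ (a consequence of Theorem~\ref{thm:lint-properties}(v) for $\lintfond$ together with the admissibility theorems of Section~\ref{section-5}), the formula interpretation $\uc \iota(\Sigma)$ is $\intfond$-valid. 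By the strong completeness of $\nintfond$, the sequent $\Sigma$ has a derivation $\Pi'$ in $\nintfond$.

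Next, I would apply Theorem~\ref{thm:Nintfond-to-Intfondl} to $\Pi'$ to obtain a derivation $\Pi$ of $\switchtwo(\Sigma) = \Lambda$ in $\intfondl$. It remains to verify that $\Pi$ is in nested form. Condition (i) of Definition~\ref{def:nested-form} is immediate, since $\Lambda$ is nestedlike by hypothesis. For condition (ii), I would trace how the nested rules $\existsr$ and $\alll$ translate under $\switchtwo$: each carries the side condition $\dag_1$ stipulating that the active parameter $\unda$ is either an eigenvariable or \emph{available} in the relevant component. If $\unda$ is an eigenvariable, the translation yields an instance of $\existsnedr$ or $\allnedl$, which imposes no further obligation. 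If instead $\unda$ is available in the component bearing $\exists x A$ (respectively, $\forall x A$) at label $w$, then by Definition~\ref{def:available-parameter} some formula $B(\unda)$ occurs in that component or in an ancestor component at a label $u$. Under $\switchtwo$, this translates to a labelled formula $u : B(\unda)$ with $u \leadsto^{\R} w$ appearing in the conclusion of the resulting $\existsndr$ or $\allndl$ inference---exactly the witness required by Definition~\ref{def:nested-form}(ii).

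The main obstacle is ensuring that the correspondence between nested availability and the labelled witness condition is preserved uniformly by the translation: one must confirm that $\switchtwo$ sends each nested rule instance to a matching labelled inference whose side condition is validated, and that the relational connectivity $u \leadsto^{\R} w$ between the ancestor component supplying $B(\unda)$ and the target label $w$ is faithfully reflected in the relational atoms generated by $\switchtwo$. Here the inductive structure of $\switchtwo$ (Definition~\ref{def:switchtwo}), which outputs $\sigma \leq \sigma.i$ for each boxed child, makes this straightforward: ancestor-descendant relationships in the nested tree correspond precisely to directed paths in the labelled relational structure.

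Finally, I note that this argument depends on Theorem~\ref{thm:strong-soundness-completeness-nested}. To avoid this dependency---so that the present lemma may be used to \emph{derive} the completeness of $\nintfond$ from that of $\lintfond$ via the refinement and translation pipeline---one can instead proceed constructively, by induction on the given $\intfondl$-derivation: any application of $\existsndr$ or $\allndl$ whose side condition lacks a witness must have its active parameter $\unda$ behave effectively as an eigenvariable (since no formula in the conclusion carries it), and may therefore be rewritten (after hp-admissible parameter renaming, cf.\ Lemma~\ref{lem:extended-lint-properties}(ii)) as an instance of $\existsnedr$ or $\allnedl$. This direct transformation respects the treelike structure established by Theorem~\ref{thm:treelike-derivations-fond} and preserves the end sequent $\Lambda$, yielding the required nested form derivation without recourse to the completeness of $\nintfond$.
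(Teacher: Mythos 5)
Your main argument is essentially the paper's own proof of this lemma: soundness of $\intfondl$, transfer of validity to $\switch(\Lambda)$, Fitting's strong completeness of $\nintfond$ (Thm.~\ref{thm:strong-soundness-completeness-nested}), and translation of the resulting nested derivation back to a nested form derivation of $\switchtwo(\switch(\Lambda)) = \Lambda$ via Thm.~\ref{thm:Nintfond-to-Intfondl} and Lem.~\ref{lem:nestedlike-implies-treelike}. Your closing constructive sketch, however, goes beyond what the paper does (it explicitly defers a completeness-independent proof to future work) and contains a gap: failure of the nested-form witness condition for an $\existsndr$ or $\allndl$ instance only rules out a formula $u : B(\unda)$ with $u \leadsto^{\R} w$, not an occurrence of $\unda$ at a label \emph{not} connected to $w$ by a directed path, so $\unda$ need not be an eigenvariable and the inference cannot simply be rewritten as $\existsnedr$ or $\allnedl$.
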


The truth of the above lemma can be seen to follow from Thm.~\ref{thm:Nintfond-to-Intfondl} below (which states that each derivation in $\nintfond$ of a nested sequent $\Sigma$ can be translated into a nested form derivation of $\switchtwo(\Sigma)$ in $\intfondl$) in conjunction with a few facts: (i) $\intfondl$ is sound, (ii) one can show by semantic arguments that if a nestedlike labelled sequent $\Lambda$ is valid, then so is $\switch(\Lambda)$, (iii) by Fitting's work in~\cite{Fit14}, we know that $\nintfond$ is strongly complete (Thm.~\ref{thm:strong-soundness-completeness-nested}), and (iv) for any nestedlike labelled sequent $\Lambda$, $\switchtwo(\switch(\Lambda)) = \Lambda$ (Lem.~\ref{lem:nestedlike-implies-treelike}).\footnote{The soundness of $\intfondl$ is established by Thm.~\ref{thm:refined-to-labelled} in Sect.~\ref{section-7}, where it is shown that every derivation in $\intfondl$ can be effectively transformed into a derivation of the same labelled sequent in $\lintfond$. Since $\lintfond$ is sound by Thm.~\ref{thm:lint-properties}, this establishes the soundness of $\intfondl$.} Hence, given a derivable nestedlike labelled sequent $\Lambda$, which is therefore valid since $\intfondl$ is sound by (i), it follows that $\switch(\Lambda)$ is valid by (ii), meaning that $\switch(\Lambda)$ is derivable by (iii), and so by Thm.~\ref{thm:Nintfond-to-Intfondl} below, $\switchtwo(\switch(\Lambda))$ has a nested form derivation, i.e. $\Lambda$ has a nested form derivation by (iv). Although this argument serves as a proof of Lem.~\ref{lem:Nestedlike-sequents-have-nested-form-proofs}, it is not entirely satisfactory. 

The above argument relies on the completeness of $\nintfond$, and so, if our aim were to extract $\nintfond$ from $\lintfond$ without any prior knowledge of $\nintfond$ or its properties, or to show $\nintfond$ complete via this extraction, then since the above argument relies on the completeness of $\nintfond$ (i.e. Thm.~\ref{thm:strong-soundness-completeness-nested}), we would fail to reach our aim. The author sees at least two possible strategies for confirming the above lemma without relying on the completeness of $\nintfond$. In the first method, one could write a (potentially non-terminating) proof-search procedure for $\intfondl$ that takes a nestedlike labelled sequent $\Lambda$ as input and attempts to construct a nested form proof of $\Lambda$, showing that a counter-model for $\Lambda$ can be constructed given that the procedure does not halt with a nested form proof of the input. Such a method of proof would effectively prove the completeness of $\intfondl$ relative to valid nestedlike labelled sequents while showing that every such sequent possesses a nested form proof. The second method consists of showing how to algorithmically transform any derivation of a nestedlike labelled sequent $\Lambda$ in $\intfondl$ into a nested form proof of $\Lambda$. Such methods of proof are desirable as they would effectively demonstrate that $\nintfond$ inherits completeness from its associated labelled calculus $\lintfond$, however, we leave such results to future work, and continue our investigation of the correspondence between semantic and nested systems.

\begin{theorem}\label{thm:Intfondl-to-Nintfond}
If a nestedlike labelled sequent $\Lambda$ is derivable in $\intfondl$, then $\switch(\Lambda)$ is derivable in $\nintfond$.
\end{theorem}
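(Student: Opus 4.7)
The plan is to first invoke Lemma~\ref{lem:Nestedlike-sequents-have-nested-form-proofs} to replace the given derivation of the nestedlike labelled sequent $\Lambda$ with a nested form derivation $\Pi$ of $\Lambda$ in $\intfondl$. Then, by Theorem~\ref{thm:treelike-derivations-fond}, every labelled sequent occurring in $\Pi$ is treelike with the fixed root property, so the translation $\switch$ is well-defined at every node of $\Pi$. The proof will then proceed by induction on the height of $\Pi$, translating each rule instance in $\intfondl$ into a corresponding rule instance in $\nintfond$ under $\switch$.

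For the base cases ($\idnd$ and $\botl$), the crucial observation is that in a nestedlike labelled sequent the labelled formula $w : p(\vv{\unda})$ and the domain atoms $\vv{\unda} \in D_{w}$ attached to the same label $w$ collapse under $\switch$ into the formula $p(\vv{\unda})$ appearing in the component corresponding to $w$; since monotonicity is built into the $\idfo$ rule in $\nintfond$ (via the $\lift$-like persistence of atoms in subtrees), each labelled initial sequent translates to a nested initial sequent. For the propositional and structural rules ($\conrl$, $\conrr$, $\disrl$, $\disrr$, $\imprr$, $\implnew$, $\negl$, $\negr$, $\lift$, $\existsl$, $\allcdr$-analogues not in $\nintfond$, etc.), one verifies that each labelled inference has the same local graph shape as its nested counterpart, so translation proceeds rule-by-rule; here I will rely on Lemma~\ref{lem:nestedlike-implies-treelike} to ensure the premise labelled sequents remain nestedlike, so that IH applies.

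The crux of the proof lies in the quantifier rules $\existsndr$, $\allndl$, $\existsnedr$, and $\allnedl$. For $\existsnedr$ and $\allnedl$, the side condition $\dag_{7}$ in Figure~\ref{fig:new-rules} tells us that $\unda$ is an eigenvariable, and since $\unda$ is used to decompose a formula at label $w$, the translation yields an instance of the nested $\existsr$/$\alll$ rule with $\unda$ fresh, which satisfies the nested side condition $\dag_{1}$ (``available or eigenvariable''). For $\existsndr$ and $\allndl$, the nested form condition of Definition~\ref{def:nested-form} is exactly what is needed: it guarantees a formula $u : B(\unda)$ with $u \leadsto^{\R} w$ in the conclusion, which under $\switch$ places the formula $B(\unda)$ at an ancestor node of the component corresponding to $w$, making $\unda$ \emph{available} at $w$ in the sense of Definition~\ref{def:available-parameter}. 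This verifies the side condition $\dag_{1}$ on the nested $\existsr$ and $\alll$ rules.

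The hard part is this last point: confirming that the labelled notion of \emph{directed path to a domain atom witness} translates faithfully to the nested notion of \emph{available parameter in an ancestor component}. This translation is precisely what the definition of nested form was crafted to support, so once the nested form derivation is in hand, the rest of the induction is routine case analysis. No separate work is needed for $\refl$, $\trans$, $\nd$, $\ned$, $\idfo$, $\existsr$, or $\alll$, as these rules are absent from $\intfondl$ by construction.
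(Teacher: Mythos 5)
Your proposal is correct and follows essentially the same route as the paper's proof: obtain a nested form derivation via Lemma~\ref{lem:Nestedlike-sequents-have-nested-form-proofs}, use Theorem~\ref{thm:treelike-derivations-fond} (together with Lemma~\ref{lem:nestedlike-implies-treelike}) to ensure $\switch$ is defined throughout, translate all rules except the four quantifier rules directly, and handle $\existsndr$/$\allndl$ by observing that the nested form condition yields a formula $u : B(\unda)$ with $u \leadsto^{\R} w$, which makes $\unda$ available in the sense of Definition~\ref{def:available-parameter}, while the eigenvariable condition on $\existsnedr$/$\allnedl$ carries over directly. The only superfluous detail is your appeal to ``$\lift$-like persistence'' in the $\idnd$ base case: both principal occurrences of $p(\vv{\unda})$ carry the same label $w$, so they land in the same component under $\switch$ and no monotonicity argument is needed.
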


\begin{proof} Since $\Lambda$ is nestedlike and derivable by assumption, we know that $\Lambda$ is treelike (by Lem.~\ref{lem:nestedlike-implies-treelike}) and has a nested form derivation $\Pi$ (by Lem.~\ref{lem:Nestedlike-sequents-have-nested-form-proofs}), implying that $\Pi$ is treelike by Thm.~\ref{thm:treelike-derivations-fond}. Hence, the translation function $\switch$ is defined for each labelled sequent in $\Pi$. It is straightforward to verify that each rule in $\intfondl$---with the exception of $\existsndr$, $\existsnedr$, $\allndl$, and $\allnedl$---translates to an instance of the corresponding rule in $\nintfond$ under the $\switch$ translation. We show the translation of the $\existsndr$ and $\existsnedr$ inferences below; the $\allndl$ and $\allnedl$ inferences are similar.

In the $\existsndr$ inference top-left below, we know that there exists a labelled formula $u : B(\unda) \in \Gamma, \Delta$ where $u \leadsto^{\R} w$ since our given derivation is in nested form (Def.~\ref{def:nested-form}). % with the active domain atom $\unda \in D_{v}$ occurring in the premise of the inference. 
 Hence, after applying $\switch$ to the premise of the $\existsndr$ inference, the parameter $\unda$ will occur in the formula $B(\unda)$ that is either in the component $\Gamma \restriction w \far A(\unda/x), \exists x A, \Delta \restriction w$ (and is different than the occurrence of $A(\unda/x)$) or is in the past of the component in the nested sequent $\Sigma\{\Gamma \restriction w \far A(\unda/x), \exists x A, \Delta \restriction w\}$. Hence, the parameter $\unda$ is available, and so, the necessary side condition of the $\existsr$ inference is met. In the $\existsnedr$ inference below, the parameter $\unda$ is an eigenvariable and continues to be so after applying the translation $\switch$, implying that the nested $\existsr$ inference is valid.
\begin{flushleft}
\AxiomC{$\R, \unda \in D_{v}, \Gamma \Rightarrow \Delta, w: A(\unda/x), w: \exists x A$}
\RightLabel{$\existsndr$}
\UnaryInfC{$\R, \unda \in D_{v}, \Gamma \Rightarrow \Delta, w: \exists x A$}
\DisplayProof
\end{flushleft}
\begin{flushright}
\AxiomC{$\switch(\R, \unda \in D_{v}, \Gamma \Rightarrow \Delta, w: A(\unda/x), w: \exists x A)$}
\dottedLine
\RightLabel{=}
\UnaryInfC{$\Sigma\{\Gamma \restriction w \far A(\unda/x), \exists x A, \Delta \restriction w\}$}
\RightLabel{$\existsr$}
\UnaryInfC{$\Sigma\{\Gamma \restriction w \far \exists x A, \Delta \restriction w\}$}
\dottedLine
\RightLabel{=}
\UnaryInfC{$\switch(\R, \unda \in D_{v}, \Gamma \Rightarrow \Delta, w: \exists x A)$}
\DisplayProof
\end{flushright}

\begin{flushleft}
\AxiomC{$\R, \unda \in D_{v}, \Gamma \Rightarrow \Delta, w: A(\unda/x), w: \exists x A$}
\RightLabel{$\existsnedr$}
\UnaryInfC{$\R, \Gamma \Rightarrow \Delta, w: \exists x A$}
\DisplayProof
\end{flushleft}
\begin{flushright}
\AxiomC{$\switch(\R, \unda \in D_{v}, \Gamma \Rightarrow \Delta, w: A(\unda/x), w: \exists x A)$}
\dottedLine
\RightLabel{=}
\UnaryInfC{$\Sigma\{\Gamma \restriction w \far A(\unda/x), \exists x A, \Delta \restriction w\}$}
\RightLabel{$\existsr$}
\UnaryInfC{$\Sigma\{\Gamma \restriction w \far \exists x A, \Delta \restriction w\}$}
\dottedLine
\RightLabel{=}
\UnaryInfC{$\switch(\R, \Gamma \Rightarrow \Delta, w: \exists x A)$}
\DisplayProof
\end{flushright}
\qed 
\end{proof}

\begin{theorem}\label{thm:Nintfond-to-Intfondl}
Every derivation in $\nintfond$ of a nested sequent $\Sigma$ is effectively translatable to a nested form derivation of the labelled sequent $\switchtwo(\Sigma)$ in $\intfondl$.
\end{theorem}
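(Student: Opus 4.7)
The proof will proceed by induction on the height of the nested derivation of $\Sigma$ in $\nintfond$. The plan is to show that each rule of $\nintfond$ can be simulated by its counterpart in $\intfondl$ under the $\switchtwo$ translation, producing a labelled derivation whose end sequent is $\switchtwo(\Sigma)$ and which satisfies the two conditions of a nested form derivation (Def.~\ref{def:nested-form}). Condition (i), that the end sequent is nestedlike, holds by construction of $\switchtwo$, so the substantive obligation is condition (ii) at each $\existsndr$ or $\allndl$ step introduced by the translation.

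For the base case, $\id$ and $\idfo$ translate directly to $\idnew$ and $\idnd$; for $\idnd$, the side condition $v_{i} \leadsto^{\R} w$ is met because $\switchtwo$ places each domain atom $\unda_{i} \in D_{v_{i}}$ only at components containing $\unda_{i}$, and the treelike shape of $\switchtwo(\Sigma)$ ensures a directed path to the principal label $w$ whenever $\unda_{i}$ also appears in $p(\vv{\unda})$ at $w$. For the inductive step, the propositional rules ($\conrl$, $\conrr$, $\disrl$, $\disrr$, $\negl$, $\imprl$ via $\implnew$) and $\lift$ map one-to-one to their labelled analogues on the same sequent shape. The rules introducing a new world ($\imprr$, $\negr$, $\allndr$) translate to labelled $\imprr$, $\negr$, and $\allr$, where the fresh boxed subsequent becomes a fresh label $v$ with relational atom $w \leq v$ (and, for $\allr$, a fresh domain atom $\unda \in D_{v}$) contributed by $\switchtwo$, and the eigenvariable side conditions transfer unchanged. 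The rule $\existsl$ with eigenvariable $\unda$ maps to labelled $\existsl$, with the premise's translation supplying $\unda \in D_{w}$.

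The essential cases are $\existsr$ and $\alll$, which split on whether the principal parameter $\unda$ is an eigenvariable or merely available. If $\unda$ is an eigenvariable, then $\switchtwo$ of the premise carries the fresh atom $\unda \in D_{w}$ arising from the introduced $A(\unda/x)$, and I apply $\existsnedr$ or $\allnedl$ with $v = w$, preserving the freshness side condition. If $\unda$ is available in the sense of Def.~\ref{def:available-parameter}, then there is a formula $B(\unda)$ in the current component or in an ancestor labelled $u$, so $\switchtwo$ of the conclusion already contains both $u : B(\unda)$ and the domain atom $\unda \in D_{u}$, with $u \leadsto^{\R} w$ holding by treelikeness; I then apply $\existsndr$ or $\allndl$ with $v = u$, and this same $u : B(\unda)$ serves as the witness required by clause (ii) of Def.~\ref{def:nested-form}.

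The main obstacle I expect is the bookkeeping of domain atoms across $\switchtwo$: when $\unda$ is available only through an ancestor and does not otherwise occur in the current component, the translation of the nested premise carries a domain atom $\unda \in D_{w}$ that is absent from the translation of the nested conclusion, so applying the labelled $\existsndr$ or $\allndl$ to $\switchtwo$ of the premise yields a sequent that differs from $\switchtwo$ of the conclusion by this extraneous atom. To keep intermediate sequents aligned so that the end sequent of the translated derivation is exactly $\switchtwo(\Sigma)$, the hp-admissibility of the structural rules of Fig.~\ref{fig:lab-struc-rules}, in particular weakening established in Lem.~\ref{lem:extended-lint-properties}(ii), can be invoked to adjust intermediate labelled sequents, with care taken that the admissible transformations do not introduce any rule outside $\intfondl$ and that the witness demanded by nested form is preserved through each such adjustment.
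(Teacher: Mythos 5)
Your overall architecture matches the paper's: a rule-by-rule simulation under $\switchtwo$, with the $\existsr$/$\alll$ cases split according to whether the principal parameter is an eigenvariable (translating to $\existsnedr$/$\allnedl$) or available (translating to $\existsndr$/$\allndl$, with the witness $u : B(\unda)$ securing clause (ii) of Def.~\ref{def:nested-form}). The gap lies in how you propose to repair the domain-atom mismatches. You correctly observe that $\switchtwo$ of a premise can contain domain atoms that are absent from $\switchtwo$ of the conclusion, but you offer only the rules of Fig.~\ref{fig:lab-struc-rules} --- in particular weakening --- to fix this. Weakening only \emph{adds} material; it cannot delete the extraneous atom $\unda \in D_{w}$ that survives an $\existsndr$ or $\allndl$ step when $\unda$ is available only through an ancestor, nor the atoms $\vec{\unda} \in D_{u}$ that the premise of a $\lift$ step carries at the child label (a case you assert maps ``one-to-one,'' which it does not), and $\ctrrel$ only merges identical duplicates. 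The tool actually required is the admissibility of $\nd$ in $\intfondl$ (Thm.~\ref{thm:admiss-all-rules-fond}), read top-down to delete the future copy of a domain atom in the presence of a past copy; in the available-parameter case one must additionally first weaken in a chain $\unda \in D_{v_{1}}, \ldots, \unda \in D_{v_{n}}$ along the directed path from $u$ to $w$ so that $\nd$ can be applied $n+1$ times in succession. Without this, the end sequent of each simulated inference simply fails to equal $\switchtwo$ of the corresponding nested sequent.

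The same omission undermines your claim that for $\imprr$, $\negr$ and $\allndr$ the ``eigenvariable side conditions transfer unchanged.'' Under $\switchtwo$, the premise of an $\imprr$ step carries domain atoms $\vec{\unda} \in D_{v}$ at the freshly introduced label $v$ for every parameter occurring in $A, B$, so $v$ occurs in the would-be conclusion and the labelled $\imprr$ rule is not applicable as it stands; the paper first weakens in the corresponding atoms $\vec{\unda}' \in D_{w}$ and then invokes $\nd$-admissibility to delete the atoms at $v$, only after which $v$ becomes an eigenvariable. (An analogous adjustment is needed for $\allndr$ whenever $A$ contains parameters besides the quantified one.) Your strategy is therefore salvageable, but the lemma it must rest on at every one of these junctures is the admissibility of $\nd$, not the hp-admissibility of weakening, and this is precisely where the paper's proof does its real work.
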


\begin{proof} We show how to translate the $\disrl$, $\lift$, $\impr$ and $\existsr$ rules as all other cases are simple or similar. Last, we argue that the derivation obtained in $\intfondl$ is in nested form.

$\disrl$. Our given $\disrl$ inference is shown below top. Let $\vec{\unda} := \unda_{1}, \ldots, \unda_{n}$ be all free parameters in $A, X, Y$ that do not occur in $B, X, Y$, and let $\vec{\undb} := \undb_{1}, \ldots, \undb_{k}$ be all free parameters in $B, X, Y$ that do not occur in $A, X, Y$. To obtain the desired conclusion, we first apply the hp-admissibility of $\wk$ (Lem.~\ref{lem:extended-lint-properties}) in order to ensure that the contexts of the premises match, followed by an application of the $\disrl$ rule. We assume that $w$ is the label assigned to the formulae of the components $X, A \far Y$ and $X, B \far Y$ in $\Sigma \lcut X, A \far Y \rcut$ and $\Sigma \lcut X, B \far Y \rcut$, respectively.
\begin{center}
\AxiomC{$\Sigma \lcut X, A \far Y \rcut$}
\AxiomC{$\Sigma \lcut X, B \far Y \rcut$}
\RightLabel{$\disrl$}
\BinaryInfC{$\Sigma \lcut X, A \lor B \far Y \rcut$}
\DisplayProof
\end{center}
\begin{center}
\resizebox{\columnwidth}{!}{
\AxiomC{$\switchtwo(\Sigma \lcut X, A \far Y \rcut)$}
\RightLabel{=}
\dottedLine
\UnaryInfC{$\R, \vec{\unda} \in D_{w}, w :A, \Gamma \Rightarrow \Delta$}
\RightLabel{$\wk$}
\dashedLine
\UnaryInfC{$\R, \vec{\unda} \in D_{w}, \vec{\undb} \in D_{w}, w :A, \Gamma \Rightarrow \Delta$}

\AxiomC{$\switchtwo(\Sigma \lcut X, B \far Y \rcut)$}
\RightLabel{=}
\dottedLine
\UnaryInfC{$\R, \vec{\undb} \in D_{w}, w :B, \Gamma \Rightarrow \Delta$}
\RightLabel{$\wk$}
\dashedLine
\UnaryInfC{$\R, \vec{\unda} \in D_{w}, \vec{\undb} \in D_{w}, w :B, \Gamma \Rightarrow \Delta$}
\RightLabel{$\disrl$}
\BinaryInfC{$\R, \vec{\unda} \in D_{w}, \vec{\undb} \in D_{w}, w :A \vee B, \Gamma \Rightarrow \Delta$}
\RightLabel{=}
\dottedLine
\UnaryInfC{$\switchtwo(\Sigma \lcut X, A \lor B \far Y \rcut)$}
\DisplayProof
}
\end{center}

$\lift$. Our given $\lift$ inference is shown below top-left. Let $\vec{\unda} := \unda_{1}, \ldots, \unda_{n}$ be all free parameters occurring in $A$ that do not occur in $X', Y'$, $w$ be the label assigned to the formulae of the component $X, A \far Y$, and $u$ be the label assigned to the formulae of the component $X', A \far Y'$. If we invoke the admissibility of $\nd$ (Thm.~\ref{thm:admiss-all-rules-fond}) $n$ times to delete all domain atoms $\vec{\unda} \in D_{u}$, then after an application of $\lift$, we obtain the desired conclusion. 

\begin{flushleft}
\AxiomC{$\Sigma\{X, A \far Y, [X', A \far Y']\}$}
\RightLabel{$\lift$}
\UnaryInfC{$\Sigma\{X, A \far Y, [X' \far Y']\}$}
\DisplayProof
\end{flushleft}
\begin{flushright}
\AxiomC{$\switchtwo(\Sigma\{X, A \far Y, [X', A \far Y']\})$}
\RightLabel{=}
\dottedLine
\UnaryInfC{$\R, \vec{\unda} \in D_{w}, \vec{\unda} \in D_{u},  w \leq u, w:A, u:A, \Gamma \Rightarrow \Delta$}
\RightLabel{$\nd \times n$}
\dashedLine
\UnaryInfC{$\R, \vec{\unda} \in D_{w},  w \leq u, w:A, u:A, \Gamma \Rightarrow \Delta$}
\RightLabel{$\lift$}
\UnaryInfC{$\R, \vec{\unda} \in D_{w}, w \leq u, w:A, \Gamma \Rightarrow \Delta$}
\RightLabel{=}
\dottedLine
\UnaryInfC{$\switchtwo(\Sigma\{X, A \far Y, [X' \far Y']\})$}
\DisplayProof
\end{flushright}

$\imprr$. Let $\vec{\unda} := \unda_{1}, \ldots, \unda_{n}$ be the free parameters occurring in the active formulae $A$ and $B$ (shown in the $\imprr$ inference below). Let $w$ be the label assigned to the formulae of the component $X \far Y$ and $v$ be the label assigned to the formulae of the component $A \far B$ after applying the translation function $\switchtwo$. In order to obtain the desired conclusion, we first apply the hp-admissibility of $\wk$ (Lem.~\ref{lem:extended-lint-properties}), which introduces the domain atoms $\vec{\unda}' \in D_{w} := \unda_{i_{1}} \in D_{w}, \ldots, \unda_{i_{k}} \in D_{w}$ such that $\unda_{i_{j}} \in D_{w} \not\in \R$ for $i_{j} \in \{1, \ldots, n\}$ and $j \in \{1, \ldots, k\}$, i.e. for each parameter $\unda_{i_{j}}$ in $\vec{\unda}$ we introduce the domain atom $\unda_{i_{j}} \in D_{w}$ given that it does not already occur in $\R$. Next, we apply admissibility of $\nd$ (Thm.~\ref{thm:admiss-all-rules-fond}) $n$ times to delete all domain atoms $\vec{\unda} \in D_{v}$, making $v$ an eigenvarible, and permitting $\impr$ to be applied.

\begin{flushleft}
\AxiomC{$\Sigma \lcut X \far Y, [A \far B] \rcut$}
\RightLabel{$\imprr$}
\UnaryInfC{$\Sigma \lcut X \far A \imp B, Y \rcut$}
\DisplayProof
\end{flushleft}
\begin{flushright}
\AxiomC{$\switchtwo(\Sigma \lcut X \far Y, [A \far B] \rcut)$}
\RightLabel{=}
\dottedLine
\UnaryInfC{$\R, \vec{\unda} \in D_{v}, w \leq v, v :A, \Gamma \Rightarrow \Delta, v :B$}
\dashedLine
\RightLabel{$\wk$}
\UnaryInfC{$\R, \vec{\unda}' \in D_{w}, \vec{\unda} \in D_{v}, w \leq v, v :A, \Gamma \Rightarrow \Delta, v :B$}
\dashedLine
\RightLabel{$\nd \times n$}
\UnaryInfC{$\R, \vec{\unda}' \in D_{w}, w \leq v, v :A, \Gamma \Rightarrow \Delta, v :B$}
\RightLabel{$\imprr$}
\UnaryInfC{$\R, \vec{\unda}' \in D_{w}, \Gamma \Rightarrow \Delta, w :A \imp B$}
\RightLabel{=}
\dottedLine
\UnaryInfC{$\switchtwo(\Sigma \lcut X \far A \imp B, Y \rcut)$}
\DisplayProof
\end{flushright}

$\existsr$. If the relevant parameter $\unda$ (in the active formula $A(\unda/x)$) is an \emph{eigenvariable} in the $\existsr$ inference below, then the inference translates to an instance of $\existsnedr$. If the relevant parameter is \emph{available} in the inference, then the translation requires more care, so we focus on showing how to translate this case below.

\begin{center}
\AxiomC{$\Sigma\{X \far A(\unda/x), \exists x A, Y\}$}
\RightLabel{$\existsr$}
\UnaryInfC{$\Sigma\{X \far \exists x A, Y\}$}
\DisplayProof
\end{center}

Since the parameter $\unda$ is available in $\existsr$, this implies that there exists some formula $B(\unda)$ (different than the occurrence of $A(\unda/x)$) occurring in, or in the past of, the displayed component $X \far A(\unda/x), \exists x A, Y$ of the nested sequent $\Sigma\{X \far A(\unda/x), \exists x A, Y\}$ (i.e. the premise of the $\existsr$ inference above). We assume that the translation $\switchtwo$ assigns the label $u$ to all formulae in the component where $B(\unda)$ occurs, and note that $u : B(\unda) \in \Gamma,\Delta$ below. By the definition of $\switchtwo$ (Def.~\ref{def:switchtwo}), this implies the existence of a domain atom $\unda \in D_{u}$ in the image of $\Sigma\{X \far A(\unda/x), \exists x A, Y\}$ under $\switchtwo$ (which we have explicitly included in the derivation below). Because $B(\unda)$ is available and is associated with the label $u$, this implies that there is a directed path $u \leq v_{1}$, $\ldots$, $v_{n} \leq w$ from $u$ to $w$ in $\R$, i.e. $u \leadsto^{\R} w$. (NB. If $u = w$, then the translation is straightforward, so we omit consideration of that case.) If we weaken in domain atoms of the form $\unda \in D_{v_{i}}$ (with $i \in \{1, \ldots, n\}$) along this directed path, invoke the admissibility of $\nd$ (Thm.~\ref{thm:admiss-all-rules-fond}), and then apply $\existsndr$, we obtain the image of the conclusion of the $\existsr$ inference above under the $\switchtwo$ translation. This entire process is succinctly demonstrated in the derivation below:

\begin{center}
\resizebox{\columnwidth}{!}{
\begin{tabular}{c}
%\AxiomC{$\Sigma\{X \far A(\unda/x), \exists x A, Y\}$}
%\RightLabel{$\existsr$}
%\UnaryInfC{$\Sigma\{X \far \exists x A, Y\}$}
%\DisplayProof
\AxiomC{$\switchtwo(\Sigma\{X \far A(\unda/x), \exists x A, Y\})$}
\dottedLine
\RightLabel{=}
\UnaryInfC{$\R, \unda \in D_{u}, \unda \in D_{w}, \Gamma \sar w : A(\unda/x), w : \exists x A, \Delta$}
\dashedLine
\RightLabel{$\wk$}
\UnaryInfC{$\R, \unda \in D_{u}, \unda \in D_{v_{1}}, \ldots, \unda \in D_{v_{n}}, \unda \in D_{w}, \Gamma \sar w : A(\unda/x), w : \exists x A, \Delta$}
\dashedLine
\RightLabel{$\nd \times (n+1)$}
\UnaryInfC{$\R, \unda \in D_{u}, \Gamma \sar w : A(\unda/x), w : \exists x A, \Delta$}
\RightLabel{$\existsndr$}
\UnaryInfC{$\R, \unda \in D_{u}, \Gamma \sar w : \exists x A, \Delta$}
\dottedLine
\RightLabel{=}
\UnaryInfC{$\switchtwo(\Sigma\{X \far \exists x A, Y\})$}
\DisplayProof
\end{tabular}
}
\end{center}
To see why the output derivation is in nested form, first observe that the conclusion of the derivation will be $\switchtwo(\Sigma)$, which, %by this very fact, 
\emph{ipso facto}, makes the conclusion a nestedlike labelled sequent; hence, the derivation satisfies property (i) of Def.~\ref{def:nested-form}. Second, observe that the output derivation in $\intfondl$ satisfies property (ii) of Def.~\ref{def:nested-form} since each $\existsndr$ and $\allndr$ inference is obtained from an $\existsr$ or $\alll$ inference in the input derivation, ensuring the existence of a labelled formula $u : B(\unda)$ in the conclusion of any $\existsndr$ or $\allndr$ inference such that there is a directed path from $u$ to the label $w$ prefixing the principal formula.
% for any nestedlike labelled sequent $\Lambda := \R, \Gamma \sar \Delta$, that is, any output of the translation function $\switchtwo$, the following property (P) holds by Lem.~\ref{lem:nestedlike-implies-treelike}: %the definition of $\switchtwo$ (Def.~\ref{def:switchtwo}): (P) There exists a domain atom $\unda \in D_{w} \in \R$ iff there exists a labelled formula $w : A(\unda) \in \Gamma, \Delta$. As shown above, translating certain rules may require applications of the admissible $\nd$ rule, which will delete occurrences of domain atoms in the labelled sequents occurring between the translatee of the premise and the translatee of the conclusion. {\color{blue} Consequently, (P) does not hold for every labelled sequent in the block of inferences obtained from translating an $\imprr$, $\negr$, $\existsr$, $\allr$, or $\alll$ inference; nevertheless, a weakened version of (P) holds, namely: if there exists a domain atom $\unda \in D_{w} \in \R$, then there exists a labelled formula $w : A(\unda) \in \Gamma, \Delta$. This fact, in conjunction with the fact that all other rules of the input derivation straightforwardly translate (meaning that in such cases the translatee of the premise and the translatee of the conclusion satisfy (P)) implies that the output derivation is in nested form as condition (ii) of the definition (Def.~\ref{def:nested-form}) is satisfied.}
\qed 
\end{proof}

\section{Deriving the Calculus $\nintfocd$ from $\lintfocd$}\label{section-6}

In the previous section, we saw that strengthening the $\existsr$ and $\alll$ rules allowed for the rules $\nd$ and $\ned$ to be eliminated from any given derivation. In this section, we will focus on how to strengthen these rules even further to additionally obtain the elimination of $\cd$ from any given derivation. We begin our analysis after first stating the following admissibility result: %confirming the admissibility of $\refl$ and $\trans$:

\begin{lemma}
\label{lem:trans-refl-focd-elim}
The $\refl$ and $\trans$ rules are admissible in the calculus $\lintfocd^{*}$.
\end{lemma}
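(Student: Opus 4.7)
The plan is to mimic the proof strategy of Lemma~\ref{lem:trans-refl-fond-elim}, extending it to account for the additional constant-domain rules present in $\lintfocd^{*}$. We proceed by induction on the height of the given derivation, assuming without loss of generality that the derivation ends with an application of $\refl$ or $\trans$ and that no other instances of these rules occur above. The general admissibility result then follows by repeatedly eliminating topmost occurrences of $\refl$ and $\trans$ in succession.

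First, I would invoke Lemma~\ref{lm:structural-rules-permutation} to dispose of all the straightforward permutation cases, which tells us that $\refl$ and $\trans$ commute upward with all rules whose conclusions do not contain active relational atoms of the form $w \leq u$. This leaves only the following nontrivial cases to handle: $\idfocd$, $\idnd$, $\imprl$, $\lift$, $\nd$, $\cd$, and the four directed/undirected-path quantifier rules $\existsndr$, $\existsnedr$, $\allndl$, $\allnedl$, together with their constant-domain analogues $\existscdr$, $\existscedr$, $\allcdl$, $\allcedl$, and the right-rule $\allcdr$. Several of these reduce immediately: $\imprl$ is admissible in the presence of $\implnew$ and $\lift$ (as in Lemma~\ref{lm:implies-left-deriv}), $\idfo$ and $\alll$ are instances of $\idnd$ and $\allndl$ respectively (Lemma~\ref{lem:old-instance-of-new-nd}), and the $\lift$ permutation case is handled precisely as in the propositional setting of Lemma~\ref{lm:refl-trans-admiss}.

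The remaining work, and the main technical point, is verifying that side conditions involving undirected paths $v \sim^{\R} w$ (for the constant-domain rules) and directed paths $v \leadsto^{\R} w$ (for the nested-domain rules) are preserved after eliminating a $\refl$ or $\trans$ inference. For $\refl$, which deletes an atom $u \leq u$, any (un)directed path present in the premise remains present in the conclusion: simply delete each occurrence of $u \leq u$ from the path to obtain a (un)directed path in the reduced relational context. For $\trans$, which deletes the transitive-closure atom $w \leq u$ in the presence of $w \leq v, v \leq u$, any path passing through $w \leq u$ can be rerouted through $w \leq v, v \leq u$, so both directed and undirected paths are preserved. These observations ensure that after applying the inductive hypothesis and re-applying the principal rule, the side conditions of $\idnd$, $\idfocd$, $\existscdr$, $\allcdl$, $\existscedr$, $\allcedl$, $\existsndr$, $\allndl$, $\existsnedr$, and $\allnedl$ continue to hold.

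The base case is handled exactly as in Lemma~\ref{lem:trans-refl-fond-elim}, showing that the end sequent of $\refl$ or $\trans$ applied to an initial sequent remains an initial sequent (using the path-preservation argument above). The cases of $\nd$ and $\cd$ are handled by observing that these rules merely reshuffle domain atoms, so $\refl$ and $\trans$ permute upward with an auxiliary use of $\ctrrel$ where needed (mirroring the $\nd$ case in Lemma~\ref{lem:trans-refl-fond-elim}). I expect the $\cd$ case to require slightly more care than its nested-domain counterpart, since $\cd$ shifts a domain atom \emph{backwards} along a relational edge, but because the side conditions of the constant-domain quantifier rules use undirected paths, the reshuffling is absorbed without issue. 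The detailed inductive derivations are entirely analogous to those displayed in Lemma~\ref{lem:trans-refl-fond-elim}, with each nested-domain quantifier rule replaced by its undirected-path constant-domain analogue, and with the additional case for $\allcdr$ treated exactly as the $\allr$ case (since neither rule contains an active relational atom in its conclusion, Lemma~\ref{lm:structural-rules-permutation} already covers it). The full proof is deferred to Appendix~\ref{app:A}.
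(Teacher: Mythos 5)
Your proposal is correct and follows essentially the same route as the paper's proof: induction on derivation height, Lem.~\ref{lm:structural-rules-permutation} for the trivial permutations, preservation of the undirected-path side conditions by deleting occurrences of $u \leq u$ (for $\refl$) or rerouting through $w \leq v, v \leq u$ (for $\trans$), and absorption of the $\nd$ and $\cd$ cases via $\ctrrel$ (resp.\ $\wk$ for the $\trans$ permutation). The only slip is cosmetic: the nested-domain rules $\existsndr$, $\existsnedr$, $\allndl$, $\allnedl$ are not part of $\lintfocd^{*}$, so those cases do not arise, and the relevant instance lemma here is Lem.~\ref{lem:old-instance-of-new-cd} rather than Lem.~\ref{lem:old-instance-of-new-nd}.
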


\begin{proof} The proof is similar to Lem.~\ref{lem:trans-refl-fond-elim} and is detailed in Appendix~\ref{app:A} for the interested reader.
\qed
\end{proof}

The exclusion of $\refl$ and $\trans$ implies the completeness of $\lintfocd^{*} - \{\refl, \trans\}$ relative to treelike derivations, similar to Lem.~\ref{thm:treelike-derivations-fond}. We therefore assume for the rest of the section that all labelled sequents we consider are treelike---this assumption will ease our admissibility analyses of $\nd$, $\cd$, and $\ned$, though note that the assumption can be omitted, if desired.

Previously, we saw that permutations of the $\nd$ rule above the $\idfo$, $\existsr$, and $\alll$ rules shifted domain atoms to the past. We now investigate how permutations of the $\cd$ rule affect domain atoms in $\idfo$, $\existsr$, and $\alll$. As before, we first concentrate on permuting $\cd$ above the $\existsr$ rule solely. Observing the effect of permuting $\cd$ above $\existsr$ will motivate how we strengthen the rule---and the $\idfo$ and $\alll$ rules---to obtain the complete elimination of $\cd$ from any given derivation.

To begin our analysis, observe (3) a non-trivial application of $\cd$ after an $\existsr$ inference, and (4) the result of applying $\cd$ to the top sequent in (3). 

\begin{equation}
\AxiomC{$\R, w \leq v, \unda \in D_{w}, \unda \in D_{v}, \Gamma \Rightarrow \Delta, w : A(\unda/x), w : \exists x A$}
\RightLabel{$\existsr$}
\UnaryInfC{$\R, w \leq v, \unda \in D_{w}, \unda \in D_{v}, \Gamma \Rightarrow \Delta, w : \exists x A$}
\RightLabel{$\cd$}
\UnaryInfC{$\R, w \leq v, \unda \in D_{v}, \Gamma \Rightarrow \Delta, w : \exists x A$}
\DisplayProof
\end{equation}

\begin{equation}
\AxiomC{$\R, w \leq v, \unda \in D_{w}, \unda \in D_{v}, \Gamma \Rightarrow \Delta, w : A(\unda/x), w : \exists x A$}
\RightLabel{$\cd$}
\UnaryInfC{$\R, w \leq v, \unda \in D_{v}, \Gamma \Rightarrow \Delta, w : A(\unda/x), w : \exists x A$}
\DisplayProof
\end{equation}

The application of $\cd$ has generated a labelled sequent where the domain atom (i.e. $\unda \in D_{v}$) is associated with the label $v$ \emph{one step in the future} of $w$ (due to the presence of the $w \leq v$ relational atom). By analogy to what we saw in the last section (in the case of permuting $\nd$ above $\existsr$), it is not hard to see that successive (non-trivial) permutations of $\cd$ above an $\existsr$ inference will continually shift the domain atom to the future. If---at the same time---we also consider (non-trivial) permutations of the $\nd$ rule above the $\existsr$ rule, then %one may realize 
the domain atom can shift either to the future or the past.

%This behavior, whereby the domain atom \emph{shifts} to the future or past, generalizes through successive permutations of the $\nd$ and $\cd$ rules above the $\existsr$ rule. To be more accurate, successive (non-trivial) permutations of the $\nd$ rule above the $\existsr$ rule will cause the domain atom to \emph{shift to the past}, and successive (non-trivial) permutations of the $\cd$ rule above the $\existsr$ rule will cause the domain atom to \emph{shift to the future}. 

Recall that we have restricted our analysis to only consider derivations in $\lintfocd^{*} - \{\refl, \trans\}$ that employ treelike labelled sequents. Observe that \emph{any node} in the graph of a treelike labelled sequent is reachable from any other node by a sequence of forward and backwards edges. This property of treelike labelled sequents, accompanied by the witnessed shifting behavior discussed in the previous paragraph, suggests that we ought to add the following $\existscdr$ rule (see Fig.~\ref{fig:new-rules}) to our calculus:
\begin{center}
\AxiomC{$\R, \unda \in D_{v}, \Gamma \Rightarrow \Delta, w: A(\unda/x), w: \exists x A$}
\RightLabel{$\existscdr$}
\UnaryInfC{$\R, \unda \in D_{v}, \Gamma \Rightarrow \Delta, w: \exists x A$}
\DisplayProof
\end{center} %(potentially of length $0$)
with the side condition that $v \sim^{\R} w$, i.e. there is an undirected path from the label $v$ to the label $w$. Observe that this side condition generalizes the shifting behaviour of the domain atom to include all possible forward and backward \emph{shifts}. Intuitively, this side condition states that \emph{if an object is an element of some domain $D_{v}$, then it is an element of the domain $D_{w}$}, which always holds in the \emph{constant domain} setting.

%Furthermore, since the labelled sequents we are considering encode an $\intfocd$-model---which satisfies the \emph{constant domain} condition---it is fitting to impose the above side condition, which essentially states that \emph{if an object is an element of some domain $D_{v}$, then it is an element of the domain $D_{w}$}. It is straightforward to verify that the $\existscdr$ rule is sound relative to $\intfocd$-models (Thm.~\ref{???}).

The same shifting behavior can be witnessed by considering the permutation of $\nd$ and $\cd$ above the $\idfo$ and $\alll$ rules. Thus, we strengthen those rules in a similar way to obtain the $\idcd$ and $\allcdl$ rules (see Fig.~\ref{fig:new-rules}).

%If one analyzes the cases of permuting $\nd$ and $\cd$ above the $\idfo$ and $\alll$ rules, then they will observe the same shifting behavior of domain atoms as in the $\existsr$ case. We therefore impose a similar side condition on the $\idfo$ and $\alll$ rules to obtain the $\idcd$ and $\allcdl$ rules (see Fig.~\ref{fig:new-rules}). That the $\idcd$, $\allcdl$, and $\existscdr$ rules are sufficient to prove the admissibility of $\nd$ and $\cd$ is formally proven in the lemma below.

\begin{lemma}\label{lem:old-instance-of-new-cd}
The $\idfo$, $\existsr$, and $\alll$ rules are instances of the $\idcd$, $\existscdr$, and $\allcdl$ rules, respectively.
\end{lemma}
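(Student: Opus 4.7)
My plan is to verify, for each of the three rule pairs, that a suitable instantiation of the parameters in the ``cd'' rule yields the corresponding original rule. The central observation is that the side condition $v \sim^{\R} w$ appearing in each of $\idcd$, $\existscdr$, and $\allcdl$ is automatically satisfied by the relational structure already present in the respective original rule, so no genuine derivational work is needed—the argument is a schematic bookkeeping exercise entirely analogous to Lemma~\ref{lem:old-instance-of-new-nd}.

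For $\existsr$ the argument is immediate: setting $v := w$ in $\existscdr$ reduces its side condition to the trivially valid $w \sim^{\R} w$ (witnessed by the empty undirected path), and the instantiated schema coincides literally with $\existsr$. This is the cleanest of the three cases and requires nothing beyond the definition of $\sim^{\R}$.

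For $\idfo$ and $\alll$, the key is that both rules carry the relational atom $w \leq v$ in their context, which provides the single undirected edge $w \sim v$ and thereby witnesses $w \sim^{\R} v$. I would instantiate $\idcd$ by aligning its succedent label with $v$ and its domain labels with $w$, so that the side condition $w \sim^{\R} v$ is met by the $w \leq v$ atom already in $\R$; similarly, I would instantiate $\allcdl$ by choosing its domain label to be $v$ and its principal label to be $w$, so that the same side condition holds. In each case the instantiated schema matches the shape of the target rule, with any mismatch in the principal-formula labels being absorbed through the $\Gamma$ or $\Delta$ context.

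I do not foresee any serious obstacle. The only care required is to verify that the chosen label alignments reproduce exactly the shape of $\idfo$ and $\alll$, and this can be read off directly from the positions of the relational atoms in the displayed forms of the rules. The analogy with Lemma~\ref{lem:old-instance-of-new-nd} is tight—the only change is that the stronger undirected-path side condition of the ``cd'' rules is even easier to satisfy than the directed-path condition used in the ``nd'' analogue.
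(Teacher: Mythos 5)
Your treatment of $\existsr$ is correct and is all the lemma amounts to for that rule: instantiating $v := w$ in $\existscdr$ makes the side condition $w \sim^{\R} w$ hold by the base clause of Def.~\ref{def:undirected-path}, and the resulting schema is literally $\existsr$. The problem is with $\idfo$ and $\alll$, where your claim that ``any mismatch in the principal-formula labels [can be] absorbed through the $\Gamma$ or $\Delta$ context'' does not hold up. Every instance of $\idcd$ has the \emph{same} labelled formula $u : p(\vv{\unda})$ on both sides of the arrow, whereas the conclusion of $\idfo$ has $w : p(\vv{\unda})$ only in the antecedent and $v : p(\vv{\unda})$ only in the succedent, with $w$ and $v$ distinct in general; since $\Gamma$ and $\Delta$ in $\idfo$ are arbitrary (take them empty), no instantiation of $\idcd$'s contexts can supply the missing second copy. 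Likewise, every instance of $\allcdl$ deletes a formula $u : A(\unda/x)$ carrying the \emph{same} label $u$ as the principal formula $u : \forall x A$ retained in premise and conclusion, whereas $\alll$ deletes $v : A(\unda/x)$ while its principal formula sits at $w$ with only $w \leq v$ relating the two; with $\Gamma$ empty there is no occurrence of $v : \forall x A$ available to serve as $\allcdl$'s principal formula. So neither of these two rules is a literal instance of its ``cd'' counterpart, and the instantiations you propose do not reproduce their shapes.

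What does hold --- and what the paper effectively relies on (it states the lemma without proof, but compare the propositional case in Thm.~\ref{thm:admiss-all-rules-propositional}, where $\id$ is said to be \emph{derivable} using $\idnew$ and $\lift$, not an instance of $\idnew$) --- is that $\idfo$ and $\alll$ are derivable from $\idcd$ and $\allcdl$ with the help of $\lift$ and hp-admissible weakening: the conclusion of $\idfo$ is obtained by applying $\lift$ to the $\idcd$-instance $\R, w \leq v, \vv{\unda} \in D_{w}, w : p(\vv{\unda}), v : p(\vv{\unda}), \Gamma \Rightarrow \Delta, v : p(\vv{\unda})$ (the side condition $w \sim^{\R} v$ being witnessed by $w \leq v$), and the conclusion of $\alll$ is obtained from its premise by weakening in $v : \forall x A$, applying $\allcdl$ at $v$, and then removing $v : \forall x A$ with $\lift$. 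Since the lemma is only invoked to discharge permutation cases in the admissibility arguments, this derivability claim suffices for the paper's purposes; but your proof as written asserts instance-hood for all three rules, and for two of them the step that is supposed to deliver it fails.
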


The lemma below confirms that the strengthened versions of $\idfo$, $\existsr$, and $\alll$ are sufficient to prove the admissibility of the $\nd$ and $\cd$ rules in the calculus $\lintfocd^{*} - \{\refl, \trans\}$:

\begin{lemma}
\label{lm:nd-cd-admiss}
The rules $\nd$ and $\cd$ are admissible in the calculus $\lintfocd^{*} - \{\refl,\trans\}$.
\end{lemma}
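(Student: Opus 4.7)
The plan is to mimic the structure of the proof of Lemma~\ref{lm:nd-admiss}, arguing by induction on the height of a derivation and considering a topmost occurrence of either $\nd$ or $\cd$. By Lemma~\ref{lm:structural-rules-permutation}(ii) and Lemma~\ref{lem:trans-refl-focd-elim}, both $\nd$ and $\cd$ freely permute above every rule of $\lintfocd^{*} - \{\refl,\trans\}$ whose conclusion contains no active domain atom. By Lemma~\ref{lem:old-instance-of-new-cd}, instances of $\idfo$, $\existsr$, and $\alll$ may be treated as instances of $\idcd$, $\existscdr$, and $\allcdl$, so the non-trivial cases to check reduce to permutation above $\idcd$ (base case), $\existscdr$, $\allcdl$, $\existscedr$, $\allcedl$, and interactions of $\nd$/$\cd$ with another $\nd$/$\cd$.

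For the base case, I would show that applying $\nd$ or $\cd$ to an instance of $\idcd$ yields another instance of $\idcd$. The key observation is that neither $\nd$ nor $\cd$ deletes any relational atom of the form $u \leq v$, so the undirected-path side condition $v_i \sim^{\R} w$ required by $\idcd$ on the surviving domain atoms continues to hold verbatim. If the deleted domain atom is one of those explicitly listed by $\idcd$ (say $\unda \in D_u$ for the witnessing parameter $\unda$), then the companion atom that triggered $\nd$ or $\cd$ (either $\unda \in D_v$ in the $\cd$ case or $\unda \in D_w$ in the $\nd$ case) remains in the context, and since $u \leq v$ or $w \leq v$ is preserved, there is still an undirected path from the surviving label to $w$, so $\idcd$ still closes the sequent.

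For the inductive step, the crucial case is permuting $\cd$ (or $\nd$) above an $\existscdr$ inference whose side-condition witness is the very domain atom being deleted. Concretely, given a premise of the form $\R, u \leq v', \unda \in D_{v'}, \unda \in D_u, \Gamma \Rightarrow \Delta, w:A(\unda/x), w:\exists x A$ with $u \sim^{\R \cup \{u \leq v'\}} w$, followed by $\existscdr$ (using $\unda \in D_u$) and then $\cd$ (deleting $\unda \in D_u$), I would first apply the induction hypothesis to the premise, obtaining $\R, u \leq v', \unda \in D_{v'}, \Gamma \Rightarrow \Delta, w:A(\unda/x), w:\exists x A$, and then reapply $\existscdr$ now using $\unda \in D_{v'}$ as the witness. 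The side condition $v' \sim^{\R \cup \{u \leq v'\}} w$ continues to hold by transitivity of $\sim$ through the edge $u \leq v'$. The cases for $\allcdl$, $\existscedr$, and $\allcedl$ are handled analogously, while the interaction of $\nd$/$\cd$ with another application of $\nd$ or $\cd$ is resolved by invoking the hp-admissibility of $\ctrrel$ from Lemma~\ref{lem:extended-lint-properties}, as in the analogous step of Lemma~\ref{lm:nd-admiss}.

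The main obstacle will be bookkeeping: verifying in each case that every side condition involving $\sim^{\R}$ (or $\leadsto^{\R}$, in the mixed cases where $\nd$ interacts with rules inherited from the non-constant-domain calculus through Lemma~\ref{lem:old-instance-of-new-cd}) survives the deletion of the targeted domain atom. Since $\sim^{\R}$ depends only on relational atoms of the form $u \leq v$, and neither $\nd$ nor $\cd$ disturbs these, the side conditions are always preserved; spelling this out carefully for the $\existscdr$/$\allcdl$ pair is the substantive content of the argument, while everything else is routine.
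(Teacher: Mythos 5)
Your proposal is correct and follows essentially the same route as the paper, which simply notes that the proof of Lemma~\ref{lm:nd-cd-admiss} mirrors that of Lemma~\ref{lm:nd-admiss} with undirected paths in place of directed ones; your key observation---that $\nd$ and $\cd$ never delete relational atoms of the form $u \leq v$, so the $\sim^{\R}$ side conditions survive and the witnessing domain atom can be traded for its companion along the preserved edge---is exactly the substance of the paper's argument. (The only quibble is that the ``mixed cases'' you mention cannot arise, since $\lintfocd^{*}$ contains only the $\sim^{\R}$-conditioned rules and not $\existsndr$, $\allndl$, etc., but this over-caution is harmless.)
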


\begin{proof} The proof is similar to the proof of Lem.~\ref{lm:nd-admiss}, but where undirected paths (Def.~\ref{def:undirected-path}) are used instead of directed paths (Def.~\ref{def:directed-path}).
\qed
\end{proof}

In the previous section, we saw that permutations of $\ned$ above the $\existsndr$ and $\allndl$ rules could not be performed in certain cases. To resolve this issue, we introduced versions of the rules that absorbed the $\ned$ rule. The same phenomenon occurs in the case of the $\existscdr$ and $\allcdl$ rules, implying that we need to add $\existscedr$ and $\allcedl$ to our calculus (see Fig.~\ref{fig:new-rules}).

\begin{lemma}
\label{lm:ned-admiss-intfocd}
The rule $\ned$ is admissible in $\lintfocd^{*} - \{\refl,\trans,\nd,\cd\}$.
\end{lemma}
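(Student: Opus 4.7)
The plan is to mirror the proof of Lemma~\ref{lm:ned-admiss} almost verbatim, adapting it to the constant-domain setting. We proceed by induction on the height of the given derivation, assuming without loss of generality that the derivation ends with an instance of $\ned$ and that no other instance of $\ned$ occurs above; the general result then follows by eliminating topmost $\ned$ inferences in succession.

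For the base case, an application of $\ned$ to an instance of $\idcd$ or $\botl$ simply yields another instance of the same axiom, because the domain atom $\unda \in D_{w}$ deleted by $\ned$ is never principal in these axioms (the eigenvariable condition on $\ned$ ensures $\unda$ does not appear in any labelled formula of the conclusion, so its presence or absence does not affect the axiomhood of the sequent; any remaining domain atoms continue to satisfy the $\sim^{\R}$ side condition of $\idcd$, since $\ned$ does not alter relational atoms).

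For the inductive step, by Lemma~\ref{lm:structural-rules-permutation}(ii), $\ned$ freely permutes above every rule whose conclusion contains no active domain atom. Furthermore, by Lemma~\ref{lem:old-instance-of-new-cd}, the rules $\idfo$, $\existsr$, and $\alll$ are subsumed as instances of $\idcd$, $\existscdr$, and $\allcdl$, respectively, so we need not treat them separately. The only nontrivial cases are therefore when $\ned$ is applied directly after an instance of $\existscdr$ or $\allcdl$ in which the domain atom deleted by $\ned$ is precisely the active domain atom of that rule. These cases are handled by collapsing the two inferences into a single application of $\existscedr$ or $\allcedl$, respectively, in perfect analogy with the $\existsnedr$/$\allnedl$ treatment in Lemma~\ref{lm:ned-admiss}:
\begin{center}
\AxiomC{$\R, \unda \in D_{v}, \Gamma \Rightarrow \Delta, w : A(\unda/x), w : \exists x A$}
\RightLabel{$\existscdr$}
\UnaryInfC{$\R, \unda \in D_{v}, \Gamma \Rightarrow \Delta, w : \exists x A$}
\RightLabel{$\ned$}
\UnaryInfC{$\R, \Gamma \Rightarrow \Delta, w : \exists x A$}
\DisplayProof
\end{center}
is replaced by a single $\existscedr$ inference, and similarly for $\allcdl$ followed by $\ned$, which is replaced by $\allcedl$. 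In each subcase one must check that the side conditions match up: the $\sim^{\R}$ condition $v \sim^{\R} w$ transfers directly from $\existscdr$/$\allcdl$ to $\existscedr$/$\allcedl$, and the eigenvariable condition on $\unda$ is inherited from the eliminated $\ned$ inference.

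The main obstacle, as in the $\intfond$ case, is simply recognizing that $\existscdr$ and $\allcdl$ are the only rules whose conclusion carries an active domain atom of the shape forbidden by $\ned$'s eigenvariable side condition; once one has enumerated these cases via Lemmas~\ref{lm:structural-rules-permutation} and~\ref{lem:old-instance-of-new-cd}, the absorbed rules $\existscedr$ and $\allcedl$ were introduced precisely so that the reduction goes through immediately, and no genuinely new argument is required beyond that of Lemma~\ref{lm:ned-admiss}.
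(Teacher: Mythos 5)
Your proof is correct and follows essentially the same route as the paper: the paper's own proof of this lemma is simply ``Similar to the proof of Lem.~\ref{lm:ned-admiss}'', i.e.\ induction on derivation height, dispatching most cases via Lem.~\ref{lm:structural-rules-permutation} and Lem.~\ref{lem:old-instance-of-new-cd}, and absorbing the remaining non-trivial $\existscdr$ and $\allcdl$ cases into $\existscedr$ and $\allcedl$. Your additional checks of the $\sim^{\R}$ and eigenvariable side conditions are exactly the details the paper leaves implicit.
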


\begin{proof} Similar to the proof of Lem.~\ref{lm:ned-admiss}.
\qed
\end{proof}

%By leveraging the above results, we prove two central theorems:

\begin{theorem}
\label{thm:admiss-all-rules-focd}
The rules $\idfo$, $\botl$, $\imprl$, $\refl$, $\trans$, $\existsr, \alll$, $\allr$, $\nd$, $\cd$, and $\ned$ are admissible in the calculus $\intfocdl$.
\end{theorem}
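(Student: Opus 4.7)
My plan is to assemble this result from the admissibility lemmata already established, following exactly the template set by Thm.~\ref{thm:admiss-all-rules-propositional} and Thm.~\ref{thm:admiss-all-rules-fond}. Most cases are essentially immediate citations: $\{\refl,\trans\}$ are admissible by Lem.~\ref{lem:trans-refl-focd-elim}; $\{\idfo,\existsr,\alll\}$ are literal instances of $\{\idcd,\existscdr,\allcdl\}$ by Lem.~\ref{lem:old-instance-of-new-cd}; $\{\nd,\cd\}$ are admissible by Lem.~\ref{lm:nd-cd-admiss}; and $\ned$ is admissible by Lem.~\ref{lm:ned-admiss-intfocd}. Admissibility of $\imprl$ proceeds by the same small derivation pattern as in Lem.~\ref{lm:implies-left-deriv}, weakening the two premises and then applying $\implnew$ followed by $\lift$. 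Finally, since the refined calculus treats $\bot$ as the defined formula $p \land \neg p$, any instance of $\botl$ is derivable using $\idnew$, $\negl$, and $\conrl$.

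The one genuinely new case is $\allr$, which has no counterpart in the earlier theorems because only the constant-domain refined system contains the rule $\allcdr$ strong enough to simulate it. Suppose we are given a derivation in $\intfocdl$ of the premise $\R, w \leq v, \unda \in D_{v}, \Gamma \Rightarrow \Delta, v : A(\unda/x)$ with $v$ and $\unda$ eigenvariables. I would proceed in three short steps. First, apply the hp-admissible label substitution $(lsub)$ with $(w/v)$; since $v$ does not occur anywhere else in the end sequent, this produces a derivation of $\R, w \leq w, \unda \in D_{w}, \Gamma \Rightarrow \Delta, w : A(\unda/x)$. Second, invoke admissibility of $\refl$ (already established above) to erase the loop $w \leq w$, yielding $\R, \unda \in D_{w}, \Gamma \Rightarrow \Delta, w : A(\unda/x)$. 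Third, since $\unda$ remains an eigenvariable, a single application of $\allcdr$ produces the desired conclusion $\R, \Gamma \Rightarrow \Delta, w : \forall x A$.

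The main obstacle I anticipate is bookkeeping: I need to check that each transformation in the $\allr$ argument actually stays inside $\intfocdl$, rather than drifting into the strictly larger $\lintfocd^{*}$ in which the admissibility lemmata were proved. The application of $(lsub)$ is harmless because it only renames labels and therefore preserves the multiset of rules appearing in the derivation. Inspecting the proofs of the $\refl$-, $\nd$-, $\cd$- and $\ned$-elimination procedures, one sees that they proceed purely by upward permutation and absorption and never instantiate any of the rules excluded from $\intfocdl$; they delete the targeted structural inference while appealing only to admissible contraction-type rules. Consequently, iterating these procedures on a derivation that began in $\intfocdl$ yields a derivation that remains in $\intfocdl$, which is precisely what is needed to close the argument.
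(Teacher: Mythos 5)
Your proposal is correct and follows essentially the same route as the paper: the same decomposition into cases settled by Lemmas~\ref{lem:trans-refl-focd-elim}, \ref{lem:old-instance-of-new-cd}, \ref{lm:nd-cd-admiss}, and \ref{lm:ned-admiss-intfocd}, the same treatment of $\imprl$ and $\botl$ via $\implnew$/$\lift$ and $\negl$/$\conrl$, and the identical three-step derivation of $\allr$ from $(lsub)$, $\refl$-admissibility, and $\allcdr$. Your closing remark that the elimination procedures never reintroduce rules excluded from $\intfocdl$ is a useful point the paper leaves implicit.
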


\begin{proof} The admissibility of $\{\refl,\trans\}$, $\{\idfo, \existsr, \alll\}$, $\{\nd,\cd\}$, and $\ned$ follow from Lem.~\ref{lem:trans-refl-focd-elim}, Lem.~\ref{lem:old-instance-of-new-cd}, Lem.~\ref{lm:nd-cd-admiss}, and Lem.~\ref{lm:ned-admiss-intfocd}, respectively. Admissibility of $\impl$ and $\botl$ is shown as stated in Thm.~\ref{thm:admiss-all-rules-fond}. Admissibility of $\allr$ in the presence of $\allcdr$ is shown below:
\begin{center}
\AxiomC{$\R, w \leq v, \unda \in D_{v}, \Gamma \Rightarrow v : A(\unda/x), \Delta$}
\RightLabel{$(lsub)$}
\dashedLine
\UnaryInfC{$\R, w \leq w, \unda \in D_{w}, \Gamma \Rightarrow w : A(\unda/x), \Delta$}
\RightLabel{$\refl$}
\dashedLine
\UnaryInfC{$\R, \unda \in D_{w}, \Gamma \Rightarrow w : A(\unda/x), \Delta$}
\dashedLine
\RightLabel{$\allcdr$}
\UnaryInfC{$\R, \Gamma \Rightarrow w : \forall x A, \Delta$}
\DisplayProof
\end{center}
\qed
\end{proof}

\begin{theorem}
\label{thm:treelike-derivations-focd}
(i) The calculus $\intfocdl$ is complete relative to treelike derivations with the fixed root property. (ii) Every derivation in $\intfocdl$ of a treelike labelled sequent is a treelike derivation with the fixed root property.
\end{theorem}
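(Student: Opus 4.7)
The plan is to mirror closely the proofs of Theorems \ref{thm:treelike-derivations} and \ref{thm:treelike-derivations-fond}. Completeness of $\intfocdl$ is an immediate corollary of Theorem \ref{thm:admiss-all-rules-focd} together with the completeness of $\lintfocd$ from Theorem \ref{thm:lint-properties}(v), so all that remains is the structural claim about the shape of derivations. I would argue by induction on the height of a given $\intfocdl$-derivation, reading rules bottom-up, that if the end sequent is treelike with root $w$ then every sequent in the derivation is treelike with the same root $w$. Claim (i) is then the special case where the end sequent is of the form $\sar w : A$, whose graph is trivially the one-vertex tree rooted at $w$; claim (ii) is the general version of the inductive statement.

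The case analysis splits the rules of $\intfocdl$ into two groups. The first group --- comprising $\id$, $\idnew$, $\idcd$, $\conrl$, $\conrr$, $\disrl$, $\disrr$, $\negl$, $\implnew$, $\lift$, $\existsl$, $\existscdr$, $\existscedr$, $\allcdl$, $\allcedl$, and $\allcdr$ --- leaves the labels and relational atoms of the sequent untouched when read bottom-up, affecting only labelled formulae, parameters, and domain atoms; treelikeness and the root are preserved trivially. The second group --- $\imprr$ and $\negr$ --- introduces a single fresh label $v$ together with exactly one new relational atom $w \leq v$, grafting a new leaf onto the existing tree without altering its root. Since no rule of $\intfocdl$ introduces a self-loop, an undirected cycle, a disconnected vertex, or a new root candidate (the structural culprits $\refl$, $\trans$, $\nd$, $\cd$, and $\ned$ having been removed by construction), the treelike structure and the fixed root are invariant up the derivation.

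The only (mild) obstacle is to confirm that the first-order rules introduced in this section --- namely $\existscdr$, $\existscedr$, $\allcdl$, $\allcedl$, and $\allcdr$ --- indeed fall into the first group. Inspecting Figure \ref{fig:new-rules} confirms that each of these rules modifies only formulae, parameters, and domain atoms; in particular, none introduces or deletes a relational atom of the form $u \leq v$, so the underlying graph of the premise coincides with that of the conclusion. The side conditions involving undirected paths in $\R$ are merely eligibility constraints on the inference and have no bearing on the graph produced. Once this observation is recorded, the argument reduces verbatim to the two-case analysis used for Theorems \ref{thm:treelike-derivations} and \ref{thm:treelike-derivations-fond}, so the author's terse ``Similar to Thm.~\ref{thm:treelike-derivations}'' is entirely justified.
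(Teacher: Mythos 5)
Your proposal is correct and follows essentially the same route as the paper: the paper's proof is simply ``Similar to Thm.~\ref{thm:treelike-derivations}'', whose underlying argument is exactly your bottom-up observation that every rule of $\intfocdl$ either leaves the relational structure untouched or grafts a single fresh forward edge onto the tree, with completeness supplied by Thm.~\ref{thm:admiss-all-rules-focd}. Your additional check that the constant-domain quantifier rules manipulate only domain atoms (which induce no edges in $G(\Lambda)$) is precisely the detail implicit in the paper's appeal to similarity.
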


\begin{proof}
Similar to Thm.~\ref{thm:treelike-derivations}.
\qed
\end{proof}

Although we relied on the notion of a nested form derivation (Def.~\ref{def:nested-form}) in the previous section to translate proofs from the refined labelled calculus $\intfondl$ into proofs in the nested calculus $\nintfond$, we note that this notion, or an analog thereof, is unnecessary to translate derivations from $\intfocdl$ to $\nintfocd$. This follows from the fact that side conditions are not imposed on $\existsr$ and $\alll$ in $\nintfocd$---contrary to what is the case in $\nintfond$. Hence, applications of $\existscdr$ and $\allcdl$ in $\intfocdl$ need not satisfy certain condition in order to be translated, thus simplifying the translation from labelled to nested in the constant domain setting.

\begin{theorem}\label{thm:Intfocdl-to-Nintfocd}
Every derivation in $\intfocdl$ of a nestedlike labelled sequent $\Lambda$ is effectively translatable to a derivation of the nested sequent $\switch(\Lambda)$ in $\nintfocd$.
\end{theorem}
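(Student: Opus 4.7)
The plan is to mirror the proof of Theorem~\ref{thm:Intfondl-to-Nintfond}, but crucially exploit the fact that $\nintfocd$ imposes no side condition on the $\existsr$ and $\alll$ rules (as noted in the paragraph preceding the theorem). First I would observe that since $\Lambda$ is nestedlike, Lemma~\ref{lem:nestedlike-implies-treelike} gives that $\Lambda$ is treelike, and then Theorem~\ref{thm:treelike-derivations-focd} guarantees that the given derivation in $\intfocdl$ is itself treelike with the fixed root property. Consequently the translation function $\switch$ (Def.~\ref{def:switch}) is well-defined on every labelled sequent occurring in the derivation. The proof then proceeds by induction on the height of the derivation, translating each labelled inference rule to its nested counterpart.

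For the propositional rules together with $\negl$, $\negr$, $\lift$, $\implnew$, and $\idnew$, the translation is exactly as in Theorem~\ref{thm:refinement-prop}; in particular, the weakening steps used in the $\disrl$ and $\imprr$ cases of Theorem~\ref{thm:Nintfond-to-Intfondl} are applied in the reverse direction (invoking hp-admissibility of $\wk$ from Lemma~\ref{lem:extended-lint-properties}) to ensure matching contexts. For the novel first-order rules $\existscdr$, $\existscedr$, $\allcdl$, $\allcedl$, and $\allcdr$, I would check each one individually: an $\existscdr$ (respectively $\allcdl$) inference translates directly to $\existsr$ (respectively $\alll$) in $\nintfocd$, because its side condition $v \sim^\R w$ is used only to justify the labelled-side shifting of the domain atom and plays no role in the nested system where no side condition is imposed. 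The $\existscedr$ and $\allcedl$ cases translate identically: the eigenvariable condition on $\unda$ is, if anything, strictly stronger than what $\nintfocd$ requires. Finally, an $\allcdr$ inference translates to the nested $\allcdr$ rule, using that $\unda$ is an eigenvariable and that no new component is created in either system, so the target parameter correctly becomes an eigenvariable in the nested setting.

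The technical point to verify carefully is that the domain atom in the premise of the translated rule vanishes correctly under $\switch$: the $\switch$ translation simply discards domain atoms and relational atoms in favour of the tree structure and component formulae, so the premise of an $\existscdr$ inference in $\intfocdl$ has the form $\Sigma\{X \far A(\unda/x), \exists x A, Y\}$ under $\switch$, which is precisely the premise required by the nested $\existsr$ rule whose conclusion is $\Sigma\{X \far \exists x A, Y\}$. The main obstacle I anticipate is a bookkeeping one rather than a conceptual one: namely, writing down the $\switch$-translation of each premise and conclusion so that the equality of the translated sequents with the expected nested sequents is transparent, and confirming that no nested-form-style condition (as was required in Def.~\ref{def:nested-form} for $\intfondl$) is needed here because the $\existsr$ and $\alll$ rules of $\nintfocd$ carry no availability requirement. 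Once these routine verifications are collected, the induction closes and the theorem follows.
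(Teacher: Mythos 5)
Your proposal is correct and follows essentially the same route as the paper's (much terser) proof: treelikeness of the derivation via Lemma~\ref{lem:nestedlike-implies-treelike} and Theorem~\ref{thm:treelike-derivations-focd}, then a rule-by-rule translation in which $\existscdr$, $\existscedr$, $\allcdl$, and $\allcedl$ map to the unconditioned $\existsr$ and $\alll$ of $\nintfocd$, with no analogue of the nested-form condition required. One small remark: in this labelled-to-nested direction the context-sharing rules such as $\disrl$ already have identical contexts in both premises, so the reverse-weakening step you mention is not actually needed, but this does not affect the correctness of the argument.
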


\begin{proof} By Lem.~\ref{lem:nestedlike-implies-treelike}, we know that $\Lambda$ is treelike, implying that the derivation of $\Lambda$ is treelike by Thm.~\ref{thm:treelike-derivations-focd}, and so, the translation function $\switch$ is defined. When translating from labelled to nested, the existential rules $\existscdr$ and $\existscedr$, and the universal rules $\allcdl$ and $\allcedl$, translate to instances of $\existsr$ and $\alll$, respectively. All remaining rules in $\intfocdl$ translate to their corresponding version in the nested calculus.
\qed
\end{proof}

\begin{theorem}\label{thm:Nintfocd-to-Intfocdl}
Every derivation in $\nintfocd$ of a nested sequent $\Sigma$ is effectively translatable to a derivation of the labelled sequent $\switchtwo(\Sigma)$ in $\intfocdl$.
\end{theorem}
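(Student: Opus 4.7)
The plan is to proceed by induction on the height of the given derivation in $\nintfocd$, mirroring the strategy of Thm.~\ref{thm:Nintfond-to-Intfondl} but exploiting three features of the constant domain setting: admissibility of $\cd$ is now available alongside admissibility of $\nd$ (Lem.~\ref{lm:nd-cd-admiss}), so domain atoms can be shifted along $\leq$-edges in either direction; the $\existsr$ and $\alll$ rules of $\nintfocd$ carry no side condition on the witness parameter; and the universal right rule is $\allcdr$ rather than $\allndr$. Since the conclusion $\switchtwo(\Sigma)$ is nestedlike, every intermediate labelled sequent produced by the translation will be treelike (Lem.~\ref{lem:nestedlike-implies-treelike}), which in particular guarantees the undirected paths required by the side conditions on $\existscdr$, $\existscedr$, $\allcdl$, and $\allcedl$.

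Routine propositional rules ($\conrl, \conrr, \disrl, \disrr, \imprl, \imprr, \negl, \negr$), the axiomatic rules ($\id, \idfo$), and $\existsl$ translate to their labelled counterparts using hp-admissibility of $\wk$ to align contexts across premises with distinct free parameters; the $\lift$ case is handled exactly as in Thm.~\ref{thm:Nintfond-to-Intfondl}, using admissibility of $\nd$ to delete duplicate domain atoms produced by $\switchtwo$. The $\allcdr$ case is straightforward: since $\unda$ is an eigenvariable in the nested inference, one applies $\switchtwo$ to the premise, invokes hp-admissibility of $\wk$ and admissibility of $\nd/\cd$ (Lem.~\ref{lm:nd-cd-admiss}) to delete any spurious $\unda \in D_v$ atoms (so that $\unda$ is an eigenvariable in the labelled sequent), and then applies the labelled $\allcdr$.

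The crucial cases are $\existsr$ and $\alll$, which must be translated to either $\existscdr/\allcdl$ or their $\ned$-absorbing variants $\existscedr/\allcedl$. Consider an $\existsr$ inference applied at the component labelled $w$ with witness $\unda$. The image under $\switchtwo$ of the premise contains $\unda \in D_w$, since $\unda$ appears in $A(\unda/x)$. Split on whether $\unda$ appears anywhere else in the translated conclusion. If it does, then there is a label $u$ with $\unda \in D_u$ present in the conclusion and, by the treelike shape of the labelled sequent, $u \sim^{\R} w$ holds; apply $\existscdr$ and then use admissibility of $\nd$ and $\cd$ (combined with hp-admissibility of $\wk$ to introduce intermediate $\unda \in D_{v_i}$ atoms along the undirected path from $u$ to $w$) to delete $\unda \in D_w$ and any other atoms not in the conclusion's image. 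If $\unda$ does not appear elsewhere, then $\unda$ is an eigenvariable in the labelled sense, and one applies $\existscedr$ with $v = w$, whose side condition $w \sim^{\R} w$ is trivially met. The $\alll$ case is handled symmetrically via $\allcdl$ and $\allcedl$.

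The main obstacle will be the bookkeeping in case (a): given an undirected path $u \sim v_1 \sim \cdots \sim v_n \sim w$, one must iteratively weaken in $\unda \in D_{v_i}$ at each intermediate label, then walk the path using $\nd$-admissibility on forward edges and $\cd$-admissibility on backward edges to delete $\unda \in D_w$ first, and finally delete the intermediate witnesses $\unda \in D_{v_i}$, all while preserving $\unda \in D_u$. Verifying that the resulting labelled sequent agrees exactly with $\switchtwo$ applied to the nested conclusion — and in particular that no domain atom has been inadvertently introduced or deleted at a label where $\unda$ still occurs in the component's formulas — is the most technical part of the argument.
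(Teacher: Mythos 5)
Your proposal is correct and follows essentially the same route as the paper's proof: the key $\existsr$/$\alll$ cases are handled by splitting on whether the witness parameter occurs elsewhere, using the treelike shape of the nestedlike sequent to obtain an undirected path, weakening in intermediate domain atoms along that path, and discharging them via the admissibility of $\nd$ and $\cd$ before (or, equivalently, after) applying $\existscdr$/$\allcdl$, with the $\existscedr$/$\allcedl$ variants covering the eigenvariable case. The remaining rule cases are treated exactly as in the paper, which defers them to the proof of Thm.~\ref{thm:Nintfond-to-Intfondl}.
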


\begin{proof} We show how to translate an instance of $\existsr$; the remaining cases are similar to those in the proof of Thm.~\ref{thm:Intfondl-to-Nintfond}. Suppose we aim to translate an $\existsr$ inference occurring in our input derivation:

\begin{center}
\AxiomC{$\Sigma\{X \far A(\unda/x), \exists x A, Y\}$}
\RightLabel{$\existsr$}
\UnaryInfC{$\Sigma\{X \far \exists x A, Y\}$}
\DisplayProof
\end{center}

If the parameter $\unda$ in the above inference is an eigenvariable, then we translate the inference to an instance of $\existsnedr$ in $\intfocdl$. However, if the parameter occurs in a side formula, then the translation is non-trivial. Suppose then, that there exists some formula $B(\unda)$ (different than the occurrence of $A(\unda/x)$) occurring in the premise of the $\existsr$ inference above. Suppose that the translation $\switchtwo$ assigns the label $u$ to all formulae in the component where $B(\unda)$ occurs, and note that $u : B(\unda) \in \Gamma,\Delta$ below. This, in conjunction with the definition of $\switchtwo$ (Def.~\ref{def:switchtwo}), implies the existence of a domain atom $\unda \in D_{u}$ in the image of $\Sigma\{X \far A(\unda/x), \exists x A, Y\}$ under $\switchtwo$ (which is explicitly included in the derivation below). Since $\switchtwo(\Sigma\{X \far A(\unda/x), \exists x A, Y\})$ is a treelike labelled sequent, there must exist an undirected path $u \sim v_{1}$, $\ldots$, $v_{n} \sim w$ from $u$ to $w$ in $\R$, i.e. $u \sim^{\R} w$. (NB. If $u = w$, then the translation is straightforward, so we omit consideration of that case.) If we weaken in domain atoms of the form $\unda \in D_{v_{i}}$ (with $i \in \{1, \ldots, n\}$) along this undirected path, invoke the admissibility of $\nd$ and $\cd$ (Thm.~\ref{thm:admiss-all-rules-focd}), and then apply $\existscdr$, we obtain the image of the conclusion of the $\existsr$ inference above under the $\switchtwo$ translation. This entire process is succinctly demonstrated in the derivation below:
\begin{center}
\resizebox{\columnwidth}{!}{
\begin{tabular}{c}
\AxiomC{$\switchtwo(\Sigma\{X \far A(\unda/x), \exists x A, Y\})$}
\dottedLine
\RightLabel{=}
\UnaryInfC{$\R, \unda \in D_{u}, \unda \in D_{w}, \Gamma \sar w : A(\unda/x), w : \exists x A, \Delta$}
\dashedLine
\RightLabel{$\wk$}
\UnaryInfC{$\R, \unda \in D_{u}, \unda \in D_{v_{1}}, \ldots, \unda \in D_{v_{n}}, \unda \in D_{w}, \Gamma \sar w : A(\unda/x), w : \exists x A, \Delta$}
\dashedLine
\RightLabel{$\nd \times k_{1} + \cd \times k_{2}$}
\UnaryInfC{$\R, \unda \in D_{u}, \Gamma \sar w : A(\unda/x), w : \exists x A, \Delta$}
\RightLabel{$\existscdr$}
\UnaryInfC{$\R, \unda \in D_{u}, \Gamma \sar w : \exists x A, \Delta$}
\dottedLine
\RightLabel{=}
\UnaryInfC{$\switchtwo(\Sigma\{X \far \exists x A, Y\})$}
\DisplayProof
\end{tabular}
}
\end{center}
Note that $k_{1} + k_{2} = n+1$, i.e. $k_{1} + k_{2}$ is the number of labels occurring in the undirected path from $u$ to $w$ occurring after $u$ and including $w$.
\qed
\end{proof}

\section{Reverse Translations, Corollaries, and Inheritance}\label{section-7}

The previous three sections were dedicated to refining the labelled calculi $\lint$, $\lintfond$, and $\lintfocd$ to obtain refined labelled variants that were translatable into nested calculi. In this section, we analyze and provide the reverse translations from $\nint$, $\nintfond$, and $\nintfocd$ to $\lint$, $\lintfond$, and $\lintfocd$, respectively. These reverse translations are interesting in that they demonstrate how to add further syntactic structures and semantic information in order to obtain the more complicated semantic systems from the more refined nested systems. Additionally, these reverse translations can be leveraged to show that each nested calculus inherits proof theoretic properties from each labelled calculus, which we discuss in the second subsection.

\subsection{Reverse Translations}\label{subsec:reverse-trans}

When translating from labelled to nested, we discovered what new rules and what strengthened versions of rules were needed in order to allow the elimination of certain structural rules (e.g. $\refl$ and $\nd$) from a derivation. Obtaining the reverse translation is much simpler, as we already know what rules need to be shown admissible. Lem.~\ref{lem:admiss-rules-lint}--\ref{lem:admiss-rules-lintfocd} below show how to effectively transform derivations from $\intl$, $\intfondl$, and $\intfocdl$ into $\lint$, $\lintfond$, and $\lintfocd$, respectively. These transformations, coupled with the translations from Thm.~\ref{thm:refinement-prop}, \ref{thm:Nintfond-to-Intfondl}, and \ref{thm:Nintfocd-to-Intfocdl}, yield effective translations from our nested calculi to the $\mathsf{G3}$-style labelled calculi.

\begin{lemma}[\cite{Pim18}]\label{lem:admiss-rules-lint}
The rules $\idnew$, $\implnew$, and $\lift$ are admissible in $\lint$.
\end{lemma}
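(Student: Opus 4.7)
The plan is to establish admissibility of each of the three rules by leveraging the proof-theoretic properties of $\lint$ already assembled in Theorem \ref{thm:lint-properties}, in particular the generalized identity results (i)(a)--(b), hp-admissibility of weakening, admissibility of $\cut$, and the fact that $\refl$ and $\imprl$ are primitive rules of $\lint$.

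For $\idnew$, which is an initial-sequent rule, I would simply observe that its conclusion $\R, \Gamma, w : p \Rightarrow w : p, \Delta$ is the instance $A := p$ of Theorem \ref{thm:lint-properties}(i)(b). Hence every conclusion of $\idnew$ is already derivable in $\lint$, and the rule is admissible.

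For $\implnew$, the idea is to simulate it using $\imprl$ together with $\refl$. Starting from the two premises $\R, w : A \imp B, \Gamma \Rightarrow \Delta, w : A$ and $\R, w : A \imp B, w : B, \Gamma \Rightarrow \Delta$, I would apply hp-admissibility of weakening (Theorem \ref{thm:lint-properties}(ii)) to each, inserting the relational atom $w \leq w$. This brings both premises into the exact shape required to apply $\imprl$ with the eigenvariable instantiated to $w$ itself, yielding $\R, w \leq w, w : A \imp B, \Gamma \Rightarrow \Delta$. A single application of $\refl$ then removes $w \leq w$ and delivers the desired conclusion.

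For $\lift$, the natural route is via $\cut$-admissibility. Given a derivation of the premise $\R, w \leq u, w : A, u : A, \Gamma \Rightarrow \Delta$, I would invoke Theorem \ref{thm:lint-properties}(i)(a) to obtain the companion sequent $\R, w \leq u, w : A, \Gamma \Rightarrow u : A, \Delta$. Applying admissibility of $\cut$ (Theorem \ref{thm:lint-properties}(iv)) on the cut formula $u : A$ then yields $\R, w \leq u, w : A, \Gamma \Rightarrow \Delta$, which is precisely the conclusion of $\lift$. No obstacle of substance is anticipated: each case reduces in one or two steps to an already-established property of $\lint$, and the only thing to be careful about is citing admissibility results (weakening, $\cut$) rather than treating them as primitive, so that no circularity enters the argument.
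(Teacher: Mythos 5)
Your proposal is correct and matches the paper's own proof essentially step for step: $\implnew$ is simulated by weakening in $w \leq w$, applying $\imprl$ with $v := w$, and then $\refl$; and $\lift$ is obtained by cutting the premise against the derivable sequent from Thm.~\ref{thm:lint-properties}(i)(a). For $\idnew$ the paper derives the conclusion directly via $\id$ plus $\refl$ rather than citing (i)(b), but these amount to the same thing; the only nitpick is that the label $v$ in $\imprl$ is not an eigenvariable (it persists in the conclusion), which is precisely why instantiating it to $w$ is legitimate.
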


\begin{proof} The proof of the admissibility of each rule in $\lint$ is shown below:

\begin{center}
\AxiomC{}
\RightLabel{$\id$}
\UnaryInfC{$\R, w \leq w, \Gamma, w : p \sar w : p, \Delta$}
\RightLabel{$\refl$}
\UnaryInfC{$\R, \Gamma, w : p \sar w : p, \Delta$}
\DisplayProof
\end{center}

\begin{center}
\resizebox{\columnwidth}{!}{
\AxiomC{$\R, w :A \imp B, \Gamma \Rightarrow \Delta, w :A$}
\RightLabel{$\wk$}
\dashedLine
\UnaryInfC{$\R,w \leq w, w :A \imp B, \Gamma \Rightarrow \Delta, w :A$}

\AxiomC{$\R, w :A \imp B, w :B, \Gamma \Rightarrow \Delta$}
\RightLabel{$\wk$}
\dashedLine
\UnaryInfC{$\R,w \leq w, w :A \imp B, w :B, \Gamma \Rightarrow \Delta$}
\RightLabel{$\imprl$}
\BinaryInfC{$\R,w \leq w, w :A \imp B, \Gamma \Rightarrow \Delta$}
\RightLabel{$\refl$}
\UnaryInfC{$\R, w :A \imp B, \Gamma \Rightarrow \Delta$}
\DisplayProof 
}
\end{center}

\begin{center}
\resizebox{\columnwidth}{!}{
\AxiomC{}
\RightLabel{Thm.~\ref{thm:lint-properties}-(i)-(a)}
\dashedLine
\UnaryInfC{$\R, w \leq v, \Gamma, w : A \sar v : A, \Delta$}

\AxiomC{$\R, w \leq v, \Gamma, w : A, v : A \sar \Delta$}
\RightLabel{$\cut$}
\dashedLine
\BinaryInfC{$\R, w \leq v, \Gamma, w : A \sar \Delta$}
%\RightLabel{$\ctrl$}
%\dashedLine
%\UnaryInfC{$\R, w \leq v, \Gamma, w : A \sar \Delta$}
\DisplayProof
}
\end{center}
\qed
\end{proof}

\begin{lemma}\label{lem:admiss-rules-lintfond}
The rules $\idnd$, $\negl$, $\negr$, $\implnew$, $\lift$, $\existsndr$, $\allndl$, $\existsnedr$, and $\allnedl$ are admissible in $\lintfond$.
\end{lemma}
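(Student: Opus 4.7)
The plan is to proceed analogously to Lem.~\ref{lem:admiss-rules-lint}, giving for each rule an explicit derivation in $\lintfond$ that appeals to: the generalized identity axioms of Thm.~\ref{thm:lint-properties}(i)(c,d), the admissibility of weakening and cut from Thm.~\ref{thm:lint-properties}(ii,iv), and the native rules $\refl$, $\nd$, $\ned$, $\imprl$, $\existsr$, $\alll$, and $\botl$ of $\lintfond$. The essential new mechanism compared with the propositional case is the use of $\nd$ (and occasionally $\ned$) to shift or discharge domain atoms along the directed paths that appear in the side conditions of the new rules.

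For the structural-like rules $\idnd$, $\negl$, $\negr$, $\implnew$, and $\lift$ I would mimic the patterns of Lem.~\ref{lem:admiss-rules-lint}. The rule $\implnew$ is derived by weakening each premise with $w \leq w$, applying $\imprl$, and then discharging $w \leq w$ with $\refl$. The rule $\lift$ follows by $\cut$ with the generalized identity sequent of Thm.~\ref{thm:lint-properties}(i)(c), after first using $\wk$ to introduce the required domain atoms. Unfolding $\neg A$ as $A \imp \bot$, the rules $\negr$ and $\negl$ reduce, respectively, to $\imprr$ and to $\imprl$ combined with the axiom $\botl$, modulo applications of $\wk$ and $\refl$. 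For the generalized initial sequent $\idnd$, I would start from the instance $\R, \vv{\unda} \in D_{w}, w : p(\vv{\unda}), \Gamma \Rightarrow w : p(\vv{\unda}), \Delta$ of Thm.~\ref{thm:lint-properties}(i)(d), then for each $i$ use $\wk$ to insert the intermediate domain atoms $\unda_i \in D_{u_j}$ along the chain witnessing $v_i \leadsto^{\R} w$, and finally fire $\nd$ repeatedly top-down from the far end of the path back towards $v_i$, peeling off $\unda_i \in D_w$ and each intermediate atom in succession, so as to leave exactly $\unda_i \in D_{v_i}$.

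The quantifier rules $\existsndr$, $\allndl$, $\existsnedr$, and $\allnedl$ are handled by a common path-based strategy. Given the premise of $\existsndr$, let $v \leq u_1, u_1 \leq u_2, \ldots, u_n \leq w$ be a chain in $\R$ witnessing the side condition $v \leadsto^{\R} w$. I would first use $\wk$ to add the atoms $\unda \in D_{u_1}, \ldots, \unda \in D_{u_n}, \unda \in D_w$ to the antecedent, then apply the primitive $\existsr$ rule (which requires $\unda \in D_w$), and then apply $\nd$ top-down, starting at the edge $u_n \leq w$ and working back along the path, to discharge $\unda \in D_w, \unda \in D_{u_n}, \ldots, \unda \in D_{u_1}$ in that order. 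The resulting sequent is exactly the conclusion of $\existsndr$, and the treatment of $\allndl$ is symmetric, using $\alll$ in place of $\existsr$. For $\existsnedr$ and $\allnedl$, the same construction yields a sequent with $\unda \in D_v$ still in the antecedent, and the eigenvariable side condition on $\unda$ then licenses a final application of $\ned$ to remove it.

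The main obstacle I expect is bookkeeping the $\nd$ sequence correctly: each top-down application of $\nd$ requires both the relational edge and the \emph{past} domain atom to be present at the moment the \emph{future} atom is deleted, so the order of deletions along the path matters, and one must be careful not to remove an intermediate atom before it is needed by a later $\nd$. A secondary subtlety is that $\lift$ may need the parameters of $A$ to appear in domain atoms at $w$ in order to instantiate Thm.~\ref{thm:lint-properties}(i)(c) before performing $\cut$; this is handled by a preparatory application of $\wk$.
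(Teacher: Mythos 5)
Your proposal is correct and follows essentially the same route as the paper: the structural-like rules are reduced to $\imprl$, $\botl$, $\wk$, $\refl$, and $\cut$ exactly as in the propositional case, and each of $\existsndr$, $\allndl$, $\existsnedr$, $\allnedl$ is derived by weakening in domain atoms $\unda \in D_{u_1}, \ldots, \unda \in D_{u_n}, \unda \in D_w$ along the directed path witnessing the side condition, applying the primitive $\existsr$ or $\alll$ (with $\refl$ where a $w \leq w$ atom is needed), discharging the added atoms with $\nd$ applied $n+1$ times in the order you describe, and finishing with $\ned$ for the eigenvariable variants. Your bookkeeping of the $\nd$ sequence and the final $\ned$ step matches the paper's derivations precisely.
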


\begin{proof} The $\idnd$ rule can be derived by using $\idfo$, $\refl$, and $\nd$. The admissibility of $\implnew$ and $\lift$ are shown similarly as in Lem.~\ref{lem:admiss-rules-lint}. Recall that $\neg A := A \imp \bot$; using this fact, the proofs of the admissibility of $\negl$ and $\negr$ are as shown below:
\begin{center}
\resizebox{\columnwidth}{!}{
\AxiomC{$\R, w :A \imp \bot, \Gamma \Rightarrow \Delta, w :A$}
\RightLabel{$\wk$}
\dashedLine
\UnaryInfC{$\R,w \leq w, w :A \imp \bot, \Gamma \Rightarrow \Delta, w :A$}

\AxiomC{}
\RightLabel{$\botl$}
\UnaryInfC{$\R, w :A \imp \bot, w :\bot, \Gamma \Rightarrow \Delta$}
\RightLabel{$\wk$}
\dashedLine
\UnaryInfC{$\R,w \leq w, w :A \imp \bot, w :\bot, \Gamma \Rightarrow \Delta$}
\RightLabel{$\imprl$}
\BinaryInfC{$\R,w \leq w, w :A \imp \bot, \Gamma \Rightarrow \Delta$}
\RightLabel{$\refl$}
\UnaryInfC{$\R, w :A \imp \bot, \Gamma \Rightarrow \Delta$}
\DisplayProof 
}
\end{center}

\begin{center}
\AxiomC{$\R, w \leq v, \Gamma, v : A \sar \Delta$}
\RightLabel{$\wk$}
\dashedLine
\UnaryInfC{$\R, w \leq v, \Gamma, v : A \sar v : \bot, \Delta$}
\RightLabel{$\imprr$}
\UnaryInfC{$\R, \Gamma \sar w : A \imp \bot, \Delta$}
\DisplayProof
\end{center}

The admissibility of $\existsndr$ and $\allndl$ are shown below. We assume that there exists a directed path from $v$ to $w$ of a length greater than $0$ in $\R$, i.e. $v \leq u_{1}, \ldots, u_{n} \leq w \in \R$. The cases where the directed path from $v$ to $w$ is of length $0$ (i.e. $v = w$) are simple and are handled similarly.

\begin{center}
\resizebox{\columnwidth}{!}{
\AxiomC{$\R, \unda \in D_{v}, \Gamma \Rightarrow \Delta, w: A(\unda/x), w: \exists x A$}
\RightLabel{$\wk$}
\dashedLine
\UnaryInfC{$\R, \unda \in D_{v}, \unda \in D_{u_{1}}, \ldots, \unda \in D_{u_{n}}, \unda \in D_{w},  \Gamma \Rightarrow \Delta, w: A(\unda/x), w: \exists x A$}
\RightLabel{$\existsr$}
\UnaryInfC{$\R, \unda \in D_{v}, \unda \in D_{u_{1}}, \ldots, \unda \in D_{u_{n}}, \unda \in D_{w},  \Gamma \Rightarrow \Delta, w: \exists x A$}
\RightLabel{$\nd \times (n+1)$}
\UnaryInfC{$\R, \unda \in D_{v}, \Gamma \Rightarrow \Delta, w: \exists x A$}
\DisplayProof
}
\end{center}

\begin{center}
\resizebox{\columnwidth}{!}{
\AxiomC{$\R, \unda \in D_{v}, \Gamma, w: A(\unda/x), w : \forall x A \Rightarrow \Delta$}
\RightLabel{$\wk$}
\dashedLine
\UnaryInfC{$\R, w \leq w, \unda \in D_{v}, \unda \in D_{u_{1}}, \ldots, \unda \in D_{u_{n}}, \unda \in D_{w},  \Gamma, w: A(\unda/x), w : \forall x A \Rightarrow \Delta$}
\RightLabel{$\alll$}
\UnaryInfC{$\R, w \leq w, \unda \in D_{v}, \unda \in D_{u_{1}}, \ldots, \unda \in D_{u_{n}}, \unda \in D_{w},  \Gamma, w : \forall x A \Rightarrow \Delta$}
\RightLabel{$\refl$}
\UnaryInfC{$\R, \unda \in D_{v}, \unda \in D_{u_{1}}, \ldots, \unda \in D_{u_{n}}, \unda \in D_{w},  \Gamma, w : \forall x A \Rightarrow \Delta$}
\RightLabel{$\nd \times (n+1)$}
\UnaryInfC{$\R, \unda \in D_{v}, \Gamma, w : \forall x A \Rightarrow \Delta$}
\DisplayProof
}
\end{center}

Similar to the previous two proofs, to show the admissibility of $\existsnedr$ and $\allnedl$, we assume that $v \leq u_{1}, \ldots, u_{n} \leq w \in \R$, i.e. there exists a directed path of length greater than $0$ in $\R$. The cases where the directed path from $v$ to $w$ is of length $0$ (i.e. $v = w$) are simple and are handled similarly.

\begin{center}
\resizebox{\columnwidth}{!}{
\AxiomC{$\R, \unda \in D_{v}, \Gamma \Rightarrow \Delta, w: A(\unda/x), w: \exists x A$}
\RightLabel{$\wk$}
\dashedLine
\UnaryInfC{$\R, \unda \in D_{v}, \unda \in D_{u_{1}}, \ldots, \unda \in D_{u_{n}}, \unda \in D_{w},  \Gamma \Rightarrow \Delta, w: A(\unda/x), w: \exists x A$}
\RightLabel{$\existsr$}
\UnaryInfC{$\R, \unda \in D_{v}, \unda \in D_{u_{1}}, \ldots, \unda \in D_{u_{n}}, \unda \in D_{w},  \Gamma \Rightarrow \Delta, w: \exists x A$}
\RightLabel{$\nd \times (n+1)$}
\UnaryInfC{$\R, \unda \in D_{v}, \Gamma \Rightarrow \Delta, w: \exists x A$}
\RightLabel{$\ned$}
\UnaryInfC{$\R, \Gamma \Rightarrow \Delta, w: \exists x A$}
\DisplayProof
}
\end{center}

\begin{center}
\resizebox{\columnwidth}{!}{
\AxiomC{$\R, \unda \in D_{v}, \Gamma, w: A(\unda/x), w : \forall x A \Rightarrow \Delta$}
\RightLabel{$\wk$}
\dashedLine
\UnaryInfC{$\R, w \leq w, \unda \in D_{v}, \unda \in D_{u_{1}}, \ldots, \unda \in D_{u_{n}}, \unda \in D_{w},  \Gamma, w: A(\unda/x), w : \forall x A \Rightarrow \Delta$}
\RightLabel{$\alll$}
\UnaryInfC{$\R, w \leq w, \unda \in D_{v}, \unda \in D_{u_{1}}, \ldots, \unda \in D_{u_{n}}, \unda \in D_{w},  \Gamma, w : \forall x A \Rightarrow \Delta$}
\RightLabel{$\refl$}
\UnaryInfC{$\R, \unda \in D_{v}, \unda \in D_{u_{1}}, \ldots, \unda \in D_{u_{n}}, \unda \in D_{w},  \Gamma, w : \forall x A \Rightarrow \Delta$}
\RightLabel{$\nd \times (n+1)$}
\UnaryInfC{$\R, \unda \in D_{v}, \Gamma, w : \forall x A \Rightarrow \Delta$}
\RightLabel{$\ned$}
\UnaryInfC{$\R, \Gamma, w : \forall x A \Rightarrow \Delta$}
\DisplayProof
}
\end{center}
\qed
\end{proof}

\begin{lemma}\label{lem:admiss-rules-lintfocd}
The rules $\idcd$, $\negl$, $\negr$, $\implnew$, $\lift$, $\allcdr$, $\existscdr$, $\allcdl$, $\existscedr$, and $\allcedl$ are admissible in $\lintfocd$.
\end{lemma}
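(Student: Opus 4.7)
The plan is to mimic the strategies employed in Lem.~\ref{lem:admiss-rules-lint} and Lem.~\ref{lem:admiss-rules-lintfond}, adapting them to the constant-domain setting by making use of the $\cd$ rule in addition to $\nd$. Specifically, the admissibility of $\negl$, $\negr$, $\implnew$, and $\lift$ is handled verbatim as in Lem.~\ref{lem:admiss-rules-lint} and Lem.~\ref{lem:admiss-rules-lintfond}, since these rules make no essential use of the constant domain assumption and their derivations already live within $\lintfocd$ (which contains $\imprl$, $\refl$, $\botl$, $\imprr$, $\wk$, and $\cut$, with $\cut$ admissible by Thm.~\ref{thm:lint-properties}). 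The rule $\idcd$ is straightforwardly derivable by first applying $\idfo$ with suitable weakened-in domain atoms, and then eliminating extraneous domain atoms using $\nd$, $\cd$, and $\refl$; the side condition $v_{i} \sim^{\R} w$ ensures that each required domain atom can be propagated to $D_{w}$ via a finite sequence of $\nd$ and $\cd$ applications along the undirected path.

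The admissibility of $\allcdr$ is handled separately: given a premise $\R, \unda \in D_{w}, \Gamma \Rightarrow w : A(\unda/x), \Delta$ with $\unda$ an eigenvariable, one weakens in $w \leq w$, applies $\allr$ (treating $w$ as the successor of itself), and then invokes $\refl$ to remove the $w \leq w$ atom, yielding the desired conclusion $\R, \Gamma \Rightarrow w : \forall x A, \Delta$.

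For the four rules $\existscdr$, $\allcdl$, $\existscedr$, and $\allcedl$, the strategy follows the pattern of the second half of Lem.~\ref{lem:admiss-rules-lintfond}, but generalized from directed to undirected paths. Fixing an undirected path $v \sim u_{1}, \ldots, u_{n} \sim w \in \R$ witnessing the side condition $v \sim^{\R} w$, one weakens in the domain atoms $\unda \in D_{u_{1}}, \ldots, \unda \in D_{u_{n}}, \unda \in D_{w}$, applies the corresponding basic rule ($\existsr$ or $\alll$, with a $\refl$ and weakening in $w \leq w$ for the universal cases), and then eliminates the weakened-in domain atoms one by one by proceeding along the undirected path in reverse: for each edge $u_{i} \leq u_{i+1}$ (forward direction) one applies $\nd$, while for each edge $u_{i+1} \leq u_{i}$ (backward direction) one applies $\cd$. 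For $\existscedr$ and $\allcedl$, a concluding application of $\ned$ discharges the domain atom associated with $v$.

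The main obstacle will be bookkeeping in the $\existscdr$ and $\allcdl$ cases: unlike the directed-path setting of Lem.~\ref{lem:admiss-rules-lintfond}, the choice between applying $\nd$ or $\cd$ at each step depends on the orientation of the relational atom along the undirected path, so the derivation must be described schematically rather than by a single uniform $\nd \times (n+1)$ block. Beyond this, the proofs are mechanical, and the edge cases $v = w$ (empty undirected path) are resolved exactly as in Lem.~\ref{lem:admiss-rules-lintfond}.
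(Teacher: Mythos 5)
Most of your proposal matches the paper's strategy: for $\idcd$, $\negl$, $\negr$, $\implnew$, $\lift$, $\existscdr$, $\allcdl$, $\existscedr$, and $\allcedl$ the paper simply says the arguments are "similar to Lem.~\ref{lem:admiss-rules-lintfond}," and your adaptation---replacing directed paths by undirected ones and interleaving $\nd$ and $\cd$ according to the orientation of each edge, with a final $\ned$ for the eigenvariable variants---is exactly the intended generalization and is correct.

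However, your treatment of $\allcdr$ contains a genuine gap. You propose to weaken in $w \leq w$, apply $\allr$ "treating $w$ as the successor of itself," and then discharge $w \leq w$ with $\refl$. This is not a legal instance of $\allr$: the side condition $\dag_{3}$ requires that the successor label $v$ be an eigenvariable, i.e.\ that it not occur in the conclusion, and $w$ manifestly occurs in the conclusion $\R, \Gamma \Rightarrow w : \forall x A, \Delta$. The restriction is not a technicality---$\allr$ checks $A$ at a genuinely new future world, whereas your collapsed instance would only check it at $w$ itself, which is precisely the extra strength of $\allcdr$ that has to be justified. (You appear to have run the reduction backwards: substituting $v := w$ and applying $\refl$ is how one derives $\allr$ \emph{from} $\allcdr$, as in Thm.~\ref{thm:admiss-all-rules-focd}, not the other way around.) The paper's proof instead introduces a genuinely fresh label $v$, weakens the premise to $\R, w \leq v, \unda \in D_{w}, \unda \in D_{v}, \Gamma \Rightarrow w : A(\unda/x), v : A(\unda/x), \Delta$, and cuts against the derivable sequent $\R, w \leq v, \unda \in D_{w}, \unda \in D_{v}, \Gamma, w : A(\unda/x) \Rightarrow v : A(\unda/x), \Delta$ from Thm.~\ref{thm:lint-properties}-(i)-(c), thereby transferring $A(\unda/x)$ to the fresh world $v$ via general monotonicity; an application of $\cd$ then removes $\unda \in D_{w}$, after which $\allr$ applies legitimately since both $v$ and $\unda$ are now eigenvariables. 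Without this cut-and-monotonicity step your derivation of $\allcdr$ does not go through.
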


\begin{proof} The admissibility of $\idcd$, $\negl$, $\negr$, $\implnew$, $\lift$, $\existscdr$, $\allcdl$, $\existscedr$, and $\allcedl$ are shown similarly to the admissibility of the corresponding rules in $\lintfond$ (Lem.~\ref{lem:admiss-rules-lintfond} above). The admissibility of $\allcdr$ is shown below; we assume that $v$ is a fresh label.
\begin{center}
\AxiomC{$\Pi_{1}$}

\AxiomC{$\Pi_{2}$}

\RightLabel{$\cut$}
\dashedLine
\BinaryInfC{$\R, w \leq v, \unda \in D_{w}, \unda \in D_{v}, \Gamma \Rightarrow v : A(\unda/x), \Delta$}
\RightLabel{$\cd$}
\UnaryInfC{$\R, w \leq v, \unda \in D_{v}, \Gamma \Rightarrow v : A(\unda/x), \Delta$}
\RightLabel{$\allr$}
\UnaryInfC{$\R, \Gamma \Rightarrow w : \forall x A, \Delta$}
\DisplayProof
\end{center}
\begin{center}
\begin{tabular}{c c c}
$\Pi_{1}$

&

$= \Big \{$

&

\AxiomC{$\R, \unda \in D_{w}, \Gamma \Rightarrow w : A(\unda/x), \Delta$}
\RightLabel{$\wk$}
\dashedLine
\UnaryInfC{$\R, w \leq v, \unda \in D_{w}, \unda \in D_{v}, \Gamma \Rightarrow w : A(\unda/x), v : A(\unda/x), \Delta$}
\DisplayProof
\end{tabular}
\end{center}
\begin{center}
\resizebox{\columnwidth}{!}{
\begin{tabular}{c c c}
$\Pi_{2}$

&

$= \Big \{$

&

\AxiomC{}
\RightLabel{Thm.~\ref{thm:lint-properties}-(i)-(c)}
\dashedLine
\UnaryInfC{$\R, w \leq v, \unda \in D_{w}, \unda \in D_{v}, \Gamma, w : A(\unda/x) \Rightarrow v : A(\unda/x), \Delta$}
\DisplayProof
\end{tabular}
}
\end{center}
\qed
\end{proof}

\begin{theorem}\label{thm:refined-to-labelled}
Every derivation of a labelled sequent $\Lambda$ in $\intl$, $\intfondl$, and $\intfocdl$ can be effectively transformed into a derivation of $\Lambda$ in $\lint$, $\lintfond$, and $\lintfocd$, respectively, and vice-versa.
\end{theorem}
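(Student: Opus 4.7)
The plan is to prove the two directions of the equivalence separately by a simple induction on the height of the given derivation, replacing instances of rules missing from the target calculus with their effective derivations obtained from the admissibility results already established in the paper.

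For the direction from refined to semantic (i.e. $\intl \to \lint$, $\intfondl \to \lintfond$, and $\intfocdl \to \lintfocd$), I would proceed as follows. Inspecting the definitions of the refined calculi, the only rules appearing in a refined calculus but absent from its corresponding labelled calculus are precisely those introduced to enable structural rule elimination, namely $\{\idnew,\negl,\negr,\implnew,\lift\}$ in the propositional case together with the first-order variants $\{\idnd,\existsndr,\allndl,\existsnedr,\allnedl\}$ (for $\intfondl$) and $\{\idcd,\existscdr,\allcdl,\existscedr,\allcedl,\allcdr\}$ (for $\intfocdl$). Each of these rules has been shown admissible in the respective semantic calculus by Lemmas~\ref{lem:admiss-rules-lint}, \ref{lem:admiss-rules-lintfond}, and \ref{lem:admiss-rules-lintfocd}. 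Hence, I would induct on the height of a given refined derivation of $\Lambda$ and replace each topmost application of an admissible rule with the explicit derivation witnessing its admissibility, leaving the end sequent $\Lambda$ unchanged.

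For the converse direction, the strategy is mirrored. The rules present in a semantic calculus but absent from the associated refined calculus are exactly $\{\id,\botr,\imprl,\refl,\trans\}$ in the propositional case, augmented by the relevant subsets of $\{\idfo,\botl,\existsr,\alll,\allr,\nd,\cd,\ned\}$ in the first-order cases. Admissibility of each of these rules in the respective refined calculus is exactly the content of Theorems~\ref{thm:admiss-all-rules-propositional}, \ref{thm:admiss-all-rules-fond}, and \ref{thm:admiss-all-rules-focd}. Thus, inducting on the height of the given semantic derivation and replacing topmost occurrences of each missing rule with their admissibility derivations yields a derivation of $\Lambda$ in the refined calculus.

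Since all of the cited admissibility arguments are constructive (they exhibit explicit derivations rather than appealing to semantic detours), both transformations are effective, as required. I do not foresee any genuine obstacle: the theorem is essentially a bookkeeping consequence of Lemmas~\ref{lem:admiss-rules-lint}--\ref{lem:admiss-rules-lintfocd} and Theorems~\ref{thm:admiss-all-rules-propositional}, \ref{thm:admiss-all-rules-fond}, \ref{thm:admiss-all-rules-focd}, and the only minor care needed is to verify that the full set of rules separating each pair of calculi has indeed been addressed by the admissibility lemmas cited, which a direct comparison of the calculus definitions confirms.
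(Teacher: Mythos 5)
Your proposal is correct and is essentially the paper's own argument: the paper likewise reduces both directions to the admissibility results, citing Theorems~\ref{thm:admiss-all-rules-propositional}, \ref{thm:admiss-all-rules-fond}, \ref{thm:admiss-all-rules-focd} for the semantic-to-refined transformation and Lemmas~\ref{lem:admiss-rules-lint}--\ref{lem:admiss-rules-lintfocd} for the refined-to-semantic transformation. Your explicit matching of each rule set separating the paired calculi against the corresponding admissibility statements, and the topmost-instance elimination scheme, is exactly the intended bookkeeping.
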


\begin{proof} The forward directions follow from Thm.~\ref{thm:admiss-all-rules-propositional}, \ref{thm:admiss-all-rules-fond}, and \ref{thm:admiss-all-rules-focd}. The backward directions follow from Lem.~\ref{lem:admiss-rules-lint}, \ref{lem:admiss-rules-lintfond}, and \ref{lem:admiss-rules-lintfocd}, respectively.
\qed
\end{proof}

\begin{theorem}\label{thm:nested-to-labelled}
(i) If a nestedlike labelled sequent $\Lambda$ is derivable in $\lint$, $\lintfond$, and $\lintfocd$, then $\switch(\Lambda)$ is derivable in $\nint$, $\nintfond$, and $\nintfocd$, respectively. (ii) Every derivation in $\nint$, $\nintfond$, and $\nintfocd$ of a nested sequent $\Sigma$ can be effectively translated into a derivation of the labelled sequent $\switchtwo(\Sigma)$ in $\lint$, $\lintfond$, and $\lintfocd$, respectively.
\end{theorem}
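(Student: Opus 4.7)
The plan is to derive both directions by composing the translations that have already been established in the paper, guided by Fig.~\ref{fig:relationships}. On the left of that diagram, the original labelled calculi $\lint$, $\lintfond$, $\lintfocd$ are related to their refined counterparts $\intl$, $\intfondl$, $\intfocdl$ by Thm.~\ref{thm:refined-to-labelled} (in both directions), while on the right, the refined labelled calculi are related to the nested calculi $\nint$, $\nintfond$, $\nintfocd$ by Thm.~\ref{thm:refinement-prop}, Thm.~\ref{thm:Intfondl-to-Nintfond}/\ref{thm:Nintfond-to-Intfondl}, and Thm.~\ref{thm:Intfocdl-to-Nintfocd}/\ref{thm:Nintfocd-to-Intfocdl}. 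The idea is simply that, for each of the three logics, Thm.~\ref{thm:nested-to-labelled} is obtained by traversing this diagram.

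For part (i), suppose $\Lambda$ is a nestedlike labelled sequent derivable in one of $\lint$, $\lintfond$, $\lintfocd$. First I would invoke Thm.~\ref{thm:refined-to-labelled} to transform the given derivation into a derivation of the same labelled sequent $\Lambda$ in the corresponding refined calculus $\intl$, $\intfondl$, or $\intfocdl$. Then I would apply whichever of Thm.~\ref{thm:refinement-prop}(i), Thm.~\ref{thm:Intfondl-to-Nintfond}, or Thm.~\ref{thm:Intfocdl-to-Nintfocd} matches the logic in question to convert that refined-labelled derivation into a nested derivation of $\switch(\Lambda)$ in $\nint$, $\nintfond$, or $\nintfocd$, respectively. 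Since $\Lambda$ is nestedlike, Lem.~\ref{lem:nestedlike-implies-treelike} ensures that $\switch(\Lambda)$ is well-defined, so no extra checking is needed here.

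For part (ii), suppose instead that we are given a nested derivation of a nested sequent $\Sigma$ in one of $\nint$, $\nintfond$, $\nintfocd$. I would first apply Thm.~\ref{thm:refinement-prop}(ii), Thm.~\ref{thm:Nintfond-to-Intfondl}, or Thm.~\ref{thm:Nintfocd-to-Intfocdl} (as appropriate) to effectively translate this into a derivation of $\switchtwo(\Sigma)$ in the corresponding refined labelled calculus. A second, effective, application of Thm.~\ref{thm:refined-to-labelled} in the backward direction then produces the desired derivation of $\switchtwo(\Sigma)$ in $\lint$, $\lintfond$, or $\lintfocd$. The composition of two effective translations is effective, so the claim follows.

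Because each piece is already proved, no new technical machinery is required; the main thing to be careful about is the asymmetry between the two parts. The only subtlety worth flagging, and the reason part (i) is not asserted as effective, is the non-constant domain case: Thm.~\ref{thm:Intfondl-to-Nintfond} itself relies on Lem.~\ref{lem:Nestedlike-sequents-have-nested-form-proofs}, whose current justification proceeds semantically via the completeness of $\nintfond$ and hence is not constructive. This matches the phrasing of the statement, which only promises effectiveness in part (ii), and pinpoints where a genuinely new argument would be needed if one wished to strengthen part (i) to an effective translation uniformly across all three logics.
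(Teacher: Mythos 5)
Your proposal is correct and follows essentially the same route as the paper, whose proof of this theorem is simply "Follows from Thm.~\ref{thm:refinement-prop}, \ref{thm:Intfondl-to-Nintfond}, \ref{thm:Nintfond-to-Intfondl}, \ref{thm:Intfocdl-to-Nintfocd}, \ref{thm:Nintfocd-to-Intfocdl}, and \ref{thm:refined-to-labelled}" --- i.e.\ exactly the composition you describe. Your extra remark correctly pinpointing why part~(i) cannot be claimed effective in the $\nintfond$ case (the reliance of Lem.~\ref{lem:Nestedlike-sequents-have-nested-form-proofs} on the completeness of $\nintfond$) is a faithful reflection of the paper's own discussion at the end of Sect.~\ref{section-5}.
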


\begin{proof} Follows from Thm.~\ref{thm:refinement-prop}, \ref{thm:Intfondl-to-Nintfond}, \ref{thm:Nintfond-to-Intfondl}, \ref{thm:Intfocdl-to-Nintfocd}, \ref{thm:Nintfocd-to-Intfocdl}, and \ref{thm:refined-to-labelled}.  
\qed
\end{proof}

\subsection{Corollaries and Inheritance}\label{subsec:cor-and-inheritance}

An appealing consequence of our refinement and translation procedures is that our nested calculi inherit desirable proof-theoretic properties from their associated labelled calculi. Such properties are stated in Cor.~\ref{cor:nested-complete-sound}--\ref{cor:nested-internality} below. %The admissible rules mentioned in Cor.~\ref{cor:nested-admissibility} are presented in Fig.~\ref{fig:admiss-rules-nint}.

As mentioned near the end of Sect.~\ref{section-5}, the translation of proofs from $\intfondl$ into proofs in $\nintfond$ (Thm.~\ref{thm:Intfondl-to-Nintfond}) relies on the completeness of the latter (Thm.~\ref{thm:strong-soundness-completeness-nested}); hence, we are barred from leveraging this proof-theoretic translation to conclude that $\nintfond$ ultimately inherits completeness from $\lintfond$. (NB. In Sect.~\ref{section-5}, we also discussed methods of proof that would free us from relying on the completeness of $\nintfond$, thus allowing for this inheritance result to go through.) Nevertheless, this issue does not arise in our work concerning propositional intuitionistic logic $\int$ and first-order intuitionistic logic with constant domains $\intfocd$, and so, we may conclude the following corollary:

\begin{corollary}
\label{cor:nested-complete-sound}
%The nested calculi $\nint$ and $\nintfocd$ are sound and complete relative to $\int$ and $\intfocd$, respectively.
The calculi $\nint$ and $\nintfocd$ have inherited soundness and completeness from $\lint$ and $\lintfocd$, respectively.
\end{corollary}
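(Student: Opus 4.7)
The plan is to derive soundness and completeness of $\nint$ and $\nintfocd$ by relaying between the nested and labelled worlds via the translations of Thm.~\ref{thm:nested-to-labelled}, using Thm.~\ref{thm:lint-properties}(v) at the labelled end. The only nontrivial piece not already in the excerpt is a \emph{semantic bridge}: for every nested sequent $\Sigma$ under consideration, $\Sigma$ is valid (in the appropriate class of models) if and only if the labelled sequent $\switchtwo(\Sigma)$ is valid under the standard relational semantics of labelled sequents. This bridge is witnessed by the formula interpretation $\iota(\Sigma)$ of Def.~\ref{def:formula-interpretation}: both notions of validity coincide with $\Vdash_{\int} \iota(\Sigma)$ in the propositional case, and with $\Vdash_{\intfocd} \uc \iota(\Sigma)$ in the first-order constant domain case. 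A routine induction on the nesting depth of $\Sigma$ establishes it, exploiting the fact that the tree structure encoded by $\switchtwo(\Sigma)$ (Def.~\ref{def:switchtwo}) faithfully mirrors the implicational/disjunctive structure of $\iota(\Sigma)$ produced by unfolding the boxed components.

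For the soundness half I would take a derivation of $\Sigma$ in $\nint$ (resp.\ $\nintfocd$), apply Thm.~\ref{thm:nested-to-labelled}(ii) to obtain a derivation of $\switchtwo(\Sigma)$ in $\lint$ (resp.\ $\lintfocd$), invoke Thm.~\ref{thm:lint-properties}(v) to conclude that $\switchtwo(\Sigma)$ is valid, and then transport validity back to $\Sigma$ via the bridge. For the completeness half I would run the same pipeline in reverse: take a valid $\Sigma$, use the bridge to get validity of $\switchtwo(\Sigma)$, apply completeness of $\lint$ (resp.\ $\lintfocd$) to obtain a labelled derivation, and---since $\switchtwo(\Sigma)$ is by construction a nestedlike labelled sequent---apply Thm.~\ref{thm:nested-to-labelled}(i) to produce a derivation of $\switch(\switchtwo(\Sigma))$ in $\nint$ (resp.\ $\nintfocd$); by Lem.~\ref{lem:nestedlike-implies-treelike}(ii) this output is exactly $\Sigma$ up to a renaming of labels, so we obtain the desired derivation of $\Sigma$ itself.

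The only genuine work is the semantic bridge, and even that should be mechanical once the definitions of $\iota$ and $\switchtwo$ are unfolded side by side. The more delicate point---and the reason the corollary excludes $\nintfond$---is that the labelled-to-nested translation in the non-constant domain case (Thm.~\ref{thm:Intfondl-to-Nintfond}) passes through Lem.~\ref{lem:Nestedlike-sequents-have-nested-form-proofs}, whose proof in the excerpt in turn invokes Thm.~\ref{thm:strong-soundness-completeness-nested}; appealing to that translation inside a completeness inheritance argument for $\nintfond$ would thus be circular. By contrast, the analogous translations underlying the propositional and constant domain cases (Thms.~\ref{thm:refinement-prop} and \ref{thm:Intfocdl-to-Nintfocd}) do not depend on any prior completeness of the target nested calculus, so the above pipeline carries through without circularity and yields the stated inheritance.
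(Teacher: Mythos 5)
Your overall route is the paper's route: shuttle derivations between the nested and labelled worlds via Thm.~\ref{thm:nested-to-labelled} and let Thm.~\ref{thm:lint-properties}(v) do the semantic work at the labelled end; your explanation of why $\nintfond$ is excluded also matches the paper's. The difference is that you pitch the argument at the level of arbitrary nested sequents $\Sigma$, whereas the paper's proof stays at the level of formulae: for soundness it simply composes Thm.~\ref{thm:nested-to-labelled}(ii) with labelled soundness, and for completeness it starts from a theorem $A$, uses Thm.~\ref{thm:lint-properties}(v) to get a derivation of $\vec{\unda} \in D_{w} \sar w : A$ in $\lint$ or $\lintfocd$, and pushes that through Thm.~\ref{thm:admiss-all-rules-propositional}/\ref{thm:refinement-prop} (resp.\ Thm.~\ref{thm:admiss-all-rules-focd}/\ref{thm:Intfocdl-to-Nintfocd}) to land on a nested derivation of $A$. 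Working at formula level lets the paper avoid both of the extra ingredients your version needs.

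The concrete gap is in your completeness half: you ``apply completeness of $\lint$ (resp.\ $\lintfocd$) to obtain a labelled derivation'' of the arbitrary nestedlike sequent $\switchtwo(\Sigma)$. But Thm.~\ref{thm:lint-properties}(v), as proved in the paper (by simulating the Hilbert axiomatization), only yields completeness for sequents of the form $\vec{\unda} \in D_{w} \sar w : A$ with $A$ a valid formula; it does not assert that every valid labelled sequent is derivable. To make your pipeline work for arbitrary $\Sigma$ you would have to pass through the formula interpretation ($\Sigma$ valid $\Rightarrow$ $\uc\iota(\Sigma)$ valid $\Rightarrow$ $\vec{\unda} \in D_{w} \sar w : \uc\iota(\Sigma)$ derivable) and then recover a derivation of $\switchtwo(\Sigma)$ by invertibility, essentially re-proving the backward direction of Cor.~\ref{cor:nested-internality} on the labelled side. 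Your ``semantic bridge'' is likewise not stated or proved anywhere in the paper, though it is, as you say, a routine induction. Neither issue makes the corollary false, but as written your proof leans on two lemmas the paper does not supply, while the paper's formula-level argument needs neither.
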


\begin{proof} Soundness follows from Thm.~\ref{thm:lint-properties} and \ref{thm:nested-to-labelled}. Regarding completeness, observe that if $A$ is a theorem of $\int$ or $\intfocd$, then $\vec{a} \in D_{w} \sar w : A$ is derivable in $\lint$ or $\lintfocd$ (respectively) by Thm.~\ref{thm:lint-properties}. By Thm.~\ref{thm:admiss-all-rules-propositional} and~\ref{thm:refinement-prop}, or Thm.~\ref{thm:admiss-all-rules-focd} and~\ref{thm:Intfocdl-to-Nintfocd}, the derivation of $\vec{a} \in D_{w} \sar w : A$ may be transformed and translated into a derivation of $A$ in $\nint$ or $\nintfocd$, respectively.
\qed
\end{proof}

\begin{figure}[t]
\noindent\hrule

\begin{center}
\begin{tabular}{c c c}
\AxiomC{$\Sigma\{X \far Y\}$}
\RightLabel{$(wk_{l})$}
\UnaryInfC{$\Sigma\{X,Z \far Y\}$}
\DisplayProof

&

\AxiomC{$\Sigma\{X \far Y\}$}
\RightLabel{$(wk_{r})$}
\UnaryInfC{$\Sigma\{X \far Y,Z\}$}
\DisplayProof

&

\AxiomC{$\Sigma$}
\RightLabel{$(psub)$}
\UnaryInfC{$\Sigma(\unda / \undb)$}
\DisplayProof
\end{tabular}
\end{center}

\begin{center}
\begin{tabular}{c c c} %@{\hskip 0em} c}
\AxiomC{$\Sigma\{X,Z,Z \far Y\}$}
\RightLabel{$(ctr_{l})$}
\UnaryInfC{$\Sigma\{X,Z \far Y\}$}
\DisplayProof

&

\AxiomC{$\Sigma\{X \far Z,Z,Y\}$}
\RightLabel{$(ctr_{r})$}
\UnaryInfC{$\Sigma\{X \far Z,Y\}$}
\DisplayProof

&

\AxiomC{$\Sigma$}
\RightLabel{$(nr)$}
\UnaryInfC{$\ \far [\Sigma]$}
\DisplayProof
\end{tabular}
\end{center}

\begin{center}
\begin{tabular}{c c}
\AxiomC{$\Sigma\{X \far Y, [\Sigma'], [\Sigma']\}$}
\RightLabel{$(mrg_{1})$}
\UnaryInfC{$\Sigma\{X \far Y, [\Sigma']\}$}
\DisplayProof

&

\AxiomC{$\Sigma\{X_{1} \far Y_{1}, [X_{2} \far Y_{2}, [\Sigma_{1}], \ldots, [\Sigma_{n}]] \}$}
\RightLabel{$(mrg_{2})$}
\UnaryInfC{$\Sigma\{X_{1}, X_{2} \far Y_{1}, Y_{2}, [\Sigma_{1}], \ldots, [\Sigma_{n}] \}$}
\DisplayProof
\end{tabular}
\end{center}

\begin{center}
\begin{tabular}{c c}
\AxiomC{$\Sigma\{X \far Y\}$}
\RightLabel{$(ew_{1})$}
\UnaryInfC{$\Sigma\{X \far Y, [\Sigma']\}$}
\DisplayProof

&

\AxiomC{$\Sigma\{X_{1} \far Y_{1}, [X_{2} \far Y_{2}, [\Sigma_{1}], \ldots, [\Sigma_{n}]] \}$}
\RightLabel{$(ew_{2})$}
\UnaryInfC{$\Sigma\{X_{1} \far Y_{1}, [\ \far \ [X_{2} \far Y_{2}, [\Sigma_{1}], \ldots, [\Sigma_{n}]]] \}$}
\DisplayProof
\end{tabular}
\end{center}

\begin{center}
\begin{tabular}{c @{\hskip 1em} c}
%\AxiomC{$\Sigma\{X \far Y, [\Sigma']\}$}
%\RightLabel{$(splt)$}
%\UnaryInfC{$\Sigma\{X \far Y, [\Sigma'], [\Sigma']\}$}
%\DisplayProof
\AxiomC{$\Sigma\{X_{1} \far A, Y_{1}, [X_{2} \far A, Y_{2}]\}$}
\RightLabel{$(lwr)$}
\UnaryInfC{$\Sigma\{X_{1} \far Y_{1}, [X_{2} \far A, Y_{2}]\}$}
\DisplayProof

&

\AxiomC{$\Sigma \{X \far A, Y\}$}
\AxiomC{$\Sigma \{X, A \far Y\}$}
\RightLabel{$\cut$}
\BinaryInfC{$\Sigma \{X \far Y\}$}
\DisplayProof
\end{tabular}
\end{center}

\hrule
\caption{The set $\nstrucset$ of admissible rules in $\nint$, $\nintfond$, and $\nintfocd$.}
\label{fig:admiss-rules-nint}
\end{figure}

Furthermore, we obtain the admissibility of the set of structural rules presented in Fig.~\ref{fig:admiss-rules-nint}. This set of structural rules was primarily identified by investigating the variety of ways in which labelled structural rules (e.g. $\wk$, $\ctrr$, etc.) can be applied to manipulate treelike labelled sequents. Therefore, such rules offer a more fine-grained view of the types of operations inherent in the more general functionality of the aforementioned labelled structural rules. Although the admissibility of the rules in Fig.~\ref{fig:admiss-rules-nint} can be established by confirming the soundness of each rule, and recognizing that each nested calculus is complete (Thm.~\ref{thm:strong-soundness-completeness-nested}), we provide proof-theoretic arguments which demonstrate how such rules are obtained from their associated labelled structural rules. The proof of this admissibility result (Cor.~\ref{cor:nested-admissibility} below) can be found in Appendix~\ref{app:A} for the interested reader.

%more fine-grained
%can be obtained from completeness of systems and soundness of rules, but proof-theoretic proofs are given in appendix for interested reader.

\begin{corollary}
\label{cor:nested-admissibility}
All rules in $\nstrucset$ are admissible in $\nint$, $\nintfond$ and $\nintfocd$.
%The calculi $\nint$, $\nintfond$ and $\nintfocd$ have inherited admissibility of all rules in $\nstrucset$ from $\lint$, $\lintfond$, $\lintfocd$, respectively.
\end{corollary}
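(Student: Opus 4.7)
The plan is to leverage Thm.~\ref{thm:nested-to-labelled} together with the admissibility of structural rules in the $\mathsf{G3}$-style labelled calculi (Thm.~\ref{thm:lint-properties}). For each nested rule $r \in \nstrucset$ and each derivation of the premise of $r$ in $\nint$, $\nintfond$, or $\nintfocd$, I would first translate via $\switchtwo$ into a derivation in $\lint$, $\lintfond$, or $\lintfocd$ (using Thm.~\ref{thm:nested-to-labelled}(ii)), then apply a short sequence of admissible labelled rules to produce a derivation of $\switchtwo$ applied to the conclusion of $r$, and finally translate back using Thm.~\ref{thm:nested-to-labelled}(i) to obtain a derivation of the conclusion of $r$ in the original nested calculus. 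The justification rests on the observation that the image under $\switchtwo$ of the premise of each nested structural rule differs from the image of its conclusion by operations realized by compositions of the admissible labelled rules $\wk$, $\ctrl$, $\ctrr$, $\ctrrel$, $(lsub)$, $(psub)$, and $\cut$.

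For the easy cases, $(wk_{l})$ and $(wk_{r})$ reduce to labelled $\wk$ on labelled formulae at the appropriate label; $(ctr_{l})$ and $(ctr_{r})$ reduce to labelled $\ctrl$ and $\ctrr$ (hp-admissible by Thm.~\ref{thm:lint-properties}(ii)); $(psub)$ reduces to labelled $(psub)$ in the first-order calculi; and $(nr)$, $(ew_{1})$, and $(ew_{2})$ are handled by $\wk$ applied to fresh relational atoms (with a fresh label), which matches the addition of a fresh empty nesting under $\switchtwo$.

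The more delicate cases involve label identifications and cut. For $(mrg_{1})$, the $\switchtwo$-image of the premise contains two disjoint sibling subtrees rooted at fresh labels $v_{1}$ and $v_{2}$ under a common parent $w$; I would apply hp-admissible $(lsub)$ with $v_{1}/v_{2}$ to identify them, then hp-admissible $\ctrrel$ to remove the duplicated relational atom $w \leq v_{1}$, and $\ctrl$, $\ctrr$ to remove the duplicated labelled formulae throughout the identified subtree. For $(mrg_{2})$, the image of the premise contains a child subtree rooted at $v$ attached by $w \leq v$; applying $(lsub)$ with $w/v$ collapses $v$ onto $w$, producing a self-loop $w \leq w$ which is then removed by invoking admissibility of $\refl$ (Thms.~\ref{thm:admiss-all-rules-propositional}, \ref{thm:admiss-all-rules-fond}, \ref{thm:admiss-all-rules-focd}), after which contractions tidy up duplicates. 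For $(lwr)$, the image of the premise is $\R, w \leq v, \Gamma \sar \Delta, w:A, v:A$; by Thm.~\ref{thm:lint-properties}(i) we have a derivation of $\R, w \leq v, w:A, \Gamma \sar \Delta, v:A$, and cutting on $w:A$ (admissible by Thm.~\ref{thm:lint-properties}(iv)) yields $\R, w \leq v, \Gamma \sar \Delta, v:A$, which is exactly the image of the conclusion. The nested $\cut$ is immediate from labelled $\cut$-admissibility.

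The main obstacle will be verifying that the intermediate labelled sequents produced by the $(lsub)$ steps in $(mrg_{1})$ and $(mrg_{2})$ and by the $\wk$ steps in $(nr)$, $(ew_{1})$, $(ew_{2})$ either remain nestedlike, or become nestedlike after the subsequent admissible inferences, so that Thm.~\ref{thm:nested-to-labelled}(i) applies and returns a derivation in the appropriate nested calculus. The $(mrg_{2})$ case is the subtlest, since the post-substitution sequent contains a self-loop $w \leq w$ and is thus not nestedlike until $\refl$ is eliminated; the order of the admissibility invocations therefore matters. One must additionally track parameter annotations in the first-order calculi, since the $\switchtwo$/$\switch$ round-trip requires domain atoms to line up with the parameters appearing in labelled formulae (cf.~Lem.~\ref{lem:nestedlike-implies-treelike}(iii)); this is straightforward but requires care when $\wk$ or $(lsub)$ introduces or removes labels carrying parameters.
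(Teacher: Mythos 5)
Your proposal matches the paper's proof essentially step for step: round-trip each rule through the $\mathsf{G3}$-style labelled calculi via Thm.~\ref{thm:nested-to-labelled}, discharge it with the admissible labelled structural rules, and translate back---including the $(lsub)$-plus-contraction treatment of the merge rules and the cut against Thm.~\ref{thm:lint-properties}-(i)-(c) for $(lwr)$. Two small corrections: $(ew_{2})$ additionally needs an application of the primitive $\trans$ rule after the weakening, to delete the now-redundant $w \leq v$ atom (weakening alone leaves a non-treelike sequent on which $\switch$ is undefined), and in $(mrg_{2})$ the self-loop $w \leq w$ is removed by the primitive $\refl$ rule of $\lint$ itself rather than by the admissibility results for the refined calculi, which are not the systems in play at that point.
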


Additionally, we obtain the invertibility of the rules in our nested calculi:

%%%%%%%%%%%%%%%%%%%%%%%%%%%%%%%%%%%%%%%%%%%%%%%%%%%%%
%%%%%%%%%%%%%%%%%%%%%%%%%%%%%%%%%%%%%%%%%%%%%%%%%%%%%
\begin{comment}
\begin{corollary}\label{cor:nested-internality1}
If the nested sequent $\Sigma$ is derivable in $\nint$, $\nintfond$ and $\nintfocd$, then the nested sequent $\far \iota(\Sigma)$ is derivable in $\nint$, $\nintfond$ and $\nintfocd$, respectively.
\end{corollary}

\begin{proof} The claim is shown by induction on the \emph{depth} $d(\Sigma)$ of $\Sigma$, which we define inductively as follows: if $\Sigma = X \far Y$, then $d(\Sigma) = 0$, and if $\Sigma = X \far Y, [\Sigma_{1}], \ldots, [\Sigma_{n}]$, then $d(\Sigma) = max\{d(\Sigma_{i}) \ | \ 1 \leq i \leq n\} + 1$.

\textit{Base case.} If $d(\Sigma) = 0$, then $\Sigma$ is of the form $X \far Y$. By applying the admissible rule $(nr)$ (Cor.~\ref{cor:nested-admissibility}), we have that $\far [X \far Y]$ is derivable. Applying $\landl$ and $\lorr$ a sufficient number of times, followed by an application of $\impr$ gives a derivation of the formula translation.

\textit{Inductive step.} By IH, the nested sequent $\far \iota(\Sigma_{i})$ is derivable for $i \in \{1, \ldots, n\}$. Taking one of these derivations and applying the admissible rule
\qed
\end{proof}
\end{comment}
%%%%%%%%%%%%%%%%%%%%%%%%%%%%%%%%%%%%%%%%%%%%%%%%%%%%%
%%%%%%%%%%%%%%%%%%%%%%%%%%%%%%%%%%%%%%%%%%%%%%%%%%%%%

\begin{corollary}
\label{cor:nested-invertibility}
All rules in $\nint$, $\nintfond$ and $\nintfocd$ are invertible.%have inherited invertibility of all rules from $\lint$, $\lintfond$, $\lintfocd$, respectively.
\end{corollary}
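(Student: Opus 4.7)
The plan is to transfer invertibility from the labelled calculi $\lint$, $\lintfond$, $\lintfocd$ (where all rules are hp-invertible by Thm.~\ref{thm:lint-properties}-(iii)) to the nested calculi via the translation functions $\switch$ and $\switchtwo$ together with Thm.~\ref{thm:nested-to-labelled}. Suppose we are given a derivation in $\nint$ (resp.\ $\nintfond$, $\nintfocd$) of a nested sequent $\Sigma$ which is the conclusion of some nested rule $r$ with premises $\Sigma_{1}, \ldots, \Sigma_{k}$. First I would translate this derivation, via Thm.~\ref{thm:nested-to-labelled}-(ii), into a derivation of $\switchtwo(\Sigma)$ in the corresponding labelled calculus.

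Next, for each nested rule $r$, I would identify a labelled rule (or short sequence of labelled rules) $r^{\ell}$ whose premise(s) are $\switchtwo(\Sigma_{1}), \ldots, \switchtwo(\Sigma_{k})$ (modulo some contexts of relational and domain atoms that are irrelevant for invertibility) and whose conclusion is $\switchtwo(\Sigma)$. By Thm.~\ref{thm:lint-properties}-(iii), every rule in the labelled calculus is hp-invertible, so by applying invertibility of $r^{\ell}$ (possibly combined with hp-admissibility of weakening, and in the case of the quantifier rules $\existsndr$, $\allndl$ etc.\ a use of $(psub)$ to rename eigenvariables, both from Lem.~\ref{lem:extended-lint-properties}) I obtain derivations of $\switchtwo(\Sigma_{1}), \ldots, \switchtwo(\Sigma_{k})$ in the labelled calculus. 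Then, by Thm.~\ref{thm:nested-to-labelled}-(i), since each $\switchtwo(\Sigma_{i})$ is nestedlike (by construction of $\switchtwo$) and $\switch(\switchtwo(\Sigma_{i})) = \Sigma_{i}$ up to a relabelling (Lem.~\ref{lem:nestedlike-implies-treelike}), I obtain nested derivations of each $\Sigma_{i}$, establishing invertibility of $r$.

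The main obstacle will be handling the rules with eigenvariable side conditions (e.g.\ $\imprr$, $\negr$, $\existsl$, $\allndr$, $\allcdr$) and the rule $\lift$. For the eigenvariable rules, the nested premise introduces a fresh parameter (or fresh boxed component) and the corresponding labelled inverse invertibility gives back a premise with a fresh label; one has to argue that renaming this label to match the canonical label produced by $\switchtwo$ on the nested premise can be done via hp-admissibility of $(lsub)$ and $(psub)$. For the propagation rule $\lift$, one needs to verify that what invertibility of $\lift$ in the labelled calculus yields (which is essentially vacuous, since $\lift$ only duplicates a formula downward) corresponds to the trivial invertibility of the nested $\lift$. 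A secondary subtlety is that a few nested rules (notably $\existsr$, $\alll$ in $\nintfond$) translate to different labelled rules depending on whether the relevant parameter is an eigenvariable or available, so I would case-split accordingly; in each case the relevant labelled rule is hp-invertible, so the argument goes through uniformly.
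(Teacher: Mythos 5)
Your proposal follows essentially the same route as the paper, whose entire proof is ``Follows from Lem.~\ref{lem:extended-lint-properties} and Thm.~\ref{thm:nested-to-labelled}'': translate the nested derivation of the rule's conclusion into the labelled setting, invert the corresponding labelled rule(s), and translate back via $\switch$. The one correction is that you should invoke the invertibility results of Lem.~\ref{lem:extended-lint-properties}-(iii)/(iv) for the \emph{extended} calculi $\lint^{*}$, $\lintfond^{*}$, $\lintfocd^{*}$ rather than Thm.~\ref{thm:lint-properties}-(iii) for the base calculi, since the nested rules correspond to rules such as $\negr$, $\negl$, $\lift$, $\existsndr$, and $\allnedl$ that exist only in the starred systems --- for instance, simulating the inversion of the nested $\negr$ by inverting the base-calculus $\imprr$ on $w : A \imp \bot$ leaves a residual $v : \bot$ in the succedent that does not match $\switchtwo$ of the nested premise, whereas inverting the rule $\negr$ of $\lint^{*}$ directly yields the required labelled sequent.
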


\begin{proof} Follows from Lem.~\ref{lem:extended-lint-properties} and Thm.~\ref{thm:nested-to-labelled}. % Let $\Sigma$ be a nested sequent derivable in $\nint$, $\nintfond$, or $\nintfocd$ which is in the form of the conclusion of a rule $(r)$. By Thm.~\ref{thm:nested-to-labelled}, we know that $\switchtwo(\Sigma)$ is derivable in $\lint$, $\lintfond$, or $\lintfocd$, respectively. By invoking Thm.~\ref{thm:lint-properties} we know that the premise(s) $\Lambda$ (and $\Lambda'$) of $(r)$ is (are) derivable. By Thm.~\ref{thm:nested-to-labelled} it follows that $\switch(\Lambda)$ (and $\switch(\Lambda')$) is (are) derivable in $\nint$, $\nintfond$, or $\nintfocd$ (respectively), giving the premise(s) of $(r)$.
%Thm.~\ref{thm:lint-properties}-(iii)
\qed
\end{proof}

By making use of the previous two corollaries, we can prove that derivability of a nested sequent is equivalent to the derivability of its formula interpretation (Def.~\ref{def:formula-interpretation}). Note that in the propositional case the universal closure $\uc$ is vacuous, and so, $\far \uc \iota(\Sigma)$ is identical to $\far \iota(\Sigma)$.

\begin{corollary}
\label{cor:nested-internality}
The nested sequent $\Sigma$ is derivable in $\nint$, $\nintfond$ and $\nintfocd$ iff the nested sequent $\far \uc \iota(\Sigma)$ is derivable in $\nint$, $\nintfond$ and $\nintfocd$, respectively.
\end{corollary}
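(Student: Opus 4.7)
The plan is to prove both directions of the biconditional by strong induction on the nesting structure of $\Sigma$, leveraging the admissibility of $(nr)$, $(psub)$, and $(mrg_2)$ (Cor.~\ref{cor:nested-admissibility}) together with the invertibility of all rules (Cor.~\ref{cor:nested-invertibility}). The argument naturally factors into two independent pieces: a \emph{universal-closure reduction} that handles $\uc$, followed by a \emph{flattening induction} that relates $\Sigma$ to $\iota(\Sigma)$.

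First I would reduce the statement to ``$\Sigma$ derivable iff $\ \far \iota(\Sigma)$ derivable.'' In the propositional case this is trivial because $\uc$ is vacuous. In $\nintfocd$, from $\ \far \uc \iota(\Sigma)$ I would iterate invertibility of $\allcdr$ to strip off each universal quantifier, arriving at $\ \far \iota(\Sigma)$ on fresh parameters, then use $(psub)$ to rename back; conversely, from $\ \far \iota(\Sigma)$ I would use $(psub)$ to replace its parameters $\vec\unda$ by fresh $\vec\undb$, then apply $\allcdr$ forwards once per quantifier (eigenvariable condition satisfied by freshness). In $\nintfond$, the same plan works but using $\allndr$; its forward application requires a nesting $[\ \far A(\undb/x)]$ in the premise, which I create by inserting $(nr)$, and its invertibility produces such a nesting, which I collapse afterwards by the admissible $(mrg_2)$ (applied with $n=0$).

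For the core biconditional, I induct on the number of nestings in $\Sigma$. The base case $\Sigma = X\far Y$ is direct: forwards, apply $\landl$ and $\lorr$ (forwards) to contract $X,Y$ into $\bigwedge X$ and $\bigvee Y$, then $(nr)$ and $\imprr$ to produce $\ \far\bigwedge X\imp\bigvee Y = \ \far\iota(\Sigma)$; backwards, invertibility of $\imprr,\landl,\lorr$ turns $\ \far\iota(\Sigma)$ into $\ \far[X\far Y]$, and $(mrg_2)$ yields $X\far Y$. The inductive step $\Sigma = X\far Y,[\Sigma_1],\ldots,[\Sigma_n]$ rests on a \textbf{contextual subclaim}: for every context $\Sigma'\{\cdot\}$ and nested sequent $\Omega$,
\[
 \Sigma'\{Z\far W,[\Omega]\}\text{ is derivable}\ \Longleftrightarrow\ \Sigma'\{Z\far W,\iota(\Omega)\}\text{ is derivable}.
\]
The forward direction of the subclaim is proved by induction on $\Omega$: if $\Omega=X''\far Y''$, use forward $\landl,\lorr$ inside $[\Omega]$ followed by $\imprr$ to pull $[\bigwedge X''\far\bigvee Y'']$ up into $\bigwedge X''\imp\bigvee Y''$ in the succedent; if $\Omega$ contains further nestings $[\Omega_j]$, first apply the subclaim inductively within the nesting to replace each $[\Omega_j]$ by $\iota(\Omega_j)$, then finish with $\landl,\lorr,\imprr$ as before. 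The backward direction is proved symmetrically using invertibility of $\imprr,\landl,\lorr$ and recursion on $\Omega$. Iterating the subclaim over $\Sigma_1,\ldots,\Sigma_n$ reduces $\Sigma$ to the nesting-free sequent $X\far Y,\iota(\Sigma_1),\ldots,\iota(\Sigma_n)$, whose formula interpretation coincides with $\iota(\Sigma)$, so the base case concludes the argument.

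The main obstacle, as usual for such translations, is the bookkeeping in the subclaim: one must justify that each forward (or invertible) application of $\imprr,\landl,\lorr$ is performed with respect to a legitimate context $\Sigma'\{\cdot\}$ at the correct depth, and that the recursive case on $\Omega$ preserves the derivability of all surrounding material. Once that bookkeeping is set up, everything else is routine. Notably, the first-order content is fully isolated in the initial $\uc$-reduction, so the inductive flattening is identical across $\nint$, $\nintfond$, and $\nintfocd$.
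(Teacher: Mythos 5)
Your proposal is correct and follows essentially the same route as the paper's proof: the forward direction via the admissible $(nr)$ rule followed by downward applications of $\landl$, $\lorr$, $\imprr$ and then $\allndr$/$\allcdr$ for the universal closure, and the backward direction via invertibility of those rules together with the admissible $(mrg_{2})$. Your explicit contextual subclaim and the use of $(psub)$ for parameter bookkeeping merely formalize what the paper states informally as ``starting at the leaves and continuously applying $\landl$, $\lorr$, and $\imprr$,'' and reordering the universal-closure reduction before the flattening is a presentational difference only.
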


\begin{proof} We begin with the forward direction, and assume that $\Sigma$ is derivable in one of our nested systems. By the admissibility of $(nr)$ (Cor.~\ref{cor:nested-admissibility}), we know that $\far [\Sigma]$ is derivable. Starting at the leaves of the nested sequent and continuously applying $\landl$, $\lorr$, and $\impr$, we eventually obtain a derivation of $\far \iota(\Sigma)$. From here, we have three cases to consider: (i) If we are working in the nested system $\nint$, then we are done. (ii) If we are working in the nested system $\nintfond$, and $\far \iota(\Sigma)$ contains no free variables (i.e. parameters), then we are done. Otherwise, if the free variables (i.e. parameters) $\unda_{1}, \ldots, \unda_{n}$ occur in $\iota(\Sigma)$, then we apply the $(nr)$ rule $n$ times to obtain:
$$
\underbrace{\far \ [\ldots [\far \iota(\Sigma)]\ldots]}_{n}
$$
Applying the $\allndr$ rule $n$ times gives a derivation of $\far \uc \iota(\Sigma)$. (iii) If we are working in the nested system $\nintfocd$ and $\far \iota(\Sigma)$ does not contain any occurences of free variables (i.e. parameters), then we are done. Otherwise, we repeatedly apply $\allcdr$ to obtain a derivation of the desired conclusion.

For the backwards direction, we invoke the invertibility of $\landl$, $\lorr$, $\impr$, $\allndr$, and $\allcdr$ (Cor.~\ref{cor:nested-invertibility}), as well as the admissibility of $(mrg_{2})$ (Cor.~\ref{cor:nested-invertibility}), to obtain a derivation of $\Sigma$.
\qed
\end{proof}

Last, by making use of the admissibility of $(mrg_{2})$, we can show that each derivation in $\nintfond$ can be transformed into a derivation in $\nintfocd$. We also show that the same embedding holds between $\intfondl$ and $\intfocdl$ to complete our mapping of the relationships mentioned in Fig.~\ref{fig:relationships} of Sect.~\ref{Intro}.

\begin{corollary}\label{cor:embed-Q-to-QC}
(i) Every derivation in $\intfondl$ is transformable into a derivation in $\intfocdl$. (ii) Every derivation in $\nintfond$ is transformable into a derivation in $\nintfocd$.
\end{corollary}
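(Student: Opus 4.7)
The plan is to prove both parts by induction on the height of the given derivation, demonstrating that each rule application in the source calculus can be simulated by one or more steps in the target calculus. Since most rules are shared between $\intfondl$ and $\intfocdl$ (and between $\nintfond$ and $\nintfocd$), attention is only required on those rules that do not appear verbatim in the target system.

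For part (i), the rules of $\intfondl$ that are not rules of $\intfocdl$ are $\idfond$, $\existsndr$, $\allndl$, $\existsnedr$, $\allnedl$, and $\allr$. The first five differ from their constant-domain counterparts $\idfocd$, $\existscdr$, $\allcdl$, $\existscedr$, and $\allcedl$ only in that they impose a directed-path condition $v \leadsto^{\R} w$ instead of the undirected-path condition $v \sim^{\R} w$ (cf.~Defs.~\ref{def:directed-path} and~\ref{def:undirected-path}). Since every directed path is also an undirected path, I would reinterpret each such application as an instance of its constant-domain counterpart, which is a rule of $\intfocdl$. The remaining rule, $\allr$, is admissible in $\intfocdl$ by Thm.~\ref{thm:admiss-all-rules-focd}, so each $\allr$ step can be replaced by the short derivation given there (using $(lsub)$, $\refl$, and $\allcdr$).

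For part (ii), $\nintfond$ and $\nintfocd$ differ in two respects: (a) the side condition $\dag_{1}$ on $\existsr$ and $\alll$ is present in $\nintfond$ but absent in $\nintfocd$; (b) $\nintfond$ uses $\allndr$, whereas $\nintfocd$ uses $\allcdr$. Difference (a) is immediate: any application of $\existsr$ or $\alll$ in $\nintfond$ satisfies the weaker (i.e., empty) requirement of $\nintfocd$ and is preserved verbatim. For difference (b), I would simulate each $\allndr$ step using the admissible rule $(mrg_{2})$ from Cor.~\ref{cor:nested-admissibility}: given a derivation of the premise $\Sigma\{X \far Y, [\ \far A(\unda/x)]\}$ with $\unda$ an eigenvariable, $(mrg_{2})$ yields $\Sigma\{X \far A(\unda/x), Y\}$, after which a single application of $\allcdr$ delivers the desired conclusion $\Sigma\{X \far \forall x A, Y\}$.

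The main point requiring care is the eigenvariable bookkeeping in the $\allndr$ simulation: the side condition on $\allcdr$ demands that $\unda$ remain an eigenvariable, which holds because $(mrg_{2})$ is a purely structural step that neither introduces nor duplicates parameter occurrences. Beyond this, the induction is routine and every transformation is local, replacing each source-calculus step by a bounded number of target-calculus steps.
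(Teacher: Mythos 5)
Your proposal is correct and follows essentially the same route as the paper: observe that the non-constant-domain rules are instances of their constant-domain counterparts (since a directed path is an undirected path, and dropping the side condition $\dag_{1}$ only weakens the rule), then handle $\allr$ via the $(lsub)$/$\refl$/$\allcdr$ derivation and $\allndr$ via $(mrg_{2})$ followed by $\allcdr$. Your explicit remark on the eigenvariable bookkeeping for $(mrg_{2})$ is a welcome, if minor, addition to the paper's argument.
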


\begin{proof} %All rules in $\intfondl$ and $\nintfond$ are instances of rules in $\intfocdl$ and $\nintfocd$. 
Observe that the $\existsndr$, $\existsnedr$, $\allndl$, and $\allnedl$ rules in $\intfondl$ are instances of the $\existscdr$, $\existscedr$, $\allcdl$, and $\allcedl$ rules in $\intfocdl$. Additionally, the same relationship holds true of the $\existsr$ and $\alll$ rules in $\nintfond$ and $\nintfocd$. Thus, the only rules that differ between the systems $\intfondl$ and $\intfocdl$ are the $\allr$ and $\allcdr$ rules, and the only rules that differ between the systems $\nintfond$ and $\nintfocd$ are the $\allndr$ and $\allcdr$ rules. Therefore, we show below that $\allr$ and $\allndr$ are admissible in $\intfocdl$ and $\nintfocd$, respectively, which proves the claim since all remaining cases are straightforward.
\begin{center}
\resizebox{\columnwidth}{!}{
\begin{tabular}{c c}
\AxiomC{$\R, w \leq v, \unda \in D_{v}, \Gamma \Rightarrow v : A(\unda/x), \Delta$}
\RightLabel{$(lsub)$}
\dashedLine
\UnaryInfC{$\R, w \leq w, \unda \in D_{w}, \Gamma \Rightarrow w : A(\unda/x), \Delta$}
\RightLabel{$\refl$}
\dashedLine
\UnaryInfC{$\R, \unda \in D_{w}, \Gamma \Rightarrow w : A(\unda/x), \Delta$}
\dashedLine
\RightLabel{$\allcdr$}
\UnaryInfC{$\R, \Gamma \Rightarrow w : \forall x A, \Delta$}
\DisplayProof

&

\AxiomC{$\Sigma\{X \far Y, [\ \far A(\unda/x)]\}$}
\RightLabel{$(mrg_{2})$}
\dashedLine
\UnaryInfC{$\Sigma\{X \far A(\unda/x), Y\}$}
\RightLabel{$\allcdr$}
\UnaryInfC{$\Sigma\{X \far \forall x A, Y\}$}
\DisplayProof
\end{tabular}
}
\end{center}
\qed
\end{proof}

\section{Conclusion}\label{conclusion}

In this paper, we mapped out the interrelationships between semantic systems and Fitting's nested calculi for intuitionistic logics. We focused extensively on \emph{how} to eliminate the structural rules of the labelled calculi $\lint$, $\lintfond$, and $\lintfocd$ from derivations, by considering how to strengthen their logical rules, or via the addition of new rules. We referred to this process as the method of \emph{refinement} since it ultimately yielded the more refined (sound and complete) calculi $\intl$, $\intfondl$, and $\intfocdl$ which omitted the use of semantically explicit structural rules (e.g. $\refl$ and $\nd$) and only required treelike labelled sequents in derivations. %For example, in our refined labelled calculi one may restrict themselves to the use of treelike labelled sequents, and discovers via Lemmata~\ref{lem:side-conditions} and \ref{lem:side-conditions-cd} that the inclusion of (the semantically explicit) domain atoms is unnecessary. We saw that these refined labelled calculi were essentially notational variants of Fitting's nested calculi~\cite{Fit14}.
 We saw that fragments of these refined labelled calculi were equivalent to Fitting's nested calculi~\cite{Fit14}. Moreover, we also observed that Fitting's nested calculi inherited proof-theoretic properties from their associated labelled calculi; e.g. soundness and completeness, invertibility of rules, and admissibility of $\cut$. Such results suggest that structural rule elimination is a natural method for connecting the relational semantics of a logic to a nested calculus for the logic (via the labelled sequent paradigm).

The method of refining labelled calculi into nested calculi for intuitionistic logics is part of a larger project that seeks to investigate the extent to which such refinements can be performed. %(cf.~\cite{LyoBer19}). 
Due to general results concerning the construction of labelled calculi, their modularity, and the confirmation of their proof-theoretic properties~\cite{CiaMafSpe13,DycNeg12,Gab96,Sim94}, the exposition of a method for deriving refined (or, nested) calculi---which utilize sequents with a simpler underlying data structure (e.g. trees) and have proven to be well-suited for certain automated reasoning tasks (e.g. proof-search~\cite{TiuIanGor12}, counter-model extraction~\cite{LyoBer19,TiuIanGor12}, and effective interpolation~\cite{LyoTiuGorClo20})---from semantic systems could prove to be of practical consequence. Moreover, since labelled calculi are easily obtained from a logic's semantics, the method of refinement provides a means by which (nested) proof calculi with favorable properties may be obtained directly from a logic's semantics. %The author aims to investigate 

%rather general and highly beneficial.

\ \\
\noindent
\acknowledgments{Work funded by FWF projects I2982 and W1255-N23.}

\bibliographystyle{spmpsci}
\bibliography{bib}

\newcommand{\noop}[1]{}
\begin{thebibliography}{10}
\providecommand{\url}[1]{{#1}}
\providecommand{\urlprefix}{URL }
\expandafter\ifx\csname urlstyle\endcsname\relax
  \providecommand{\doi}[1]{DOI~\discretionary{}{}{}#1}\else
  \providecommand{\doi}{DOI~\discretionary{}{}{}\begingroup
  \urlstyle{rm}\Url}\fi

\bibitem{Avr96}
Avron, A.: The method of hypersequents in the proof theory of propositional
  non-classical logics.
\newblock In: W.~Hodges, M.~Hyland, C.~Steinhorn, J.~Truss (eds.) Logic: From
  Foundations to Applications: European Logic Colloquium, p. 1–32. Clarendon
  Press, USA (1996)

\bibitem{Bel82}
Belnap, N.D.: Display logic.
\newblock Journal of philosophical logic \textbf{11}(4), 375--417 (1982)

\bibitem{Bru09}
Br{\"{u}}nnler, K.: Deep sequent systems for modal logic.
\newblock Arch. Math. Log. \textbf{48}(6), 551--577 (2009).
\newblock \doi{10.1007/s00153-009-0137-3}.
\newblock \urlprefix\url{https://doi.org/10.1007/s00153-009-0137-3}

\bibitem{Bul92}
Bull, R.A.: Cut elimination for propositional dynamic logic without *.
\newblock Z. Math. Logik Grundlag. Math. \textbf{38}(2), 85--100 (1992)

\bibitem{CiaMafSpe13}
Ciabattoni, A., Maffezioli, P., Spendier, L.: Hypersequent and labelled calculi
  for intermediate logics.
\newblock In: D.~Galmiche, D.~Larchey-Wendling (eds.) Automated Reasoning with
  Analytic Tableaux and Related Methods, \emph{Lecture Notes in Computer
  Science}, vol. 8123, pp. 81--96. Springer Berlin Heidelberg, Berlin,
  Heidelberg (2013)

\bibitem{DycNeg12}
Dyckhoff, R., Negri, S.: Proof analysis in intermediate logics.
\newblock Archive for Mathematical Logic \textbf{51}(1-2), 71--92 (2012)

\bibitem{Fit72}
Fitting, M.: Tableau methods of proof for modal logics.
\newblock Notre Dame Journal of Formal Logic \textbf{13}(2), 237--247 (1972)

\bibitem{Fit12}
Fitting, M.: Prefixed tableaus and nested sequents.
\newblock Annals of Pure and Applied Logic \textbf{163}(3), 291 -- 313 (2012).
\newblock \doi{https://doi.org/10.1016/j.apal.2011.09.004}

\bibitem{Fit14}
Fitting, M.: Nested sequents for intuitionistic logics.
\newblock Notre Dame Journal of Formal Logic \textbf{55}(1), 41--61 (2014)

\bibitem{GabSheSkv09}
Gabbay, D., Shehtman, V., Skvortsov, D.: Quantification in Non-classical
  Logics.
\newblock Studies in Logic and Foundations of Mathematics. Elsevier (2009)

\bibitem{Gab96}
Gabbay, D.M.: Labelled deductive systems, \emph{Oxford Logic guides}, vol.~33.
\newblock Clarendon Press/Oxford Science Publications (1996)

\bibitem{Gen35}
Gentzen, G.: Untersuchungen uber das logische schliessen.
\newblock Mathematische Zeitschrift \textbf{39}(3), 405--431 (1935)

\bibitem{GorRam12AIML}
Gor{\'{e}}, R., Ramanayake, R.: Labelled tree sequents, tree hypersequents and
  nested (deep) sequents.
\newblock In: T.~Bolander, T.~Bra{\"{u}}ner, S.~Ghilardi, L.S. Moss (eds.)
  Advances in Modal Logic 9, papers from the ninth conference on "Advances in
  Modal Logic," held in Copenhagen, Denmark, 22-25 August 2012, pp. 279--299.
  College Publications (2012).
\newblock
  \urlprefix\url{http://www.aiml.net/volumes/volume9/Gore-Ramanayake.pdf}

\bibitem{Kas94}
Kashima, R.: Cut-free sequent calculi for some tense logics.
\newblock Studia Logica \textbf{53}(1), 119--135 (1994)

\bibitem{Kri65}
Kripke, S.A.: Semantical analysis of intuitionistic logic i.
\newblock In: J.~Crossley, M.~Dummett (eds.) Formal Systems and Recursive
  Functions, \emph{Studies in Logic and the Foundations of Mathematics},
  vol.~40, pp. 92 -- 130. Elsevier (1965).
\newblock \doi{https://doi.org/10.1016/S0049-237X(08)71685-9}.
\newblock
  \urlprefix\url{http://www.sciencedirect.com/science/article/pii/S0049237X08716859}

\bibitem{Lyo20}
Lyon, T.: On deriving nested calculi for intuitionistic logics from semantic
  systems.
\newblock In: S.N. Art{\"{e}}mov, A.~Nerode (eds.) Logical Foundations of
  Computer Science - International Symposium, {LFCS} 2020, Deerfield Beach, FL,
  USA, January 4-7, 2020, Proceedings, \emph{Lecture Notes in Computer
  Science}, vol. 11972, pp. 177--194. Springer (2020).
\newblock \doi{10.1007/978-3-030-36755-8\_12}.
\newblock \urlprefix\url{https://doi.org/10.1007/978-3-030-36755-8\_12}

\bibitem{LyoBer19}
Lyon, T., van Berkel, K.: Automating agential reasoning: Proof-calculi and
  syntactic decidability for stit logics.
\newblock In: M.~Baldoni, M.~Dastani, B.~Liao, Y.~Sakurai, R.~Zalila~Wenkstern
  (eds.) {PRIMA} 2019: Principles and Practice of Multi-Agent Systems - 22nd
  International Conference, Turin, Italy, October 28-31, 2019, Proceedings,
  \emph{Lecture Notes in Computer Science}, vol. 11873, pp. 202--218. Springer
  International Publishing, Cham (2019)

\bibitem{LyoIttEckGra17}
Lyon, T., Ittner, C., Eckhardt, T., Gratzl, N.: The basics of display calculi.
\newblock Kriterion -- Journal of Philosophy \textbf{31}(2), 55--100 (2017)

\bibitem{LyoTiuGorClo20}
Lyon, T., Tiu, A., Gor{\'{e}}, R., Clouston, R.: Syntactic interpolation for
  tense logics and bi-intuitionistic logic via nested sequents.
\newblock In: M.~Fern{\'{a}}ndez, A.~Muscholl (eds.) 28th {EACSL} Annual
  Conference on Computer Science Logic, {CSL} 2020, January 13-16, 2020,
  Barcelona, Spain, \emph{LIPIcs}, vol. 152, pp. 28:1--28:16. Schloss Dagstuhl
  - Leibniz-Zentrum f{\"{u}}r Informatik (2020).
\newblock \doi{10.4230/LIPIcs.CSL.2020.28}.
\newblock \urlprefix\url{https://doi.org/10.4230/LIPIcs.CSL.2020.28}

\bibitem{Mina13}
Minari, P.: Labeled sequent calculi for modal logics and implicit contractions.
\newblock Arch. Math. Log. \textbf{52}(7-8), 881--907 (2013).
\newblock \doi{10.1007/s00153-013-0350-y}

\bibitem{Pim18}
Pimentel, E.: A semantical view of proof systems.
\newblock In: L.S. Moss, R.J.G.B. de~Queiroz, M.~Martinez (eds.) Logic,
  Language, Information, and Computation - 25th International Workshop, WoLLIC
  2018, Bogota, Colombia, July 24-27, 2018, Proceedings, \emph{Lecture Notes in
  Computer Science}, vol. 10944, pp. 61--76. Springer (2018).
\newblock \doi{10.1007/978-3-662-57669-4\_3}.
\newblock \urlprefix\url{https://doi.org/10.1007/978-3-662-57669-4\_3}

\bibitem{Pog08}
Poggiolesi, F.: A cut-free simple sequent calculus for modal logic $s5$.
\newblock The Review of Symbolic Logic \textbf{1}(1), 3--15 (2008)

\bibitem{Pog09Trends}
Poggiolesi, F.: The method of tree-hypersequents for modal propositional logic.
\newblock In: D.~Makinson, J.~Malinowski, H.~Wansing (eds.) Towards
  Mathematical Philosophy, \emph{Trends in logic}, vol.~28, pp. 31--51.
  Springer (2009).
\newblock \doi{10.1007/978-1-4020-9084-4\_3}.
\newblock \urlprefix\url{https://doi.org/10.1007/978-1-4020-9084-4\_3}

\bibitem{Sim94}
Simpson, A.K.: The proof theory and semantics of intuitionistic modal logic.
\newblock Ph.D. thesis, University of Edinburgh. College of Science and
  Engineering. School of Informatics (1994)

\bibitem{TiuIanGor12}
Tiu, A., Ianovski, E., Gor{\'{e}}, R.: Grammar logics in nested sequent
  calculus: Proof theory and decision procedures.
\newblock In: T.~Bolander, T.~Bra{\"{u}}ner, S.~Ghilardi, L.S. Moss (eds.)
  Advances in Modal Logic 9, papers from the ninth conference on "Advances in
  Modal Logic," held in Copenhagen, Denmark, 22-25 August 2012, pp. 516--537.
  College Publications (2012).
\newblock
  \urlprefix\url{http://www.aiml.net/volumes/volume9/Tiu-Ianovski-Gore.pdf}

\bibitem{TroDal88}
Troelstra, A., van Dalen, D.: Constructivism in Mathematics, vol.~1.
\newblock Elsevier Science (1988)

\bibitem{Vig00}
Vigan{\`o}, L.: Labelled Non-Classical Logics.
\newblock Springer Science \& Business Media (2000)

\bibitem{Wan94}
Wansing, H.: Sequent calculi for normal modal propositional logics.
\newblock Journal of Logic and Computation \textbf{4}(2), 125--142 (1994)

\end{thebibliography}

\appendix

\section{Proofs}\label{app:A}

\begin{customthm}{\ref{thm:lint-properties}} Let $\mathsf{G3X} \in \{\lintfo,\lintfocd \}$. The calculi $\lint$, $\lintfo$, and $\lintfocd$ have the following properties:
\begin{itemize}

\item[$(i)$] 

\begin{itemize}

\item[(a)] For all $A \in \lang$, $ \vdash_{\lint} \R,w \leq v, w : A, \Gamma \Rightarrow v : A, \Delta$;

\item[(b)] For all $A \in \lang$, $ \vdash_{\lint} \R,w:A,\Gamma \Rightarrow \Delta, w :A$; 

\item[(c)] For all $A \in \langfo$, $\vdash_{\mathsf{G3X}} \R,w \leq v, \vv{\unda} \in D_{w}, w : A(\vv{\unda}), \Gamma \Rightarrow v : A(\vv{\unda}), \Delta$; 

\item[(d)] For all $A \in \langfo$, $\vdash_{\mathsf{G3X}} \R, \vv{\unda} \in D_{w}, w:A(\vv{\unda}),\Gamma \Rightarrow \Delta, w :A(\vv{\unda})$;

\end{itemize}

\item[$(ii)$] All rules in $\strucset - \{\cut\}$ are hp-admissible;

\item[$(iii)$] All rules are hp-invertible;

\item[$(iv)$] The $\cut$ rule is admissible;

\item[$(v)$] $\lint$, $\lintfo$, and $\lintfocd$ are sound and complete for $\int$, $\intfo$, and $\intfocd$, respectively.

\end{itemize}
\end{customthm}

\begin{proof} We argue that properties (i)--(v) obtain for $\lintfocd$; by omitting consideration of the $\cd$ rule, we obtain proofs of these properties for $\lintfond$ as well.

%\begin{itemize}

\textit{Claim (i).} Claims (a) and (b) are shown in~\cite[Lem.~1]{DycNeg12}, so we focus on proving (c) and (d). We prove claims (c) and (d) together by mutual induction on the complexity of $A$.\\

\textit{Claim (i)-(c).} The base case is resolved using the $\idfo$ rule for atomic formulae and the $\botr$ rule for $\bot$. We provide the cases for $\imp$, $\exists$, and $\forall$ for the inductive step; the $\lor$ and $\land$ cases are simple to verify. Note that for the $\imp$ case the parameters $\vv{\unda_{1}}$ and  $\vv{\unda_{2}}$ are all and only those parameters that occur in $\vv{\unda}$, with $\vv{\unda_{1}}$ and  $\vv{\unda_{2}}$ potentially intersecting. Also, the $\imp$ and $\forall$ cases rely on claim (d) (shown below), and we occasionally use primes (e.g. $\Gamma'$ and $\Delta'$) to indicate that formulae explicitly occurring lower in the proof have been absorbed into the context in order to save space. Last, we let $\vec{\unda} := \unda_{1}, \ldots, \unda_{n}$.

%\begin{center}
%\begin{tabular}{c} %@{\hskip -16em} c}
%\scalebox{.7}{
\begin{center}
\resizebox{\columnwidth}{!}{
\begin{tabular}{c}
\AxiomC{$\Pi_{1}$}
\AxiomC{$\Pi_{2}$}
\RightLabel{$\imprl$}
\BinaryInfC{$\R, w \leq v, v \leq u, w \leq u, \vv{\unda} \in D_{w}, \vv{\unda} \in D_{u}, w : A(\vv{\unda_{1}}) \imp B(\vv{\unda_{2}}), u : A(\vv{\unda_{1}}), \Gamma \Rightarrow u : B(\vv{\unda_{2}}), \Delta$}
\RightLabel{$\nd \times n$}
\UnaryInfC{$\R, w \leq v, v \leq u, w \leq u, \vv{\unda} \in D_{w}, w : A(\vv{\unda_{1}}) \imp B(\vv{\unda_{2}}), u : A(\vv{\unda_{1}}), \Gamma \Rightarrow u : B(\vv{\unda_{2}}), \Delta$}
\RightLabel{$\trans$}
\UnaryInfC{$\R, w \leq v, v \leq u, \vv{\unda} \in D_{w}, w : A(\vv{\unda_{1}}) \imp B(\vv{\unda_{2}}), u : A(\vv{\unda_{1}}), \Gamma \Rightarrow u : B(\vv{\unda_{2}}), \Delta$}
\RightLabel{$\imprr$}
\UnaryInfC{$\R, w \leq v, \vv{\unda} \in D_{w}, w : A(\vv{\unda_{1}}) \imp B(\vv{\unda_{2}}), \Gamma \Rightarrow v : A(\vv{\unda_{1}}) \imp B(\vv{\unda_{2}}), \Delta$}
\DisplayProof
\end{tabular}
}
\end{center}

\begin{center}
%\resizebox{\columnwidth}{!}{
\begin{tabular}{c c c}
$\Pi_{1}$

&

$= \Big \{$

&

\AxiomC{}
\RightLabel{(d)}
\dashedLine
\UnaryInfC{$\R', \vv{\unda} \in D_{w}, \vv{\unda} \in D_{u}, u : A(\vv{\unda_{1}}), \Gamma' \Rightarrow u : A(\vv{\unda_{1}}), \Delta'$} %$\R, w \leq v, v \leq u, w \leq u, \vv{a} \in D_{w}, w : A(\vv{a_{1}}) \imp B(\vv{a_{2}}), u : A(\vv{a_{1}}), \Gamma \Rightarrow u : A(\vv{a_{1}}), u : B(\vv{a_{2}}), \Delta$
\DisplayProof
\end{tabular}
%}
\end{center}
\begin{center}
%\resizebox{\columnwidth}{!}{
\begin{tabular}{c c c}
$\Pi_{2}$

&

$= \Big \{$

&

\AxiomC{}
\RightLabel{(d)}
\dashedLine
\UnaryInfC{$\R', \vv{\unda} \in D_{w}, \vv{\unda} \in D_{u}, u : B(\vv{\unda_{2}}), \Gamma' \Rightarrow u : B(\vv{\unda_{2}}), \Delta$} %$\R, w \leq v, v \leq u, w \leq u, \vv{a} \in D_{w}, w : A(\vv{a_{1}}) \imp B(\vv{a_{2}}), u : A(\vv{a_{1}}),  u : B(\vv{a_{2}}), \Gamma \Rightarrow u : B(\vv{a_{2}}), \Delta$
\DisplayProof
\end{tabular}
%}
\end{center}
\begin{center}
\begin{tabular}{c} %@{\hskip -1em} c}
\AxiomC{}
\RightLabel{IH}
\dashedLine
\UnaryInfC{$\R, w \leq v, \undb \in D_{w}, \vv{\unda} \in D_{w}, w : A(\vv{\unda})(\undb / x), \Gamma \Rightarrow v : A(\vv{\unda})(\undb / x), \Delta'$}
\RightLabel{$\existsr$}
\UnaryInfC{$\R, w \leq v, \undb \in D_{w}, \vv{\unda} \in D_{w}, w : A(\vv{a})(\undb / x), \Gamma \Rightarrow v : \exists x A(\vv{a}), \Delta$}
\RightLabel{$\existsl$}
\UnaryInfC{$\R, w \leq v, \vv{a} \in D_{w}, w : \exists x A(\vv{a}), \Gamma \Rightarrow v : \exists x A(\vv{a}), \Delta$}
\DisplayProof
\end{tabular}
\end{center}
\begin{center}
\resizebox{\columnwidth}{!}{
\begin{tabular}{c}
\AxiomC{}
\RightLabel{IH}
\dashedLine
\UnaryInfC{$\R', w \leq u, \vv{\unda} \in D_{u}, \undb \in D_{u}, u : A(\vv{\unda})(\undb / x), w : \forall x A(\vv{\unda}), \Gamma \Rightarrow u : A(\vv{\unda})(\undb / x), \Delta$}
\RightLabel{$\alll$}
\UnaryInfC{$\R', w \leq u, \vv{\unda} \in D_{u}, \undb \in D_{u}, w : \forall x A(\vv{\unda}), \Gamma \Rightarrow u : A(\vv{\unda})(\undb / x), \Delta$}
\RightLabel{$\nd \times n$}
\UnaryInfC{$\R', w \leq u, \undb \in D_{u}, w : \forall x A(\vv{\unda}), \Gamma \Rightarrow u : A(\vv{\unda})(\undb / x), \Delta$}
\RightLabel{$\trans$}
\UnaryInfC{$\R', \undb \in D_{u}, w : \forall x A(\vv{\unda}), \Gamma \Rightarrow u : A(\vv{\unda})(\undb / x), \Delta$}
\RightLabel{$\allr$}
\UnaryInfC{$\R, w \leq v, \vv{\unda} \in D_{w}, w : \forall x A(\vv{a}), \Gamma \Rightarrow v : \forall x A(\vv{a}), \Delta$}
\DisplayProof
\end{tabular}
}
\end{center}
In the $\forall$ case above, we let $\R' := \R, w \leq v, v \leq u, \vv{\unda} \in D_{w}$.

\textit{Claim (i)-(d).} The base case for atomic formulae is shown below; the case for $\bot$ is omitted as it is simple to verify using the $\botr$ rule. We provide the $\imp$, $\exists$, and $\forall$ cases as the cases for the other connectives are straightforward to prove.\\ % Note that for the $\land$ case the parameters $\vv{a_{1}}$ and  $\vv{a_{2}}$ are all and only those parameters that occur in $\vv{a}$, with $\vv{a_{1}}$ and  $\vv{a_{2}}$ potentially intersecting.\\

\begin{center}
\begin{tabular}{c}
\AxiomC{}
\RightLabel{$\idfo$}
\UnaryInfC{$\R, w \leq w, \vv{\unda} \in D_{w}, w : p(\vv{\unda}), \Gamma \Rightarrow w : p(\vv{\unda}), \Delta$}
\RightLabel{$\refl$}
\UnaryInfC{$\R, \vv{\unda} \in D_{w}, w : p(\vv{\unda}), \Gamma \Rightarrow w : p(\vv{\unda}), \Delta$}
\DisplayProof
\end{tabular}
\end{center}
%\begin{comment}
\begin{center}
\resizebox{\columnwidth}{!}{
\begin{tabular}{c} %@{\hskip -18em} c}
\AxiomC{$\Pi_{1}$}
\AxiomC{$\Pi_{2}$}
\RightLabel{$\imprl$}
\BinaryInfC{$\R, w \leq v, \vv{\unda} \in D_{w}, \vv{\unda} \in D_{v}, w : A(\vv{\unda_{1}}) \imp B(\vv{\unda_{2}}), v : A(\vv{\unda_{1}}), \Gamma \Rightarrow v : B(\vv{\unda_{2}}), \Delta$}
\RightLabel{$\nd \times n$}
\UnaryInfC{$\R, w \leq v, \vv{\unda} \in D_{w}, w : A(\vv{\unda_{1}}) \imp B(\vv{\unda_{2}}), v : A(\vv{\unda_{1}}), \Gamma \Rightarrow v : B(\vv{\unda_{2}}), \Delta$}
\RightLabel{$\imprr$}
\UnaryInfC{$\R, \vv{\unda} \in D_{w}, w : A(\vv{\unda_{1}}) \imp B(\vv{\unda_{2}}), \Gamma \Rightarrow w : A(\vv{\unda_{1}}) \imp B(\vv{\unda_{2}}), \Delta$}
\DisplayProof
\end{tabular}
}
\end{center}
%\end{comment}
\begin{center}
%\resizebox{\columnwidth}{!}{
\begin{tabular}{c c c}
$\Pi_{1}$

&

$= \Big \{$

&

\AxiomC{}
\RightLabel{IH}
\dashedLine
\UnaryInfC{$\R, w \leq v, \vv{\unda} \in D_{w}, \vv{\unda} \in D_{v}, v : A(\vv{\unda_{1}}), \Gamma' \Rightarrow v : A(\vv{\unda_{1}}), \Delta'$}
\DisplayProof
\end{tabular}
%}
\end{center}
\begin{center}
%\resizebox{\columnwidth}{!}{
\begin{tabular}{c c c}
$\Pi_{2}$

&

$= \Big \{$

&

\AxiomC{}
\RightLabel{IH}
\dashedLine
\UnaryInfC{$\R, w \leq v, \vv{\unda} \in D_{w}, \vv{\unda} \in D_{v}, v : B(\vv{\unda_{2}}), \Gamma' \Rightarrow v : B(\vv{\unda_{2}}), \Delta$}
\DisplayProof
\end{tabular}
%}
\end{center}
\begin{center}
\begin{tabular}{c} %@{\hskip -1em} c}
\AxiomC{}
\RightLabel{IH}
\dashedLine
\UnaryInfC{$\R, \undb \in D_{w}, \vv{\unda} \in D_{w}, w : A(\vv{\unda})(\undb/x), \Gamma \Rightarrow w : A(\vv{\unda})(\undb/x), \Delta'$}
\RightLabel{$\existsr$}
\UnaryInfC{$\R, \undb \in D_{w}, \vv{\unda} \in D_{w}, w : A(\vv{\unda})(\undb/x), \Gamma \Rightarrow w : \exists x A(\vv{\unda}), \Delta$}
\RightLabel{$\existsl$}
\UnaryInfC{$\R, \vv{\unda} \in D_{w}, w : \exists x A(\vv{\unda}), \Gamma \Rightarrow w : \exists x A(\vv{\unda}), \Delta$}
\DisplayProof
\end{tabular}
\end{center}

\begin{center}
\resizebox{\columnwidth}{!}{
\begin{tabular}{c}
\AxiomC{}
\RightLabel{IH}
\dashedLine
\UnaryInfC{$\R, w \leq v, \undb \in D_{u}, \vv{\unda} \in D_{w}, \vv{\unda} \in D_{v}, v : A(\vv{\unda})(\undb/x), w : \forall x A(\vv{\unda}), \Gamma \Rightarrow v : A(\vv{\unda})(\undb/x), \Delta$}
\RightLabel{$\alll$}
\UnaryInfC{$\R, w \leq v, \undb \in D_{v}, \vv{\unda} \in D_{w}, \vv{\unda} \in D_{v}, w : \forall x A(\vv{a}), \Gamma \Rightarrow v : A(\vv{\unda})(\undb/x), \Delta$}
\RightLabel{$\nd \times n$}
\UnaryInfC{$\R, w \leq v, \undb \in D_{v}, \vv{\unda} \in D_{w}, w : \forall x A(\vv{a}), \Gamma \Rightarrow v : A(\vv{\unda})(\undb/x), \Delta$}
\RightLabel{$\allr$}
\UnaryInfC{$\R, \vv{\unda} \in D_{w}, w : \forall x A(\vv{\unda}), \Gamma \Rightarrow w : \forall x A(\vv{\unda}), \Delta$}
\DisplayProof
\end{tabular}
}
\end{center}

\textit{Claim (ii).} We consider two sets of rules from $\strucset$ in turn.\\

\textit{The $\{(lsub),(psub)\}$ rules.} Hp-admissibility of $(lsub)$ and $(psub)$ are proven by induction on the height of the given derivation; both are similar to \cite[Lem.~3]{DycNeg12}. The only non-trivial cases for the former are the $\imprr$ and $\allr$ rules of the inductive step, and the non-trivial cases for the latter are the $\existsl$, $\allr$, and $\ned$ rules of the inductive step. In such cases, the side condition must be preserved if the rule is to be applied. Nevertheless, this can be ensured in the usual way (cf. \cite[Lem.~5.1]{DycNeg12}) by invoking IH twice: first, IH replaces the eigenvariable of the $\imprr$, $\existsl$, $\allr$, or $\ned$ rule with a fresh one, and second, the substitution of $(lsub)$ or $(psub)$ is performed, after which, the corresponding rule may be applied to derive the desired conclusion.\\

\textit{The $\{\wk,\ctrrel,\ctrl,\ctrr\}$ rules.} The hp-admissibility of $\wk$, $\ctrrel$, $\ctrl$, and $\ctrr$ is shown by induction on the height of the given derivation. Hp-admissibility of $\wk$ is relatively straightforward; the only non-trivial cases are the $\imprr$, $\existsl$, $\allr$, and $\ned$ rules of the inductive step due to the eigenvariable condition. Such cases are resolved, however, by potentially applying hp-admissibility of $(lsub)$ or $(psub)$ (argued above), then IH, and last the corresponding rule. Hp-admissibility of $\ctrrel$ is simple; any application of the rule to an initial sequent yields an initial sequent, and every case in the inductive step is resolved by applying IH followed by the corresponding rule. The $\ctrl$ and $\ctrr$ cases are also quite straightforward. The only non-trivial cases occur when a principal occurrence of an $\imprr$,  $\existsl$, or $\allr$ rule is contracted. In such cases, the inductive step is solved by invoking the hp-invertibility of each rule (property (iii) below), followed by a potential application of $(lsub)$ or $(psub)$, an application of IH, and finally, the hp-admissibility of $\ctrrel$ if needed.\\

\textit{Claim (iii).} The cases for the propositional rules are straightforward to check, so we omit them. By hp-admissibility of $\wk$, we know that $\existsr$, $\alll$, $\ned$, $\nd$, and $\cd$ are hp-invertible (note that the proof of $\wk$ admissibility does not depend on the hp-invertibility of rules holding, so we may invoke it). We therefore only need to check that $\allr$ and $ \existsl$ are hp-invertible. We prove the claim by induction on the height of the given derivation of $\R, \Gamma \Rightarrow w : \forall x A, \Delta$ for the $\allr$ rule; the proof for $\existsl$ is similar.\\

\textit{Base case.} If the height of the derivation is $0$, then $\R, w \leq v, a \in D_{v}, \Gamma \Rightarrow v : A(\unda / x), \Delta$ is either an instance of $\botr$ or $\idfo$.\\

\textit{Inductive step.} If the last rule applied in the derivation is $\conrl$, $\conrr$, $\disrl$, $\disrr$, $\imprl$, $\refl$, $\trans$, $\existsr$, $\alll$, $\nd$, or $\cd$, then the conclusion follows by applying IH followed by an application of the associated rule. If the last rule applied is $\imprr$, $\existsl$, $\allr$, or $\ned$ (where the principal formula of $\allr$ is in $\Delta$), then we potentially apply hp-admissibility of $(lsub)$ or $(psub)$ (property (ii) above), followed by IH, and then an application of the corresponding rule. If the last rule applied is $\allr$, with $w : \forall x A$ the principal formula, then the premise of the inference gives the desired result.\\

\textit{Claim (iv).} We prove the admissibility of $\cut$ by induction on the lexicographic ordering of tuples $(|A|,h_{1},h_{2})$, where $|A|$ is the complexity of the cut formula $A$, $h_{1}$ is the height of the left premise of $\cut$, and $h_{2}$ is the height of the right premise of $\cut$. We only consider the cases where the cut formula is principal in both premises of $\cut$; the other cases where the cut formula is not principal in one premise, or in both premises, are relatively straightforward to verify, though the large number of cases makes the proof tedious. By the cut admissibility theorem for $\lint$~\cite[Thm.~5.6]{DycNeg12}, we need only verify the cases where the cut formula is of the form $\forall x B$ or $\exists x B$. We show each case in turn below:

\begin{center}
%\resizebox{\columnwidth}{!}{
\begin{tabular}{c} %@{\hskip -4em} c}
\AxiomC{$\Pi_{1}$}

\AxiomC{$\Pi_{2}$}

\RightLabel{$\cut$}
\BinaryInfC{$\R, w \leq v, \unda \in D_{v}, \Gamma \Rightarrow \Delta$}

\DisplayProof
\end{tabular}
%}
\end{center}

\begin{center}
\begin{tabular}{c c c}
$\Pi_{1}$

&

$= \Bigg \{$

&

\AxiomC{$\R, w \leq v, w \leq u, \unda \in D_{v}, \undb \in D_{u}, \Gamma \Rightarrow \Delta, u : B(\undb/x)$}
\RightLabel{$\allr$}
\UnaryInfC{$\R, w \leq v, \unda \in D_{v}, \Gamma \Rightarrow \Delta, w : \forall x B$}
\DisplayProof
\end{tabular}
\end{center}

\begin{center}
\begin{tabular}{c c c}
$\Pi_{2}$

&

$= \Bigg \{$

&

\AxiomC{$\R, w \leq v, \unda \in D_{v}, v : B(\unda / x), w : \forall x B, \Gamma \Rightarrow \Delta$}
\RightLabel{$\alll$}
\UnaryInfC{$\R, w \leq v, \unda \in D_{v}, w : \forall x B, \Gamma \Rightarrow \Delta$}
\DisplayProof
\end{tabular}
\end{center}

The case is resolved as follows:

\begin{center}
%\scalebox{.6}{
\begin{tabular}{c}
\AxiomC{$\Pi_{1}'$}
\AxiomC{$\Pi_{2}'$}
\RightLabel{$\cut$}
\BinaryInfC{$\R, w \leq v, \unda \in D_{v}, \Gamma \Rightarrow \Delta$}
\DisplayProof
\end{tabular}
%}
\end{center}

\begin{center}
\begin{tabular}{c c c}
$\Pi_{1}'$

&

$= \Bigg \{$

&

\AxiomC{$\R, w \leq u, \undb \in D_{u}, \Gamma \Rightarrow \Delta, u : B(\undb / x)$}
\RightLabel{$(lsub)$}
\dashedLine
\UnaryInfC{$\R, w \leq v, \undb \in D_{v}, \Gamma \Rightarrow \Delta, v : B(\undb / x)$}
\RightLabel{$(psub)$}
\dashedLine
\UnaryInfC{$\R, w \leq v, \unda \in D_{v}, \Gamma \Rightarrow \Delta, v : B(\unda / x)$}
\DisplayProof
\end{tabular}
\end{center}

\begin{center}
\resizebox{\columnwidth}{!}{
\begin{tabular}{c c c}
$\Pi_{2}'$

&

$= \Bigg \{$

&

\AxiomC{$\R, w \leq v, w \leq u, \unda \in D_{v}, \undb \in D_{u}, \Gamma \Rightarrow \Delta, u : B(\undb / x)$}
\RightLabel{$\wk$}
\dashedLine
\UnaryInfC{$\R, w \leq v, w \leq u, \unda \in D_{v}, \undb \in D_{u}, v : B(\unda / x), \Gamma \Rightarrow \Delta, u : B(\undb / x)$}
\RightLabel{$\allr$}
\UnaryInfC{$\R, w \leq v, \unda \in D_{v}, v : B(\unda / x), \Gamma \Rightarrow \Delta, w : \forall x B$}

\AxiomC{$\Lambda$}

\RightLabel{$\cut$}
\BinaryInfC{$\R, w \leq v, \unda \in D_{v}, v : B(\unda / x), \Gamma \Rightarrow \Delta$}
\DisplayProof
\end{tabular}
}
\end{center}
$$
\Lambda = \R, w \leq v, \unda \in D_{v}, v : B(\unda / x), w : \forall x B, \Gamma \Rightarrow \Delta
$$
Observe that the $\cut$ in $\Pi_{2}'$ has a height $h_{2}$ that is one less than the original $\cut$, and the second $\cut$ is on a formula $B(\unda / x)$ that is of less complexity than the original $\cut$. Let us examine the $\exists x B$ case.

\begin{center}
\resizebox{\columnwidth}{!}{
\begin{tabular}{c}
\AxiomC{$\R, \unda \in D_{w}, \Gamma \Rightarrow \Delta, w: B(\unda/x), w: \exists x B$}
\RightLabel{$\existsr$}
\UnaryInfC{$\R, \unda \in D_{w}, \Gamma \Rightarrow \Delta, w: \exists x B$}

\AxiomC{$\R, \unda \in D_{w}, \undb \in D_{w}, w: B(\undb/x), \Gamma \Rightarrow \Delta$}
\RightLabel{$\existsl$}
\UnaryInfC{$\R, \unda \in D_{w}, w : \exists x B, \Gamma \Rightarrow \Delta$}

\RightLabel{$\cut$}

\BinaryInfC{$\R, \unda \in D_{w}, \Gamma \Rightarrow \Delta$}
\DisplayProof
\end{tabular}
}
\end{center}
The case is resolved as follows:
\begin{center}
%\scalebox{.6}{
\begin{tabular}{c}
\AxiomC{$\Pi_{1}'$}
\AxiomC{$\Pi_{2}'$}
\RightLabel{$\cut$}
\BinaryInfC{$\R, a \in D_{w}, \Gamma \Rightarrow \Delta$}
\DisplayProof
\end{tabular}
%}
\end{center}

\begin{center}
\resizebox{\columnwidth}{!}{
\begin{tabular}{c c c}
$\Pi_{1}'$

&

$= \Bigg \{ \qquad$

&

\AxiomC{$\Lambda$}

\AxiomC{$\R, \unda \in D_{w}, \undb \in D_{w}, w: B(\undb/x), \Gamma \Rightarrow \Delta$}
\RightLabel{$\wk$}
\dashedLine
\UnaryInfC{$\R, \unda \in D_{w}, \undb \in D_{w}, w: B(\undb/x), \Gamma \Rightarrow w : B(\unda/x), \Delta$}
\RightLabel{$\existsl$}
\UnaryInfC{$\R, \unda \in D_{w}, w : \exists x B, \Gamma \Rightarrow w : B(\unda/x), \Delta$}

\RightLabel{$\cut$}

\BinaryInfC{$\R, \unda \in D_{w}, \Gamma \Rightarrow w : B(\unda/x), \Delta$}

\DisplayProof
\end{tabular}
}
\end{center}

$$
\Lambda = \R, \unda \in D_{w}, \Gamma \Rightarrow \Delta, w: B(\unda/x), w: \exists x B
$$

\begin{center}
\begin{tabular}{c c c}
$\Pi_{2}'$

&

$= \Bigg \{$

&

\AxiomC{$\R, \unda \in D_{w}, \undb \in D_{w}, w: B(\undb/x), \Gamma \Rightarrow \Delta$}
\RightLabel{$(psub)$}
\dashedLine
\UnaryInfC{$\R, \unda \in D_{w}, \unda \in D_{w}, w: B(\unda/x), \Gamma \Rightarrow \Delta$}
\RightLabel{$\ctrrel$}
\dashedLine
\UnaryInfC{$\R, \unda \in D_{w}, w: B(\unda/x), \Gamma \Rightarrow \Delta$}
\DisplayProof
\end{tabular}
\end{center}
Observe that the $\cut$ in the $\Pi_{1}'$ inference has a height $h_{1}$ that is one less than the original $\cut$, and the second $\cut$ is on a formula of smaller complexity.\\

\textit{Claim (v).} Soundness is straightforward and is shown by interpreting labelled sequents on $\intfocd$-models (or $\intfond$- models in the case of $\lintfond$) and proving that validity is preserved from premise to conclusion. By~\cite{DycNeg12}, we know that $\lint$ is complete relative to $\int$, so we need only show that $\lintfocd$ can derive quantifier axioms and simulate the inference rules of the axiomatization for $\intfocd$ (see~\cite[Ch.~2.6]{GabSheSkv09}); we assume that none of the quantifiers occurring in formulae are vacuous, and note that the vacuous cases can be derived in a similar fashion. We say that a formula $A(\vv{\unda})$ is derivable in $\lintfocd$ if and only if $\vv{\unda} \in D_{w} \Rightarrow w : A(\vv{\unda})$ is derivable in $\lintfocd$. Also, we use the notation $\nd \times n$ to indicate that the rule $\nd$ was applied some number of times to shift a group of domain atoms forward (which will be made clear by the context).

%Since valid formulae of $\intfocd$ do not, technically speaking, contain formulae with free variables (see~\cite{GabSheSkv09}), we define that a sequent $\Rightarrow w : A(\vv{a})$ is derivable if and only if $\Rightarrow w : \vv{ \forall x} A(\vv{x})$ is derivable.

\begin{center}
\resizebox{\columnwidth}{!}{
\begin{tabular}{c} %@{\hskip -1em} c}
\AxiomC{}
\RightLabel{Prop.~(i)}
\dashedLine
\UnaryInfC{$w \leq u, u \leq u, \vv{\unda} \in D_{w}, \unda \in D_{w}, \unda \in D_{u}, \vv{\unda} \in D_{u}, u : \forall x A(\vv{\unda},x), u :A(\vv{\unda},\unda) \Rightarrow u : A(\vv{\unda},\unda)$}
\RightLabel{$\alll$}
\UnaryInfC{$w \leq u, u \leq u, \vv{\unda} \in D_{w}, \unda \in D_{w}, \unda \in D_{u}, \vv{\unda} \in D_{u}, u : \forall x A(\vv{\unda},x) \Rightarrow u : A(\vv{\unda},\unda)$}
\RightLabel{$\nd \times n$}
\UnaryInfC{$w \leq u, u \leq u, \vv{\unda} \in D_{w}, \unda \in D_{w}, u : \forall x A(\vv{\unda},x) \Rightarrow u : A(\vv{\unda},\unda)$}
\RightLabel{$\refl$}
\UnaryInfC{$w \leq u, \vv{\unda} \in D_{w}, \unda \in D_{w}, u : \forall x A(\vv{\unda},x) \Rightarrow u : A(\vv{\unda},\unda)$}
\RightLabel{$\imprr$}
\UnaryInfC{$\vv{\unda} \in D_{w}, \unda \in D_{w} \Rightarrow w : \forall x A(\vv{\unda},x) \imp A(\vv{\unda},\unda)$}
\DisplayProof
\end{tabular}
}
\end{center}

\begin{center}
\resizebox{\columnwidth}{!}{
\begin{tabular}{c}
\AxiomC{}
\RightLabel{Prop.~(i)}
\dashedLine
\UnaryInfC{$w \leq u, \vv{\unda} \in D_{w}, \unda \in D_{w}, \vv{\unda} \in D_{u}, \unda \in D_{u}, u : A(\vv{\unda},\unda) \Rightarrow u : A(\vv{\unda},\unda), u : \exists x A(\vv{\unda},x)$}
\RightLabel{$\existsr$}
\UnaryInfC{$w \leq u, \vv{\unda} \in D_{w}, \unda \in D_{w}, \vv{\unda} \in D_{u}, \unda \in D_{u}, u : A(\vv{\unda},\unda) \Rightarrow u : \exists x A(\vv{\unda},x)$}
\RightLabel{$\nd \times n$}
\UnaryInfC{$w \leq u, \vv{\unda} \in D_{w}, \unda \in D_{w},  u : A(\vv{\unda},\unda) \Rightarrow u : \exists x A(\vv{\unda},x)$}
\RightLabel{$\imprr$}
\UnaryInfC{$\vv{\unda} \in D_{w}, \unda \in D_{w} \Rightarrow w : A(\vv{\unda},\unda) \imp \exists x A(\vv{\unda},x)$}
\DisplayProof
\end{tabular}
}
\end{center}

\begin{center}
\resizebox{\columnwidth}{!}{
\begin{tabular}{c}
\AxiomC{$\Pi_{1}$}
\AxiomC{$\Pi_{2}$}
\RightLabel{$\imprl$}
\BinaryInfC{$\R, v \leq z, z \leq z, z : B(\vv{\undb}) \imp A(\vv{\unda},\unda), v : \forall x (B(\vv{\undb}) \imp A(\vv{\unda},x)), u : B(\vv{\undb}) \Rightarrow z : A(\vv{\unda},\unda)$}
\RightLabel{$\refl$}
\UnaryInfC{$\R, v \leq z, z : B(\vv{\undb}) \imp A(\vv{\unda},\unda), v : \forall x (B(\vv{\undb}) \imp A(\vv{\unda},x)), u : B(\vv{\undb}) \Rightarrow z : A(\vv{\unda},\unda)$}
\RightLabel{$\alll$}
\UnaryInfC{$\R, v \leq z, v : \forall x (B(\vv{\undb}) \imp A(\vv{\unda},x)), u : B(\vv{\undb}) \Rightarrow z : A(\vv{\unda},\unda)$}
\RightLabel{$\trans$}
\UnaryInfC{$\R, v : \forall x (B(\vv{\undb}) \imp A(\vv{\unda},x)), u : B(\vv{\undb}) \Rightarrow z : A(\vv{\unda},\unda)$}
\RightLabel{$\allr$}
\UnaryInfC{$w \leq v, v \leq u, \vv{\unda} \in D_{w}, \vv{\undb} \in D_{w}, v : \forall x (B(\vv{\undb}) \imp A(\vv{\unda},x)), u : B(\vv{\undb}) \Rightarrow u : \forall x A(\vv{\unda},x)$}
\RightLabel{$\imprr$}
\UnaryInfC{$w \leq v, \vv{\unda} \in D_{w}, \vv{\undb} \in D_{w}, v : \forall x (B(\vv{\undb}) \imp A(\vv{\unda},x)) \Rightarrow v : B(\vv{\undb}) \imp \forall x A(\vv{\unda},x)$}
\RightLabel{$\imprr$}
\UnaryInfC{$\vv{\unda} \in D_{w}, \vv{\undb} \in D_{w} \Rightarrow w : \forall x (B(\vv{\undb}) \imp A(\vv{\unda},x)) \imp (B(\vv{\undb}) \imp \forall x A(\vv{\unda},x))$}
\DisplayProof
\end{tabular}
}
\end{center}
To save space, we let $\R := w \leq v, v \leq u, u \leq z, \vv{\unda} \in D_{w}, \vv{\undb} \in D_{w}, \unda \in D_{z}$ and $\Gamma := z : B(\vv{\undb}) \imp A(\vv{\unda},\unda), v : \forall x (B(\vv{\undb}) \imp A(\vv{\unda},x))$. The proofs $\Pi_{1}$ and $\Pi_{2}$ are as follows:

\begin{center}
\resizebox{\columnwidth}{!}{
\begin{tabular}{c c c}
$\Pi_{1}$

&

$= \Bigg \{$

&

\AxiomC{}
\RightLabel{Prop.~(i)}
\dashedLine
\UnaryInfC{$\R, v \leq z, z \leq z, \vv{\undb} \in D_{v}, \vv{\undb} \in D_{u}, \Gamma, u : B(\vv{\undb}) \Rightarrow z : B(\vv{\undb}), z : A(\vv{\unda},\unda)$}
\RightLabel{$\nd \times k_{1}$}
\UnaryInfC{$\R, v \leq z, z \leq z, \Gamma, u : B(\vv{\undb}) \Rightarrow z : B(\vv{\undb}), z : A(\vv{\unda},\unda)$}
\DisplayProof
\end{tabular}
}
\end{center}

\begin{center}
\resizebox{\columnwidth}{!}{
\begin{tabular}{c c c}
$\Pi_{2}$

&

$= \Bigg \{$

&

\AxiomC{}
\RightLabel{Prop.~(i)}
\dashedLine
\UnaryInfC{$\R, v \leq z, z \leq z, \vv{\unda} \in D_{v}, \vv{\unda} \in D_{z}, \Gamma, u : B(\vv{\undb}), z : A(\vv{\unda},\unda) \Rightarrow z : A(\vv{\unda},\unda)$}
\RightLabel{$\nd \times k_{2}$}
\UnaryInfC{$\R, v \leq z, z \leq z, \Gamma, u : B(\vv{\undb}), z : A(\vv{\unda},\unda) \Rightarrow z : A(\vv{\unda},\unda)$}
\DisplayProof
\end{tabular}
}
\end{center}

The proof of the axiom $\forall x (A(x) \imp B) \imp (\exists x A(x) \imp B)$ is similar to the previous proof. The generalization rule is simulated as shown below:\\

\begin{center}
\begin{tabular}{c}
\AxiomC{$\vv{\unda} \in D_{w}, \unda \in D_{w} \Rightarrow w : A(\vv{\unda},\unda)$}
\RightLabel{$\wk$}
\dashedLine
\UnaryInfC{$u \leq w, \vv{\unda} \in D_{u}, \vv{\unda} \in D_{w}, \unda \in D_{w} \Rightarrow w : A(\vv{\unda},\unda)$}
\RightLabel{$\nd \times n$}
\UnaryInfC{$u \leq w, \vv{\unda} \in D_{u}, \unda \in D_{w} \Rightarrow w : A(\vv{\unda},\unda)$}
\RightLabel{$\allr$}
\UnaryInfC{$\vv{\unda} \in D_{u} \Rightarrow u : \forall x A(\vv{\unda},x)$}
\RightLabel{$(lsub)$}
\dashedLine
\UnaryInfC{$\vv{\unda} \in D_{w} \Rightarrow w : \forall x A(\vv{\unda},x)$}
\DisplayProof
\end{tabular}
\end{center}

\begin{center}
\resizebox{\columnwidth}{!}{
\begin{tabular}{c}
\AxiomC{$\Pi_{1}$}
\AxiomC{$\Pi_{2}$}
\RightLabel{$\disrl$}
\BinaryInfC{$\R, v \leq v, \unda \in D_{v}, v : A(\vv{\unda},\unda) \lor B(\vv{\undb}), v : \forall x (A(\vv{\unda},x) \lor B(\vv{\undb})) \Rightarrow u : A(\vv{\unda},\unda), v : B(\vv{\undb})$}
\RightLabel{$\alll$}
\UnaryInfC{$\R, v \leq v, \unda \in D_{v}, v : \forall x (A(\vv{\unda},x) \lor B(\vv{\undb})) \Rightarrow u : A(\vv{\unda},\unda), v : B(\vv{\undb})$}
\RightLabel{$\cd$}
\UnaryInfC{$\R, v \leq v, v : \forall x (A(\vv{\unda},x) \lor B(\vv{\undb})) \Rightarrow u : A(\vv{\unda},\unda), v : B(\vv{\undb})$}
\RightLabel{$\refl$}
\UnaryInfC{$\R, v : \forall x (A(\vv{\unda},x) \lor B(\vv{\undb})) \Rightarrow u : A(\vv{\unda},\unda), v : B(\vv{\undb})$}
\RightLabel{$\allr$}
\UnaryInfC{$w \leq v, \vv{\unda} \in D_{w}, \vv{\undb} \in D_{w}, v : \forall x (A(\vv{\unda},x) \lor B(\vv{\undb})) \Rightarrow v : \forall x A(\vv{\unda},x), v : B(\vv{\undb})$}
\RightLabel{$\disrr$}
\UnaryInfC{$w \leq v, \vv{\unda} \in D_{w}, \vv{\undb} \in D_{w}, v : \forall x (A(\vv{\unda},x) \lor B(\vv{\undb})) \Rightarrow v : \forall x A(\vv{\unda},x) \lor B(\vv{\undb})$}
\RightLabel{$\imprr$}
\UnaryInfC{$\vv{\unda} \in D_{w}, \vv{\undb} \in D_{w} \Rightarrow w : \forall x (A(\vv{\unda},x) \lor B(\vv{\undb})) \imp \forall x A(\vv{\unda},x) \lor B(\vv{\undb})$}
\DisplayProof
\end{tabular}
}
\end{center}
to save space, we let $\R := w \leq v, v \leq u, \vv{\unda} \in D_{w}, \vv{\undb} \in D_{w}, \unda \in D_{u}$. The proofs $\Pi_{1}$ and $\Pi_{2}$ are as follows (resp.).

\begin{center}
\resizebox{\columnwidth}{!}{
\begin{tabular}{c}
\AxiomC{}
\RightLabel{Prop.~(i)}
\dashedLine
\UnaryInfC{$\R, v \leq v, \vv{\unda} \in D_{v}, \unda \in D_{v}, v : A(\vv{\unda},\unda), v : \forall x (A(\vv{\unda},x) \lor B(\vv{\undb})) \Rightarrow u : A(\vv{\unda},\unda), v : B(\vv{\undb})$}
\RightLabel{$\nd \times k_{1}$}
\UnaryInfC{$\R, v \leq v, \unda \in D_{v}, v : A(\vv{\unda},\unda), v : \forall x (A(\vv{\unda},x) \lor B(\vv{\undb})) \Rightarrow u : A(\vv{\unda},\unda), v : B(\vv{\undb})$}
\DisplayProof
\end{tabular}
}
\end{center}

\begin{center}
\resizebox{\columnwidth}{!}{
\begin{tabular}{c}
\AxiomC{}
\RightLabel{Prop.~(i)}
\dashedLine
\UnaryInfC{$\R, v \leq v, \unda \in D_{v}, \vv{\undb} \in D_{v}, v : B(\vv{\undb}), v : \forall x (A(\vv{\unda},x) \lor B(\vv{\undb})) \Rightarrow u : A(\vv{\unda},\unda), v : B(\vv{\undb})$}
\RightLabel{$\nd \times k_{2}$}
\UnaryInfC{$\R, v \leq v, \unda \in D_{v}, v : B(\vv{\undb}), v : \forall x (A(\vv{\unda},x) \lor B(\vv{\undb})) \Rightarrow u : A(\vv{\unda},\unda), v : B(\vv{\undb})$}
\DisplayProof
\end{tabular}
}
\end{center}

To show that modus ponens can be simulated, we let $\vv{a}$ be all parameters occurring in $A$, $\vv{b}$ be all parameters occurring in $B$, and let $\vv{c}$ consist of all parameters occurring in $B$, but not $A$. The last inference consists of a sequence of $k$ $\ned$ applications that delete all domains atoms containing parameters from $A$, but not $B$.

\begin{center}
\resizebox{\columnwidth}{!}{
\begin{tabular}{c}
\AxiomC{$\vv{\unda} \in D_{w} \sar w : A(\vv{\unda})$}
\dashedLine
\RightLabel{$\wk$}
\UnaryInfC{$\vv{\unda} \in D_{w}, \vv{\undc} \in D_{w} \sar w : A(\vv{\unda})$}

\AxiomC{$\vv{\unda} \in D_{w}, \vv{\undc} \in D_{w} \sar w : A(\vv{\unda}) \imp B(\vv{\undb})$}
\RightLabel{Prop.~(iii)}
\dashedLine
\UnaryInfC{$w \leq u, \vv{\unda} \in D_{w}, \vv{\undc} \in D_{w}, u : A(\vv{\unda}) \sar u : B(\vv{\undb})$}
\RightLabel{$(lsub)$}
\UnaryInfC{$w \leq w, \vv{\unda} \in D_{w}, \vv{\undc} \in D_{w}, w : A(\vv{\unda}) \sar w : B(\vv{\undb})$}
\RightLabel{$\refl$}
\UnaryInfC{$\vv{\unda} \in D_{w}, \vv{\undc} \in D_{w}, w : A(\vv{\unda}) \sar w : B(\vv{\undb})$}

\RightLabel{$\cut$}
\BinaryInfC{$\vv{\unda} \in D_{w}, \vv{\undc} \in D_{w} \sar w : B(\vv{\undb})$}
\RightLabel{$\ned \times k$}
\UnaryInfC{$\vv{\undb} \in D_{w} \sar w : B(\vv{\undb})$}
\DisplayProof
\end{tabular}
}
\end{center}
\qed
\end{proof}

%%%%%%%%%%%%%%%%%%%%%%%%%%%%%%%%%%%%%%%%%%%%%%%%%%%%%%%%%%%%%%%%%%%
%%%%%%%%%%%%%%%%%%%%%%%%%%%%%%%%%%%%%%%%%%%%%%%%%%%%%%%%%%%%%%%%%%%
%%%%%%%%%%%%%%%%%%%%%%%%%%%%%%%%%%%%%%%%%%%%%%%%%%%%%%%%%%%%%%%%%%%
%%%%%%%%%%%%%%%%%%%%%%%%%%%%%%%%%%%%%%%%%%%%%%%%%%%%%%%%%%%%%%%%%%%

\begin{customlem}{\ref{lem:extended-lint-properties}} Let $\mathsf{G3X}^{*} \in \{\lintfond^{*}, \lintfocd^{*}\}$. The calculi $\lint^{*}$, $\lintfond^{*}$, and $\lintfocd^{*}$ have the following properties:
\begin{itemize}

\item[$(i)$] 

\begin{itemize}

\item[(a)] For all $A \in \lang$, $ \vdash_{\lint^{*}} \R,w \leq v, w : A, \Gamma \Rightarrow v : A, \Delta$;

\item[(b)] For all $A \in \lang$, $ \vdash_{\lint^{*}} \R,w:A,\Gamma \Rightarrow \Delta, w :A$; 

\item[(c)] For all $A \in \langfo$, $\vdash_{\mathsf{G3X}^{*}} \R,w \leq v, \vv{\unda} \in D_{w}, w : A(\vv{\unda}), \Gamma \Rightarrow v : A(\vv{\unda}), \Delta$; 

\item[(d)] For all $A \in \langfo$, $\vdash_{\mathsf{G3X}^{*}} \R, \vv{\unda} \in D_{w}, w:A(\vv{\unda}),\Gamma \Rightarrow \Delta, w :A(\vv{\unda})$;

\end{itemize}

\item[(ii)] The rules $\{(lsub),(psub),\wk,\ctrrel,\ctrr\}$ are hp-admissible;

\item[(iii)] With the exception of $\{\conrl,\existsl\}$, all rules are hp-invertible;

\item[(iv)] The rules $\{\conrl,\existsl\}$ are invertible;

\item[(v)] The rule $\ctrl$ is admissible.

\end{itemize}
\end{customlem}

\begin{proof} We show that the results hold when all rules from $\lint^{*}$, $\lintfond^{*}$, and $\lintfocd^{*}$ are considered.

\textit{Claim (i).} Similar to Thm.~\ref{thm:lint-properties}-(i).

\textit{Claim (ii).} By induction on the height of the given derivation---similar to the proofs given for Thm.~\ref{thm:lint-properties}-(ii). Note that the hp-admissibility of $\ctrr$ invokes property (iii) below.

\textit{Claim (iii).} Hp-invertibility of $\imprl$, $\implnew$, $\negl$, $\refl$, $\trans$, $\lift$, $\ned$, $\existsr$, $\existsndr$, $\existsnedr$, $\existscdr$, $\existscedr$, $\alll$, $\allndl$, $\allnedl$, $\allcdl$, $\allcedl$, $\nd$, $\cd$ follow from hp-admissibility of $\wk$ (property (ii) above). Hence, we need only prove invertibility of the $\impr$, $\conrr$, $\disrl$, $\disrr$, $\negr$, $\allr$, and $\allcdr$ rules. The result is shown by induction on the height of the given derivation and is similar to Thm.~\ref{thm:lint-properties}-(iii). %The base cases are simple and every case of the inductive step is resolved by invoking IH followed by an application of the relevant rule (for $\impr$, $\negr$, $\allr$, and $\allcdr$ hp-admissibility of $(lsub)$ or $(psub)$ may need to be invoked).

\textit{Claim (iv).} The addition of $\lift$ and $\nd$ to our calculus breaks the \emph{height preserving} invertibility of $\conrl$ and $\existsl$. Nevertheless, it is worthwhile to note that the rule $(lift')$ (shown below) ought to allow for hp-invertibility of $\conrl$ in the propositional calculus, and $(nd')$ (shown below) ought to allow for the hp-invertibility of $\conrl$ and $\existsl$ in the first-order calculi (which would have the effect that $\ctrl$ is \emph{hp-admissible} in both calculi) while retaining the soundness and cut-free completeness of each system. The notation $v : \Gamma'$ is used to represent multisets of formulae labelled with $v$.
\begin{center}
\resizebox{\columnwidth}{!}{
\begin{tabular}{c c}
\AxiomC{$\R, w \leq u, \Gamma, w : \Gamma', u : \Gamma' \Rightarrow \Delta$}
\RightLabel{$(lift')$}
\UnaryInfC{$\R, w \leq u, \Gamma, w : \Gamma' \Rightarrow \Delta$}
\DisplayProof

&

\AxiomC{$\R, w \leq u, \vv{\unda} \in D_{w}, \vv{\unda} \in D_{u}, \Gamma, w : \Gamma', u : \Gamma' \Rightarrow \Delta$}
\RightLabel{$(nd')$}
\UnaryInfC{$\R, w \leq u, \vv{\unda} \in D_{w}, \Gamma, w : \Gamma' \Rightarrow \Delta$}
\DisplayProof
\end{tabular}
}
\end{center}
Despite this shortcoming, the rules are still invertible. To show this, we prove the following two claims by induction on the height of the given derivation.\\

\begin{itemize}

\item[(a)] If $\R, w_{1} : A \land B, \ldots, w_{n} : A \land B, \Gamma \Rightarrow \Delta$ is provable, then so is the sequent $\R, w_{1} : A, w_{1} : B, \ldots, w_{n} : A, w_{n} : B, \Gamma \Rightarrow \Delta$.

\item[(b)] If $\R, w_{1} : \exists x A, \ldots, w_{n} : \exists x A, \Gamma \Rightarrow \Delta$ is provable, then so is the sequent $\R, \unda_{1} \in D_{w_{1}}, \ldots, \unda_{n} \in D_{w_{n}}, w_{1} : A(\unda_{1} / x), \ldots, w_{n} : A(\unda_{n} / x), \Gamma \Rightarrow \Delta$.

\end{itemize}

\textit{Claim (a).} The base case is trivial, so we move on to the inductive step.\\

\textit{Inductive step.} Excluding the case of $\lift$, the result follows by applying IH followed by the relevant rule, or in the case where one of the conjunction formulae is principal in a $\conrr$ inference, apply IH to the premise (if needed) to obtain the desired result. If none of the conjunctions are active in a $\lift$ inference, then apply IH followed by an application of $\lift$. If one of the conjunctions is principal in an application of $\lift$ (as shown below top), then resolve the case as shown below bottom; we assume w.l.o.g. that the label of the principal formula is $w_{1}$.
\begin{center}
%\begin{tabular}{c c}
\AxiomC{$\R, u \leq w_{1}, u : A \land B, w_{1} : A \land B, \ldots, w_{n} : A \land B, \Gamma \Rightarrow \Delta$}
\RightLabel{$\lift$}
\UnaryInfC{$\R, u \leq w_{1}, u : A \land B, \ldots, w_{n} : A \land B, \Gamma \Rightarrow \Delta$}
\DisplayProof
\end{center}
%&
\begin{center}
\AxiomC{}
\RightLabel{IH}
\dashedLine
\UnaryInfC{$\R, u \leq w_{1}, u : A, u :B, w_{1} : A, w_{1} : B, \ldots, w_{n} : A, w_{n} : B, \Gamma \Rightarrow \Delta$}
\RightLabel{$\lift$}
\UnaryInfC{$\R, u \leq w_{1}, u :A, u :B, w_{1} : B, \ldots, w_{n} : A, w_{n} : B, \Gamma \Rightarrow \Delta$}
\RightLabel{$\lift$}
\UnaryInfC{$\R, u \leq w_{1}, u : A, u :B, \ldots, w_{n} : A, w_{n} : B, \Gamma \Rightarrow \Delta$}
\DisplayProof
%\end{tabular}
\end{center}
Notice that the two applications of $\lift$ needed to derive the desired conclusion break the hp-invertibility of the $\conrl$ rule.\\

\textit{Claim (b).} The base case is trivial, so we move on to the inductive step.\\

\textit{Inductive step.} All cases, with the exception of the one given below top (where one of our existential formulae is principal in an application of $\lift$), are either resolved by applying IH followed by the corresponding rule, or if one of the existential formulae is principal in an $\existsr$ inference, then apply IH to the premise (if needed) to obtain the desired result. The non-trivial case given below top is resolved as shown below bottom; we assume w.l.o.g. that the label of the principal formula is $w_{1}$.
\begin{center}
\AxiomC{$\R, u \leq w_{1}, u : \exists x A, w_{1} : \exists x A, \ldots, w_{n} : \exists x A, \Gamma \Rightarrow \Delta$}
\RightLabel{$\lift$}
\UnaryInfC{$\R, u \leq w_{1}, u : \exists x A, \ldots, w_{n} : \exists x A, \Gamma \Rightarrow \Delta$}
\DisplayProof
\end{center}
\begin{center}
\resizebox{\columnwidth}{!}{
\AxiomC{}
\RightLabel{IH}
\dashedLine
\UnaryInfC{$\R, u \leq w_{1}, \unda \in D_{u}, \unda_{1} \in D_{w_{1}}, \ldots, \unda_{n} \in D_{w_{n}}, u : A(\unda / x), w_{1} : A(\unda_{1} / x), \ldots, w_{n} : A(\unda_{n} / x), \Gamma \Rightarrow \Delta$}
\RightLabel{$(psub)$}
\dashedLine
\UnaryInfC{$\R, u \leq w_{1}, \unda \in D_{u}, \unda \in D_{w_{1}}, \ldots, \unda_{n} \in D_{w_{n}}, u : A(\unda / x), w_{1} : A(\unda / x), \ldots, w_{n} : A(\unda_{n} / x), \Gamma \Rightarrow \Delta$}
\RightLabel{$\lift$}
\UnaryInfC{$\R, u \leq w_{1}, \unda \in D_{u}, \unda \in D_{w_{1}}, \ldots, \unda_{n} \in D_{w_{n}}, u : A(\unda / x),\ldots, w_{n} : A(\unda_{n} / x), \Gamma \Rightarrow \Delta$}
\RightLabel{$\nd$}
\UnaryInfC{$\R, u \leq w_{1}, \unda \in D_{u}, \ldots, \unda_{n} \in D_{w_{n}}, u : A(\unda / x),\ldots, w_{n} : A(\unda_{n} / x), \Gamma \Rightarrow \Delta$}
\DisplayProof
}
\end{center}
Observe that the use of $\lift$ and $\nd$ breaks height-preserving invertibility of the rule.\\

Propositions (a) and (b) imply the invertibility of $\conrl$ and $\existsl$.\\

\textit{Claim (v).} By induction on pairs of the form $(|A|,h)$ where $|A|$ is the complexity of the contraction formula $A$ and $h$ is the height of the derivation. The proof makes use of properties (ii)-- (iv).
\qed
\end{proof}

\begin{customlem}{\ref{lem:trans-refl-fond-elim}}
The $\refl$ and $\trans$ rules are admissible in the calculus $\lintfond^{*}$.
\end{customlem}

\begin{proof} The $\trans$ elimination cases are given below with each non-trivial case being followed by a proof of how it is resolved. %Due to Lem.~\ref{lm:structural-rules-permutation} 
In the $\idnd$ case, We let $\R' := \R, \unda_{1} \in D_{v_{1}}, \ldots, \unda_{n} \in D_{v_{n}}$.

\begin{flushleft}
\begin{tabular}{c}
%\begin{equation}
\AxiomC{}
\RightLabel{$\idfond$}
\UnaryInfC{$\R', u \leq z, z \leq z', u \leq z', \Gamma, w : p(\vv{\unda}) \Rightarrow w : p(\vv{\unda}), \Delta$}
\RightLabel{$\trans$}
\UnaryInfC{$\R', u \leq z, z \leq z', \Gamma, w : p(\vv{\unda}) \Rightarrow w : p(\vv{\unda}), \Delta$}
\DisplayProof
%\end{equation}
\end{tabular}
\end{flushleft}

\begin{flushright}
\begin{tabular}{c}
%\resizebox{\columnwidth}{!}{
\AxiomC{}
\RightLabel{$\idfond$}
\UnaryInfC{$\R', u \leq z, z \leq z', \Gamma, w : p(\vv{\unda}) \Rightarrow w : p(\vv{\unda}), \Delta$}
\DisplayProof
%}
\end{tabular}
\end{flushright}

%%%%%%%%%%%%%%%%%%%%555555
\begin{comment}
\begin{center}
%\scalebox{.8}{
%\begin{tabular}{c c}
\AxiomC{$\R, w \leq u, u \leq v, w \leq v, \unda \in D_{v}, v : A(\unda/x), w : \forall x A, \Gamma \Rightarrow \Delta$}
\RightLabel{$\alll$}
\UnaryInfC{$\R, w \leq u, u \leq v, w \leq v, \unda \in D_{v}, w : \forall x A, \Gamma \Rightarrow \Delta$}
\RightLabel{$\trans$}
\UnaryInfC{$\R, w \leq u, u \leq v, \unda \in D_{v}, w : \forall x A, \Gamma \Rightarrow \Delta$}
\DisplayProof
\end{center}
%&
\begin{center}
\AxiomC{}
\RightLabel{IH}
\dashedLine
\UnaryInfC{$\R, w \leq u, u \leq v, \unda \in D_{v}, \Gamma, v : A(\unda/x), w : \forall x A \Rightarrow \Delta$}
\RightLabel{$\wk$}
\dashedLine
\UnaryInfC{$\R, w \leq u, u \leq v, \unda \in D_{v}, \Gamma, v : A(\unda/x), w : \forall x A, u : \forall x A \Rightarrow \Delta$}
\RightLabel{$(\forall_{l})$}
\UnaryInfC{$\R, w \leq u, u \leq v, \unda \in D_{v}, \Gamma, w : \forall x A, u : \forall x A \Rightarrow \Delta$}
\RightLabel{$\lift$}
\UnaryInfC{$\R, w \leq u, u \leq v, \unda \in D_{v}, \Gamma, w : \forall x A, \Rightarrow \Delta$}
\DisplayProof
%\end{tabular}
%}
\end{center}
\end{comment}
%%%%%%%%%%%%%%%%%%%%%%%%%%%%

\begin{flushleft}
\AxiomC{$\R, u \leq z, z \leq z', u \leq z', \unda \in D_{v}, \Gamma, w : A(\unda/x), w : \forall x A \Rightarrow \Delta$}
\RightLabel{$\allndl$}
\UnaryInfC{$\R, u \leq z, z \leq z', u \leq z', \unda \in D_{v}, \Gamma, w : \forall x A \Rightarrow \Delta$}
\RightLabel{$\trans$}
\UnaryInfC{$\R, u \leq z, z \leq z', \unda \in D_{v}, \Gamma, w : \forall x A \Rightarrow \Delta$}
\DisplayProof
\end{flushleft}
\begin{flushright}
\AxiomC{}
\RightLabel{IH}
\dashedLine
\UnaryInfC{$\R, u \leq z, z \leq z', \unda \in D_{v}, w : A(\unda/x), w : \forall x A, \Gamma \Rightarrow \Delta$}
\RightLabel{$\allndl$}
\UnaryInfC{$\R, u \leq z, z \leq z', \unda \in D_{v}, w : \forall x A, \Gamma \Rightarrow \Delta$}
\DisplayProof
\end{flushright}

\begin{flushleft}
\AxiomC{$\R, u \leq z, z \leq z', u \leq z', \unda \in D_{v}, \Gamma, w : A(\unda/x), w : \forall x A \Rightarrow \Delta$}
\RightLabel{$\allnedl$}
\UnaryInfC{$\R, u \leq z, z \leq z', u \leq z', \Gamma, w : \forall x A \Rightarrow \Delta$}
\RightLabel{$\trans$}
\UnaryInfC{$\R, u \leq z, z \leq z', \Gamma, w : \forall x A \Rightarrow \Delta$}
\DisplayProof
\end{flushleft}
\begin{flushright}
\AxiomC{}
\RightLabel{IH}
\dashedLine
\UnaryInfC{$\R, u \leq z, z \leq z', \unda \in D_{v}, w : A(\unda/x), w : \forall x A, \Gamma \Rightarrow \Delta$}
\RightLabel{$\allnedl$}
\UnaryInfC{$\R, u \leq z, z \leq z', w : \forall x A, \Gamma \Rightarrow \Delta$}
\DisplayProof
\end{flushright}

\begin{flushleft}
\AxiomC{$\R, u \leq z, z \leq z', u \leq z', \unda \in D_{v}, \Gamma \Rightarrow w : A(\unda/x), w : \exists x A, \Delta$}
\RightLabel{$\existsndr$}
\UnaryInfC{$\R, u \leq z, z \leq z', u \leq z', \unda \in D_{v}, \Gamma \Rightarrow w : \exists x A, \Delta$}
\RightLabel{$\trans$}
\UnaryInfC{$\R, u \leq z, z \leq z', \unda \in D_{v}, \Gamma \Rightarrow w : \exists x A, \Delta$}
\DisplayProof
\end{flushleft}
\begin{flushright}
\AxiomC{}
\RightLabel{IH}
\dashedLine
\UnaryInfC{$\R, u \leq z, z \leq z', \unda \in D_{v}, \Gamma \Rightarrow w : A(\unda/x), w : \exists x A, \Delta$}
\RightLabel{$\existsndr$}
\UnaryInfC{$\R, u \leq z, z \leq z', \unda \in D_{v}, \Gamma \Rightarrow  w : \exists x A, \Delta$}
\DisplayProof
\end{flushright}

\begin{flushleft}
\AxiomC{$\R, u \leq z, z \leq z', u \leq z', \Gamma, \unda \in D_{v} \Rightarrow w : A(\unda/x), w : \exists x A, \Delta$}
\RightLabel{$\existsnedr$}
\UnaryInfC{$\R, u \leq z, z \leq z', u \leq z', \Gamma \Rightarrow w : \exists x A, \Delta$}
\RightLabel{$\trans$}
\UnaryInfC{$\R, u \leq z, z \leq z', \Gamma \Rightarrow w : \exists x A, \Delta$}
\DisplayProof
\end{flushleft}
\begin{flushright}
\AxiomC{}
\RightLabel{IH}
\dashedLine
\UnaryInfC{$\R, u \leq z, z \leq z', \unda \in D_{v}, \Gamma \Rightarrow w : A(\unda/x), w : \exists x A, \Delta$}
\RightLabel{$\existsnedr$}
\UnaryInfC{$\R, u \leq z, z \leq z', \Gamma \Rightarrow  w : \exists x A, \Delta$}
\DisplayProof
\end{flushright}

\begin{flushleft}
\AxiomC{$\R, w \leq u, u \leq v, w \leq v, \unda \in D_{w}, \unda \in D_{v}, \Gamma \Rightarrow \Delta$}
\RightLabel{$\nd$}
\UnaryInfC{$\R, w \leq u, u \leq v, w \leq v, \unda \in D_{w}, \Gamma \Rightarrow \Delta$}
\RightLabel{$\trans$}
\UnaryInfC{$\R, w \leq u, u \leq v, \unda \in D_{w}, \Gamma \Rightarrow \Delta$}
\DisplayProof
\end{flushleft}
\begin{flushright}
\AxiomC{$\R, w \leq u, u \leq v, w \leq v, \unda \in D_{w}, \unda \in D_{v}, \Gamma \Rightarrow \Delta$}
\RightLabel{$\wk$}
\dashedLine
\UnaryInfC{$\R, w \leq u, u \leq v, w \leq v, \unda \in D_{w}, \unda \in D_{u}, \unda \in D_{v}, \Gamma \Rightarrow \Delta$}
\RightLabel{IH}
\dashedLine
\UnaryInfC{$\R, w \leq u, u \leq v, \unda \in D_{w}, \unda \in D_{u}, \unda \in D_{v}, \Gamma \Rightarrow \Delta$}
\RightLabel{$\nd$}
\UnaryInfC{$\R, w \leq u, u \leq v, \unda \in D_{w}, \unda \in D_{u}, \Gamma \Rightarrow \Delta$}
\RightLabel{$\nd$}
\UnaryInfC{$\R, w \leq u, u \leq v, \unda \in D_{w}, \Gamma \Rightarrow \Delta$}
\DisplayProof
\end{flushright}

\begin{comment}
\begin{center}
\begin{tabular}{c}
\AxiomC{$\R, w \leq u, u \leq v, w \leq v, a \in D_{w}, a \in D_{v}, \Gamma \Rightarrow \Delta$}
\RightLabel{$\cd$}
\UnaryInfC{$\R, w \leq u, u \leq v, w \leq v, a \in D_{v}, \Gamma \Rightarrow \Delta$}
\RightLabel{$\trans$}
\UnaryInfC{$\R, w \leq u, u \leq v, a \in D_{v}, \Gamma \Rightarrow \Delta$}
\DisplayProof
\end{tabular}
\end{center}

\begin{center}
\begin{tabular}{c}
\AxiomC{$\R, w \leq u, u \leq v, w \leq v, a \in D_{w}, a \in D_{v}, \Gamma \Rightarrow \Delta$}
\RightLabel{Lem.~\ref{lem:extended-lint-properties}-(ii)}
\dashedLine
\UnaryInfC{$\R, w \leq u, u \leq v, w \leq v, a \in D_{w}, a \in D_{u}, a \in D_{v}, \Gamma \Rightarrow \Delta$}
\RightLabel{IH}
\dashedLine
\UnaryInfC{$\R, w \leq u, u \leq v, a \in D_{w}, a \in D_{u}, a \in D_{v}, \Gamma \Rightarrow \Delta$}
\RightLabel{$\cd$}
\UnaryInfC{$\R, w \leq u, u \leq v, a \in D_{u}, a \in D_{v}, \Gamma \Rightarrow \Delta$}
\RightLabel{$\cd$}
\UnaryInfC{$\R, w \leq u, u \leq v, a \in D_{v}, \Gamma \Rightarrow \Delta$}
\DisplayProof
\end{tabular}
\end{center}
\end{comment}

As with the admissibility proof for $\refl$, the side condition of the $\idnd$ rule still holds after $u \leq z'$ is deleted, and the side condition of the $\existsndr$, $\existsnedr$, $\allndl$, and $\allnedl$ cases continues to hold after the invocation of IH. If a directed path traverses the relational atom $u \leq z'$ in the $\idnd$ case, a directed path still exists by replacing $u \leq z'$ with $u \leq z, z \leq z'$ in the given path. In the other cases, if the directed path traverses the $u \leq z'$ relational atom, then the relational atoms $u \leq z, z \leq z'$ are still present in the sequent after invoking IH, ensuring that a path between $v$ and $w$ continues to exist.
%In the latter two cases, we made use of the hp-admissibility of $(ctr_{R})$ and $\wk$ (Lem.~\ref{lem:extended-lint-properties}), and so, each $\refl$ and $\trans$ inference moved up by 1 in its corresponding derivation.
\qed
\end{proof}

\begin{customlem}{\ref{lem:trans-refl-focd-elim}}
The $\refl$ and $\trans$ rules are admissible in the calculus $\lintfocd^{*}$.
\end{customlem}

\begin{proof} We prove the result by induction on the height of the given derivation. We assume w.l.o.g. that the last inference in the given derivation is an instance of $\refl$ or $\trans$, and that this is the only instance of the rule in the given derivation. The general result follows by successively eliminating topmost occurrences of $\refl$ or $\trans$ from a given derivation until it is free of such inferences. By Lem.~\ref{lm:structural-rules-permutation} and Lem.~\ref{lem:old-instance-of-new-cd}, we need only consider the non-trivial $\idcd$, $\existscdr$, $\existscedr$, $\allcdl$, $\allcedl$, $\nd$, and $\cd$ cases.

\textit{Base case.} We let $\R' := \R, \unda_{1} \in D_{v_{1}}, \ldots, \unda_{n} \in D_{v_{n}}$ in the $\idnd$ cases below. In both the $\refl$ and $\trans$ cases, the side condition that there is an undirected path from $v_{i}$ to $w$, for each $i \in \{1, \ldots, n\}$, holds in the end sequent. If none of the undirected paths from $v_{i}$ to $w$ contain $u \leq u$ ($u \leq z'$), then the paths are present in $\R'$ ($\R',u \leq z, z \leq z'$, resp.), and if an undirected path from $v_{i}$ to $w$ contains $u \leq u$ ($u \leq z'$, resp.), then deleting each occurrence of $u \leq u$ from the undirected path (replacing each occurrence of $u \leq z'$ with $u \leq z, z \leq z'$, resp.) gives a new path from $v_{i}$ to $w$.
\begin{center}
\resizebox{\columnwidth}{!}{
\begin{tabular}{c c}
\AxiomC{}
\RightLabel{$\idnd$}
\UnaryInfC{$\R', u \leq u, \Gamma, w : p(\vv{\unda}) \Rightarrow w : p(\vv{\unda}), \Delta$}
\RightLabel{$\refl$}
\UnaryInfC{$\R', \Gamma, w : p(\vv{\unda}) \Rightarrow w : p(\vv{\unda}), \Delta$}
\DisplayProof
%\end{tabular}
%\end{center}
&
%\begin{center}
%\begin{tabular}{c}
\AxiomC{}
\RightLabel{$\idcd$}
\UnaryInfC{$\R', \Gamma, w : p(\vv{\unda}) \Rightarrow w : p(\vv{\unda}), \Delta$}
\DisplayProof
\end{tabular}
}
\end{center}

\begin{center}
\begin{tabular}{c}
%\begin{equation}
\AxiomC{}
\RightLabel{$\idfocd$}
\UnaryInfC{$\R', u \leq z, z \leq z', u \leq z', \Gamma, w : p(\vv{\unda}) \Rightarrow w : p(\vv{\unda}), \Delta$}
\RightLabel{$\trans$}
\UnaryInfC{$\R', u \leq z, z \leq z', \Gamma, w : p(\vv{\unda}) \Rightarrow w : p(\vv{\unda}), \Delta$}
\DisplayProof
%\end{equation}
\end{tabular}
\end{center}

\begin{center}
\begin{tabular}{c}
%\resizebox{\columnwidth}{!}{
\AxiomC{}
\RightLabel{$\idfocd$}
\UnaryInfC{$\R', u \leq z, z \leq z', \Gamma, w : p(\vv{\unda}) \Rightarrow w : p(\vv{\unda}), \Delta$}
\DisplayProof
%}
\end{tabular}
\end{center}

\textit{Inductive step.} For the inductive step, we consider the non-trivial $\existscdr$, $\existscedr$, $\allcdl$, $\allcedl$, $\nd$, and $\cd$ cases. We first show how to permute $\refl$ above each of these rules, and then focus on the permutation of $\trans$ above each considered rule.

\begin{flushleft}
\AxiomC{$\R, u \leq u, \unda \in D_{v}, \Gamma \Rightarrow w : A(\unda/x), w : \exists x A, \Delta$}
\RightLabel{$\existscdr$}
\UnaryInfC{$\R, u \leq u, \unda \in D_{v}, \Gamma \Rightarrow w : \exists x A, \Delta$}
\RightLabel{$\refl$}
\UnaryInfC{$\R, \unda \in D_{v}, \Gamma \Rightarrow w : \exists x A, \Delta$}
\DisplayProof
\end{flushleft}
\begin{flushright}
\AxiomC{}
\RightLabel{IH}
\dashedLine
\UnaryInfC{$\R, \unda \in D_{v}, \Gamma \Rightarrow w : A(\unda/x), w : \exists x A, \Delta$}
\RightLabel{$\existscdr$}
\UnaryInfC{$\R, \unda \in D_{v}, \Gamma \Rightarrow w : \exists x A, \Delta$}
\DisplayProof
\end{flushright}

\begin{flushleft}
\AxiomC{$\R, u \leq u, \unda \in D_{v}, \Gamma \Rightarrow w : A(\unda/x), w : \exists x A, \Delta$}
\RightLabel{$\existscedr$}
\UnaryInfC{$\R, u \leq u, \Gamma \Rightarrow w : \exists x A, \Delta$}
\RightLabel{$\refl$}
\UnaryInfC{$\R, \Gamma \Rightarrow w : \exists x A, \Delta$}
\DisplayProof
\end{flushleft}
\begin{flushright}
\AxiomC{}
\RightLabel{IH}
\dashedLine
\UnaryInfC{$\R, \unda \in D_{v}, \Gamma \Rightarrow w : A(\unda/x), w : \exists x A, \Delta$}
\RightLabel{$\existscedr$}
\UnaryInfC{$\R, \Gamma \Rightarrow w : \exists x A, \Delta$}
\DisplayProof
\end{flushright}

\begin{flushleft}
\AxiomC{$\R, u \leq u, \unda \in D_{v}, w : A(\unda/x), w : \forall x A, \Gamma \Rightarrow \Delta$}
\RightLabel{$\allcdl$}
\UnaryInfC{$\R, u \leq u, \unda \in D_{v}, w : \forall x A, \Gamma \Rightarrow \Delta$}
\RightLabel{$\refl$}
\UnaryInfC{$\R, \unda \in D_{v}, w : \forall x A, \Gamma \Rightarrow \Delta$}
\DisplayProof
\end{flushleft}
\begin{flushright}
\AxiomC{}
\RightLabel{IH}
\dashedLine
\UnaryInfC{$\R, \unda \in D_{v}, w : A(\unda/x), w : \forall x A, \Gamma \Rightarrow \Delta$}
\RightLabel{$\allcdl$}
\UnaryInfC{$\R, \unda \in D_{v}, w : \forall x A, \Gamma \Rightarrow \Delta$}
\DisplayProof
\end{flushright}

\begin{flushleft}
\AxiomC{$\R, u \leq u, \unda \in D_{v}, \Gamma, w : A(\unda/x), w : \forall x A \Rightarrow \Delta$}
\RightLabel{$\allcedl$}
\UnaryInfC{$\R, u \leq u, \Gamma, w : \forall x A \Rightarrow \Delta$}
\RightLabel{$\refl$}
\UnaryInfC{$\R, w : \forall x A, \Gamma \Rightarrow \Delta$}
\DisplayProof
\end{flushleft}
\begin{flushright}
\AxiomC{}
\RightLabel{IH}
\dashedLine
\UnaryInfC{$\R, \unda \in D_{v}, \Gamma, w : A(\unda/x), w : \forall x A, \Gamma \Rightarrow \Delta$}
\RightLabel{$\allcedl$}
\UnaryInfC{$\R, w : \forall x A, \Gamma \Rightarrow \Delta$}
\DisplayProof
\end{flushright}

\begin{flushleft}
\AxiomC{$\R, w \leq w, \unda \in D_{w}, \unda \in D_{w}, \Gamma \Rightarrow \Delta$}
\RightLabel{$\nd$}
\UnaryInfC{$\R, w \leq w, \unda \in D_{w}, \Gamma \Rightarrow \Delta$}
\RightLabel{$\refl$}
\UnaryInfC{$\R, \unda \in D_{w}, \Gamma \Rightarrow \Delta$}
\DisplayProof
\end{flushleft}
\begin{flushright}
\AxiomC{$\R, w \leq w, \unda \in D_{w}, \unda \in D_{w}, \Gamma \Rightarrow \Delta$}
\RightLabel{$\ctrrel$}
\dashedLine
\UnaryInfC{$\R, w \leq w, \unda \in D_{w}, \Gamma \Rightarrow \Delta$}
\RightLabel{IH}
\dashedLine
\UnaryInfC{$\R, \unda \in D_{w}, \Gamma \Rightarrow \Delta$}
\DisplayProof
\end{flushright}

\begin{flushleft}
\AxiomC{$\R, w \leq w, \unda \in D_{w}, \unda \in D_{w}, \Gamma \Rightarrow \Delta$}
\RightLabel{$\cd$}
\UnaryInfC{$\R, w \leq w, \unda \in D_{w}, \Gamma \Rightarrow \Delta$}
\RightLabel{$\refl$}
\UnaryInfC{$\R, \unda \in D_{w}, \Gamma \Rightarrow \Delta$}
\DisplayProof
\end{flushleft}
\begin{flushright}
\AxiomC{$\R, w \leq w, \unda \in D_{w}, \unda \in D_{w}, \Gamma \Rightarrow \Delta$}
\RightLabel{$\ctrrel$}
\dashedLine
\UnaryInfC{$\R, w \leq w, \unda \in D_{w}, \Gamma \Rightarrow \Delta$}
\RightLabel{IH}
\dashedLine
\UnaryInfC{$\R, \unda \in D_{w}, \Gamma \Rightarrow \Delta$}
\DisplayProof
\end{flushright}

In the $\existscdr$, $\existscedr$, $\allcdl$, and $\allcedl$ cases above, observe that each rule can be applied after invoking IH for the following reason: if the undirected path from $v$ to $w$ does not go through $u$, then the path is still present after applying IH, and if the undirected path contains $u \leq u$, then deleting all occurrences of $u \leq u$ from the undirected path yields another undirected path between $v$ and $w$. The $\trans$ cases are considered below:

\begin{flushleft}
\AxiomC{$\R, u \leq z, z \leq z', u \leq z', \unda \in D_{v}, \Gamma, w : A(\unda/x), w : \forall x A \Rightarrow \Delta$}
\RightLabel{$\allcdl$}
\UnaryInfC{$\R, u \leq z, z \leq z', u \leq z', \unda \in D_{v}, \Gamma, w : \forall x A \Rightarrow \Delta$}
\RightLabel{$\trans$}
\UnaryInfC{$\R, u \leq z, z \leq z', \unda \in D_{v}, \Gamma, w : \forall x A \Rightarrow \Delta$}
\DisplayProof
\end{flushleft}
\begin{flushright}
\AxiomC{}
\RightLabel{IH}
\dashedLine
\UnaryInfC{$\R, u \leq z, z \leq z', \unda \in D_{v}, w : A(\unda/x), w : \forall x A, \Gamma \Rightarrow \Delta$}
\RightLabel{$\allcdl$}
\UnaryInfC{$\R, u \leq z, z \leq z', \unda \in D_{v}, w : \forall x A, \Gamma \Rightarrow \Delta$}
\DisplayProof
\end{flushright}

\begin{flushleft}
\AxiomC{$\R, u \leq z, z \leq z', u \leq z', \unda \in D_{v}, \Gamma, w : A(\unda/x), w : \forall x A \Rightarrow \Delta$}
\RightLabel{$\allcedl$}
\UnaryInfC{$\R, u \leq z, z \leq z', u \leq z', \Gamma, w : \forall x A \Rightarrow \Delta$}
\RightLabel{$\trans$}
\UnaryInfC{$\R, u \leq z, z \leq z', \Gamma, w : \forall x A \Rightarrow \Delta$}
\DisplayProof
\end{flushleft}
\begin{flushright}
\AxiomC{}
\RightLabel{IH}
\dashedLine
\UnaryInfC{$\R, u \leq z, z \leq z', \unda \in D_{v}, w : A(\unda/x), w : \forall x A, \Gamma \Rightarrow \Delta$}
\RightLabel{$\allcedl$}
\UnaryInfC{$\R, u \leq z, z \leq z', w : \forall x A, \Gamma \Rightarrow \Delta$}
\DisplayProof
\end{flushright}

\begin{flushleft}
\AxiomC{$\R, u \leq z, z \leq z', u \leq z', \unda \in D_{v}, \Gamma \Rightarrow w : A(\unda/x), w : \exists x A, \Delta$}
\RightLabel{$\existscdr$}
\UnaryInfC{$\R, u \leq z, z \leq z', u \leq z', \unda \in D_{v}, \Gamma \Rightarrow w : \exists x A, \Delta$}
\RightLabel{$\trans$}
\UnaryInfC{$\R, u \leq z, z \leq z', \unda \in D_{v}, \Gamma \Rightarrow w : \exists x A, \Delta$}
\DisplayProof
\end{flushleft}
\begin{flushright}
\AxiomC{}
\RightLabel{IH}
\dashedLine
\UnaryInfC{$\R, u \leq z, z \leq z', \unda \in D_{v}, \Gamma \Rightarrow w : A(\unda/x), w : \exists x A, \Delta$}
\RightLabel{$\existscdr$}
\UnaryInfC{$\R, u \leq z, z \leq z', \unda \in D_{v}, \Gamma \Rightarrow  w : \exists x A, \Delta$}
\DisplayProof
\end{flushright}

\begin{flushleft}
\AxiomC{$\R, u \leq z, z \leq z', u \leq z', \Gamma, \unda \in D_{v} \Rightarrow w : A(\unda/x), w : \exists x A, \Delta$}
\RightLabel{$\existscedr$}
\UnaryInfC{$\R, u \leq z, z \leq z', u \leq z', \Gamma \Rightarrow w : \exists x A, \Delta$}
\RightLabel{$\trans$}
\UnaryInfC{$\R, u \leq z, z \leq z', \Gamma \Rightarrow w : \exists x A, \Delta$}
\DisplayProof
\end{flushleft}
\begin{flushright}
\AxiomC{}
\RightLabel{IH}
\dashedLine
\UnaryInfC{$\R, u \leq z, z \leq z', \unda \in D_{v}, \Gamma \Rightarrow w : A(\unda/x), w : \exists x A, \Delta$}
\RightLabel{$\existscedr$}
\UnaryInfC{$\R, u \leq z, z \leq z', \Gamma \Rightarrow  w : \exists x A, \Delta$}
\DisplayProof
\end{flushright}

\begin{flushleft}
\AxiomC{$\R, w \leq u, u \leq v, w \leq v, \unda \in D_{w}, \unda \in D_{v}, \Gamma \Rightarrow \Delta$}
\RightLabel{$\nd$}
\UnaryInfC{$\R, w \leq u, u \leq v, w \leq v, \unda \in D_{w}, \Gamma \Rightarrow \Delta$}
\RightLabel{$\trans$}
\UnaryInfC{$\R, w \leq u, u \leq v, \unda \in D_{w}, \Gamma \Rightarrow \Delta$}
\DisplayProof
\end{flushleft}
\begin{flushright}
\AxiomC{$\R, w \leq u, u \leq v, w \leq v, \unda \in D_{w}, \unda \in D_{v}, \Gamma \Rightarrow \Delta$}
\RightLabel{$\wk$}
\dashedLine
\UnaryInfC{$\R, w \leq u, u \leq v, w \leq v, \unda \in D_{w}, \unda \in D_{u}, \unda \in D_{v}, \Gamma \Rightarrow \Delta$}
\RightLabel{IH}
\dashedLine
\UnaryInfC{$\R, w \leq u, u \leq v, \unda \in D_{w}, \unda \in D_{u}, \unda \in D_{v}, \Gamma \Rightarrow \Delta$}
\RightLabel{$\nd$}
\UnaryInfC{$\R, w \leq u, u \leq v, \unda \in D_{w}, \unda \in D_{u}, \Gamma \Rightarrow \Delta$}
\RightLabel{$\nd$}
\UnaryInfC{$\R, w \leq u, u \leq v, \unda \in D_{w}, \Gamma \Rightarrow \Delta$}
\DisplayProof
\end{flushright}

\begin{flushleft}
\AxiomC{$\R, w \leq u, u \leq v, w \leq v, a \in D_{w}, a \in D_{v}, \Gamma \Rightarrow \Delta$}
\RightLabel{$\cd$}
\UnaryInfC{$\R, w \leq u, u \leq v, w \leq v, a \in D_{v}, \Gamma \Rightarrow \Delta$}
\RightLabel{$\trans$}
\UnaryInfC{$\R, w \leq u, u \leq v, a \in D_{v}, \Gamma \Rightarrow \Delta$}
\DisplayProof
\end{flushleft}
\begin{flushright}
\AxiomC{$\R, w \leq u, u \leq v, w \leq v, a \in D_{w}, a \in D_{v}, \Gamma \Rightarrow \Delta$}
\RightLabel{$\wk$}
\dashedLine
\UnaryInfC{$\R, w \leq u, u \leq v, w \leq v, a \in D_{w}, a \in D_{u}, a \in D_{v}, \Gamma \Rightarrow \Delta$}
\RightLabel{IH}
\dashedLine
\UnaryInfC{$\R, w \leq u, u \leq v, a \in D_{w}, a \in D_{u}, a \in D_{v}, \Gamma \Rightarrow \Delta$}
\RightLabel{$\cd$}
\UnaryInfC{$\R, w \leq u, u \leq v, a \in D_{u}, a \in D_{v}, \Gamma \Rightarrow \Delta$}
\RightLabel{$\cd$}
\UnaryInfC{$\R, w \leq u, u \leq v, a \in D_{v}, \Gamma \Rightarrow \Delta$}
\DisplayProof
\end{flushright}

%In the $\idnd$ case the side condition holds, and in the $\existsndr$, $\existsnedr$, $\allndl$, and $\allnedl$ cases, observe that the side condition continues to hold after IH is applied. 
We now argue that the side condition $v \sim^{\R} w$ of $\existscdr$, $\existscedr$, $\allcdl$, and $\allcedl$ continues to hold after applying IH. If the undirected path from $v$ to $w$ does not contain $u \leq z'$, then the side condition trivially holds. If, on the other hand, the undirected path from $v$ to $w$ contains $u \leq z'$, then another undirected path from $v$ to $w$ is found by replacing all occurrences of $u \leq z'$ with $u \leq z, z \leq z'$, which continues to be present after the invocation of IH.
\qed
\end{proof}

\begin{customcor}{\ref{cor:nested-admissibility}}
All rules in $\nstrucset$ are admissible in $\nint$, $\nintfond$ and $\nintfocd$.
\end{customcor}

\begin{proof} We prove the admissibility of each rule of $\nstrucset$ in turn. The first three cases we consider are the $(wk_{l})$, $(wk_{r})$, and $(psub)$ cases, though we omit the $(wk_{r})$ case since it is similar to the $(wk_{l})$ case.
%In all of the derivations below, observe that the second invocation of Thm.~\ref{thm:nested-to-labelled} is valid since the labelled sequent  
\begin{center}
\begin{tabular}{c c}
\AxiomC{$\Sigma\{X \far Y\}$}
\RightLabel{Thm.~\ref{thm:nested-to-labelled}}
\dashedLine
\UnaryInfC{$\switchtwo(\Sigma\{X \far Y\})$}
\RightLabel{=}
\dottedLine
\UnaryInfC{$\R, \Gamma, w : X \sar w : Y, \Delta$}
\RightLabel{$\wk$}
\dashedLine
\UnaryInfC{$\R', \Gamma, w : X, w : Z \sar w : Y, \Delta$}
\RightLabel{Thm.~\ref{thm:nested-to-labelled}}
\dashedLine
\UnaryInfC{$\switch(\R, \Gamma, w : X, w : Z \sar w : Y, \Delta)$}
\RightLabel{=}
\dottedLine
\UnaryInfC{$\Sigma\{X, Z \far Y\}$}
\DisplayProof

&

\AxiomC{$\Sigma$}
\RightLabel{Thm.~\ref{thm:nested-to-labelled}}
\dashedLine
\UnaryInfC{$\switchtwo(\Sigma)$}
\RightLabel{=}
\dottedLine
\UnaryInfC{$\Lambda$}
\RightLabel{$(psub)$}
\dashedLine
\UnaryInfC{$\Lambda(\unda / \undb)$}
\RightLabel{Thm.~\ref{thm:nested-to-labelled}}
\dashedLine
\UnaryInfC{$\switch(\Lambda(\unda / \undb))$}
\RightLabel{=}
\dottedLine
\UnaryInfC{$\Sigma(\unda/\undb)$}
\DisplayProof
\end{tabular}
\end{center}
We assume that $\R' := \R, \vv{\unda} \in D_{w}$ where $\vv{\unda}$ are all parameters occurring in the labelled formulae of $w : Z$. This ensures that the labelled sequent $\R', \Gamma, w : X, w : Z \sar w : Y, \Delta$ is nestedlike (cf.~Def.~\ref{def:switchtwo} and~\ref{def:nestedlike}). Also, the labelled sequent $\Lambda(\unda/\undb)$ in the $(psub)$ derivation is trivially nestedlike since $\switchtwo(\Sigma(\unda / \undb)) = \Lambda(\unda / \undb)$ (up to a change of labels). This implies that the second invocation of Thm.~\ref{thm:nested-to-labelled} in both cases is valid.

We next consider the $\ctrl$, $\ctrr$, $(nr)$, and $(mrg_{1})$ cases, but omit the proof of $\ctrr$ since it is similar to $\ctrl$. In the top right (i.e. $(nr)$) derivation, we assume that $v$ is the root of $G(\switchtwo(\Sigma))$ and that $w$ is fresh, meaning we effectively change the root from $v$ to $w$ after applying $\wk$. %In the top left (i.e. $\ctrl$) derivation, we let $\R'$
\begin{center}
\resizebox{\columnwidth}{!}{
\begin{tabular}{c c}
\AxiomC{$\Sigma\{X,Z,Z \far Y\}$}
\RightLabel{Thm.~\ref{thm:nested-to-labelled}}
\dashedLine
\UnaryInfC{$\switchtwo(\Sigma\{X,Z,Z \far Y\})$}
\RightLabel{=}
\dottedLine
\UnaryInfC{$\R, \Gamma, w : X, w : Z, w : Z \sar w : Y, \Delta$}
\RightLabel{$\ctrl + \ctrrel$}
\dashedLine
\UnaryInfC{$\R, \Gamma, w : X, w : Z \sar w : Y, \Delta$}
\RightLabel{Thm.~\ref{thm:nested-to-labelled}}
\dashedLine
\UnaryInfC{$\switch(\R, \Gamma, w : X, w : Z \sar w : Y, \Delta)$}
\RightLabel{=}
\dottedLine
\UnaryInfC{$\Sigma\{X, Z \far Y\}$}
\DisplayProof

&

\AxiomC{$\Sigma$}
\RightLabel{Thm.~\ref{thm:nested-to-labelled}}
\dashedLine
\UnaryInfC{$\switchtwo(\Sigma)$}
\RightLabel{=}
\dottedLine
\UnaryInfC{$\R,\Gamma \sar \Delta$}
\RightLabel{$\wk$}
\dashedLine
\UnaryInfC{$\R, w \leq v, \Gamma \sar \Delta$}
\RightLabel{Thm.~\ref{thm:nested-to-labelled}}
\dashedLine
\UnaryInfC{$\switch(\R, w \leq v, \Gamma \sar \Delta)$}
\RightLabel{=}
\dottedLine
\UnaryInfC{$\far [\Sigma]$}
\DisplayProof
\end{tabular}
}
\end{center}
\begin{center}
\AxiomC{$\Sigma\{X \far Y, [\Sigma'], [\Sigma']\}$}
\RightLabel{Thm.~\ref{thm:nested-to-labelled}}
\dashedLine
\UnaryInfC{$\switchtwo(\Sigma\{X \far Y, [\Sigma'], [\Sigma']\})$}
\RightLabel{=}
\dottedLine
\UnaryInfC{$\R, \R', \R'', \Gamma, \Gamma', \Gamma'' \sar \Delta', \Delta'', \Delta$}
\RightLabel{$(lsub) \times n$}
\dashedLine
\UnaryInfC{$\R, \R', \R', \Gamma, \Gamma', \Gamma' \sar \Delta', \Delta', \Delta$}
\RightLabel{$\ctrrel + \ctrl + \ctrr$}
\dashedLine
%\doubleLine
\UnaryInfC{$\R, \R',\Gamma, \Gamma' \sar \Delta', \Delta$}
\RightLabel{Thm.~\ref{thm:nested-to-labelled}}
\dashedLine
\UnaryInfC{$\switch(\R, \R',\Gamma, \Gamma' \sar \Delta', \Delta)$}
\RightLabel{=}
\dottedLine
\UnaryInfC{$\Sigma\{X \far Y, [\Sigma']\}$}
\DisplayProof
\end{center}
The labelled sequents $\R, \Gamma, w : X, w : Z \sar w : Y, \Delta$, $\R, w \leq v, \Gamma \sar \Delta$, and $\R, \R',\Gamma, \Gamma' \sar \Delta', \Delta$ are nestedlike, as the conclusion of each derivation \emph{is} the nested sequent that when input into $\switchtwo$ outputs each such labelled sequent (up to a change of labels). In the bottom (i.e. $(mrg_{1})$) derivation, we assume that the first copy of $\Sigma'$ is translated to $\R',\Gamma' \sar \Delta'$ under $\switchtwo$ and that the second copy of $\Sigma'$ is translated to $\R'',\Gamma'' \sar \Delta''$ under $\switchtwo$. Therefore, the graphs $G(\R', \Gamma' \sar \Delta')$ and $G(\R'', \Gamma'' \sar \Delta'')$ are isomorphic, and so, the $n$ applications of $(lsub)$ correspond to the replacement of labels in $\R'', \Gamma'' \sar \Delta''$ with labels in $\R', \Gamma' \sar \Delta'$ to make these two \emph{identical}. Then, using $\ctrrel$, $\ctrl$, and $\ctrr$, we contract all duplicate copies of relational atoms and labelled formulae to obtain $\R, \R',\Gamma, \Gamma' \sar \Delta', \Delta$.

We prove the admissibility of the $(mrg_{2})$ rule below. We assume that the label $w$ is associated with the component $X_{1} \far Y_{1}$ and the label $v$ is associated with the component $X_{2} \far Y_{2}$. As before, the conclusion of the derivation serves as the nested sequent that when input into $\switchtwo$ yields $\R(w/v), \Gamma(w/v), w : X_{1}, w : X_{2} \sar w : Y_{1}, w : Y_{2}, \Delta(w/v)$ (up to a change of labels), thus showing the validity of our second use of Thm.~\ref{thm:nested-to-labelled}.
\begin{center}
\AxiomC{$\Sigma\{X_{1} \far Y_{1}, [X_{2} \far Y_{2}, [\Sigma_{1}], \ldots, [\Sigma_{n}]]\}$}
\RightLabel{Thm.~\ref{thm:nested-to-labelled}}
\dashedLine
\UnaryInfC{$\switchtwo(\Sigma\{X_{1} \far Y_{1}, [X_{2} \far Y_{2}, [\Sigma_{1}], \ldots, [\Sigma_{n}]]\})$}
\RightLabel{=}
\dottedLine
\UnaryInfC{$\R, w \leq v, \Gamma, w : X_{1}, v : X_{2} \sar w : Y_{1}, v : Y_{2}, \Delta$}
\RightLabel{$(lsub)$}
\dashedLine
\UnaryInfC{$\R(w/v), w \leq w, \Gamma(w/v), w : X_{1}, w : X_{2} \sar w : Y_{1}, w : Y_{2}, \Delta(w/v)$}
\RightLabel{$\refl$}
\UnaryInfC{$\R(w/v), \Gamma(w/v), w : X_{1}, w : X_{2} \sar w : Y_{1}, w : Y_{2}, \Delta(w/v)$}
\RightLabel{Thm.~\ref{thm:nested-to-labelled}}
\dashedLine
\UnaryInfC{$\switch(\R(w/v), \Gamma(w/v), w : X_{1}, w : X_{2} \sar w : Y_{1}, w : Y_{2}, \Delta(w/v))$}
\RightLabel{=}
\dottedLine
\UnaryInfC{$\Sigma\{X_{1}, X_{2} \far Y_{1}, Y_{2}, [\Sigma_{1}], \ldots, [\Sigma_{n}]\}$}
\DisplayProof
\end{center}
%We explain (and prove below) the remaining cases concerning the admissibility of $(ew_{1})$, $(ew_{2})$, $(lwr)$, and $\cut$.
Next we consider the  $(ew_{1})$ case. We assume that $\switchtwo(\Sigma') = \R', \Gamma' \sar \Delta'$, which shows that the conclusion of the derivation serves as the nested sequent, which when input into $\switchtwo$ outputs $\R, \R',\Gamma, \Gamma' \sar \Delta', \Delta$ (up to a change of labels). Therefore, the second use of Thm.~\ref{thm:nested-to-labelled} is valid.
\begin{center}
\AxiomC{$\Sigma\{X \far Y\}$}
\RightLabel{Thm.~\ref{thm:nested-to-labelled}}
\dashedLine
\UnaryInfC{$\switchtwo(\Sigma\{X \far Y\})$}
\RightLabel{=}
\dottedLine
\UnaryInfC{$\R, \Gamma \sar \Delta$}
\RightLabel{$\wk$}
\dashedLine
%\doubleLine
\UnaryInfC{$\R, \R',\Gamma, \Gamma' \sar \Delta', \Delta$}
\RightLabel{Thm.~\ref{thm:nested-to-labelled}}
\dashedLine
\UnaryInfC{$\switch(\R, \R',\Gamma, \Gamma' \sar \Delta', \Delta)$}
\RightLabel{=}
\dottedLine
\UnaryInfC{$\Sigma\{X \far Y, [\Sigma']\}$}
\DisplayProof
\end{center}
In the proof of the admissibility of $(ew_{2})$ below,  we assume that the label $w$ is associated with the component $X_{1} \far Y_{1}$, the label $v$ is associated with the component $X_{2} \far Y_{2}$, and that the label $u$ is fresh. Up to a change of labels, the conclusion of the derivation is the nested sequent that when input into $\switchtwo$ outputs the labelled sequent $\R, w \leq u, u \leq v, \Gamma, w : X_{1}, v : X_{2} \sar w : Y_{1}, v : Y_{2}, \Delta$, justifying our second use of Thm.~\ref{thm:nested-to-labelled}.
\begin{center}
\AxiomC{$\Sigma\{X_{1} \far Y_{1}, [X_{2} \far Y_{2}, [\Sigma_{1}], \ldots, [\Sigma_{n}]]\}$}
\RightLabel{Thm.~\ref{thm:nested-to-labelled}}
\dashedLine
\UnaryInfC{$\switchtwo(\Sigma\{X_{1} \far Y_{1}, [X_{2} \far Y_{2}, [\Sigma_{1}], \ldots, [\Sigma_{n}]]\})$}
\RightLabel{=}
\dottedLine
\UnaryInfC{$\R, w \leq v, \Gamma, w : X_{1}, v : X_{2} \sar w : Y_{1}, v : Y_{2}, \Delta$}
\RightLabel{$\wk$}
\dashedLine
\UnaryInfC{$\R, w \leq u, u \leq v, w \leq v, \Gamma, w : X_{1}, v : X_{2} \sar w : Y_{1}, v : Y_{2}, \Delta$}
\RightLabel{$\trans$}
\UnaryInfC{$\R, w \leq u, u \leq v, \Gamma, w : X_{1}, v : X_{2} \sar w : Y_{1}, v : Y_{2}, \Delta$}
\RightLabel{Thm.~\ref{thm:nested-to-labelled}}
\dashedLine
\UnaryInfC{$\switch(\R, w \leq u, u \leq v, \Gamma, w : X_{1}, v : X_{2} \sar w : Y_{1}, v : Y_{2}, \Delta)$}
\RightLabel{=}
\dottedLine
\UnaryInfC{$\Sigma\{X_{1} \far Y_{1}, [ \ \far [X_{2} \far Y_{2}, [\Sigma_{1}], \ldots, [\Sigma_{n}]]]\}$}
\DisplayProof
\end{center}
Last, we show the admissibility of $(lwr)$ and $\cut$. In the proof of the admissibility of $(lwr)$ below, we assume that the label $w$ is associated with the component $X_{1} \far A, Y_{1}$ and that the label $v$ is associated with the component $X_{2} \far A, Y_{2}$. In the proof of the admissibility of $\cut$, we assume that the label $w$ is associated with $X \far A, Y$ and $X, A \far Y$. As in all previous cases, the end sequent of each derivation serves as the nested sequent (up to a change of labels) that when input into $\switchtwo$ yields the labelled sequent before the (second) invocation of Thm.~\ref{thm:nested-to-labelled}, thus showing that our use of the theorem is valid.
\begin{center}
\resizebox{\columnwidth}{!}{
\AxiomC{$\Sigma\{X_{1} \far A, Y_{1}, [X_{2} \far A, Y_{2}]\}$}
\RightLabel{Thm.~\ref{thm:nested-to-labelled}}
\dashedLine
\UnaryInfC{$\switchtwo(\Sigma\{X_{1} \far A, Y_{1}, [X_{2} \far A, Y_{2}]\})$}
\RightLabel{=}
\dottedLine
\UnaryInfC{$\R, w \leq v, \Gamma \sar w : A, v : A, \Delta$}

\AxiomC{}
\RightLabel{Thm.~\ref{thm:lint-properties}-(i)-(c)}
\dashedLine
\UnaryInfC{$\R, w \leq v, \Gamma, w : A \sar v : A, \Delta$}

\RightLabel{$\cut$}
\dashedLine
%\doubleLine
\BinaryInfC{$\R, w \leq v, \Gamma \sar v : A \sar \Delta$}
\RightLabel{Thm.~\ref{thm:nested-to-labelled}}
\dashedLine
\UnaryInfC{$\switch(\R, w \leq v, \Gamma \sar v : A, \Delta)$}
\RightLabel{=}
\dottedLine
\UnaryInfC{$\Sigma\{X_{1} \far Y_{1}, [X_{2} \far A, Y_{2}]\}$}
\DisplayProof
}
\end{center}

\begin{center}
\AxiomC{$\Sigma\{X \far A, Y\}$}
\RightLabel{Thm.~\ref{thm:nested-to-labelled}}
\dashedLine
\UnaryInfC{$\switchtwo(\Sigma\{X \far A, Y\})$}
\RightLabel{=}
\dottedLine
\UnaryInfC{$\R, \Gamma \sar w : A,\Delta$}

\AxiomC{$\Sigma\{X, A \far Y\}$}
\RightLabel{Thm.~\ref{thm:nested-to-labelled}}
\dashedLine
\UnaryInfC{$\switchtwo(\Sigma\{X, A \far Y\})$}
\RightLabel{=}
\dottedLine
\UnaryInfC{$\R, \Gamma, w : A \sar \Delta$}

\RightLabel{$\cut$}
\dashedLine
%\doubleLine
\BinaryInfC{$\R, \Gamma \sar \Delta$}
\RightLabel{Thm.~\ref{thm:nested-to-labelled}}
\dashedLine
\UnaryInfC{$\switch(\R, \Gamma \sar \Delta)$}
\RightLabel{=}
\dottedLine
\UnaryInfC{$\Sigma\{X \far Y\}$}
\DisplayProof
\end{center}
%\end{itemize}
\qed
\end{proof}

\end{document}